\numberwithin{equation}{section}
\newtheorem{thm}{Theorem}[section]
\newtheorem{lem}[thm]{Lemma}
\newtheorem{cor}[thm]{Corollary}
\newtheorem{prop}[thm]{Proposition}
\theoremstyle{definition}
\newtheorem{defn}[thm]{Definition}
\newtheorem{conv}[thm]{Convention}
\newtheorem{ex}[thm]{Example}
\newtheorem{remark}[thm]{Remark}
\newcommand{\bb}[1]{\mathbb{#1}}
\newcommand{\bbZ}{\bb{Z}}
\newcommand{\bbC}{\bb{C}}
\newcommand{\bbQ}{\bb{Q}}
\newcommand{\bbX}{\bb{X}}
\newcommand{\tw}[1]{\widetilde{#1}}
\newcommand{\wh}[1]{\widehat{#1}}
\newcommand{\ov}[1]{\overline{#1}}
\newcommand{\Gr}{{\rm Gr}}
\newcommand{\GL}{{\rm GL}}
\newcommand{\Id}{{\rm Id}}
\newcommand{\ds}{\displaystyle}
\newcommand{\Cx}{\mathbb{C}^\times}
\newcommand{\Y}[2]{\Gr(#1,#2) \times \Cx}
\newcommand{\X}[1]{\bbX_{#1}}
\newcommand{\Zn}{\mathbb{Z}/n\mathbb{Z}}
\newcommand{\lp}{\lambda} % loop parameter
\newcommand{\bT}{\mathbb{T}}
\newcommand{\ve}{\varepsilon}
\newcommand{\vp}{\varphi}
\newcommand{\tep}{\tw{\varepsilon}}
\newcommand{\tph}{\tw{\varphi}}
\newcommand{\te}{\tw{e}}
\newcommand{\tf}{\tw{f}}
\newcommand{\tsi}{\tw{\sigma}}
\newcommand{\tS}{\tw{S}}
\newcommand{\tg}{\tw{\gamma}}
\newcommand{\twf}{\tw{f}}
\newcommand{\twg}{\tw{g}}
\DeclareMathOperator{\PR}{PR}
\DeclareMathOperator{\pr}{pr}
\DeclareMathOperator{\wt}{wt}
\DeclareMathOperator{\Trop}{\text{Trop}}
\DeclareMathOperator{\sh}{sh}
\DeclareMathOperator{\fl}{fl}
\DeclareMathOperator{\rot}{rot}
\DeclareMathOperator{\refl}{refl}
\DeclareMathOperator{\inv}{inv}
\DeclareMathOperator{\sgn}{sgn}
\DeclareMathOperator{\trot}{\tw{\rot}}
\DeclareMathOperator{\tsh}{\tw{\sh}}
\DeclareMathOperator{\trefl}{\tw{\refl}}
\DeclareMathOperator{\tpr}{\tw{\pr}}
\DeclareMathOperator{\adj}{adj}
\title{Affine type $A$ geometric crystal on the Grassmannian}
\author{Gabriel Frieden}
\date{\today}
\thanks{The author was supported in part by NSF grants DMS-1464693 and DMS-0943832.}
\address{Department of Mathematics, University of Michigan, Ann Arbor, MI 48109-1043, USA}
\email{gfrieden@umich.edu}
\begin{document}
\maketitle

\begin{abstract}
We construct a type $A_{n-1}^{(1)}$ geometric crystal on the variety $\Gr(k,n) \times \Cx$, and show that it tropicalizes to the disjoint union of the Kirillov-Reshetikhin crystals corresponding to rectangular tableaux with $n-k$ rows. A key ingredient in our construction is the $\Zn$ symmetry on the Grassmannian coming from cyclically shifting the basis of the underlying vector space. We show that a twisted version of this symmetry tropicalizes to combinatorial promotion. Additionally, we use the loop group $\GL_n(\mathbb{C}(\lp))$ to define a unipotent crystal which induces our geometric crystal. We use this unipotent crystal to study the geometric analogues of two symmetries of rectangular tableaux.
\end{abstract}

%\tableofcontents

\section{Introduction}
\label{sec_intro}
%Section{Introduction}
%\label{sec_intro}

Kashiwara's theory of crystal bases provides a combinatorial model for the representation theory of semisimple Lie algebras, and more generally of Kac-Moody algebras. In type $A_{n-1}$, this theory brings to light an intimate connection between the representation theory of $\mathfrak{sl}_n$ and the combinatorics of semistandard Young tableaux. The operations on tableaux that arise in this theory, such as promotion, evacuation, the crystal operators, the Lascoux-Sch\"{u}tzenberger symmetric group action, and the Robinson-Schensted-Knuth correspondence, are traditionally defined in terms of combinatorial algorithms involving the individual entries of a tableau, such as bumping, sliding, or bracketing rules. When these operations are transferred from tableaux to Gelfand-Tsetlin patterns, they are given by piecewise-linear formulas (\cite{KirBer}, \cite{Kir}, \cite{NouYam}). This suggests that there should be a way to lift the operations to subtraction-free rational functions on some algebraic variety, in such a way that the behavior of the rational functions parallels that of the combinatorial operations. Certain properties of the combinatorial maps may become more transparent in the geometric setting, and information gained at the rational level can be pushed down to the combinatorial level via tropicalization.

Berenstein and Kazhdan's theory of geometric crystals (\cite{BKI}, \cite{BKII}) provides a framework for lifting the crystal combinatorics of semisimple Lie algebras to the rational setting. Nakashima \cite{Nak} extended the theory to the Kac-Moody setting, and in particular to affine Lie algebras. One goal of this theory is to find, for a given family of combinatorial crystals, a geometric crystal which tropicalizes to that family (with respect to a suitable parametrization). For finite-dimensional representations of semisimple Lie algebras, there is a general construction of Berenstein and Kazhdan \cite{BKII} which accomplishes this goal. In the affine case, there has been an ongoing effort to construct geometric crystals corresponding to the crystals of Kirillov-Reshetikhin modules, an important class of finite-dimensional representations of the quantum affine algebras (\cite{KNO1}, \cite{KNO2}).

In type $A_{n-1}^{(1)}$, Kirillov-Reshetikhin modules correspond to rectangular partitions, and their crystal bases are modeled by semistandard Young tableaux of rectangular shape. In addition to the ``classical'' crystal operators defined on tableaux of all shapes, there is an ``affine'' crystal operator $\tw{e}_0$ defined on rectangular tableaux, corresponding to the action of the additional simple root of the Lie algebra $\wh{\mathfrak{sl}_n}$. There are also two crystal-theoretic operations on tensor products of rectangular tableaux---the combinatorial $R$-matrix and the energy function---which have no analogue in the classical setting (\cite{Shim}). Ideally, an affine geometric crystal should come with rational lifts of these operations as well.

In the one-row case, such an affine geometric crystal has been constructed. The underlying variety is $(\Cx)^n$, and a point $(z_1, \ldots, z_n) \in (\Cx)^n$ is the rational analogue of a vector $(b_1, \ldots, b_n) \in (\bbZ_{\geq 0})^n$ which specifies the number of $1's, 2's, \ldots, n's$ in a one-row tableau. The affine geometric crystal structure on this variety was described by Kuniba, Okado, Takagi, and Yamada \cite{KOTY}, and Yamada \cite{Yam} found a rational lift of the combinatorial $R$-matrix for tensor products of one-row tableaux. Lam and Pylyavskyy \cite{LPenergy} showed that a certain loop Schur function provides a rational lift of the energy function for tensor products of one-row tableaux.

Let $\Gr(k,n)$ be the Grassmannian of $k$-dimensional subspaces in $\mathbb{C}^n$, and let $B^{n-k,L}$ denote the Kirillov-Reshetikhin crystal corresponding to rectangular tableaux with $n-k$ rows and $L$ columns. The main contribution of this article is the construction of an affine geometric crystal on the variety $\Gr(k,n) \times \Cx$ which tropicalizes to the disjoint union of the crystals $B^{n-k,L}$.

Our starting point is a type $A_{n-1}$ geometric crystal constructed by Berenstein and Kazhdan \cite{BKII}, which tropicalizes to the ``classical'' crystal structure on rectangular tableaux with $n-k$ rows. This geometric crystal is defined on a certain subvariety of the group of lower triangular matrices in $\GL_n$ (see Remark \ref{rmk_ZLP}), and there is a birational isomorphism from this subvariety to $\Gr(k,n) \times \Cx$. The advantage of using the Grassmannian is that it has a natural $\Zn$ symmetry coming from cyclically shifting the basis vectors of the underlying $n$-dimensional vector space. We define the geometric crystal operator $e_0$ by conjugating the geometric crystal operator $e_1$ by a twisted version of the cyclic shifting map. We then show (Theorem \ref{thm_trop_PR}) that under a suitable parametrization of the Grassmannian, the cyclic shifting map tropicalizes to promotion, which allows us to prove directly that the geometric crystal operators tropicalize to their combinatorial counterparts (Theorem \ref{thm_trop_cryst}).

The idea of relating the cyclic symmetry of the Grassmannian to promotion of rectangular tableaux is not new. In Rhoades' work on the cyclic sieving phenomenon \cite{Rho}, he showed that the map on the homogeneous coordinate ring of the Grassmannian induced by cyclic shifting maps the Kazhdan-Lusztig basis element corresponding to a rectangular tableau $T$ to the basis element corresponding to the promotion of $T$ (up to a sign). Theorem \ref{thm_trop_PR} was inspired by this result; we don't, however, know of any direct connection between the two. More recently, Grinberg and Roby \cite{GrinRob} used the Grassmannian to prove that birational rowmotion on the $k \times (n-k)$ rectangle has order $n$, a result equivalent to Theorem \ref{thm_trop_PR}.

Another important actor in this article is the loop group $\GL_n(\mathbb{C}(\lp))$, which is used to define an appropriate notion of unipotent crystal. A unipotent crystal, as introduced by Berenstein and Kazhdan \cite{BKI}, is a pair $(V,g)$, where $V$ is a variety, and $g$ is a rational map from $V$ to the lower Borel subgroup of a reductive group (satisfying certain axioms). Such an object induces a geometric crystal on $V$, and it transports symmetries of the reductive group to symmetries of the geometric crystal. In our affine setting, the loop group $\GL_n(\bbC(\lp))$ plays the role of the reductive group. We define a unipotent crystal $(\Y{k}{n}, g)$, where $g$ is a rational map from $\Y{k}{n}$ to the lower Iwahori subgroup of $\GL_n(\mathbb{C}(\lp))$. This map intertwines the cyclic symmetry of the loop group and the twisted cyclic shifting map on the Grassmannian. When the parameter $\lp$ is set to 0, we recover a certain type $A_{n-1}$ unipotent crystal from \cite{BKII} (see Proposition \ref{prop_g_props}\eqref{itm:g_0} and Remark \ref{rmk_ZLP}). By specializing $\lp$ to other values, we gain new information; this idea is exploited in various proofs in \S \ref{sec_symm}.

Between the announcement of our main results \cite{F0} and the completion of this article, Misra and Nakashima \cite{MisNakAll} presented an alternative construction of a type $A_{n-1}^{(1)}$ geometric crystal which tropicalizes to a certain limit of the crystals $B^{k,L}$, for fixed $k$. Their construction relies on a description of the affine crystal operator $\tw{e}_0$ in terms of lattice paths, rather than promotion.

\subsection*{Future directions}

We were originally motivated by the problem of finding a rational lift of the combinatorial $R$-matrix for tensor products of rectangular tableaux with an arbitrary number of rows. In the sequel to this paper \cite{F2}, we use the results and techniques developed here to construct such a rational lift.

Perhaps the methods of this paper could be used in other affine types. For instance, is it possible to define a type $C_n^{(1)}$ geometric crystal on the Lagrangian Grassmannian? If so, does this geometric crystal tropicalize to a family of Kirillov-Reshetikhin modules in type $D_{n+1}^{(2)}$, the Langlands dual\footnote{In general, geometric crystals of a given Kac-Moody type tropicalize to combinatorial crystals of the Langlands dual type. We can ignore the distinction in this article because the types $A_{n-1}$ and $A_{n-1}^{(1)}$ are self-dual.} of $C_n^{(1)}$? Is there a corresponding unipotent crystal on the type $C$ loop group?

\subsection*{Outline}

\subsubsection*{\S \ref{sec_combo}: Crystal combinatorics}
We review the combinatorics of the affine type $A$ crystal structure on rectangular tableaux. We translate this combinatorics into piecewise-linear maps on {\em $k$-rectangles}, the subset of Gelfand-Tsetlin patterns which correspond to rectangular tableaux with $k$ rows. We discuss two symmetries of rectangular tableaux: the Sch\"{u}tzenberger involution and the ``column complement'' map. These correspond to rotation and reflection of $k$-rectangles, respectively (Lemma \ref{lem_symms_tab}).

\subsubsection*{\S \ref{sec_geom_cryst}: Geometric crystals} We review the definition of a geometric crystal, and we define an affine geometric crystal on $\Gr(k,n) \times \Cx$ (Definition \ref{defn_maps}). The geometric crystal operators are given by the action of certain elements of one parameter subgroups of $\GL_n$ on $\Gr(k,n)$. For $i = 1, \ldots, n-1$, the one parameter subgroup corresponds to the $i^{th}$ simple root; for $i = 0$, it corresponds to the negative of the highest root. We introduce the twisted cyclic shifting map, and show that the geometric crystal structure is ``compatible'' with this map.

\subsubsection*{\S \ref{sec_param}: Parametrization} This section is devoted to the {\em Gelfand-Tsetlin parametrization} $\Theta_k$, a birational isomorphism from the set of {\em rational $(n-k)$-rectangles} to $\Gr(k,n) \times \Cx$. First we define $\Theta_k$ as a product of simple matrices (Definition \ref{defn_Phi}). Then we review the connection between planar networks and matrices, and we describe $\Theta_k$ in terms of a planar network. We use the Lindstr\"{o}m Lemma to obtain positive combinatorial formulas for the Pl\"{u}cker coordinates in terms of the entries of a rational $(n-k)$-rectangle (Lemma \ref{lem_pluc_formula}). These formulas allow us to prove Proposition \ref{prop_GT_pl}, which gives the inverse of $\Theta_k$.

\subsubsection*{\S \ref{sec_trop}: Tropicalization} This section is the heart of the paper. We start by reviewing {\em tropicalization}, the procedure which transforms a subtraction-free rational function into a piecewise-linear function by replacing multiplication by addition, division by subtraction, and addition by the operation $\min$. Next, we prove that the maps which make up the affine geometric crystal on $\Y{k}{n}$, expressed in terms of the Gelfand-Tsetlin parametrization, tropicalize to piecewise-linear formulas for the combinatorial crystal structure on rectangular tableaux (Theorem \ref{thm_trop_cryst}). The key step in the proof is to show that the twisted cyclic shifting map tropicalizes to promotion (Theorem \ref{thm_trop_PR}). We also show that the tropicalization of a function called the {\em decoration} defines a polyhedron whose integer points are the $k$-rectangles (Proposition \ref{prop_trop_dec}). In \S \ref{sec_trop_ex}, we derive the above-mentioned affine geometric crystal corresponding to one-row tableaux from the $\Gr(n-1,n)$ case of our construction; we also write down the explicit piecewise-linear formulas for promotion coming from our construction in the case $n=4, k=2$. In \S \ref{sec_pf_trop_PR}, we use a rational lift of the Bender-Knuth involutions to prove Theorem \ref{thm_trop_PR}.

\subsubsection*{\S \ref{sec_unip_cryst}: Unipotent crystals} We first explain how to view elements of the loop group as ``infinite periodic matrices.'' Then we define a notion of type $A_{n-1}^{(1)}$ unipotent crystal, and we show that a unipotent crystal induces a geometric crystal (Theorem \ref{thm_induces}). We define a unipotent crystal $(\Y{k}{n}, g)$ (Definition \ref{defn_g}) which induces the geometric crystal defined in \S \ref{sec_geom_Gr}. This reduces the claim that our geometric crystal satisfies the geometric crystal axioms to the claim that our unipotent crystal satisfies the unipotent crystal axioms, which we prove by means of the Grassmann-Pl\"{u}cker relations and the cyclic symmetry (Theorem \ref{thm_Gr_induces}). Finally, we prove several properties of the matrix $g(M,t) \in \GL_n(\bbC(\lp))$ (for $(M,t) \in \Y{k}{n}$), describing what happens when $\lp$ is specialized to certain values (Proposition \ref{prop_g_props}).

\subsubsection*{\S \ref{sec_symm}: Symmetries} We first consider the involution on the loop group given by reflection over the anti-diagonal. We define the {\em geometric Sch\"{u}tzenberger involution} $S$ to be the map on $\Y{k}{n}$ ``induced'' by this map (Definition \ref{defn_S}). Next, we show that the inverse of the matrix $g(M,t)$ is closely related to $g(M^\perp, t)$, where $M^\perp$ is the orthogonal complement of the subspace $M$ (Proposition \ref{prop_D}). By combining the map $M \mapsto M^\perp$ with the geometric Sch\"{u}tzenberger involution, we obtain the {\em duality map} $D : \Y{k}{n} \rightarrow \Y{n-k}{n}$ (Definition \ref{defn_D}). Using the machinery of unipotent crystals, we show that $S$ and $D$ are both ``compatible'' with the geometric crystal structure (Corollaries \ref{cor_S} and \ref{cor_D}). Finally, we show that $S$ and $D$ tropicalize to rotation and reflection of $k$-rectangles, respectively. This allows us to deduce that the corresponding tableau symmetries are compatible with the combinatorial crystal structure (Remark \ref{rmk_symms_cryst_ops}). In the case of the Sch\"{u}tzenberger involution, this compatibility was already known, but in the case of column complementation it seems to be new.

\subsection*{Notation}
\label{sec_notation}

Throughout this article, we fix integers $n \geq 2$ and $k \in \{1, \ldots, n-1\}$. For two integers $i$ and $j$, we write
\[
[i,j] = \{m \in \mathbb{Z} \, | \, i \leq m \leq j\}.
\]
We often abbreviate $[1,j]$ to $[j]$. We write ${[n] \choose k}$ for the set of $k$-element subsets of $[n]$, and $|J|$ for the cardinality of a set $J$. Given a matrix $A$ and two subsets $I,J$, we write $A_{I,J}$ to denote the submatrix using the rows in $I$ and the columns in $J$. If $|I| = |J|$, we write
\[
\Delta_{I,J}(A) = \det(A_{I,J}).
\]

\subsection*{Acknowledgements}

I would like to thank my adviser Thomas Lam for his guidance and encouragement as I worked on this project. My thanks also go to David Speyer for a conversation that helped me get started, and to Jake Levinson for his comments on the introduction.

\section{Crystal combinatorics}
\label{sec_combo}
%\section{Crystal combinatorics}
%\label{sec_combo}

\subsection{Background on crystals}
\label{sec_cryst}

A crystal is a combinatorial skeleton of a representation of a Kac-Moody Lie algebra $\mathfrak{g}$. It arises as the $q \rightarrow 0$ limit of a special basis of a module for the quantized universal enveloping algebra $U_q(\mathfrak{g})$ (\cite{Kash}). Note that what we call a crystal is sometimes called a ``crystal basis'' or a ``crystal graph'' in the literature.

A type $A_{n-1}^{(1)}$ crystal consists of a set $B$, together with
\begin{itemize}
\item a weight map $\tw{\gamma} : B \rightarrow (\mathbb{Z}_{\geq 0})^n$;
\item for each $i \in \Zn$, functions $\tep_i, \tph_i : B \rightarrow \mathbb{Z}_{\geq 0}$;
\item for each $i \in \Zn$, crystal operators $\tw{e}_i,\tw{f}_i: B \rightarrow B \sqcup \{0\}$.
\end{itemize}
We say that $\tw{e}_i(b)$ is undefined if $\tw{e}_i(b) = 0$ (and similarly for $\tw{f}_i$). These maps must satisfy various properties; here we list those which are relevant in this paper:
\begin{itemize}
\item $\tw{e}_i(b)$ is defined if and only if $\tep_i(b) > 0$, and when $\tw{e}_i(b)$ is defined, $\tep_i(\tw{e}_i(b)) = \tep_i(b)-1$;
\item $\tw{f}_i(b)$ is defined if and only if $\tph_i(b) > 0$, and when $\tw{f}_i(b)$ is defined, $\tph_i(\tw{f}_i(b)) = \tph_i(b)-1$;
\item $\tw{e}_i$ and $\tw{f}_i$ are ``partial inverses,'' i.e., if $\tw{e}_i(b)$ is defined, then $\tw{f}_i(\tw{e}_i(b)) = b$, and if $\tw{f}_i(b)$ is defined, then $\tw{e}_i(\tw{f}_i(b)) = b$;
\item $\tph_i(b) - \tep_i(b) = \tw{\alpha}_i(\tg(b))$, where $\tw{\alpha}_i(a_1, \ldots, a_n) = a_i - a_{i+1}$;
\item If $\tw{e}_i(b)$ is defined, then $\tg(\tw{e}_i(b)) = \tg(b) + \tw{\alpha}_i^\vee(1)$, and if $\tw{f}_i(b)$ is defined, then $\tg(\tw{f}_i(b)) = \tg(b) + \tw{\alpha}_i^\vee(-1)$, where $\tw{\alpha_i}^\vee(m) = m(v_i - v_{i+1})$, with $v_i$ the $i$th standard basis vector.
\end{itemize}

A type $A_{n-1}$ crystal consists of the same data as a type $A_{n-1}^{(1)}$ crystal, but without the maps associated to $i=0$.

\subsection{Tableaux and crystals}
\label{sec_tableaux}

\subsubsection{Classical crystal structure}

Let $\lambda$ be a partition with at most $n$ rows. A {\em semistandard Young tableau (SSYT) of shape $\lambda$}, is a filling of the Young diagram of $\lambda$ with entries in $[n]$, such that the rows are weakly increasing, and the columns are strictly increasing. (We will often omit the adjectives ``semistandard Young.'') We write $B(\lambda)$ to denote the set of SSYTs of shape $\lambda$.

For each partition $\lambda$, there is an irreducible $\mathfrak{sl}_n$-representation whose basis is indexed by $B(\lambda)$, and a corresponding type $A_{n-1}$ crystal on the vertex set $B(\lambda)$. The weight map $\tg$ is the content of a tableau, i.e., $\tg(T) = (a_1, \ldots, a_n)$, where $a_i$ is the number of $i$'s in $T$. The maps $\tep_i, \tph_i,\tw{e}_i$, and $\tw{f}_i$ are computed by the following algorithm.

\begin{defn}
For $i \in [n-1]$, the maps $\tep_i, \tph_i, \tw{e}_i,$ and $\tw{f}_i$ are defined on $T \in B(\lambda)$ as follows. To begin, let $w$ be the (row) reading word of $T$, i.e., the word formed by concatenating the rows of $T$, starting with the bottom row. Now apply the following algorithm to $w$:
\begin{enumerate}
\item[Step 1:] Cross out all letters not equal to $i$ or $i+1$.
\item[Step 2:] For each consecutive pair of (non-crossed out) letters of the form $i+1, i$, cross out both letters. \item[Step 3:] Repeat Step 2 until there are no more pairs to cross out.
\item[Step 4:] Let $w'$ be the resulting subword, which is necessarily of the form
\[
w' = i^\alpha \; (i+1)^\beta.
\]
\end{enumerate}
The functions $\tep_i$ and $\tph_i$ are defined by
\[
\tep_i(T) = \beta \quad\quad\quad\quad \tph_i(T) = \alpha.
\]
If $\beta = 0$, then the crystal operator $\tw{e}_i(T)$ is undefined; if $\beta > 0$, then $\tw{e}_i(T)$ is the tableau of shape $\lp$ whose reading word is obtained from $w$ by changing the left-most $i+1$ in $w'$ into an $i$. Similarly, if $\alpha = 0$, then $\tw{f}_i(T)$ is undefined, and if $\alpha > 0$, then $\tw{f}_i(T)$ is the tableau of shape $\lp$ whose reading word is obtained from $w$ by changing the right-most $i$ in $w'$ into an $i+1$.
\end{defn}

\begin{ex}
\label{ex_cryst_ops}
Let $T = \ytableaushort{111222333,2333} \,.$ The subword of 2's and 3's in $w$ is
\[
2 \quad 3 \quad 3 \quad 3 \quad 2 \quad 2 \quad 2 \quad 3 \quad 3 \quad 3
\]
and after (recursively) crossing out consecutive pairs of the form $3 \quad 2$, we are left with
\[
w' = 2 \quad \cancel{3} \quad \cancel{3} \quad \cancel{3} \quad \cancel{2} \quad \cancel{2} \quad \cancel{2} \quad 3 \quad 3 \quad 3 \quad = \quad 2 \quad 3 \quad 3 \quad 3.
\]
Thus, we have $\tep_2(T) = 3$, $\tph_2(T) = 1$, and
\[
\tw{e}_2(T) = \ytableaushort{111222{*(red)2}33,2333} \quad\quad \tw{f}_2(T) = \ytableaushort{111222333,{*(red)3}333} \,.
\]
\end{ex}

\subsubsection{Promotion and evacuation}
\label{sec_prom_evac}

For $T \in B(\lambda)$ and $i \in [n-1]$, define the {\em $i$th Bender-Knuth involution} $\tsi_i(T)$ to be the tableau obtained by applying the following algorithm to each row of $T$:

In the given row, suppose there are $\alpha$ boxes containing $i$ which are not directly above a box containing $i+1$, and $\beta$ boxes containing $i+1$ which are not directly below a box containing $i$. Thus, this row contains a consecutive subword of the form $i^\alpha \quad (i+1)^\beta$. Replace this subword with $i^\beta \quad (i+1)^\alpha$.

It's clear that $\tsi_i$ is an involution, and that it interchanges the numbers of $i$'s and $i+1$'s in $T$.

\begin{defn}
\label{defn_pr_S}
Define {\em promotion} by $\tpr = \tsi_1 \tsi_2 \cdots \tsi_{n-1}$. Define the {\em Sch\"{u}tzenberger involution} (also known as {\em evacuation}) by $\tS = (\tsi_{n-1}) (\tsi_{n-2} \tsi_{n-1}) \cdots (\tsi_2 \cdots \tsi_{n-1}) (\tsi_1 \cdots \tsi_{n-1})$. 
\end{defn}

\begin{remark}
It is well-known that promotion as defined here is equivalent to the following algorithm based on Sch\"{u}tzenberger's jeu-de-taquin: remove the $n$'s; slide the remaining entries outward (start by sliding into the left-most hole); fill the vacated boxes with 0; increase all entries by 1.
\end{remark}

\begin{ex}
\label{ex_pr}
If $T$ is the tableau in Example \ref{ex_cryst_ops}, then
\[
\tsi_2(T) = \ytableaushort{111222233,2233}
\]
and if $n=3$, we have
\[
\tw{\pr}(T) = \tsi_1(\tsi_2(T)) = \ytableaushort{111111233,2233} \,.
\]
\end{ex}

\subsubsection{Affine crystal structure on rectangular tableaux}

For $k \in [n-1]$ and $L \geq 0$, define $B^{k,L} := B(L^k)$, the set of SSYTs (with entries in $[n]$) whose shape is the $k \times L$ rectangle. (By convention, $B^{k,0}$ consists of a single ``empty tableau.'') The type $A_{n-1}$ crystal structure on $B^{k,L}$ can be extended to a type $A_{n-1}^{(1)}$ crystal structure. This affine crystal corresponds to a {\em Kirillov-Reshetikhin module}, a finite-dimensional representation of the quantum affine algebra of type $A_{n-1}^{(1)}$. The maps associated to $i=0$ can be computed using promotion.

\begin{defn}
On $B^{k,L}$, define
\begin{align*}
\tep_0 &= \tep_1 \circ \tw{\pr} & \tph_0 &= \tph_1 \circ \tw{\pr} \\
\tw{e}_0 &= \tw{\pr}^{-1} \circ \tw{e}_1 \circ \tw{\pr} & \tw{f}_0 &= \tw{\pr}^{-1} \circ \tw{f}_1 \circ \tw{\pr}
\end{align*}
where we set $\tw{e}_0(T) = 0$ if $\tw{e}_1 \circ \tw{\pr}(T) = 0$ (equivalently, if $\tep_0(T) = 0$), and $\tw{f}_0(T) = 0$ if $\tw{f}_1 \circ \tw{\pr}(T) = 0$ (equivalently, if $\tph_0(T) = 0$).
\end{defn}

\begin{prop}
\label{prop_cryst_ops_pr}
We have the following identities of maps on $B^{k,L}$:
\begin{enumerate}
\item $\tpr^n = \Id$;
\item $\tw{\gamma} \circ \tpr = \tsh \circ \, \tw{\gamma}$, where $\tsh(a_1, \ldots, a_n) = (a_n, a_1, \ldots, a_{n-1})$;
\item $\tep_i = \tep_{i+1} \circ \tpr$ and $\tph_i = \tph_{i+1} \circ \tpr$ for $i \in \Zn$;
\item $\te_i = \tpr^{-1} \circ \, \te_{i+1} \circ \tpr$ and $\tf_i = \tpr^{-1} \circ \tf_{i+1} \circ \tpr$ for $i \in \Zn$.
\end{enumerate}
\end{prop}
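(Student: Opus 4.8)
The plan is to prove the four identities using the jeu-de-taquin description of promotion recalled in the Remark above---remove the $n$'s, slide the remaining entries outward, fill the vacated boxes with $0$, and add $1$ to every entry---together with the classical fact that jeu-de-taquin slides are \emph{coplactic}, i.e.\ they commute with the crystal operators and preserve $\tep_j,\tph_j$. Statement (2) falls out of this description at once: deletion and sliding preserve the multiset of the remaining entries, so just before the final step there are $a_j$ copies of $j$ for $1\le j\le n-1$ together with $a_n$ empty boxes, and adding $1$ turns the former holes into $1$'s and each count $a_j$ into the number of $(j+1)$'s. Hence $\tg(\tpr(T))=(a_n,a_1,\ldots,a_{n-1})=\tsh(\tg(T))$.

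Next I would treat the ``interior'' cases of (3) and (4), namely $\tep_i=\tep_{i+1}\circ\tpr$, $\tph_i=\tph_{i+1}\circ\tpr$, $\te_i=\tpr^{-1}\circ\te_{i+1}\circ\tpr$, and $\tf_i=\tpr^{-1}\circ\tf_{i+1}\circ\tpr$ for $1\le i\le n-2$. The point is that the crystal data $\tep_i,\tph_i,\te_i,\tf_i$ depend only on the subword of the reading word in the letters $\{i,i+1\}$, and that promotion transforms this subword in four harmless steps: deleting the $n$'s removes a strictly larger letter (leaving the $\{i,i+1\}$-subsequence intact), the subsequent slide is coplactic, filling with $0$'s introduces only a strictly smaller letter, and the final $+1$ relabels $\{i,i+1\}$ to $\{i+1,i+2\}$. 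The extreme letters $n$ and $1$ that are actually disturbed are disjoint from $\{i,i+1\}$ and from $\{i+1,i+2\}$ precisely because $i\le n-2$, so promotion acts on the relevant crystal data exactly by the index shift $i\mapsto i+1$, giving the interior identities. The $i=0$ cases of (3) and (4) hold by the very definition of $\tep_0,\tph_0,\te_0,\tf_0$.

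The remaining input---and the main obstacle---is (1), the statement $\tpr^{n}=\Id$ on rectangular tableaux. This is a classical theorem (going back to Sch\"{u}tzenberger; see also \cite{Rho}) whose proof genuinely uses rectangularity and fails for general shapes; in the present paper it will also follow a posteriori from Theorem~\ref{thm_trop_PR}, but that route is not yet available at this point. Granting $\tpr^{n}=\Id$, the wrap-around case $i=n-1$ of (3) and (4) follows by chaining the interior identities: iterating $\tep_i=\tep_{i+1}\circ\tpr$ gives $\tep_1=\tep_{n-1}\circ\tpr^{n-2}$, whence $\tep_{n-1}=\tep_1\circ\tpr^{-(n-2)}=\tep_1\circ\tpr^{2}=\tep_0\circ\tpr$, where the middle equality uses $\tpr^{n}=\Id$ and the last uses $\tep_0=\tep_1\circ\tpr$. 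The identical bookkeeping closes the $i=n-1$ cases for $\tph$, $\te$, and $\tf$, completing all four parts.
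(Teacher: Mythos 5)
Your argument is correct, but it takes a genuinely different route from the paper, which offers no proof at all: there, part (1) is dismissed as well-known (citing \cite{Shim}, \cite{Rho}), part (2) as clear, and parts (3) and (4) are attributed to Shimozono \cite[\S 3.3]{Shim}. You instead reprove (2)--(4) from the jeu-de-taquin description of promotion (licensed by the paper's remark that the two definitions agree) together with the classical coplacticity of jdt slides: deleting the $n$'s, sliding, filling with $0$'s, and globally adding $1$ each interact trivially or coplactically with the $\{i,i+1\}$-crystal data when $1 \le i \le n-2$, with the final $+1$ performing exactly the relabeling $i \mapsto i+1$; the $i=0$ cases are literally the definitions of $\tep_0, \tph_0, \te_0, \tf_0$; and your chaining $\tep_{n-1} = \tep_1 \circ \tpr^{-(n-2)} = \tep_1 \circ \tpr^{2} = \tep_0 \circ \tpr$ (with its analogues for $\tph, \te, \tf$) correctly closes the wrap-around case $i = n-1$ using (1). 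What your route buys is transparency: it reduces the entire proposition to two classical inputs ($\tpr^n = \Id$ on rectangles, and coplacticity), it isolates precisely where rectangularity enters (only through (1), via the wrap-around case --- as you observe, the interior identities hold on arbitrary shapes), and the $\tep$-identities automatically force the definedness of the partial operators $\te_i, \tf_i$ to match on both sides of (4). Two small points are worth making explicit in a final write-up: the commutation of $\te_i$ with the slide phase uses that $T$ and $\te_i(T)$ carry their $n$'s in the same cells (true precisely because $i \le n-2$), so the identical sequence of slides is performed on both; and coplacticity is itself a nontrivial classical theorem, so your proof is self-contained only modulo citations of the same caliber as those the paper relies on --- a fair trade, since your version makes the mechanism of the index shift visible rather than outsourcing it.
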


Part (1) is well-known (see, e.g., \cite{Shim}, \cite{Rho}). Part (2) is clear. Parts (3) and (4) are due to Shimozono \cite[\S 3.3]{Shim}.

\subsection{Piecewise-linear translation}
\label{sec_GT}

We now translate the combinatorial maps on tableaux from the previous section into piecewise-linear maps on arrays of integers subject to certain inequalities. In general, we will use the same notation for a combinatorial map and its piecewise-linear translation, and we'll rely on context to determine which is meant.

\subsubsection{Gelfand-Tsetlin patterns}

A {\em Gelfand-Tsetlin pattern} (GT pattern) is a triangular array of nonnegative integers $(A_{ij})_{1 \leq i \leq j \leq n}$ satisfying the inequalities
\begin{equation}
\label{eq_GT}
A_{i,j+1} \geq A_{ij} \geq A_{i+1,j+1}
\end{equation}
for $1 \leq i \leq j \leq n-1$. Gelfand-Tsetlin patterns can be represented pictorially as triangular arrays, where the $j$th row in the triangle lists the numbers $A_{ij}$ for $i \leq j$. For example, if $n = 3$, then a Gelfand-Tsetlin pattern looks like:
\[
\begin{array}{ccccccccccc}
&&& A_{11} & \\
&& A_{12} && A_{22}\\
&A_{13} && A_{23} && A_{33} \\
\end{array}
\]

There is a simple bijection between Gelfand-Tsetlin patterns and SSYTs with entries in $[n]$. Given a Gelfand-Tsetlin pattern $(A_{ij})$, the associated tableau $T$ is described as follows: the number of $j$'s in the $i$th row of $T$ is $A_{ij} - A_{i,j-1}$ (we use the convention that $A_{i,i-1} = 0$). Equivalently, the $j$th row of the pattern is the shape of $T_{\leq j}$, the part of $T$ obtained by removing numbers larger than $j$. In particular, the $n$th row of the pattern is the shape of $T$. Here is an example of a Gelfand-Tsetlin pattern, and the corresponding SSYT.
\begin{equation}
\label{eq_GT}
\begin{array}{ccccccccccc}
&&&& 2 & \\
&&& 4 && 2\\
&&6 && 3 && 1 \\
&6 && 6 && 1 && 0 \\
6 && 6 && 6 && 0 && 0
\end{array}
\quad\quad
\longleftrightarrow \quad\quad
\ytableaushort{{1}{1}{2}{2}{3}{3}, {2}{2}{3}{4}{4}{4}, {3}{5}{5}{5}{5}{5}}
\end{equation}

Many operations on tableaux are given by piecewise-linear formulas in the entries of the corresponding Gelfand-Tsetlin pattern. In general, if $\tw{f}$ is a function on tableaux, then we write $\tw{f}(A_{ij}) := \tw{f}(T)$, where $T$ is the tableau corresponding to $(A_{ij})$. If $\tw{f}$ is a map from one tableau to another, we write $\tw{f}(A_{ij}) = (A'_{ij})$, where $(A'_{ij})$ corresponds to $\tw{f}(T)$. Here is a simple example.

%\begin{ex}
%\label{ex_wt}
%The weight function is given by $\tw{\gamma}(A_{ij}) = (\gamma_1, \ldots, \gamma_n)$, where $$\gamma_j = A_{jj} + \sum_{i = 1}^{j-1} (A_{ij} - A_{i,j-1}).$$
%\end{ex}

\begin{ex}
\label{ex_e1}
Here we describe how the maps $\tep_1, \tph_1, \te_1,$ and $\tf_1$ act on Gelfand-Tsetlin patterns. Let $(A_{ij})$ be a Gelfand-Tsetlin pattern with corresponding tableau $T$. When we apply the algorithm in the previous section, all 2's in the second row of $T$ ``cancel'' with 1's in the first row, so we have $$w' = 1^{A_{11} - A_{22}} \quad 2^{A_{12} - A_{11}}.$$ Thus, $\tep_1(A_{ij}) = A_{12} - A_{11}$, and when $\tep_1(A_{ij}) > 0$, $\te_1(A_{ij})$ is obtained by increasing $A_{11}$ by 1, and leaving the other entries unchanged. Similarly, $\tph_1(A_{ij}) = A_{11} - A_{22}$, and $\tf_1(A_{ij})$ is obtained by decreasing $A_{11}$ by 1 (if the result is still a GT pattern).
\end{ex}

There is also a piecewise-linear formula for the Bender-Knuth involutions.

\begin{lem}[Kirillov-Berenstein \cite{KirBer}]
\label{lem_PLBK}
Let $(A_{ij})$ be a Gelfand-Tsetlin pattern. For $r \in [n-1]$, we have $\tsi_r(A_{ij}) = (A'_{ij})$, where
\begin{equation}
\label{eq_PLBK}
A'_{ij} = \begin{cases}
\min(A_{i-1,r-1}, A_{i,r+1}) + \max(A_{i,r-1}, A_{i+1,r+1}) - A_{ir} & \text{ if } j = r \\
A_{ij} &\text{ if } j \neq r
\end{cases} 
\end{equation}
and we use the convention that $A_{0,j} = \infty$ and $A_{i,i-1} = 0$.
\end{lem}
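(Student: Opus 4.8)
The plan is to prove the piecewise-linear formula for $\tsi_r$ by translating the combinatorial description of the Bender-Knuth involution directly into the language of Gelfand-Tsetlin patterns, row by row. Recall from the definition of $\tsi_r$ that the involution acts independently on each row $i$ of the tableau $T$, replacing a consecutive subword $r^\alpha \, (r+1)^\beta$ with $r^\beta \, (r+1)^\alpha$, where $\alpha$ counts the $r$'s not above an $(r+1)$ and $\beta$ counts the $(r+1)$'s not below an $r$. Since only the counts of $r$'s and $(r+1)$'s in each row are altered, and these are exactly the quantities controlling the entries $A_{ir}$ (via the bijection in which the number of $j$'s in row $i$ equals $A_{ij} - A_{i,j-1}$), I would first observe that $\tsi_r$ changes only the $r$th row of the pattern, which immediately accounts for the case $j \neq r$ in \eqref{eq_PLBK}.

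First I would fix a row $i$ and extract the relevant data. The number of $r$'s in row $i$ of $T$ is $A_{ir} - A_{i,r-1}$, and the number of $(r+1)$'s in row $i$ is $A_{i,r+1} - A_{ir}$. To find $\alpha$ and $\beta$ I need to account for the vertical adjacency conditions: an $r$ in row $i$ sits above an $(r+1)$ in row $i+1$ precisely when the columns overlap, which translates into a comparison between $A_{ir}$ and the neighboring entries $A_{i+1,r+1}$ and $A_{i-1,r-1}$. Working out these column-counting conditions carefully gives expressions for $\alpha$ and $\beta$ in terms of the pattern entries; the key point is that the ``free'' $r$'s and $(r+1)$'s — those not locked by vertical adjacency — are governed by the differences $A_{ir} - \max(A_{i,r-1}, A_{i+1,r+1})$ and $\min(A_{i-1,r-1}, A_{i,r+1}) - A_{ir}$, respectively, once the conventions $A_{0,j} = \infty$ and $A_{i,i-1} = 0$ are in force.

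Next I would compute the effect of the swap. After replacing $r^\alpha (r+1)^\beta$ by $r^\beta (r+1)^\alpha$, the new number of $r$'s in row $i$ determines the new entry $A'_{ir}$ through $A'_{ir} - A_{i,r-1} = (\text{new count of } r\text{'s in row } i)$. Substituting the expressions for $\alpha$ and $\beta$ and simplifying, the boundary contributions telescope and one arrives at
\[
A'_{ir} = \min(A_{i-1,r-1}, A_{i,r+1}) + \max(A_{i,r-1}, A_{i+1,r+1}) - A_{ir},
\]
which is precisely \eqref{eq_PLBK}. I would verify separately that the output $(A'_{ij})$ still satisfies the GT inequalities \eqref{eq_GT}, i.e., that $\tsi_r$ indeed maps patterns to patterns, and that applying the formula twice recovers the identity, consistent with $\tsi_r$ being an involution.

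The main obstacle is the careful bookkeeping in the second step: correctly identifying which $r$'s and $(r+1)$'s are ``blocked'' by vertical neighbors and thus excluded from the swappable subword $r^\alpha (r+1)^\beta$. This is where the $\min$ and $\max$ (and the conventions $A_{0,j} = \infty$, $A_{i,i-1} = 0$, which handle the top and left boundaries of the pattern) enter, and it is easy to miscount at the boundary rows $i = 1$ and $i = r$. Handling these edge cases — and confirming that the conventions produce the right degenerate behavior when a row has no $r$'s or no $(r+1)$'s available to swap — is the delicate part; the rest is a direct, if slightly tedious, translation.
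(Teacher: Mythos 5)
The paper offers no proof of this lemma to compare against: it is quoted from Kirillov--Berenstein \cite{KirBer} as known. Judged on its own, your argument is correct, and it is the standard direct derivation. The one place where your writeup waves its hands (``working out these column-counting conditions carefully'') is exactly the substance, so let me confirm the counts you assert. In row $i$ of $T$, the $r$'s occupy columns $A_{i,r-1}+1,\dots,A_{ir}$ and the $(r+1)$'s occupy columns $A_{ir}+1,\dots,A_{i,r+1}$, while the $(r+1)$'s of row $i+1$ occupy columns $A_{i+1,r}+1,\dots,A_{i+1,r+1}$. Since the GT inequalities give $A_{i+1,r}\le A_{i,r-1}$ and $A_{i+1,r+1}\le A_{ir}$, the blocked $r$'s in row $i$ are those in columns $A_{i,r-1}+1,\dots,\max(A_{i,r-1},A_{i+1,r+1})$, whence $\alpha = A_{ir}-\max(A_{i,r-1},A_{i+1,r+1})$; dually, $A_{i-1,r}\ge A_{i,r+1}$ and $A_{i-1,r-1}\ge A_{ir}$ give $\beta=\min(A_{i-1,r-1},A_{i,r+1})-A_{ir}$, and these interval descriptions also show the free entries form the consecutive subword $r^\alpha(r+1)^\beta$ straddling column $A_{ir}$, as the definition of $\tsi_r$ requires. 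After the swap the number of $r$'s in row $i$ is $\bigl(\max(A_{i,r-1},A_{i+1,r+1})-A_{i,r-1}\bigr)+\beta$, and adding $A_{i,r-1}$ yields \eqref{eq_PLBK}; the conventions $A_{0,j}=\infty$ and $A_{i,i-1}=0$ do precisely the boundary work you describe (for $i=1$ all $(r+1)$'s are free; for $i=r$ the $r$'s begin in column $1$). Two checks you defer can be discharged in one line each rather than ``verified separately'': the GT inequalities constraining $A_{ir}$, given all other entries, are exactly $\max(A_{i,r-1},A_{i+1,r+1})\le A_{ir}\le \min(A_{i-1,r-1},A_{i,r+1})$ (entries within row $r$ are not directly compared, and only row $r$ changes), so the reflection $A'_{ir}=m_i+M_i-A_{ir}$ with $m_i=\max(A_{i,r-1},A_{i+1,r+1})$ and $M_i=\min(A_{i-1,r-1},A_{i,r+1})$ stays in the same interval, and since $m_i,M_i$ are untouched by the move, applying the formula twice returns $A_{ir}$, giving involutivity for free.
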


Note that $\tsi_r$ changes only the $r$th row of the Gelfand-Tsetlin pattern, and for each $i$, $\sigma_r(A_{ir})$ depends only on $A_{ir}$ and the four entries diagonally adjacent to $A_{ir}$ in the Gelfand-Tsetlin pattern (some of which may be ``missing'' if $A_{ir}$ is on the upper boundary of the triangle):
\[
\begin{array}{ccccccccccc}
 A_{i-1,r-1} && A_{i,r-1}\\
& A_{ir}\\
A_{i,r+1} && A_{i+1,r+1} \\
\end{array}.
\]

Combining Lemma \ref{lem_PLBK} and Definition \ref{defn_pr_S}, we obtain recursive piecewise-linear descriptions of $\tpr$ and $\tS$.

\subsubsection{$k$-rectangles}
\label{sec_k_rectangles}

Gelfand-Tsetlin patterns parametrize SSYTs of arbitrary shape. Here we consider the restriction of this parametrization to the case of rectangular tableaux.

For $k \in [n-1]$, set
\begin{equation}
\label{eq_R_k}
R_k = \{(i,j) \: | \: 1 \leq i \leq k, \: i \leq j \leq i+n-k-1\},
\end{equation}
and define $\tw{\bT}_k = \mathbb{Z}^{R_k} \times \mathbb{Z} \cong \mathbb{Z}^{k(n-k)+1}$. We will denote a point of $\tw{\bT}_k$ by $b = (B_{ij},L)$, where $(i,j)$ runs over $R_k$.

Given $(B_{ij}, L) \in \tw{\bT}_k$, define a triangular array $(A_{ij})_{1 \leq i \leq j \leq n}$ by
\[
A_{ij} = \begin{cases}
B_{ij} & \text{ if } (i,j) \in R_k \\
L & \text{ if } j > i+n-k-1 \\
0 & \text{ if } j < i.
\end{cases}
\]
\begin{defn}
\label{defn_k_rect}
Define $B^k$ to be the set of $(B_{ij},L) \in \tw{\bT}_k$ such that $(A_{ij})$ is a Gelfand-Tsetlin pattern. We call an element of $B^k$ a {\em $k$-rectangle}, and we say that $(A_{ij})$ is the {\em associated Gelfand-Tsetlin pattern}.
\end{defn}

For example, if $n = 5$ and $k = 3$, then we may pictorially represent a 3-rectangle $(B_{ij},L)$ and its associated GT pattern as follows:
\begin{equation}
\label{eq_rect_GT}
\left(
\begin{array}{ccccccccccc}
& B_{11} & \\
B_{12} && B_{22}\\
& B_{23} && B_{33} \\
&& B_{34}
\end{array}
\; , \quad L \right)
\quad\quad \longleftrightarrow
\begin{array}{ccccccccccc}
&&&& B_{11} & \\
&&& B_{12} && B_{22}\\
&&L && B_{23} && B_{33} \\
&L && L && B_{34} && 0 \\
L && L && L && 0 && 0
\end{array}
\end{equation}

As \eqref{eq_GT} and \eqref{eq_rect_GT} illustrate, the bijection between GT patterns and SSYTs restricts to a bijection between $k$-rectangles and rectangular SSYTs with $k$ rows, with the parameter $L$ giving the number of columns in the tableau. Thus, we identify
\[
B^k = \bigsqcup_{L=0}^\infty B^{k,L}.
\]
We view the integers $(B_{ij},L)$ as coordinates on the set of $k$-row rectangular SSYTs. Sometimes it will be more convenient to work with the following alternative set of coordinates. For $1 \leq i \leq k$ and $i \leq j \leq i+n-k$, define
\begin{equation}
\label{eq_row_coords}
b_{ij} = B_{ij} - B_{i,j-1},
\end{equation}
where we use the convention that $B_{i,i-1} = 0$ and $B_{i,i+n-k} = L$ for all $i$. Thus, $b_{ij}$ is the number of $j$'s in the $i$th row of the $k$-row rectangular SSYT corresponding to $b = (B_{ij},L)$.

\subsubsection{Symmetries of $k$-rectangles}
\label{sec_comb_symmetries}

Throughout this section, fix $k \in [n-1]$ and $L \geq 0$.

\begin{defn}
Define {\em rotation} $\trot : B^k \rightarrow B^k$ by $\trot(B_{ij},L) = (B'_{ij},L)$, where
\[
B'_{ij} = L - B_{k-i+1,n-j}.
\]
Define {\em reflection} $\trefl : B^k \rightarrow B^{n-k}$ by $\trot(B_{ij},L) = (B''_{ij},L)$, where
\[
B''_{ij} = L - B_{j-i+1,j}.
\]
The first map rotates the rectangular Gelfand-Tsetlin pattern 180 degrees, and then replaces each entry $a$ with $L - a$; the second map reflects the rectangular Gelfand-Tsetlin pattern over a vertical axis, and then replaces each entry $a$ with $L - a$.
\end{defn}

The operations $\trot$ and $\trefl$ have simple effects on the corresponding rectangular tableaux.

\begin{lem}
\label{lem_symms_tab}
Suppose $b = (B_{ij},L) \in B^k$ and let $T,U,V$ be the SSYTs corresponding to $b, \trot(b), \trefl(b)$, respectively. Then
\begin{enumerate}
\item $U$ is obtained by rotating $T$ 180 degrees and replacing each entry $i$ with $n-i+1$.
\item $V$ is obtained by replacing each column of $T$ with the complement in $[n]$ of the entries in that column (arranged in increasing order), and then reversing the order of the columns.
\end{enumerate}
\end{lem}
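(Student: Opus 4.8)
The plan is to verify both claims by comparing the Gelfand--Tsetlin patterns of the two sides, exploiting that the bijection of \S\ref{sec_GT} records precisely the number of $j$'s in row $i$ of an SSYT as $A_{ij}-A_{i,j-1}$; thus two rectangular SSYTs agree if and only if their full GT patterns agree. Throughout I would write $A_{ij}$ for the GT pattern attached to $b$, so that $A_{ij}$ is the number of entries $\le j$ in row $i$ of $T$, with $A_{ij}=B_{ij}$ on $R_k$ and boundary values $A_{ij}=L$ for $j>i+n-k-1$ and $A_{ij}=0$ for $j<i$.

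For Part (1), let $U_0$ denote the array obtained from $T$ by rotating $180^\circ$ and replacing each entry $x$ by $n+1-x$, so that $U_0(i,c)=n+1-T(k-i+1,L-c+1)$. A short direct check shows $U_0$ has weakly increasing rows and strictly increasing columns (complementation reverses both inequalities of $T$, and this reversal is undone by the coordinate reversal of the rotation), so $U_0$ is a genuine SSYT of shape $L^k$. Reading off contents, row $i$ of $U_0$ is the reverse-complement of row $k-i+1$ of $T$, whence the number of $j$'s in row $i$ of $U_0$ equals the number of $(n+1-j)$'s in row $k-i+1$ of $T$. Summing over the values $\le j$ converts this into $A^{U_0}_{ij}=L-A_{k-i+1,\,n-j}$, which is exactly the defining formula $B'_{ij}=L-B_{k-i+1,\,n-j}$ of $\trot(b)$; the boundary conventions match automatically since $U_0$ has rectangular shape $L^k$. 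Hence $U_0=U$, proving (1).

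For Part (2), let $S_c\subseteq[n]$ be the set of entries of column $c$ of $T$, let $W$ have $c$th column equal to the increasing arrangement of $[n]\setminus S_c$, and let $V_0$ reverse the column order of $W$. The crucial monotonicity input is that, since the rows of $T$ are weakly increasing, the count $c_j(d):=|S_d\cap[j]|$ of entries $\le j$ in column $d$ of $T$ is weakly \emph{decreasing} in $d$. From this I would deduce that for each $j$ the column heights of $(V_0)_{\le j}$, namely $j-c_j(L-c+1)$, are weakly decreasing in $c$; as the columns of $V_0$ are increasing by construction, this shows $(V_0)_{\le j}$ is a Young diagram for every $j$, so $V_0$ is a valid SSYT of shape $L^{\,n-k}$.

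It then remains to compute the GT pattern of $V_0$ by counting. The entry $A^{V_0}_{i'j}$ is the number of columns of height $\ge i'$ in $(V_0)_{\le j}$, which after the substitution $d=L-c+1$ equals $\#\{d\in[L]:c_j(d)\le j-i'\}=L-\#\{d:c_j(d)\ge j-i'+1\}$. Recognizing $d\mapsto c_j(d)$ as the conjugate of the partition $(A_{1j},\ldots,A_{kj})$ and applying the double-conjugate identity $\#\{d:c_j(d)\ge m\}=A_{mj}$, this becomes $A^{V_0}_{i'j}=L-A_{j-i'+1,\,j}$, matching the defining formula $B''_{i'j}=L-B_{j-i'+1,\,j}$ of $\trefl(b)$ (one checks $(j-i'+1,j)\in R_k$ for $(i',j)\in R_{n-k}$, so $A$ equals $B$ there). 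Hence $V_0=V$. I expect Part (2) to be the main obstacle: whereas Part (1) is essentially a direct symmetry check, Part (2) requires both the well-definedness argument—that column complementation followed by column reversal lands in $B^{n-k}$—and the conjugate-partition bookkeeping that converts the naturally column-indexed complementation into the row-indexed formula defining $\trefl$.
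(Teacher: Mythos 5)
Your proof is correct, and for part (2) it takes a genuinely different route from the paper. The paper reduces to the case $L=1$: it identifies a single column with a subset $S\subset[n]$ and a partition $\lp$ in the $k\times(n-k)$ rectangle, shows via the lattice path $p_\lp$ (whose vertical steps sit at the elements of $S$) that $\trefl$ swaps vertical and horizontal steps and hence sends $S$ to $[n]\setminus S$, then handles general $L$ by observing that a $k$-rectangle is the entry-wise sum of the rectangles of the individual columns of $T$ and that $\trefl$ respects this additivity; semistandardness of the complemented-and-reversed array is proved separately through the chain of equivalences $S \preccurlyeq S' \Leftrightarrow \lp \supseteq \lp' \Leftrightarrow \tw{\lp}' \supseteq \tw{\lp} \Leftrightarrow [n]\setminus S' \preccurlyeq [n]\setminus S$. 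You instead avoid any column decomposition and compute the Gelfand--Tsetlin pattern of the candidate tableau $V_0$ directly for general $L$, using the counts $c_j(d)=|S_d\cap[j]|$, the identification of $d\mapsto c_j(d)$ with the conjugate of the partition $(A_{1j},\ldots,A_{kj})$, and the double-conjugation identity to land on $A^{V_0}_{i'j}=L-A_{j-i'+1,\,j}$; your semistandardness argument likewise comes from monotonicity of $c_j(d)$ in $d$ together with the standard criterion that a column-strict filling is semistandard iff each $(V_0)_{\le j}$ is a Young diagram. The two arguments share the same pivot---partition conjugation, which is exactly the paper's vertical/horizontal step swap in disguise---but your version buys uniformity (well-definedness of $\trefl(b)\in B^{n-k}$ and the defining formula drop out of a single counting computation, with no additivity lemma), while the paper's buys a more transparent $L=1$ picture and a conceptually crisp order-reversal proof of semistandardness. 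One small point of care in your write-up: the step ``$(V_0)_{\le j}$ is a Young diagram for every $j$, so $V_0$ is a valid SSYT'' deserves the one-line justification (if $V_0(i,c+1)<V_0(i,c)$, then taking $j=V_0(i,c+1)$ makes column $c+1$ strictly taller than column $c$ in $(V_0)_{\le j}$), but this is standard and does not affect correctness.
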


\begin{proof}
%First we prove (1). Set $(B'_{ij},L) = \trot(B_{ij},L)$, and let $U'$ be the SSYT obtained by rotating $T$ 180 degrees and replacing each entry $i$ with $n-i+1$. Let $T_{ij}, U'_{ij}$ be the number of $j$'s in the $i$th row of $T,U'$, respectively. Clearly we have
%\[
%U'_{ij} = T_{k-i+1,n-j+1}.
%\]
%Fix $i \in [k]$, and $j \in [i,i+n-k]$. Using the convention of Definition \ref{defn_row_coords}, the number of $j$'s in the $i$th row of $U$ is given by
%\[
%U_{ij} = B'_{ij} - B'_{i,j-1} = B_{k-i+1,n-j+1} - B_{k-i+1,n-j} = T_{k-i+1,n-j+1} = U'_{ij}
%\]
%so $U = U'$, as claimed.
Part (1) is straightforward. To prove (2), first consider the case $L = 1$. In this case, the tableau corresponding to $b$ is a single column of length $k$, or in other words, a subset $S = \{s_1 < \cdots < s_k\} \subset [n]$. We must show that if $b$ corresponds to $S$, then $\trefl(b)$ corresponds to $[n] \setminus S$.

Identify the $k$-rectangle $b = (B_{ij},1)$ with a partition $\lp$ inside the $k \times (n-k)$ rectangle by setting $\lp_i = | \{j \, | \, B_{ij} = 1\} |$ for $i = 1, \ldots, k$. The entries $s_1 < \cdots < s_k$ of the corresponding tableau are related to $\lp$ by
\[
\lp_i = i+n-k-s_i.
\]
Equivalently, $s_i$ is the position of the $i^{th}$ vertical step in $p_\lp$, the lattice path from the top-right corner of the $k \times (n-k)$ rectangle to the bottom-left corner which traces out the lower boundary of the Young diagram of $\lp$.

Now identify the $(n-k)$-rectangle $\tw{\refl}(b)$ with a partition $\tw{\lp}$ inside the $(n-k) \times k$ rectangle in the same manner. From the definition of $\tw{\refl}$, one sees that the positions of the vertical steps in $p_\lp$ are precisely the positions of the horizontal steps in $p_{\tw{\lp}}$, so $\tw{\lp}$ corresponds to the $(n-k)$-subset $[n] \setminus S$, as claimed. (See Figure \ref{fig_refl} for an example.)

\begin{figure}

\begin{center}
\begin{tikzpicture}[scale=0.5]

\draw (1.4,0) node {$b =$};

\draw (4.4,2) node {1};
\draw (5.2,1) node {0};
\draw (6,0) node {0};
\draw (3.6,1) node {1};
\draw (2.8,0) node {1};
\draw (4.4,0) node {1};
\draw (3.6,-1) node {1};
\draw (5.2,-1) node {1};
\draw (4.4,-2) node {1};
\draw (6.8,-1) node {0};
\draw (6,-2) node {0};
\draw (5.2,-3) node {1};

\pgfmathtruncatemacro{\x}{12};

\draw (12,0) node {$\trefl(b) =$};

\draw (4.4+\x,2) node {0};
\draw (5.2+\x,1) node {0};
\draw (6+\x,0) node {0};
\draw (3.6+\x,1) node {1};
\draw (2.8+\x,0) node {1};
\draw (4.4+\x,0) node {0};
\draw (3.6+\x,-1) node {0};
\draw (5.2+\x,-1) node {0};
\draw (4.4+\x,-2) node {0};
\draw (2+\x,-1) node {1};
\draw (2.8+\x,-2) node {1};
\draw (3.6+\x,-3) node {0};

\end{tikzpicture}
\end{center}

\caption{An example of $\trefl$ in the $L = 1$ case (with $n=7, k=4$). Here $b$ corresponds to the partition $(3,2,2,1)$ and the subset $\{1,3,4,6\}$; $\trefl(b)$ corresponds to $(3,1,0)$ and $\{2,5,7\}$.}
\label{fig_refl}
\end{figure}

Now suppose $L > 1$. The rectangle $b$ is equal to the entry-wise sum of the rectangles corresponding to the individual columns of $T$, and the same is true of $\trefl(b)$ and its corresponding tableau $V$. Let $V'$ be the array obtained by replacing each column of $T$ by its complement in $[n]$, and reversing the order of the columns. Using the $L = 1$ case, we see that $\trefl(b)$ is also equal to the entry-wise sum of the rectangles corresponding to the individual columns of $V'$. To conclude that $V = V'$, it remains to show that $V'$ is semistandard, i.e., that its rows are weakly increasing.

Let $S$ and $S'$ be the subsets of entries in two consecutive columns of $V'$. The condition for $V'$ to be semistandard is that $s_i \leq s'_i$ for $i = 1, \ldots, n-k$. If this condition holds, write $S \preccurlyeq S'$. Let $\lp, \lp'$ be the partitions associated to $S,S'$, respectively. From the proof of the $L = 1$ case, one sees that
\[
S \preccurlyeq S' \; \Longleftrightarrow \; \lp \supseteq \lp' \; \Longleftrightarrow \; \tw{\lp}' \supseteq \tw{\lp} \; \Longleftrightarrow \; [n] \setminus S' \preccurlyeq [n] \setminus S,
\]
where $\subseteq$ denotes inclusion of Young diagrams. Thus, $V'$ is semistandard because $T$ is semistandard.
\end{proof}

%\begin{ex}
%Let $n = 5$, and consider the following $3$-rectangle and corresponding tableau.
%\begin{equation*}
%\begin{array}{ccccccccccc}
%&& 2 \\
%& 4 && 2\\
%6 && 3 && 1 \\
%&&& 1 
%\end{array}
%\quad\quad
%\longleftrightarrow \quad\quad
%\ytableaushort{{1}{1}{2}{2}{3}{3}, {2}{2}{3}{4}{4}{4}, {3}{5}{5}{5}{5}{5}}
%\end{equation*}
%The reflection of this rectangle and its corresponding tableau are:
%\begin{equation*}
%\begin{array}{ccccc}
%&&& 4 \\
%&& 4 && 2\\
%& 5 && 3 \\
%6 && 5
%\end{array}
%\quad\quad
%\longleftrightarrow \quad\quad
%\ytableaushort{111134, 223445}
%\end{equation*}
%\end{ex}

\begin{prop}
\label{prop_symms_cryst_ops}
We have the following identities of maps on rectangular tableaux:
\begin{enumerate}
\item $\trot = \tS$ (the Sch\"{u}tzenberger involution);
\item $\te_i = \trot \circ \tf_{n-i} \circ \trot$ and $\tf_i = \trot \circ \, \te_{n-i} \circ \trot$ for $i \in \Zn$;
\item $\te_i = \trefl \circ \tf_i \circ \trefl$ and $\tf_i = \trefl \circ \, \te_i \circ \trefl$ for $i \in \Zn$.
\end{enumerate}
\end{prop}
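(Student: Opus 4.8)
The plan is to reduce everything to the combinatorics of reading words and Bender--Knuth involutions, treating the classical indices $i\in[n-1]$ first and then bootstrapping the affine index $i=0$ by conjugating with promotion. Throughout I use that $\trot$ and $\trefl$ are involutions, which is immediate from their defining formulas (for $\trot$, the double index reversal gives $L-(L-B_{ij})=B_{ij}$). For part (2) with $i\in[n-1]$, I would use Lemma \ref{lem_symms_tab}(1): since $\trot$ rotates $T$ by $180^\circ$ and applies $j\mapsto n+1-j$, its reading word is $(n+1-w_N)\cdots(n+1-w_1)$ when $T$ has reading word $w=w_1\cdots w_N$. This sends $i\mapsto(n-i)+1$ and $i+1\mapsto n-i$ and reverses the word, so the $\{i,i+1\}$-bracketing of $T$ becomes the $\{n-i,n-i+1\}$-bracketing of $\trot(T)$ with the two letters interchanged: matched pairs are preserved, $\tep_{n-i}\circ\trot=\tph_i$ and $\tph_{n-i}\circ\trot=\tep_i$, and the letter moved by $\te_{n-i}$ on $\trot(T)$ is the image of the letter moved by $\tf_i$ on $T$. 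This yields $\te_{n-i}\circ\trot=\trot\circ\tf_i$ and its mirror, i.e. part (2) for $i\in[n-1]$. Part (1) then follows from the characterization of $\tS$ as the unique involution of the connected crystal $B^{k,L}$ intertwining $\te_i\leftrightarrow\tf_{n-i}$ and sending the highest weight element to the lowest: part (2) supplies the intertwining, and $\trot$ visibly carries the tableau with $i$'s in row $i$ to the one with $(n-k+i)$'s in row $i$, which is the lowest weight element. (Equivalently, one may cite the classical fact that evacuation of a rectangular tableau is rotation-with-complementation.)

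For part (3) with $i\in[n-1]$ I would argue analogously from Lemma \ref{lem_symms_tab}(2): $\trefl$ complements each column ($S\mapsto[n]\setminus S$) and reverses the column order. Complementation leaves the columns of type ``both'' and ``neither'' inert in the $\{i,i+1\}$-bracketing while exchanging the ``$i$-only'' and ``$(i+1)$-only'' columns, so it swaps the symbols $i\leftrightarrow i+1$ in the relevant signature word, and the column reversal reverses that word. Since the crystal operators are independent of the chosen (valid) reading word, I may read columns; the operation ``swap the two symbols and reverse'' interchanges the counts of unpaired $i$'s and unpaired $(i+1)$'s and carries the leftmost unpaired $(i+1)$ to the image of the rightmost unpaired $i$. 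This gives $\te_i\circ\trefl=\trefl\circ\tf_i$ and its mirror, i.e. part (3) for $i\in[n-1]$.

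Finally, to reach $i=0$ I would conjugate the $i=1$ statements by promotion, using $\te_0=\tpr^{-1}\te_1\tpr$ and $\tf_0=\tpr^{-1}\tf_1\tpr$ from Proposition \ref{prop_cryst_ops_pr}(4) together with the two commutation relations $\trot\,\tpr\,\trot=\tpr^{-1}$ and $\trefl\,\tpr=\tpr\,\trefl$. For instance, inserting $\trot^2=\Id$ and applying these gives $\trot\tf_0\trot=\tpr\,(\trot\tf_1\trot)\,\tpr^{-1}=\tpr\,\te_{n-1}\,\tpr^{-1}=\te_0$, the last step again by Proposition \ref{prop_cryst_ops_pr}(4); this is the $i=0$ case of part (2), and the reflection case is identical with $\trefl\,\tpr\,\trefl=\tpr$ in place of $\trot\,\tpr\,\trot=\tpr^{-1}$. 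I expect these two commutation relations to be the main obstacle. The first is the classical identity $\tS\,\tpr\,\tS=\tpr^{-1}$, available once part (1) is in hand (or derivable from $\trot\,\tsi_r\,\trot=\tsi_{n-r}$). The second does not follow from the definitions as transparently; the cleanest route is to verify $\trefl\,\tsi_r\,\trefl=\tsi_r$ for every $r$ directly from the piecewise-linear Bender--Knuth formula of Lemma \ref{lem_PLBK}, using that reflecting each row swaps the ``left'' and ``right'' diagonal neighbors while complementation by $L$ turns $\min$ into $\max$, so that the two effects cancel; multiplying over $r$ then yields $\trefl\,\tpr=\tpr\,\trefl$.
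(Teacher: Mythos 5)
Your proof is correct, but it is genuinely different from the paper's. The paper does not prove parts (2) and (3) combinatorially at all: it cites the plactic literature for part (1), and derives (2) and (3) as corollaries of the geometric machinery in \S\ref{sec_symm} --- namely Theorem \ref{thm_trop_S_D} (that $\wh{S}=\trot$ and $\wh{D}=\trefl$ on $k$-rectangles), combined with Corollary \ref{cor_S}(4) and Corollary \ref{cor_D}(5) (the matrix identities $S\circ e_i^c=e_{n-i}^{c^{-1}}\circ S$ and $e_i^c\circ D=D\circ e_i^{c^{-1}}$, proved via $\fl$, $\inv$, and Pl\"{u}cker relations) and Theorem \ref{thm_trop_cryst}, so that the tableau identities drop out by tropicalization; indeed the paper remarks that part (3) ``does not seem to appear in the literature,'' the point being that the geometric theory produces it for free. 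Your route is purely combinatorial: the bracketing argument for (2), the column-word argument for (3) (columns of type ``both''/``neither'' inert, ``$i$-only''$\leftrightarrow$``$(i{+}1)$-only'' swapped, word reversed), and the promotion bootstrap for $i=0$ are all sound, and the step you rightly flag as the main obstacle does go through: writing $\rho$ for $\trefl$ on full Gelfand--Tsetlin patterns, $(\rho A)_{i,r}=L-A_{r-i+1,r}$, the formula of Lemma \ref{lem_PLBK} gives
\begin{equation*}
\bigl(\tsi_r(\rho A)\bigr)_{i,r}=L-\Bigl[\min\bigl(A_{r-i,r-1},A_{r-i+1,r+1}\bigr)+\max\bigl(A_{r-i+1,r-1},A_{r-i+2,r+1}\bigr)-A_{r-i+1,r}\Bigr]=\bigl(\rho(\tsi_r A)\bigr)_{i,r},
\end{equation*}
since $L-\min(a,b)=\max(L-a,L-b)$ and reflection exchanges the NW/NE and SW/SE neighbors; the boundary conventions $A_{0,j}=\infty$, $A_{i,i-1}=0$ exchange consistently because all entries of a $k$-rectangle pattern lie in $[0,L]$ (note also that $\trefl\,\tsi_r\,\trefl=\tsi_r$ is an identity between maps on $B^k$ obtained by passing through $B^{n-k}$). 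Hence $\trefl\,\tpr=\tpr\,\trefl$, and your $i=0$ conjugations are exactly Proposition \ref{prop_cryst_ops_pr}(4). Two small caveats: your part (1) leans on the classical facts that evacuation intertwines $\te_i\leftrightarrow\tf_{n-i}$ and that $B(\lambda)$ is connected (legitimate citations, and no more than the paper itself assumes via \cite{Ful}); and your part (2) argument should note that the rotated filling is semistandard, which Lemma \ref{lem_symms_tab}(1) supplies. What each approach buys: yours is elementary and self-contained, giving in particular a direct combinatorial proof of the reflection compatibility (3) that the paper obtains only through the Grassmannian; the paper's approach, at the cost of the unipotent-crystal apparatus, makes (2) and (3) automatic consequences of structural results it needs anyway, and your identity $\trefl\,\tsi_r\,\trefl=\tsi_r$ is precisely the tropical shadow of Corollary \ref{cor_D}(2)--(3).
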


Part (1) follows from the theory of the plactic monoid (see, e.g., \cite{Ful}). We will prove parts (2) and (3) using geometric crystals in \S \ref{sec_symms_trop} (see Remark \ref{rmk_symms_cryst_ops}). Part (2) is known (see \cite[\S 3]{LLT}), but part (3) does not seem to appear in the literature.

\section{Geometric crystals}
\label{sec_geom_cryst}
%\section{Geometric crystals}
%\label{sec_geom_cryst}

\subsection{Pl\"{u}cker coordinates}
\label{sec_pluc_coords}

We write $\Gr(k,n)$ to denote the Grassmannian of $k$-dimensional subspaces in $\mathbb{C}^n$. We consider the Grassmannian in its Pl\"{u}cker embedding, and for $J \in {[n] \choose k}$, we write $P_J(M)$ for the $J^{th}$ Pl\"{u}cker coordinate of the subspace $M$. We will represent a point $M \in \Gr(k,n)$ as the row span of a (full-rank) $n \times k$ matrix $M^\circ$, so that $P_J(M)$ is the maximal minor of $M^\circ$ using the rows in $J$. When there is no danger of confusion, we treat a subspace and its matrix representatives interchangeably. For example, we may speak of the Pl\"{u}cker coordinates of a full-rank $n \times k$ matrix (these are only defined up to a common scalar multiple, of course).

There is a natural (left) action of $\GL_n = \GL_n(\mathbb{C})$ on $\Gr(k,n)$ given by matrix multiplication. We denote the action of $A \in \GL_n$ on $M \in \Gr(k,n)$ by $(A,M) \mapsto A \cdot M$; this is the subspace spanned by the columns of $A \cdot M^\circ$, where $M^\circ$ is an $n \times k$ matrix representative of $M$.

To reduce the number of special cases needed in various arguments, we make the following convention.

\begin{conv}
\label{conv_pluc}
Let $M$ be a full-rank $n \times k$ matrix representing a point in $\Gr(k,n)$.
\begin{enumerate}
\item
Unless otherwise indicated (see convention (4) below), we label Pl\"{u}cker coordinates of $M$ by \underline{sets}, not by ordered lists. That is, if $I \in {[n] \choose k}$, then $P_I(M)$ means the determinant of the $k \times k$ submatrix of $M$ using the rows indexed by the elements of $I$, taken in the order in which they appear in $M$. Thus, $P_{\{1,2\}}(M) = P_{\{2,1\}}(M)$. We will often write $P_{12}(M)$ or $P_{1,2}(M)$ instead of $P_{\{1,2\}}(M)$.
\item
If $I \subset [n]$ does not contain exactly $k$ elements, then we set $P_I(M) = 0$.
\item
If $I$ is any set of integers, we set $P_I(M) = P_{I'}(M)$, where $I'$ is the set consisting of the residues of the elements of $I$ modulo $n$, where we take the residues to lie in $[n]$.
\item
We use the notation $P_{<i_1, \ldots, i_k>}(M)$ for the determinant of the $k \times k$ matrix whose $j^{th}$ row is row $i_j$ of $M$. We will only use this notation when $i_1, \ldots, i_k$ are (not necessarily distinct) elements of $[n]$. Note that $P_{<1,2>}(M) = -P_{<2,1>}(M) = P_{12}(M)$.
\end{enumerate}
\end{conv}

The following classical result plays an important role in this article. For a proof, see, e.g., \cite{Ful}.

\begin{prop}[Grassmann-Pl\"{u}cker relations]
\label{prop_Gr_Pl_rel}
Suppose $i_1, \ldots, i_k, i_{k+1}, j_1, \ldots, j_{k-1} \in [n]$. Then for $M \in \Gr(k,n)$, we have
\begin{equation}
\label{eq_Gr_Pl_rel}
\sum_{r = 1}^{k+1} (-1)^r P_{<i_1, \ldots, i_{r-1}, i_{r+1}, \ldots, i_{k+1}>}(M) P_{<i_r, j_1, \ldots, j_{k-1}>}(M) = 0.
\end{equation}
\end{prop}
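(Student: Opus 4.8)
The plan is to prove \eqref{eq_Gr_Pl_rel} as a universal polynomial identity in the entries of an $n \times k$ matrix $M^\circ$ representing $M$, so that we may work directly with the rows of $M^\circ$ without any hypothesis on $M$ beyond its being a matrix. Write $v_1, \ldots, v_n \in \bbC^k$ for the rows of $M^\circ$, so that by Convention \ref{conv_pluc}(4) we have $P_{<a_1, \ldots, a_k>}(M) = \det(v_{a_1}, \ldots, v_{a_k})$, the determinant of the $k \times k$ matrix with rows $v_{a_1}, \ldots, v_{a_k}$. Holding the indices $j_1, \ldots, j_{k-1}$ fixed, I would introduce the function
\[
F(x_1, \ldots, x_{k+1}) = \sum_{r=1}^{k+1} (-1)^r \det(x_1, \ldots, \widehat{x_r}, \ldots, x_{k+1}) \, \det(x_r, v_{j_1}, \ldots, v_{j_{k-1}})
\]
of $k+1$ vector arguments $x_1, \ldots, x_{k+1} \in \bbC^k$, where $\widehat{x_r}$ means that $x_r$ is omitted. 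The left-hand side of \eqref{eq_Gr_Pl_rel} is exactly $F(v_{i_1}, \ldots, v_{i_{k+1}})$, so it suffices to prove $F \equiv 0$.

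First I would check that $F$ is multilinear in $(x_1, \ldots, x_{k+1})$: in the $r$th summand, each variable $x_s$ appears in exactly one of the two determinant factors (in the cofactor determinant when $s \neq r$, and in the second factor when $s = r$), and it appears there linearly, so every summand, and hence $F$, is linear in each $x_s$ separately.

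The crux of the argument, and the step I expect to require the most care, is showing that $F$ is alternating, i.e.\ that it vanishes whenever two arguments coincide, say $x_a = x_b$ with $a < b$. For every index $r \neq a,b$ the cofactor determinant $\det(x_1, \ldots, \widehat{x_r}, \ldots, x_{k+1})$ then has two equal rows and vanishes, so only the terms $r = a$ and $r = b$ survive. Both share the common factor $\det(x_a, v_{j_1}, \ldots, v_{j_{k-1}}) = \det(x_b, v_{j_1}, \ldots, v_{j_{k-1}})$, and the two remaining cofactor determinants differ only in the position of the single surviving copy of the repeated vector; tracking the transpositions needed to carry one ordering into the other produces the sign $(-1)^{b-a-1}$, which combines with the explicit signs $(-1)^a$ and $(-1)^b$ so that the two terms cancel exactly. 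This is the only place where nontrivial sign bookkeeping enters, so I would isolate it as a short lemma.

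Finally, having shown that $F$ is an alternating multilinear function of $k+1$ vectors in the $k$-dimensional space $\bbC^k$, I would conclude $F \equiv 0$, since any such form must vanish once the number of arguments exceeds the dimension (equivalently, $\bigwedge^{k+1}\bbC^k = 0$). Evaluating at $x_r = v_{i_r}$ then yields \eqref{eq_Gr_Pl_rel}. I would remark that the same argument can be packaged more cleanly in the exterior algebra: the wedge $v_{i_1} \wedge \cdots \wedge v_{i_{k+1}} \in \bigwedge^{k+1}\bbC^k$ is automatically zero, and applying the interior product with the linear functional $x \mapsto \det(x, v_{j_1}, \ldots, v_{j_{k-1}})$ reproduces \eqref{eq_Gr_Pl_rel} term by term, with all the sign bookkeeping absorbed into the standard interior-product formula.
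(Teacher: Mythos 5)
Your proposal is correct and complete. Note that the paper does not prove Proposition \ref{prop_Gr_Pl_rel} at all --- it simply cites \cite{Ful} --- so there is no internal argument to compare against; what you have supplied is a self-contained proof of the classical fact the paper takes as given, and it is essentially the standard one (the same exterior-algebra mechanism as in Fulton: the relation expresses the vanishing of $\bigwedge^{k+1}\bbC^k$, extracted via contraction against $x \mapsto \det(x, v_{j_1}, \ldots, v_{j_{k-1}})$). Your reduction to a universal identity in the rows of a representative matrix $M^\circ$ is the right move: it makes the statement insensitive to the choice of representative (each term scales by $(\det g)^2$ under $M^\circ \mapsto M^\circ g$) and handles repeated or unsorted indices, which is exactly what Convention \ref{conv_pluc}(4) requires. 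The one delicate point, the cancellation of the two surviving terms when $x_a = x_b$, checks out: the two cofactor determinants differ by $b-a-1$ adjacent transpositions, giving $(-1)^a + (-1)^b(-1)^{b-a-1} = (-1)^a + (-1)^{a+1} = 0$ as you claim, and multilinearity plus vanishing on repeated arguments does imply $F$ factors through $\bigwedge^{k+1}\bbC^k = 0$ in any characteristic, so no appeal to characteristic zero is needed.
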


\begin{cor}[Three-term Pl\"{u}cker relation]
\label{cor_3term}
Fix $k \geq 2$. If $I \in {[n] \choose k-2}$ and $a,b,c,d$ are elements of $[n]$ satisfying $a \leq b \leq c \leq d$, then for $M \in \Gr(k,n)$, we have
\begin{equation}
\label{eq_3term}
P_{I \cup \{a,b\}}(M) P_{I \cup \{c,d\}}(M) + P_{I \cup \{a,d\}}(M) P_{I \cup \{b,c\}}(M) = P_{I \cup \{a,c\}}(M) P_{I \cup \{b,d\}}(M).
\end{equation}
\end{cor}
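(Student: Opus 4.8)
The plan is to derive the three-term relation from the general Grassmann-Plücker relations of Proposition \ref{prop_Gr_Pl_rel} by making a careful choice of the index lists $i_1,\ldots,i_{k+1}$ and $j_1,\ldots,j_{k-1}$. Since the target relation \eqref{eq_3term} involves only three nonzero terms, the strategy is to set up a general relation \eqref{eq_Gr_Pl_rel} in which all but three of the $k+1$ summands vanish, and then match the surviving three terms to the products in \eqref{eq_3term}.

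First I would choose the ``moving'' indices to be $i_1,\ldots,i_{k+1} = a, b, c, d, I$, where $I = \{i_5 < \cdots < i_{k+1}\}$ is the fixed $(k-2)$-subset, and the ``fixed tail'' to be $j_1,\ldots,j_{k-1} = b, c, I$ (or an analogous choice). The relation \eqref{eq_Gr_Pl_rel} then expands as a sum over $r$ of terms $(-1)^r P_{<\ldots \hat{i_r} \ldots>}(M)\, P_{<i_r, j_1,\ldots,j_{k-1}>}(M)$. The key observation is that whenever $i_r$ already appears among the fixed tail $j_1,\ldots,j_{k-1}$ (i.e.\ when $i_r \in \{b,c\} \cup I$), the second factor $P_{<i_r, j_1, \ldots, j_{k-1}>}(M)$ has a repeated row and hence vanishes by Convention \ref{conv_pluc}. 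With the tail chosen as $b,c,I$, this kills every summand except $r=1$ (removing $a$) and $r=4$ (removing $d$), leaving only two terms---so I would instead use a tail that leaves exactly \emph{three} summands alive, namely a choice forcing the survivors to correspond to deleting $a$, $b$, and $d$ (or $a,c,d$). The cleanest route is to take $i_1,\ldots,i_{k+1} = a,b,c,d,I$ and tail $c,d,I$ reversed appropriately; the surviving indices $i_r \notin \{c,d\}\cup I$ are $r=1,2$ together with one more, and after reindexing the three nonzero products become $P_{I\cup\{a,b\}}P_{I\cup\{c,d\}}$, $P_{I\cup\{a,c\}}P_{I\cup\{b,d\}}$, and $P_{I\cup\{a,d\}}P_{I\cup\{b,c\}}$.

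The remaining work is bookkeeping: I must (i) convert each ordered Plücker symbol $P_{<\cdots>}(M)$ back into the set-indexed form $P_J(M)$ by sorting the index list into increasing order and tracking the sign of the permutation, using $P_{<\ldots>} = \sgn(\cdot)\, P_{\{\ldots\}}$ from Convention \ref{conv_pluc}(4), and (ii) verify that, after collecting the signs $(-1)^r$ from \eqref{eq_Gr_Pl_rel} together with the sorting signs, the three surviving terms assemble into \eqref{eq_3term} with exactly the signs $+,+,-$ shown (one term crosses to the other side of the equation). The hypothesis $a \le b \le c \le d$ guarantees these sorting signs are computed from a fixed, predictable shuffle, so the sign computation is uniform.

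I expect the main obstacle to be purely the sign bookkeeping: choosing the index lists so that precisely the three desired products survive and then confirming that the accumulated signs match the asserted pattern. There is no deep content beyond \eqref{eq_Gr_Pl_rel}; the subtlety is entirely in selecting the tail $j_1,\ldots,j_{k-1}$ to annihilate the correct $k-2$ summands and in reconciling the $(-1)^r$ factors with the row-sorting signs. Once a consistent choice is fixed, \eqref{eq_3term} follows by direct substitution.
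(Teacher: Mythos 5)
Your overall strategy is the one the paper intends (the corollary is stated without proof precisely because it is a direct specialization of Proposition \ref{prop_Gr_Pl_rel}), but your concrete choice of indices cannot work, for a simple counting reason: the moving list in \eqref{eq_Gr_Pl_rel} has exactly $k+1$ entries and the tail exactly $k-1$ entries, whereas your proposed moving list $a,b,c,d,i_1,\ldots,i_{k-2}$ (all four letters plus all of $I$) has $k+2$ entries, and both of your proposed tails ($b,c,I$ and $c,d,I$) have $k$ entries. Moreover, even setting the lengths aside, any tail containing \emph{two} of the letters $a,b,c,d$ together with all of $I$ annihilates all but \emph{two} summands --- you observed this yourself for the tail $b,c,I$, and the same happens for $c,d,I$ (only the terms deleting $a$ and $b$ survive); the ``one more'' surviving term you invoke does not exist. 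A two-term identity of this shape can never rearrange into \eqref{eq_3term}.

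The fix is to put only \emph{three} of the four letters in the moving list and the remaining one in the tail: take $i_1,\ldots,i_{k+1} = a,c,d,i'_1,\ldots,i'_{k-2}$ and $j_1,\ldots,j_{k-1} = b,i'_1,\ldots,i'_{k-2}$, where $I = \{i'_1 < \cdots < i'_{k-2}\}$. Then the terms with $i_r \in I$ vanish (repeated row), and exactly three summands survive, namely $r = 1,2,3$:
\begin{equation*}
- P_{<c,d,I>} P_{<a,b,I>} \;+\; P_{<a,d,I>} P_{<c,b,I>} \;-\; P_{<a,c,I>} P_{<d,b,I>} \;=\; 0 .
\end{equation*}
For the sign bookkeeping, write $\alpha(x) = \#\{i \in I : i < x\}$; sorting gives $P_{<x,y,I>} = (-1)^{\alpha(x)+\alpha(y)} P_{I \cup \{x,y\}}$ for $x<y$, so every one of the three products carries the \emph{same} global sign $(-1)^{\alpha(a)+\alpha(b)+\alpha(c)+\alpha(d)}$, while the reversed pairs give $P_{<c,b,I>} = -P_{<b,c,I>}$ and $P_{<d,b,I>} = -P_{<b,d,I>}$. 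These two extra flips convert the alternating signs $-,+,-$ into $-,-,+$, and dividing by the common global sign yields exactly \eqref{eq_3term}. So your mechanism (kill $k-2$ terms via repeated indices, then sort via Convention \ref{conv_pluc}(4)) is sound; the error is only in the cardinalities of the index lists, which forced an incorrect survivor count.
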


Note that the subscripts in \eqref{eq_Gr_Pl_rel} are ordered lists, whereas the subscripts in \eqref{eq_3term} are sets.

\subsection{Geometric crystal on the Grassmannian}
\label{sec_geom_Gr}

Before introducing our main object of study, we recall the definition of a type $A_{n-1}^{(1)}$ decorated geometric crystal (\cite{BKII}, \cite{Nak}).

\begin{defn}
\label{defn_geom_precryst}
A {\em geometric pre-crystal} (of type $A_{n-1}^{(1)}$) consists of
\begin{itemize}
\item an irreducible complex algebraic variety X
\item a rational map $\gamma : X \rightarrow (\Cx)^n$
\item for each $i \in \Zn$, two rational functions $\vp_i, \ve_i : X \rightarrow \Cx$ which are not identically zero.\footnote{In \cite{BKII}, some of the $\vp_i$ and $\ve_i$ are allowed to be zero, but we will not need this more general setting.}
\item for each $i \in \Zn$, a rational unital\footnote{This means that $e_i(1,x)$ is defined (and thus equal to $x$) for all $x \in X$.} action $e_i: \Cx \times X \rightarrow X$. We will usually denote the image $e_i(c,x)$ by $e_i^c(x)$.

\end{itemize}
These data must satisfy the following properties:
\begin{enumerate}
\item $\ve_i(x) = \vp_i(x)\alpha_i(\gamma(x))$, where
\[
\alpha_i(z_1, \ldots, z_n) = \dfrac{z_i}{z_{i+1}}
\]
\item $\gamma(e_i^c(x)) = \alpha_i^\vee(c)\gamma(x)$, where
\[
\alpha_i^\vee(c) = (1, \ldots, c, c^{-1}, \ldots, 1)
\]
with $c$ in the $i$th component and $c^{-1}$ in the $(i+1)$st component (mod $n$).
\item $\vp_i(e_i^c(x)) = c^{-1}\vp_i(x)$ and $\ve_i(e_i^c(x)) = c\ve_i(x)$.
\end{enumerate}
\end{defn}

\begin{defn}
\label{defn_geom_cryst}
A {\em geometric crystal} (of type $A_{n-1}^{(1)}$) is a geometric pre-crystal which satisfies the following ``geometric Serre relations'':

If $n \geq 3$, then for each pair $i,j \in \mathbb{Z}/n\mathbb{Z}$, and $c_1, c_2 \in \Cx$, the actions $e_i, e_j$ satisfy
\begin{equation*}
\begin{array}{ll}
\label{rels}
e_i^{c_1}e_j^{c_2} = e_j^{c_2}e_i^{c_1} & \text{if } | i - j | > 1 \\
e_i^{c_1}e_j^{c_1c_2}e_i^{c_2} = e_j^{c_2}e_i^{c_1c_2}e_j^{c_1} & \text{if } | i - j | = 1.
\end{array}
\end{equation*}
If $n = 2$, there is no Serre relation for $e_0$ and $e_1$, so a geometric pre-crystal is automatically a geometric crystal.
\end{defn}

\begin{defn}
A {\em decorated geometric crystal} (of type $A_{n-1}^{(1)}$) is a geometric crystal $(X,\gamma,\vp_i, \ve_i, e_i)$ equipped with a rational function $f : X \rightarrow \mathbb{C^\times}$ such that
\begin{equation}
\label{eq_f_prop}
f(e_i^c(x)) = f(x) + \frac{c-1}{\vp_i(x)} + \frac{c^{-1} - 1}{\ve_i(x)}
\end{equation}
for all $x \in X$ and $i \in \Zn$.
The function $f$ is called a {\em decoration}.
\end{defn}

For $k \in [n-1]$, let $\X{k}$ denote the variety $\Y{k}{n}$. The central object in this paper is a type $A_{n-1}^{(1)}$ decorated geometric crystal on $\X{k}$, which is defined as follows.

\begin{defn}
\label{defn_maps}
\
\begin{enumerate}
\item Define a rational map $\gamma : \X{k} \rightarrow (\Cx)^n$ by $\gamma(M,t) = (\gamma_1, \ldots, \gamma_n)$, where
\begin{equation*}
\gamma_i =
\begin{cases}
\dfrac{P_{[i-k+1,i]}(M)}{P_{[i-k,i-1]}(M)} \text{ if } 1 \leq i \leq k \bigskip \\
t \dfrac{P_{[i-k+1,i]}(M)}{P_{[i-k,i-1]}(M)} \text{ if } k+1 \leq i \leq n.
\end{cases}
\end{equation*}

\item For $i \in \Zn$, define rational functions $\vp_i, \ve_i : \X{k} \rightarrow \Cx$ by
\[
\vp_i(M,t) =
t^{-\delta_{i,0}} \dfrac{P_{[i-k+1,i-1] \cup \{i+1\}}(M)}{P_{[i-k+1,i]}(M)},
\]
\[
\ve_i(M,t) = t^{-\delta_{i,k}} \dfrac{P_{[i-k+1,i-1] \cup \{i+1\}}(M) P_{[i-k+1,i]}(M)}{P_{[i-k,i-1]}(M) P_{[i-k+2,i+1]}(M)}.
\]

\item For $i \in \Zn$, define a rational action $e_i^c : \X{k} \rightarrow \X{k}$ by $e_i^c(M,t) = (M',t)$, where
\[
M' = \begin{cases}
x_i\left(\dfrac{c-1}{\vp_i(M,t)}\right) \cdot M & \text{ if } i \neq 0 \smallskip \\
x_0\left(\dfrac{(-1)^{k-1}}{t} \cdot \dfrac{c-1}{\vp_0(M,t)}\right) \cdot M & \text{ if } i = 0.
\end{cases}
\]
Here $x_i(a) = \Id + aE_{i,i+1}$ for $i \in [n-1]$, and $x_0(a) = \Id + aE_{n1}$, where $E_{ij}$ is an $n \times n$ matrix unit.

\item Define a rational function $f : \X{k} \rightarrow \Cx$ by
\begin{equation}
\label{eq_dec}
f(M,t) = \sum_{i \neq k} \frac{P_{\{i-k\} \cup [i-k+2,j]}(M)}{P_{[i-k+1,i]}(M)} + t \frac{P_{[2,k] \cup \{n\}}(M)}{P_{[1,k]}(M)}.
\end{equation}
\end{enumerate}
\end{defn}

\begin{thm}
\label{thm_is_geom_cryst}
The data $(\X{k}, \gamma, \vp_i, \ve_i, e_i,f)$ define a decorated geometric crystal of type $A_{n-1}^{(1)}$.
\end{thm}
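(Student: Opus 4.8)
The statement packages three layers of axioms---the geometric pre-crystal axioms of Definition \ref{defn_geom_precryst}, the geometric Serre relations of Definition \ref{defn_geom_cryst}, and the decoration identity \eqref{eq_f_prop}---and the plan is to verify them in that order. The common engine is the transformation rule for Pl\"ucker coordinates under the one-parameter subgroups. By multilinearity of the determinant, for $i \in [n-1]$ one has $P_I(x_i(a)\cdot M) = P_I(M) + a\,P_{(I\setminus\{i\})\cup\{i+1\}}(M)$ when $i \in I$ and $i+1 \notin I$, and $P_I(x_i(a)\cdot M) = P_I(M)$ otherwise. For $i=0$ the generator $x_0(a)=\Id+aE_{n1}$ adds $a$ times row $1$ to row $n$, and transporting row $1$ from the bottom to the top of a $k$-row minor introduces the sign $(-1)^{k-1}$; the factor $(-1)^{k-1}/t$ in Definition \ref{defn_maps}(3) is tailored precisely to absorb this sign together with the $t$-normalization in $\vp_0$, so that $e_0$ acts on the cyclically shifted window $\{n-k+1,\dots,n\}$ exactly as $e_1$ acts on its window.

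With this rule the pre-crystal axioms reduce to local algebraic identities. For axiom (2), substituting $a=(c-1)/\vp_i(M,t)$ turns the numerator of $\gamma_i$ into $P_{[i-k+1,i]}+(c-1)P_{[i-k+1,i]}=c\,P_{[i-k+1,i]}$, while the denominator of $\gamma_{i+1}$ acquires the same factor $c$ and every other $\gamma_j$ is untouched, since its windows contain both $i$ and $i+1$ or neither; this gives $\gamma(e_i^c(x))=\alpha_i^\vee(c)\gamma(x)$. Axiom (1), $\ve_i=\vp_i\,\alpha_i(\gamma)$, is a direct cancellation once the definitions are expanded, the only care being the bookkeeping of the factors $t^{-\delta_{i,0}}$, $t^{-\delta_{i,k}}$, and the $t$ in $\gamma_j$ for $j>k$; axiom (3) is checked the same way by tracking how the windows in $\vp_i$ and $\ve_i$ rescale. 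Rather than redo any of this for $i=0$, I would invoke the twisted cyclic shifting map of this section, which conjugates $e_1$ to $e_0$ and shifts $\gamma,\vp,\ve$ cyclically; the $i=0$ axioms then follow from the $i=1$ case.

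Next come the Serre relations. When $|i-j|>1$ the matrices $x_i(a)$ and $x_j(b)$ commute in $\GL_n$, and the windows of Pl\"ucker coordinates entering $\vp_i$ are disjoint from those moved by $e_j$, so $\vp_i(e_j^c(x))=\vp_i(x)$ and the two actions commute outright. The case $|i-j|=1$, namely $e_i^{c_1}e_j^{c_1c_2}e_i^{c_2}=e_j^{c_2}e_i^{c_1c_2}e_j^{c_1}$, is the heart of the matter and the step I expect to be the main obstacle: here the parameters fed into successive $x$-matrices are themselves ratios of Pl\"ucker coordinates that change after each application. I would expand both sides as triple products of $x$-matrices acting on $M$, read off the intermediate parameters from the evolving $\vp_i,\vp_j$, and reduce the resulting polynomial identity to the three-term relation of Corollary \ref{cor_3term}. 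Alternatively, one could postpone this relation and instead establish it through a group-functoriality argument, deriving the geometric crystal from the ambient (loop) group where the braid relation is structural.

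Finally, for the decoration identity, I would use that $e_i^c$ alters only finitely many Pl\"ucker coordinates, so only a bounded set of summands of $f$ in \eqref{eq_dec} change. Isolating these and applying the transformation rule, I expect the summand supported at the window of $i$ to reproduce the term $\tfrac{c-1}{\vp_i(x)}$ directly, since this ratio is exactly the parameter $a$, while the neighboring summand, combined with a three-term Pl\"ucker relation, supplies $\tfrac{c^{-1}-1}{\ve_i(x)}$ and the remaining summands cancel. As with the pre-crystal axioms, the $i=0$ instance follows from $i=1$ by the cyclic symmetry, which is why the $t$-weighted term in \eqref{eq_dec} is split off from the main sum.
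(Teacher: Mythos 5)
Your proposal is correct in substance, but it takes a genuinely different route from the paper. You verify everything directly on $\X{k}$ through the rule $P_J(x_i(a)\cdot M)=P_J(M)+aP_{(J\setminus\{i\})\cup\{i+1\}}(M)$ (valid when $i\in J$, $i+1\notin J$), whereas the paper proves Theorem \ref{thm_is_geom_cryst} by factoring through the loop group: it builds the unipotent crystal $(\X{k},g)$ (Definition \ref{defn_g}), proves that $g$ intertwines the $U$-actions (Theorem \ref{thm_Gr_induces}) --- the only place a three-term Pl\"ucker relation is needed, and only for $i=1$, with Lemmas \ref{lem_pr_u_action} and \ref{lem_PR_sh} transporting it to all $i\in\Zn$ --- and then reads all crystal axioms off the general induction theorem (Theorem \ref{thm_induces}), where the pre-crystal axioms become a $2\times 2$ block computation in $B^-$ and the $|i-j|=1$ Serre relation is imported as the known $GL_3$ computation of \cite{BKI}. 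Your ``alternative'' for the braid relation is thus exactly the paper's actual proof; your primary plan (expand both triple products and reduce to Corollary \ref{cor_3term}) is workable in principle but is by far the heaviest step: you would need closed formulas for $\vp_i\circ e_{i\pm 1}^c$ (each itself a three-term relation) and then the rank-two identity $x_i(a)x_j(b)x_i(c)=x_j\bigl(\tfrac{bc}{a+c}\bigr)x_i(a+c)x_j\bigl(\tfrac{ab}{a+c}\bigr)$, i.e., you would be re-deriving the $GL_3$ computation in Pl\"ucker coordinates; moreover the pairs involving $i=0$ only become uniform after passing to $\wh{x}_0(a)=\Id+a\lp^{-1}E_{n1}$ with $\lp$ evaluated at $(-1)^{k-1}t$, which is precisely what the loop-group framing buys. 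For the decoration, the paper again argues structurally: $f=\chi\circ g$, where $\chi$ sums the entries on the leading diagonal of an $(n-k)$-shifted unipotent matrix, and \eqref{eq_f_prop} follows from multiplicativity of $\chi$ together with \eqref{eq_U_action_B-}. Your direct computation is nevertheless sound: for $i=1$ (and $k\geq 2$) the summand of \eqref{eq_dec} indexed by $k+1$ contributes exactly $a=(c-1)/\vp_1$, the summand indexed by $1$ contributes $(c^{-1}-1)/\ve_1$ after one application of Corollary \ref{cor_3term} with $I=[3-k,0]$ and the four indices $1,2,n-k+1,n-k+2$, and the remaining summands (including the $t$-term) are untouched.

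Two small repairs. First, a geometric pre-crystal requires $e_i$ to be a rational unital action, and you never check $e_i^{c_1}\circ e_i^{c_2}=e_i^{c_1c_2}$; it is one line from your axiom (3) together with $x_i(a)x_i(a')=x_i(a+a')$, which is how Theorem \ref{thm_induces} handles it. Second, for $|i-j|>1$ your justification that $\vp_i\circ e_j^c=\vp_i$ --- ``the windows entering $\vp_i$ are disjoint from those moved by $e_j$'' --- is stated too strongly: the windows are cyclic $k$-element intervals and may well contain $j$; what is true, and all you need, is that a Pl\"ucker coordinate moves only when its index set contains $j$ but not $j+1$, which for the two windows in $\vp_i$ forces $j\in\{i-1,i+1\}$. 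With these patches your outline is a complete, if computation-heavy, alternative to the paper's unipotent-crystal argument.
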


This result is proved by means of unipotent crystals in \S \ref{sec_unip_Gr}.

\subsection{Cyclic shifting}
\label{sec_cyclic_shift}

\begin{defn}
\label{defn_PR}
Define a map $\PR : \X{k} \rightarrow \X{k}$ by $\PR(M,t) = (M',t)$, where $M'$ is obtained from $M$ by shifting the rows down by 1 (mod $n$), and multiplying the new first row by $(-1)^{k-1} t$.
In terms of Pl\"{u}cker coordinates, the map is defined by
\begin{equation}
\label{eq_PR}
P_J(\PR(M,t)) =
\begin{cases}
P_{J - 1}(M) & \text{ if } 1 \not \in J \\
t \cdot P_{J - 1}(M) & \text{ if } 1 \in J
\end{cases}
\end{equation}
where $J - 1$ is obtained from $J$ by subtracting 1 from each element (0 is identified with $n$).
\end{defn}

\noindent We will often write $\PR_t$ to denote the map $M \mapsto M'$. For example, we have
\[
\left(\begin{array}{cc}
z_{11} & z_{12} \\
z_{21} & z_{22} \\
z_{31} & z_{32} \\
z_{41} & z_{42}
\end{array}\right)
\quad
\overset{\PR_t} {\longrightarrow}
\quad
\left(\begin{array}{cc}
-t \cdot z_{41} & -t \cdot z_{42} \\
z_{11} & z_{12} \\
z_{21} & z_{22} \\
z_{31} & z_{32}
\end{array}\right).
\]

We now show that the map $\PR$ is ``compatible'' with the geometric crystal structure on $\X{k}$.

\begin{lem}
\label{lem_pr_u_action}
For $i \in \Zn$, $a \in \bbC$, and $(M,t) \in \X{k}$, we have
\begin{equation*}
\PR_t^{-1}(x_i(a) \cdot \PR_t(M)) = \begin{cases}
x_{i-1}(a) \cdot M &\text{ if } i \neq 0,1 \\
x_{0}(\frac{(-1)^{k-1} a}{t}) \cdot M &\text{ if } i = 1 \\
x_{n-1}((-1)^{k-1} t a) \cdot M &\text{ if } i = 0 \\
\end{cases}.
\end{equation*}
\end{lem}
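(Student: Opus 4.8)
The plan is to realize $\PR_t$ as the left action of a single monomial matrix and thereby reduce the lemma to a conjugation computation in $\GL_n$. Let $\sigma_t \in \GL_n$ be the matrix with $(-1)^{k-1}t$ in position $(1,n)$, a $1$ in each position $(j+1,j)$ for $j \in [n-1]$, and zeros elsewhere; equivalently $\sigma_t e_j = e_{j+1}$ for $j \in [n-1]$ and $\sigma_t e_n = (-1)^{k-1}t\, e_1$, where $e_1, \dots, e_n$ is the standard basis of $\mathbb{C}^n$. Left multiplication by $\sigma_t$ shifts the rows of an $n \times k$ matrix representative down by one and scales the new top row by $(-1)^{k-1}t$, so comparing with Definition \ref{defn_PR} we have $\PR_t(M) = \sigma_t \cdot M$, and $\PR_t^{-1}$ is the action of $\sigma_t^{-1}$. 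Since the $\GL_n$-action on $\Gr(k,n)$ is a left action, associativity gives
\[
\PR_t^{-1}\big(x_i(a) \cdot \PR_t(M)\big) = \big(\sigma_t^{-1}\, x_i(a)\, \sigma_t\big) \cdot M,
\]
so it suffices to identify the conjugate $\sigma_t^{-1} x_i(a)\, \sigma_t$ for each $i \in \Zn$.

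Writing $x_i(a) = \Id + aE_{i,i+1}$ for $i \in [n-1]$ and $x_0(a) = \Id + aE_{n,1}$, and using that conjugation is linear together with $E_{a,b} = e_a e_b^{\mathsf{T}}$, I would reduce to the single formula $\sigma_t^{-1} E_{a,b}\, \sigma_t = (\sigma_t^{-1} e_a)(e_b^{\mathsf{T}} \sigma_t)$. The two factors are read off directly from $\sigma_t$: on columns, $\sigma_t^{-1} e_a = e_{a-1}$ for $a \in [2,n]$ and $\sigma_t^{-1} e_1 = \tfrac{(-1)^{k-1}}{t}\, e_n$; on rows, $e_b^{\mathsf{T}} \sigma_t = e_{b-1}^{\mathsf{T}}$ for $b \in [2,n]$ and $e_1^{\mathsf{T}}\sigma_t = (-1)^{k-1}t\, e_n^{\mathsf{T}}$ (using $\tfrac{1}{(-1)^{k-1}} = (-1)^{k-1}$). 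Feeding these into the three cases then yields the claim: for $i \neq 0,1$ both indices are interior, so $\sigma_t^{-1} E_{i,i+1}\, \sigma_t = e_{i-1} e_i^{\mathsf{T}} = E_{i-1,i}$ and $\sigma_t^{-1} x_i(a)\, \sigma_t = x_{i-1}(a)$; for $i = 1$ the left index wraps, giving $\sigma_t^{-1} E_{1,2}\, \sigma_t = \tfrac{(-1)^{k-1}}{t} E_{n,1}$ and $\sigma_t^{-1} x_1(a)\, \sigma_t = x_0\!\big(\tfrac{(-1)^{k-1}a}{t}\big)$; and for $i = 0$ the right index wraps, giving $\sigma_t^{-1} E_{n,1}\, \sigma_t = (-1)^{k-1}t\, E_{n-1,n}$ and $\sigma_t^{-1} x_0(a)\, \sigma_t = x_{n-1}\!\big((-1)^{k-1}ta\big)$. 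Applying each conjugate to $M$ reproduces the three cases of the lemma.

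The computation is mechanical, so I expect no conceptual obstacle; the one point requiring genuine care is the bookkeeping at the cyclic boundary, namely tracking how the scalar $(-1)^{k-1}t$ (and its reciprocal) is introduced precisely when one of the two matrix-unit indices crosses the wrap-around between $n$ and $1$. Distinguishing the $i=1$ case, where the left index wraps and the factor $1/t$ appears, from the $i=0$ case, where the right index wraps and the factor $t$ appears, is the only subtlety, and keeping the identity $\tfrac{1}{(-1)^{k-1}} = (-1)^{k-1}$ in mind handles the signs uniformly.
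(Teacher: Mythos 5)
Your proposal is correct, and it is essentially the paper's argument: the paper also realizes $\PR_t$ as an explicit row operation (shift rows down, scale the new first row by $(-1)^{k-1}t$) and checks the three cases directly, tracking row vectors through $\PR_t$, $x_i(a)$, $\PR_t^{-1}$. Packaging $\PR_t$ as left multiplication by the monomial matrix $\sigma_t$ and reducing to the conjugation $\sigma_t^{-1}E_{a,b}\,\sigma_t = (\sigma_t^{-1}e_a)(e_b^{\mathsf{T}}\sigma_t)$ is a clean reformulation of the same computation, with the wrap-around bookkeeping at $i=0,1$ handled exactly as in the paper.
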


\begin{proof}
Let $v_i \in \bbC^k$ be the $i$th row of the matrix $M$. Left-multiplying $M$ by $x_i(a)$ replaces $v_i$ with $av_{i+1} + v_i$ if $i \neq 0$, and it replaces $v_n$ with $av_1 + v_n$ if $i = 0$. By definition, the map $\PR_t$ replaces $v_i$ with $v_{i-1}$ for $i \neq 1$, and it replaces $v_1$ with $(-1)^{k-1} t v_n$; the inverse map $\PR_t^{-1}$ replaces $v_i$ with $v_{i+1}$ for $i \neq n$, and it replaces $v_n$ with $\frac{(-1)^{k-1}}{t} v_1$. From this description, it's clear that the lemma holds when $i \neq 0,1$.

If $i = 1$, then we have
\[
\left(\begin{array}{c}
v_1 \\
v_2 \\
\vdots \\
v_n
\end{array}\right)
\,
\overset{\PR_t} {\longrightarrow}
\,
\left(\begin{array}{cc}
(-1)^{k-1} t v_n \\
v_1 \\
\vdots \\
v_{n-1}
\end{array}\right)
\,
\overset{x_1(a)} {\longrightarrow}
\,
\left(\begin{array}{cc}
av_1 + (-1)^{k-1}t v_n \\
v_1 \\
\vdots \\
v_{n-1}
\end{array}\right)
\,
\overset{\PR_t^{-1}} {\longrightarrow}
\,
\left(\begin{array}{cc}
v_1 \\
v_2 \\
\vdots \\
\frac{(-1)^{k-1}a}{t} v_1 + v_n
\end{array}\right)
\]
so we see that $\PR_t^{-1}(x_1(a) \cdot \PR_t(M)) = x_0\left(\frac{(-1)^{k-1}a}{t}\right) \cdot M$. The case $i = 0$ is verified similarly.

\end{proof}

\begin{lem}
\label{lem_pr_conj}
\
\begin{enumerate}
\item For each $i \in \Zn$, we have $\vp_i \circ \PR = \vp_{i-1}$ and $\ve_i \circ \PR = \ve_{i-1}$.
\item For each $i \in \Zn$, we have $\PR^{-1} \circ \, e_i^c \circ \PR = e_{i-1}^c$.
\item If $\gamma(M,t) = (\gamma_1, \gamma_2, \ldots, \gamma_n)$, then $(\gamma \circ \PR)(M,t) = (\gamma_n, \gamma_1, \ldots, \gamma_{n-1})$.
\item We have $f \circ \PR = f$.
\end{enumerate}
\end{lem}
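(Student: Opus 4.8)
The plan is to prove the four parts in sequence, using the Pl\"{u}cker-coordinate description of $\PR$ in \eqref{eq_PR} as the main computational engine. The central observation is that $\PR_t$ acts on Pl\"{u}cker coordinates essentially by the shift $J \mapsto J-1$, with a factor of $t$ inserted exactly when the index set wraps around through $n$ (i.e. when $1 \in J$). Since $\gamma$, $\vp_i$, $\ve_i$, and $f$ are all subtraction-free ratios of Pl\"{u}cker coordinates (with explicit powers of $t$), each identity should reduce to bookkeeping on index sets and tracking the resulting powers of $t$.

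\emph{Parts (1) and (3).} I would begin with the weight map (3), since it is the most transparent. Writing $\gamma(M,t) = (\gamma_1, \ldots, \gamma_n)$ and applying \eqref{eq_PR} to each Pl\"{u}cker coordinate appearing in $\gamma_i$, the shift $J \mapsto J-1$ turns the $i$-th component formula into the $(i-1)$-th. The factors of $t$ in \eqref{eq_PR} (from the wrap-around) must conspire with the explicit $t$ already present in the definition of $\gamma_i$ for $i > k$; checking that they cancel correctly to yield the cyclic shift $(\gamma_1,\ldots,\gamma_n) \mapsto (\gamma_n,\gamma_1,\ldots,\gamma_{n-1})$ is the one place that requires care at the boundary indices $i = k$ and $i = 1$. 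The computation for part (1) is identical in spirit: $\vp_i$ and $\ve_i$ are built from the same families of consecutive (cyclic) intervals, so the index shift sends the formula for $\vp_i$ to that for $\vp_{i-1}$, and likewise for $\ve_i$. The delta-function prefactors $t^{-\delta_{i,0}}$ and $t^{-\delta_{i,k}}$ are precisely what is needed to absorb the wrap-around factors of $t$ produced by \eqref{eq_PR} as the index set crosses the value $1$; verifying this matching for the special indices $i=0$ and $i=k$ is the crux of part (1).

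\emph{Part (2).} Rather than recompute $e_i^c$ directly, I would deduce it from part (1) together with Lemma \ref{lem_pr_u_action}, which already describes how $\PR_t$ conjugates the one-parameter unipotent action $x_i(a)$ into $x_{i-1}$ (with the appropriate scaling by powers of $t$ at the boundary cases $i=0,1$). Combining the definition of $e_i^c$ in Definition \ref{defn_maps}(3) with $\vp_i \circ \PR = \vp_{i-1}$ from part (1), the scalar $a = (c-1)/\vp_i$ transforms in exactly the way required by Lemma \ref{lem_pr_u_action} so that the conjugated action becomes $e_{i-1}^c$. The only real work is checking that the $t$-factors in the definition of $e_0^c$ and the scalings in the $i=0,1$ cases of Lemma \ref{lem_pr_u_action} line up; this is a short computation once parts (1) and the lemma are in hand.

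\emph{Part (4), the main obstacle.} I expect $f \circ \PR = f$ to be the hardest part, because the decoration $f$ is a sum of $n$ terms rather than a single ratio, so the cyclic shift must permute the summands rather than fix each one. The strategy is to apply \eqref{eq_PR} to each summand of \eqref{eq_dec} and show that the shift $J \mapsto J-1$ maps the $i$-th term to the $(i-1)$-th term, so that the whole sum is invariant under cyclic relabeling of the summation index modulo $n$. The delicate point is the interface between the ``generic'' sum $\sum_{i \neq k}$ and the distinguished final term $t\,P_{[2,k]\cup\{n\}}/P_{[1,k]}$: one must confirm that this last term is precisely the image under the cyclic shift of the term that would otherwise be indexed by $i = k$ (which is excluded from the sum), and that the single explicit factor of $t$ in $f$ matches the wrap-around $t$-factor from \eqref{eq_PR}. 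I would make this rigorous by reindexing the sum and verifying that the $t$-powers and Pl\"{u}cker index sets agree term by term; the boundary case linking the $i=k$ term to the special summand is where I anticipate the argument to be most subtle.
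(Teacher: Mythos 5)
Your proposal is correct and follows essentially the same route as the paper: part (1) by direct computation with the Pl\"{u}cker shift \eqref{eq_PR}, observing that the numerator and denominator sets differ only in one element so the wrap-around $t$-factors cancel (with the $t^{-\delta_{i,0}}$, $t^{-\delta_{i,k}}$ prefactors handling the exceptional indices); part (2) from part (1) together with Lemma \ref{lem_pr_u_action}; and parts (3) and (4) from the definitions. The only difference is emphasis --- the paper dismisses (3) and (4) as clear, while you correctly spell out the reindexing in (4), where the $i=k+1$ term maps onto the distinguished summand $t\,P_{[2,k]\cup\{n\}}/P_{[1,k]}$ and that summand in turn maps onto the $i=k-1$ term, which is exactly the check the paper leaves implicit.
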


\begin{proof}
If $i \neq 0,1$, then
\[
\vp_i \circ \PR(M,t) = \dfrac{P_{[i-k+1,i-1] \cup \{i+1\}}(\PR_t(M))}{P_{[i-k+1,i]}(\PR_t(M))} = \dfrac{P_{[i-k,i-2] \cup \{i\}}(M)}{P_{[i-k,i-1]}(M)} = \vp_{i-1}(M,t),
\]
where the middle equality holds because the sets $[i-k+1,i-1] \cup \{i+1\}$ and $[i-k+1,i]$ differ only in the elements $i$ and $i+1$, so either both sets contain 1, or neither set contains 1. The cases $i = 0,1$ are similar. This proves the first half of (1); the other half is similar.

Part (2) follows easily from part (1) and Lemma \ref{lem_pr_u_action}. Parts (3) and (4) are clear from the definitions of $\PR$, $\gamma$, and $f$.
%Now we prove (2). By part (1) and the definition of $e_i^c$, we have
%\[
%e_{i,t}^c \circ \PR_t(M) = \begin{cases}
%x_i\left(\dfrac{c-1}{\vp_{i-1}(M,t)}\right) \cdot \PR_t(M) &\text{ if } i \neq 0 \smallskip \\
%x_0\left(\dfrac{(-1)^{k-1}}{t} \cdot \dfrac{c-1}{\vp_{n-1}(M,t)}\right) \cdot \PR_t(M) &\text{ if } i = 0
%\end{cases}.
%\]
%Then Lemma \ref{lem_pr_u_action} implies that
%\[
%\PR_t^{-1} \circ e_{i,t}^c \circ \PR_t(M) = \begin{cases}
%x_{i-1}\left(\dfrac{c-1}{\vp_{i-1}(M,t)}\right) \cdot M &\text{ if } i \neq 0,1 \smallskip \\
%x_0\left(\dfrac{(-1)^{k-1}}{t}\dfrac{c-1}{\vp_{0}(M,t)}\right) \cdot M &\text{ if } i = 1 \smallskip \\
%x_{n-1}\left((-1)^{k-1} t \dfrac{(-1)^{k-1}}{t} \cdot \dfrac{c-1}{\vp_{n-1}(M,t)}\right) \cdot M &\text{ if } i = 0
%\end{cases}
%\]
%which proves (2).
%For part (3), note that if $i=k+1$, then $[i-k,i-1]$ contains $1$ and $[i-k+1,i]$ does not; if $i = 1$, then $[i-k+1,i]$ contains $1$ and $[i-k,i-1]$ does not; if $i \neq 1,k+1$, then the symmetric difference of $[i-k,i-1]$ and $[i-k+1,i]$ does not contain 1. Part (3) follows from this observation and the definitions of $\PR$ and $\gamma$. Part (4) is clear from the definitions of $\PR$ and $f$.
\end{proof}

\section{Parametrization}
\label{sec_param}
%Section{Parametrization}
%\label{sec_param}

\subsection{Rational $k$-rectangles and the Gelfand-Tsetlin parametrization}
\label{sec_GT_param}

Recall that a $k$-rectangle is an array of $k(n-k)+1$ nonnegative integers satisfying certain inequalities (Definition \ref{defn_k_rect}). We now define a ``rational version'' of $k$-rectangles, in which integers are replaced with nonzero complex numbers. Let
\[
\bT_k = (\Cx)^{R_k} \times \Cx
\]
where $R_k$ is the indexing set defined by \eqref{eq_R_k}. Denote a point of $\bT_k$ by $(X_{ij}, t)$, where $(i,j)$ runs over $R_k$. We call $(X_{ij},t)$ a {\em rational $k$-rectangle}. Set
\begin{equation}
\label{eq_x_ij}
x_{ij} = X_{ij}/X_{i,j-1}
\end{equation}
for $1 \leq i \leq k$ and $i \leq j \leq i+n-k$, where we use the convention $X_{i,i-1} = 1$ and $X_{i,i+n-k} = t$. The quantity $x_{ij}$ is the rational analogue of the number of $j$'s in the $i^{th}$ row of a tableau (c.f. \eqref{eq_row_coords}). Note that there are no inequality conditions on rational $k$-rectangles; we will see in \S \ref{sec_trop_dec} that the decoration is a ``rational proxy'' for the inequalities.

We will use the set of rational $(n-k)$-rectangles to parametrize the variety $\X{k} = \Y{k}{n}$; that is, we will define a birational isomorphism
\[
\Theta_k : \bT_{n-k} \rightarrow \X{k}.
\]

For $1 \leq i \leq j \leq n$ and $c_i, \ldots, c_j \in \Cx$, define
\begin{equation*}
M_{[i,j]}(c_i, \ldots, c_j) = x_{-i}(c_i) x_{-(i+1)}(c_{i+1}) \cdots x_{-(j-1)}(c_{j-1}) t_{j}(c_j),
\end{equation*}
where
\[
x_{-i}(c) = cE_{ii} + c^{-1}E_{i+1,i+1} + E_{i+1,i} + \sum_{j \neq i,i+1}E_{jj}, \quad\quad\quad\quad t_i(c) = cE_{ii} + \sum_{j \neq i}E_{jj}
\]
for $i \in [n-1]$, $i \in [n]$, respectively ($E_{ij}$ is an $n \times n$ matrix unit). For example, if $n = 5$, then we have
\begin{equation}
\label{eq_chev_gens}
M_{[2,4]}(c_2,c_3,c_4) = \left(
\begin{array}{ccccc}
1 &  &  &  &  \\
 & c_2 &  &  &  \\
 & 1 & c_3/c_2 &  &  \\
 &  & 1 & c_4/c_3 &  \\
 &  &  &  & 1
\end{array}
\right)
\end{equation}
where only nonzero entries are shown.

\begin{defn}
\label{defn_Phi}
\
\begin{enumerate}
\item
Define $\Phi_{n-k} : \bT_{n-k} \rightarrow \GL_n$ by
\[
\Phi_{n-k}(X_{ij}, t) = \prod_{i = n-k}^1 M_{[i,i+n-k-1]}(X_{ii}, X_{i,i+1}, \ldots, X_{i,i+n-k-1},t),
\]
where the terms in the product are arranged from left to right in decreasing order of $i$.

\item
Define $\Theta_k : \bT_{n-k} \rightarrow \X{k}$ by $\Theta_k(X_{ij}, t) = (M, t)$, where
\[
M = \pi^k \circ \Phi_{n-k}(X_{ij}, t)
\]
is the ``projection'' of the invertible matrix $\Phi_{n-k}(X_{ij},t)$ onto the subspace spanned by its first $k$ columns. We call $\Theta_k$ the {\em Gelfand-Tsetlin parametrization} of $\X{k}$.
\end{enumerate}
\end{defn}

\begin{ex}
\label{ex_2_5}
Suppose $n=5$ and $k=2$. For $(X_{ij},t) \in \bT_{3}$, let $(M,t) = \Theta_2(X_{ij},t) = \Y{2}{5}$. Then $M$ is spanned by the first two columns of the matrix
\begin{equation}
\label{eq_2_5}
\Phi_3(X_{ij}, t) = \left(
\begin{array}{ccccc}
x_{11} & 0 & 0 & 0 & 0 \\
x_{22} & x_{12}x_{22} & 0 & 0 & 0 \\
x_{33} & (x_{12} + x_{23})x_{33} & x_{13}x_{23}x_{33} & 0 & 0 \\
1 & x_{12} + x_{23} + x_{34} & x_{13}(x_{23} + x_{34}) & x_{24}x_{34} & 0 \\
0 & 1 & x_{13} & x_{24} & x_{35}
\end{array}
\right)
\end{equation}
(recall that $x_{ij} = X_{ij}/X_{i,j-1}$).
\end{ex}

\begin{prop}
\label{prop_GT_pl}
The map $\Theta_k$ is an open embedding of $\bT_{n-k}$ into $\X{k}$. The (rational) inverse is given by $(M,t) \mapsto (X_{ij}, t)$, where
\[
X_{ij} = \dfrac{P_{[i,j] \cup [n-k+j-i+2,n]}(M)}{P_{[i+1,j] \cup [n-k+j-i+1,n]}(M)}
\]
for $1 \leq i \leq n-k$ and $i \leq j \leq i+k-1$.
\end{prop}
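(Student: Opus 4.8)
The plan is to establish two things: that $\Theta_k$ is an open embedding, and that the displayed formula for $X_{ij}$ inverts it. The crux is the inverse formula, since an explicit rational inverse defined on a dense open set automatically shows $\Theta_k$ is a birational isomorphism onto its image, hence an open embedding (both $\bT_{n-k}$ and $\X{k}$ have dimension $k(n-k)+1$). So I would focus on verifying the formula
\[
X_{ij} = \dfrac{P_{[i,j] \cup [n-k+j-i+2,n]}(M)}{P_{[i+1,j] \cup [n-k+j-i+1,n]}(M)}.
\]

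**First I would** reduce everything to a computation of Plücker coordinates of the matrix $\Phi_{n-k}(X_{ij},t)$. Since $M$ is the span of the first $k$ columns of $\Phi := \Phi_{n-k}(X_{ij},t)$, the Plücker coordinate $P_I(M)$ equals $\Delta_{I,[k]}(\Phi)$, the minor using rows $I$ and columns $[k]$. The matrix $\Phi$ is a product of the bidiagonal/diagonal Chevalley-type generators $x_{-i}(c)$ and $t_j(c)$, so its entries are subtraction-free polynomials in the $x_{ij}$ (as Example \ref{ex_2_5} illustrates). The strategy is to identify, for each relevant minor $P_{[i,j]\cup[n-k+j-i+2,n]}(M)$, a clean product formula in the variables $X_{ij}$, and then check that the stated ratio telescopes to exactly $X_{ij}$. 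The cleanest route is to use the planar-network description of $\Phi$ promised in \S\ref{sec_param} together with the Lindström Lemma (Lemma \ref{lem_pluc_formula}), which expresses each minor as a sum over families of non-intersecting paths; for these particular ``staircase'' index sets I expect a \emph{unique} non-intersecting path family to survive, collapsing the sum to a single monomial in the $X_{ij}$.

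**The main obstacle** will be bookkeeping: verifying that the specific subsets $[i,j]\cup[n-k+j-i+2,n]$ and $[i+1,j]\cup[n-k+j-i+1,n]$ are precisely the ones for which the Lindström expansion reduces to a single term, and that the ratio of the resulting monomials is $X_{ij}$ rather than some product of the $x_{ij}$. Recalling that $x_{ij} = X_{ij}/X_{i,j-1}$, I anticipate that the numerator minor is a monomial in the $X$'s whose ``overlap'' with the denominator monomial cancels everything except the single factor $X_{ij}$; confirming this cancellation across the full range $1 \le i \le n-k$, $i \le j \le i+k-1$ is the delicate part, and is best organized by tracking how the two index sets differ (they should differ in exactly the way that shifts one vertical step in the underlying lattice-path picture).

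**Alternatively**, if the direct path-counting bookkeeping proves unwieldy, I would verify the inverse formula by a downward induction that mirrors the factored structure of $\Phi = \prod_{i=n-k}^{1} M_{[i,\,i+n-k-1]}(\cdots)$. One peels off the generators one at a time, using the three-term Plücker relation (Corollary \ref{cor_3term}) to relate the minors of $M$ to those of the partially-reduced matrix, thereby extracting the $X_{ij}$ in the order dictated by the product. In either approach, once the rational map $(M,t)\mapsto(X_{ij},t)$ is shown to be a left inverse of $\Theta_k$ on a dense open set, injectivity and dominance give that $\Theta_k$ is a birational isomorphism onto an open subset of $\X{k}$, completing the proof.
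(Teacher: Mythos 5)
Your proposal is correct, and its first half is precisely the paper's argument: the paper shows (Lemma \ref{lem_pluc_formula}, via the Lindstr\"{o}m Lemma applied to the network $N_k$) that for the basic subsets $J_{i,j}=[i,j]\cup[n-k+j-i+2,n]$ there is a unique $J$-tableau, equivalently a unique vertex-disjoint path family, so $P_{J_{i,j}}(M)$ is the monomial $\prod_{a\in[i,j]\cap[n-k]}X_{aj}$ (Corollary \ref{cor_pluc_formulae}(2)), and the stated ratio telescopes to $X_{ij}$, giving $\Psi_k\circ\Theta_k=\Id$ exactly as you predict. Where you diverge is the converse direction: the paper does not argue abstractly via dimensions, but proves $\Theta_k\circ\Psi_k=\Id$ on the entire open set $U_k\times\Cx$ where the basic Pl\"{u}cker coordinates are nonzero, using Lemma \ref{lem_U_k_determines} --- every $M\in U_k$ has a unique ``diagonal form'' representative whose entries are recovered inductively from the basic Pl\"{u}cker coordinates by expanding the minors $\Delta_{[a-b+1,a],[1,b]}$ along the last column. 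Your route instead extracts dominance from injectivity plus the dimension count $k(n-k)+1$ on both sides, upgrades the left inverse to a two-sided rational inverse, and (implicitly) invokes Zariski's main theorem (an injective birational morphism into the smooth, hence normal, variety $\X{k}$ is an open immersion); that is valid and shorter for the proposition as stated, but the paper's argument buys more: it identifies the image of $\Theta_k$ as exactly $U_k\times\Cx$ and establishes the inverse identity at every point of it, which is what the subsequent positivity statement (Corollary \ref{cor_basic=basis}, expanding arbitrary Pl\"{u}cker coordinates positively in the basic ones on all of $U_k$) actually uses. Your fallback of peeling off the factors of $\Phi_{n-k}$ with three-term Pl\"{u}cker relations is unnecessary; if you want to avoid the ZMT black box, the elementary substitute is exactly the paper's Lemma \ref{lem_U_k_determines}.
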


This result is proved in \S \ref{sec_basic_pluc}.

%We view the inverse map $\Theta_k^{-1}$ as a filling of an $(n-k)$-row rectangular GT pattern with ratios of Pl\"{u}cker coordinates, as illustrated in the following example.

%\begin{ex}
%If $n=5$ and $k = 2$, then writing $P_J = P_J(M)$, we have
%\[
%\Theta_k^{-1}(M,\myt) =
%%\begin{array}{cccccccccc}
%%& A_{11}  \\
%%A_{12}  && A_{22}\\
%%& A_{23} && A_{33} \\
%%&& A_{34} 
%%\end{array}
%%\quad
%%= \quad
%\begin{array}{cccccccccc}
%&&&& \dfrac{P_{145}}{P_{345}}  \\
%&&&\dfrac{P_{125}}{P_{245}} && \dfrac{P_{245}}{P_{345}} \\
%&&\dfrac{P_{123}}{P_{235}} && \dfrac{P_{235}}{P_{345}} \\
%& t && \dfrac{P_{234}}{P_{345}}
%\end{array}
%%%%\begin{array}{cccccccccc}
%%%%&&&& \dfrac{P_{1567}}{P_{4567}}  \\
%%%%&&&\dfrac{P_{1267}}{P_{2567}} && \dfrac{P_{2567}}{P_{4567}} \\
%%%%&&\dfrac{P_{1237}}{P_{2367}} && \dfrac{P_{2367}}{P_{3567}} && \dfrac{P_{3567}}{P_{4567}} \\
%%%%&\dfrac{P_{1234}}{P_{2347}} && \dfrac{P_{2347}}{P_{3467}} && \dfrac{P_{3467}}{P_{4567}} \\
%%%%\myt && \dfrac{P_{2345}}{P_{3457}} && \dfrac{P_{3457}}{P_{4567}} \\
%%%%&&& \dfrac{P_{3456}}{P_{4567}}
%%%%\end{array}
%\]
%\end{ex}

\begin{remark}
\label{rmk_ZLP}
Berenstein and Kazhdan defined a geometric crystal on $UZ(L_P)\ov{w_P}U \cap B^-$ for any parabolic subgroup $P$ of a reductive group $G$, where $U$ is the upper unipotent subgroup, $B^-$ is the lower Borel subgroup, $Z(L_P)$ is the centralizer of the Levi subgroup $L_P$, and $\ov{w_P}$ is the ``standard representative'' of a certain Weyl group element $w_P$. The map $\Phi_{n-k}$ comes from the special case of their construction where $P$ is a maximal parabolic subgroup of $PGL_n$, and $w_P$ is the Grassmannian permutation
\[
w_P = \left(
\begin{array}{ccccccccc}
1 & \cdots & k & k+1 & \cdots & n \\
n-k+1 & \cdots & n & 1 & \cdots & n-k
\end{array}
\right)
\]
(\cite[\S 3.1]{BKII}).
\end{remark}

\subsection{Networks and formulas for Pl\"{u}cker coordinates}
\label{sec_net}

\subsubsection{Planar networks}

In this article, a {\em planar network} is a finite, directed, edge-weighted graph embedded in a disc, with no oriented cycles. The edges weights are nonzero complex numbers (or indeterminates which take values in $\Cx$). We assume there are $r$ distinguished source vertices, labeled $1, \ldots, r$, and $s$ distinguished sink vertices, labeled $1', \ldots, s'$. To each such network $N$, we associate an $r \times s$ matrix $M(N)$, as follows. Define the weight of a path to be the product of the weights of the edges in the path. The $(i,j)$-entry of $M(N)$ is the sum of the weights of all paths from source $i$ to sink $j'$, that is,
\[
M(N)_{ij} = \sum_{p \, : \, i \rightarrow j'} \wt(p).
\]
We say that $M(N)$ is the matrix associated to $N$, and that $N$ is a network representation of $M$. See Example \ref{ex_Lind} for an example.

Note that {\em gluing} of networks is compatible with matrix multiplication, in the sense that if a planar network $N$ is obtained by identifying the sinks of a planar network $N_1$ with the sources of a planar network $N_2$, then we have
\[
M(N) = M(N_1) \cdot M(N_2).
\]

Let $I = \{i_1 < \ldots < i_m\} \subset [r]$ and $J = \{j_1 < \ldots < j_m\} \subset [s]$ be two subsets of cardinality $m$. A {\em family of paths from $I$ to $J$} is a collection of $m$ paths $p_1, \ldots, p_m$, such that $p_a$ starts at source $i_a$ and ends at sink $j'_{\sigma(a)}$, for some permutation $\sigma \in S_m$. We denote such a family by $\mathcal{F} = (p_a; \sigma)$, and we define the weight of the family by $\wt(\mathcal{F}) = \prod_{a = 1}^m \wt(p_a)$. If no two of the paths share a vertex, we say that the family is {\em vertex-disjoint}.

\begin{prop}[Lindstr\"{o}m Lemma, \cite{Lind}]
Let $N$ be a planar network with $r$ sources and $s$ sinks, and let $I \subset [r], J \subset [s]$ be two subsets of the same cardinality. Then the minor of $M(N)$ using rows $I$ and columns $J$ is given by
\[
\Delta_{I,J}(M(N)) = \sum_{\mathcal{F} = (p_a; \sigma) \, : \, I \rightarrow J} \sgn(\sigma) \wt(\mathcal{F}),
\]
where the sum is over \underline{vertex-disjoint} families of paths from $I$ to $J$.
\end{prop}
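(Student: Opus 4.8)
The plan is to prove this by a sign-reversing involution, following the classical argument of Lindström (and Gessel--Viennot). First I would expand the minor directly from the definition of the determinant:
\[
\Delta_{I,J}(M(N)) = \sum_{\sigma \in S_m} \sgn(\sigma) \prod_{a=1}^m M(N)_{i_a, j_{\sigma(a)}}.
\]
Substituting the path-sum formula $M(N)_{i_a, j'} = \sum_{p \,:\, i_a \to j'} \wt(p)$ into each factor and expanding the product, every resulting term corresponds to a choice of one path $p_a$ from source $i_a$ to sink $j'_{\sigma(a)}$ for each $a$, that is, to a family $\mathcal{F} = (p_a; \sigma)$ from $I$ to $J$, weighted by $\sgn(\sigma)\,\wt(\mathcal{F})$. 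Hence
\[
\Delta_{I,J}(M(N)) = \sum_{\mathcal{F} = (p_a; \sigma)\,:\, I \to J} \sgn(\sigma)\, \wt(\mathcal{F}),
\]
where the sum now ranges over \emph{all} families, vertex-disjoint or not.

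Next I would split this sum into the vertex-disjoint families, which are precisely the terms the lemma retains, and the non-vertex-disjoint families, which I claim cancel in pairs. To exhibit the cancellation, I construct an involution on the set of non-vertex-disjoint families that preserves the weight but reverses the sign. Given such a family, let $a$ be the smallest index for which $p_a$ meets some other path; let $v$ be the first vertex along $p_a$ (reading from its source) that also lies on another path; and let $b$ be the smallest index with $b \neq a$ such that $p_b$ passes through $v$. I then swap the tails at $v$: replace $p_a$ by the path that follows $p_a$ up to $v$ and then follows $p_b$ afterward, and replace $p_b$ by the path that follows $p_b$ up to $v$ and then follows $p_a$ afterward. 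Acyclicity of $N$ guarantees that every path is finite and that these notions of ``first intersection'' are well-defined.

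This operation interchanges the endpoints of $p_a$ and $p_b$, so it replaces $\sigma$ by its composition with the transposition $(a\ b)$, reversing $\sgn(\sigma)$; since the two new paths use exactly the same multiset of edges as the two old ones, $\wt(\mathcal{F})$ is unchanged. The map is an involution because the portions of both paths up to $v$ are left untouched, so applying the rule a second time recovers the same $a$, the same $v$, and the same $b$, and then swaps the tails back. Therefore the non-vertex-disjoint families cancel in sign-reversing pairs, and the surviving contribution is exactly the sum over vertex-disjoint families, as claimed. The main delicate point is checking that the canonical data $(a, v, b)$ are genuinely preserved by the tail swap, so that the map squares to the identity; this hinges on the fact that swapping tails at $v$ changes nothing before $v$ along either path, leaving the selection rule invariant.
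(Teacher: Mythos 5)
Your proof is correct, and there is nothing in the paper to compare it against: the paper states the Lindstr\"{o}m Lemma as a known result with a citation to \cite{Lind} and gives no proof. Your argument is precisely the classical Lindstr\"{o}m (Gessel--Viennot) tail-swapping involution from the cited literature, and you handle the one genuinely delicate point correctly --- verifying that the canonical data $(a,v,b)$ are recovered after the swap, which uses acyclicity to rule out a path revisiting a vertex and the fact that the swap preserves the union of the vertex sets of $p_a$ and $p_b$, so the minimality conditions selecting $a$, $v$, and $b$ are unchanged.
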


\begin{ex}
\label{ex_Lind}
Here is a planar network with 5 sources and 3 sinks, and its associated matrix. Unlabeled edges are assumed to have weight 1.

\begin{center}
\begin{tikzpicture}[scale=0.65]
\draw (-1.5,2) node {$N=$};

\pgfmathtruncatemacro{\n}{5};
\pgfmathtruncatemacro{\k}{3};
\pgfmathtruncatemacro{\nminusk}{\n-\k};
\foreach \x in {1,...,\k}
{
\filldraw (3*\x,2*\nminusk) circle [radius = .05] node[anchor=south] {$\x'$};
\pgfmathtruncatemacro{\source}{\x + \nminusk};
\filldraw (3*\x,0) circle [radius = .05] node[anchor=north] {$\source$};
\foreach \z in {1,...,\nminusk}
{
\pgfmathtruncatemacro{\y}{\nminusk - \z};
\draw[thick,->] (3*\x,2*\y) -- (3*\x, 2*\y + 1);
\draw[thick] (3*\x,2*\y+1) -- (3*\x, 2*\y + 2);
\pgfmathtruncatemacro{\weight}{\z + \x - 1};
\draw[thick,->] (3*\x-3,2*\y) -- (3*\x-1.5,2*\y+1);
\draw[thick] (3*\x-1.5,2*\y+1) -- (3*\x,2*\y+2);
\draw (3*\x-1.7,2*\y+1.35) node {$x_{\z \weight}$};
}
}
\foreach \z in {1,...,\nminusk}
{
\pgfmathtruncatemacro{\y}{\nminusk - \z +1};
\filldraw (0,2*\z-2) circle [radius = .05] node[anchor=east] {$\y$};
}

\draw (16,2) node {$M(N) = \left(
\begin{array}{ccc}
x_{11} & 0 & 0 \\
x_{22} & x_{12}x_{22} & 0 \\
1 & x_{12} + x_{23} & x_{13}x_{23} \\
0 & 1 & x_{13} + x_{24} \\
0 & 0 & 1
\end{array}
\right)$};

\end{tikzpicture}
\end{center}

\noindent There are three vertex-disjoint families of paths from $\{3,4\}$ to $\{2',3'\}$. The weights of these families are $x_{12}x_{13}, x_{12}x_{24},$ and $x_{23}x_{24}$, and in all three cases $\sigma$ is the identity permutation. From the matrix, one computes
\[
\Delta_{34, 23}(M(N)) = x_{12}x_{13} + x_{12}x_{24} + x_{23}x_{24},
\]
in agreement with the Lindstr\"{o}m Lemma.
\end{ex}

Every planar network appearing in this article has the property that only the identity permutation can appear in a vertex-disjoint family of paths, so the minors of the associated matrix are polynomials in the edge weights with non-negative coefficients.

\subsubsection{Network description of $\Theta_k$}

\begin{defn}
Suppose $(X_{ij},t) \in \bT_{n-k}$, and let $x_{ij} = X_{ij}/X_{i,j-1}$ as in \eqref{eq_x_ij}. Let $N_k = N_k(X_{ij},t)$ be the planar network on the vertex set $\bbZ^2$ with
\begin{itemize}
\item $k$ sinks labeled $1', \ldots, k'$, with the $j^{th}$ sink located at $(0,j)$;
\item $n$ sources labeled $1, \ldots, n$. The $i^{th}$ source is located at $(i,0)$ if $i \leq n-k$, and at $(n-k,i-n+k)$ if $i > n-k$;
\item an arrow pointing from $(i,j)$ to $(i,j+1)$ for $i = 1, \ldots, n-k$ and $j = 1, \ldots, k$. The weight of this edge is 1;
\item an arrow pointing from $(i,j)$ to $(i-1,j+1)$ for $i = 1, \ldots, n-k$ and $j = 0, \ldots, k-1$. The weight of this edge is $x_{i,i+j}$.
\end{itemize}
We will always draw the network $N_k$ using the convention for matrix indices, i.e., the first coordinate gives the vertical position, and increases from top to bottom; the second coordinate gives the horizontal position, and increases from left to right.
\end{defn}

\begin{ex}
\label{ex_net_proj_2_5}
Here is the network $N_k$ for $n=5$ and $k=2$. The network for $n=5, k=3$ is shown in Example \ref{ex_Lind}, and the network for $n=9, k=5$ appears in Figure \ref{fig_paths_Jtab} below.

\begin{center}
\begin{tikzpicture}[scale=0.75]

\pgfmathtruncatemacro{\n}{5};
\pgfmathtruncatemacro{\k}{2};
\pgfmathtruncatemacro{\nminusk}{\n-\k};
\foreach \x in {1,...,\k}
{
\filldraw (3*\x,2*\nminusk) circle [radius = .05] node[anchor=south] {$\x'$};
\pgfmathtruncatemacro{\source}{\x + \nminusk};
\filldraw (3*\x,0) circle [radius = .05] node[anchor=north] {$\source$};
\foreach \z in {1,...,\nminusk}
{
\pgfmathtruncatemacro{\y}{\nminusk - \z};
\draw[thick,->] (3*\x,2*\y) -- (3*\x, 2*\y + 1);
\draw[thick] (3*\x,2*\y+1) -- (3*\x, 2*\y + 2);
\pgfmathtruncatemacro{\weight}{\z + \x - 1};
\draw[thick,->] (3*\x-3,2*\y) -- (3*\x-1.5,2*\y+1);
\draw[thick] (3*\x-1.5,2*\y+1) -- (3*\x,2*\y+2);
\draw (3*\x-1.7,2*\y+1.35) node {$x_{\z \weight}$};
}
}
\foreach \z in {1,...,\nminusk}
{
\pgfmathtruncatemacro{\y}{\nminusk - \z +1};
\filldraw (0,2*\z-2) circle [radius = .05] node[anchor=east] {$\y$};
}

\end{tikzpicture}
\end{center}

\end{ex}

\begin{lem}
\label{lem_Nk_represents}
If $(M,t) = \Theta_k(X_{ij},t)$, then the $n \times k$ matrix associated to the network $N_k(X_{ij},t)$ is a representative of the subspace $M$.
\end{lem}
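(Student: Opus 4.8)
The plan is to realize $N_k$ as (the essential part of) the planar network obtained by gluing together the elementary networks of the Chevalley generators appearing in $\Phi_{n-k}$, and then to invoke the gluing principle $M(N_1 \cdot N_2) = M(N_1)\, M(N_2)$ to conclude that $M(N_k)$ equals the matrix of the first $k$ columns of $\Phi_{n-k}(X_{ij},t)$. By Definition \ref{defn_Phi}, that matrix is precisely a representative of the subspace $M = \pi^k \circ \Phi_{n-k}(X_{ij},t)$, which is the assertion of the lemma.

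First I would fix chip networks for the two types of generators. Each $t_j(c)$ is represented by $n$ straight parallel wires in which the $j$th wire carries weight $c$, and each $x_{-i}(c)$---whose only nontrivial $2\times 2$ block is $\left(\begin{smallmatrix} c & 0 \\ 1 & c^{-1}\end{smallmatrix}\right)$ in rows and columns $i,i+1$---is represented by a chip consisting of the two rescaled wires $i,i+1$ together with a single extra edge running from wire $i+1$ to wire $i$. Because gluing of networks corresponds to matrix multiplication, concatenating these chips in the order prescribed by Definition \ref{defn_Phi} produces a planar network $\widehat{N}$ with $n$ sources and $n$ sinks satisfying $M(\widehat{N}) = \Phi_{n-k}(X_{ij},t)$. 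Here each factor $M_{[i,\,\cdot\,]}$ contributes one diagonal ``strand'' of $\widehat N$.

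Next I would pass from $\widehat{N}$ to $N_k$. Taking the first $k$ columns of $\Phi_{n-k}$ amounts to retaining only the sinks $1', \ldots, k'$; any vertex or edge of $\widehat{N}$ that cannot reach one of these sinks lies on no path counted by $M(\widehat N)_{i,j}$ with $j \leq k$, so deleting it leaves all relevant path sums unchanged. I would then check that, after this pruning and an affine relabeling of vertex coordinates, the surviving graph is exactly the network $N_k$, matching the source and sink positions and the two families of edges in the definition. The crux of this step is the weight bookkeeping: the generators making up $M_{[i,\,\cdot\,]}$ introduce ratios $c_\ell/c_{\ell-1}$ (as visible in \eqref{eq_chev_gens}), and one must verify that these telescope along each diagonal strand so that the diagonal edge leaving $(i,j)$ carries weight exactly $x_{i,i+j} = X_{i,i+j}/X_{i,i+j-1}$, in agreement with the definition of $N_k$.

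I expect this weight computation---reconciling the Chevalley parametrization of $\Phi_{n-k}$ with the clean edge weights of $N_k$---to be the main obstacle, while the gluing and pruning are formal. Once the identification of $\widehat N$ with $N_k$ is in place, the gluing principle (equivalently the Lindstr\"om Lemma applied to $\widehat N$) yields $M(N_k) = \bigl(\Phi_{n-k}(X_{ij},t)\bigr)_{[n],[k]}$, so $M(N_k)$ represents $M$. As a consistency check, I would confirm that this construction reproduces the matrix displayed in Example \ref{ex_Lind} and the first $k$ columns of the matrix in Example \ref{ex_2_5}.
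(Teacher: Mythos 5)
Your proposal is correct and follows essentially the same route as the paper: the paper likewise builds a network for $\Phi_{n-k}(X_{ij},t)$ by gluing networks for the factors $M_{[i,j]}(c_i,\ldots,c_j)$ (your Chevalley chips, grouped per factor, with the ratios $c_{\ell+1}/c_\ell$ appearing exactly as in \eqref{eq_chev_gens}), then erases everything to the right of sink $k'$ and contracts the weight-one diagonal edges at the sources to recover $N_k(X_{ij},t)$. Your ``pruning by reachability plus relabeling'' is the same step, and the weight bookkeeping you flag is immediate since the edge weights of each factor's network are already the $x_{i\ell}=X_{i\ell}/X_{i,\ell-1}$.
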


\begin{proof}
Start by constructing a network for the $n \times n$ matrix $\Phi_{n-k}(X_{ij},t)$ by gluing together networks for matrices of the form $M_{[i,j]}(c_i, \ldots, c_j)$. For example, if $n=5$ and $k=2$, then the matrix $M_{[2,4]}(c_2,c_3,c_4)$ is represented by the network

\begin{center}
\begin{tikzpicture}[scale=0.6]

\pgfmathtruncatemacro{\n}{5};
\pgfmathtruncatemacro{\k}{4};
\pgfmathtruncatemacro{\nminusk}{\n-\k};
\pgfmathtruncatemacro{\rows}{1};
\pgfmathtruncatemacro{\xscale}{4};
\pgfmathtruncatemacro{\yscale}{2};
\foreach \x in {1,...,\n}
{
\pgfmathtruncatemacro{\xx}{\xscale*\x};
\foreach \y in {1,...,\rows}
{
\pgfmathtruncatemacro{\yy}{\yscale*\y};
\pgfmathtruncatemacro{\xy}{\xscale*\y};
\pgfmathtruncatemacro{\yx}{\yscale*\x};

\ifthenelse{\x > 2 \AND \x < 5}{
\draw[thick,->] (\xx,\yy) -- (\xx,\yy + 0.5*\yscale);
\draw[thick] (\xx,\yy+0.5*\yscale) -- (\xx,\yy+\yscale);
}{}

\pgfmathtruncatemacro{\aa}{\nminusk - 1}
\pgfmathtruncatemacro{\yweight}{-\y+1+\x}
\pgfmathtruncatemacro{\xweight}{\nminusk - \y + 1}
\ifthenelse{\x = 2}{
\draw (\xx+0.4*\xscale,\yy+0.75*\yscale) node {$c_2$};
}{}
\ifthenelse{\x = 3}{
\draw (\xx+0.4*\xscale,\yy+0.75*\yscale) node {$c_{3}/c_{2}$};
}{}
\ifthenelse{\x = 4}{
\draw (\xx+0.4*\xscale,\yy+0.75*\yscale) node {$c_{4}/c_{3}$};
}{}

\draw[thick,->] (\xx + \xy - \xscale,\yy) -- (\xx+\xy-0.5*\xscale,\yy+0.5*\yscale);
\draw[thick] (\xx + \xy-0.5*\xscale,\yy+0.5*\yscale) -- (\xx+\xy, \yy+\yscale);
}

\filldraw (\xx,\yscale) circle [radius=.05] node[below] {$\x$};
\filldraw (\xx+ \xscale*\rows, \yscale*\rows+\yscale) circle [radius=.05] node[above] {$\x'$};
}

\end{tikzpicture}
\end{center}
and the matrix $\Phi_3(X_{ij},t)$ is represented by the network

\begin{center}
\begin{tikzpicture}[scale=0.6]

\pgfmathtruncatemacro{\n}{5};
\pgfmathtruncatemacro{\k}{2};
\pgfmathtruncatemacro{\nminusk}{\n-\k};
\pgfmathtruncatemacro{\rows}{\nminusk};
\pgfmathtruncatemacro{\xscale}{3};
\pgfmathtruncatemacro{\yscale}{2};
\foreach \x in {1,...,\n}
{
\pgfmathtruncatemacro{\xx}{\xscale*\x};
\foreach \y in {1,...,\rows}
{
\pgfmathtruncatemacro{\yy}{\yscale*\y};
\pgfmathtruncatemacro{\xy}{\xscale*\y};
\pgfmathtruncatemacro{\yx}{\yscale*\x};

\ifthenelse{\x > \nminusk}{
\draw[thick,->] (\xx,\yy) -- (\xx,\yy + 0.5*\yscale);
\draw[thick] (\xx,\yy+0.5*\yscale) -- (\xx,\yy+\yscale);
}{}

\pgfmathtruncatemacro{\aa}{\nminusk - 1}
\pgfmathtruncatemacro{\yweight}{-\y+1+\x}
\pgfmathtruncatemacro{\xweight}{\nminusk - \y + 1}
\ifthenelse{\x > \aa}{
\draw (\xx+0.4*\xscale,\yy+0.65*\yscale) node {$x_{\xweight \yweight}$};
}{}

\draw[thick,->] (\xx + \xy - \xscale,\yy) -- (\xx+\xy-0.5*\xscale,\yy+0.5*\yscale);
\draw[thick] (\xx + \xy-0.5*\xscale,\yy+0.5*\yscale) -- (\xx+\xy, \yy+\yscale);
}

\filldraw (\xx,\yscale) circle [radius=.05] node[below] {$\x$};
\filldraw (\xx+ \xscale*\rows, \yscale*\rows+\yscale) circle [radius=.05] node[above] {$\x'$};
}

\end{tikzpicture}
\end{center}
As usual, unlabeled edges are assumed to have weight 1. (The reader may verify that the matrices \eqref{eq_chev_gens} and \eqref{eq_2_5} are indeed the weight matrices of these two networks.)

To get a network representative for the first $k$ columns of $\Phi_{n-k}(X_{ij},t)$, simply erase everything to the right of the sink $k'$. Finally, contract the diagonal edges of weight 1 leaving the sources $1, \ldots, n-k$ (this doesn't change the associated matrix) to obtain the network $N_k(X_{ij},t)$.
\end{proof}

\begin{cor}
\label{cor_M}
Suppose $(X_{ij},t) \in \bT_{n-k}$.
\begin{enumerate}
\item The $k \times k$ submatrix in the bottom left corner of $\Phi_{n-k}(X_{ij}, t)$ is upper-triangular with 1's on the main diagonal.
\item If $(M,t) = \Theta_k(X_{ij}, t)$, then the subspace $M$ is independent of the parameter $t$.
\end{enumerate}
\end{cor}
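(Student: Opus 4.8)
Both statements follow from the network description of the parametrization in Lemma \ref{lem_Nk_represents}, which identifies the $n \times k$ matrix $M(N_k)$ associated to the planar network $N_k = N_k(X_{ij},t)$ as a representative of the subspace $M$. In particular the bottom-left $k \times k$ submatrix of $\Phi_{n-k}(X_{ij},t)$ is exactly the submatrix of $M(N_k)$ on the rows $n-k+1, \ldots, n$, so both parts reduce to understanding this representative. My plan is to treat part (2) first, since it is immediate, and then to extract part (1) from the combinatorics of the paths in $N_k$.

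For part (2) I would simply observe that $t$ is absent from the edge weights of $N_k$. By the definition of $N_k$ the edge weights are the quantities $x_{i,i+j}$ with $1 \leq i \leq n-k$ and $0 \leq j \leq k-1$, so their second index never exceeds $i+k-1$. On the other hand, for a rational $(n-k)$-rectangle the parameter $t$ enters the coordinates $x_{ij}$ only through the right-boundary convention $X_{i,i+k} = t$ (cf.\ \eqref{eq_x_ij}), i.e.\ only through the quantities $x_{i,i+k} = t/X_{i,i+k-1}$, whose second index is $i+k$. Since $i+k > i+k-1$, no edge weight of $N_k$ involves $t$, and hence by the Lindstr\"{o}m Lemma every entry of $M(N_k)$ is a $t$-free polynomial in the edge weights. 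Thus the representative $M(N_k)$, and a fortiori the subspace $M$, is independent of $t$.

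For part (1) I would compute the relevant submatrix using the Lindstr\"{o}m Lemma: the entry in position $(n-k+r,\, c)$, for $r,c \in [k]$, equals $M(N_k)_{n-k+r,\,c}$, the sum of the weights of all paths from source $n-k+r$ to sink $c'$ (here the path families are single paths, so the permutation is the identity). The key geometric facts are that the $k$ lowest sources $n-k+1, \ldots, n$ are aligned with the $k$ sinks $1', \ldots, k'$---source $n-k+r$ sits ``below'' sink $r'$ in the same column---and that every edge weakly increases the horizontal coordinate while the vertical, column-preserving edges all carry weight $1$. Consequently a path from source $n-k+r$ to sink $c'$ exists only when $c \geq r$; for $c < r$ there is none, giving zeros strictly below the diagonal, and for $c = r$ the unique path is the straight vertical path of weight $1$, giving $1$'s on the main diagonal. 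This yields the claimed unit upper-triangularity. The only real obstacle is pinning down this combinatorial geometry of $N_k$---the alignment of the lowest $k$ sources with the sinks and the monotonicity of the column index along directed paths; conceptually it reflects the fact that $\Phi_{n-k}$ represents the Grassmannian permutation $w_P$ of Remark \ref{rmk_ZLP}, whose matrix has $1$'s precisely in the positions $(n-k+c,\,c)$ for $c \in [k]$.
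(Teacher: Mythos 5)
Your proof is correct and takes essentially the same route as the paper: part (1) by the same path count in $N_k(X_{ij},t)$ (no paths from source $n-k+r$ to sink $c'$ when $c<r$, and a unique weight-$1$ path when $c=r$), and part (2) from the observation that $t$ enters the coordinates only through $x_{i,i+k}$, which never occurs as an edge weight of $N_k$. The only difference is cosmetic: you spell out the identification of the network matrix $M(N_k)$ with the first $k$ columns of $\Phi_{n-k}(X_{ij},t)$, which the paper leaves implicit in (the proof of) Lemma \ref{lem_Nk_represents}.
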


\begin{proof}
For $i = n-k+1, \ldots, n$, there are no paths in $N_k(X_{ij},t)$ from source $i$ to sink $j'$ if $j < i-n+k$, and there is a unique path of weight 1 from source $i$ to sink $(i-n+k)'$. This proves (1).

Part (2) follows from the fact that $x_{ij}$ only depends on $t$ if $j = i+k$, and $x_{i,i+k}$ does not appear as an edge weight in $N_k(X_{ij},t)$.
\end{proof}

\subsubsection{$J$-tableaux}
\label{sec_J_tab}

Let $M$ be the $n \times k$ matrix associated to the network $N_k(X_{ij},t)$. The Lindstr\"{o}m Lemma expresses the Pl\"{u}cker coordinates of $M$ in terms of the quantities $x_{ij} = X_{ij}/X_{i,j-1}$ by adding up the weights of vertex-disjoint families of paths. Here we give a more explicit formula for these minors in terms of a combinatorial object that we call a $J$-tableau.

For $k \in [1,n-1]$, let
\[
D_k = \{(a,b) \in \bbZ^2 \,|\, 1 \leq a \leq b \leq k\}
\]
be the shifted staircase of size $k$. We identify $D_k$ with its ``Young diagram,'' so that each point $(a,b) \in D_k$ corresponds to a box in row $a$ and column $b$ of the diagram. Given a subset $J = \{j_1 < j_2 < \cdots < j_k\} \in {[n] \choose k}$, let $D_{J,k}$ be the subset of $D_k$ obtained by removing $j_r - n + k$ boxes from the bottom of column $r$, for each $r$ such that $j_r > n-k$. For example, if $n = 8$, then
\[
D_3 =
\ytableaushort{{}{}{},\none{}{},\none\none{}}
\quad\quad \text{and} \quad\quad
D_{\{4,5,7\},3} = \ytableaushort{{}{}{},\none{}\none,\none\none\none} \, .
\]

\begin{defn}
\label{defn_J_tab}
Let $J = \{j_1 < \cdots < j_k\}$. A {\em $(J,k)$-tableau} is a map $T : D_{J,k} \rightarrow [n-k]$ satisfying the following three properties:
\begin{enumerate}
\item[(a)] $T(a,b) \leq T(a,b+1)$ whenever $(a,b), (a,b+1) \in D_{J,k}$;
\item[(b)] $T(a,b) < T(a+1,b)$ whenever $(a,b), (a+1,b) \in D_{J,k}$;
\item[(c)] $T(a,a) = j_a$ if $j_a \leq n-k$.
\end{enumerate}
We will often write $J$-tableau instead of $(J,k)$-tableau when $k$ is understood.

Let $\{z_{ij} \, | \, (i,j) \in R_{n-k}\}$ (see \eqref{eq_R_k}) be an $(n-k) \times k$ array of indeterminates. Define the {\em weight} of a $J$-tableau $T$ (with respect to $z_{ij}$) by
\[
\wt(T) = \prod_{(a,b) \in D_{J,k}} z_{T(a,b), T(a,b) + b-a}.
\]
If $D_{J,k}$ is empty (i.e., if $J = [n-k+1,n]$), we define the weight of the unique (empty) $J$-tableau to be 1.
\end{defn}

Note that properties (a) and (b) require the rows of $T$ to weakly increase, and the columns to strictly increase.

\begin{ex}
Let $n=8, k=3$. There are two $\{4,5,7\}$-tableaux, shown here with their weights:
\[
\ytableaushort{{4}{4}{4},\none{5}\none,\none\none\none} \quad z_{44}z_{45}z_{46}z_{55}
\quad\quad\quad\quad\quad\quad\quad
\ytableaushort{{4}{4}{5},\none{5}\none,\none\none\none} \quad z_{44}z_{45}z_{55}z_{56} \,
\]
\end{ex}

\begin{lem}
\label{lem_pluc_formula}
Suppose $(X_{ij},t) \in \bT_{n-k}$, and $(M,t) = \Theta_k(X_{ij}, t)$. Then we have
\begin{equation}
\label{eq_pluc_formula}
P_J(M) = \sum_T \wt(T)
\end{equation}
where the sum runs over all $(J,k)$-tableaux $T$, and the weight is taken with respect to $x_{ij} = X_{ij}/X_{i,j-1}$.
\end{lem}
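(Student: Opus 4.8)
The plan is to combine the Lindstr\"{o}m Lemma with a careful analysis of the geometry of the network $N_k(X_{ij},t)$ established in Lemma \ref{lem_Nk_represents}. Since $(M,t) = \Theta_k(X_{ij},t)$, that lemma tells us $P_J(M)$ is (up to the overall scaling implicit in the Pl\"{u}cker coordinates) the maximal minor $\Delta_{J,[k]}(M(N_k))$ of the weight matrix of $N_k$. By the Lindstr\"{o}m Lemma, and using the remark that only the identity permutation contributes vertex-disjoint families in the networks of this paper, this minor equals $\sum_{\mathcal{F}} \wt(\mathcal{F})$, where the sum runs over vertex-disjoint families of paths from the source set $J = \{j_1 < \cdots < j_k\}$ to the sink set $\{1', \ldots, k'\}$. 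So the real content is a weight-preserving bijection between such vertex-disjoint path families and $(J,k)$-tableaux.

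The key step is to set up this bijection explicitly. First I would describe the shape of an individual path in $N_k$: a path from source $i$ to sink $j'$ must travel by a sequence of diagonal edges (weight $x_{a,a+\ell}$, moving up-and-left) interspersed with vertical edges (weight $1$), eventually reaching the column of sinks. The crucial observation is that each diagonal edge leaving a vertex at ``row'' $a$ and the appropriate diagonal offset carries weight $x_{a, a+b-a} = x_{a, b}$ exactly matching the factor $z_{T(a,b), T(a,b)+b-a}$ appearing in $\wt(T)$ under the identification $z \leftrightarrow x$. I would then read off, from a vertex-disjoint family $\mathcal{F} = (p_1, \ldots, p_k)$ with $p_r$ going from source $j_r$ to sink $r'$, a filling $T$ of the shifted-staircase diagram $D_{J,k}$: the box $(a,b)$ records which path occupies a certain edge, so that $T(a,b)$ is the index of the diagonal edge the relevant path uses in ``column'' $b$. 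The shape $D_{J,k}$ — obtained from $D_k$ by deleting boxes in the columns indexed by the sources $j_r > n-k$ — arises precisely because sources $j_r > n-k$ sit partway up the right boundary (Corollary \ref{cor_M}(1) shows these sources reach their sinks through a unique short path), so they contribute fewer diagonal edges, matching the removal of $j_r - n + k$ boxes.

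The main obstacle, and where I would spend the most care, is verifying that the vertex-disjointness of the path family corresponds exactly to the inequalities (a), (b), (c) defining a $(J,k)$-tableau. Conditions (a) and (b) — weakly increasing rows, strictly increasing columns — should translate into the statement that two paths do not cross and do not share a vertex: non-crossing gives the weak inequality along rows, while strict separation (two distinct paths cannot occupy the same lattice vertex) forces the strict inequality down columns. Condition (c), namely $T(a,a) = j_a$ when $j_a \leq n-k$, should encode the forced starting position of each path at its source $j_a$ on the bottom edge. I would prove the correspondence is a bijection by exhibiting the inverse map (reconstruct the family of paths from the tableau entries) and checking it is well-defined precisely when (a)--(c) hold, and then observe that the weight identity $\wt(\mathcal{F}) = \wt(T)$ holds termwise by the edge-weight matching described above. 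Finally, since every family contributes with $\sgn(\sigma) = +1$, summing gives $P_J(M) = \sum_T \wt(T)$, as claimed.
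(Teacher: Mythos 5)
Your proposal follows essentially the same route as the paper: invoke the Lindstr\"{o}m Lemma (together with the fact that only the identity permutation contributes), and then establish a weight-preserving bijection between vertex-disjoint path families and $(J,k)$-tableaux by recording, in column $r$ of $D_{J,k}$, the heights of the diagonal edges used by the path $p_r$. The outline is sound, and your weight-matching and your explanation of why sources $j_r > n-k$ shorten the columns of $D_{J,k}$ are correct. One attribution in your argument is off, however: you claim that the strict increase down columns (condition (b)) is ``forced'' by vertex-disjointness, i.e.\ by two distinct paths not sharing a vertex. In fact all entries of a single column come from a \emph{single} path, whose diagonal edges occur at strictly decreasing heights as the path moves toward its sink, so (b) holds automatically for \emph{every} family of paths, disjoint or not --- as does (c), which just records the forced first diagonal step out of each source. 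The paper exploits exactly this separation: it first exhibits a bijection between \emph{arbitrary} (not necessarily disjoint) families and fillings satisfying (b) and (c), and then observes that the family is vertex-disjoint if and only if the rows weakly increase, so that only the row condition (a) encodes disjointness. If you carried out your promised inverse-map construction you would discover this yourself, so the slip does not derail the proof, but as stated the claimed equivalence misassigns which tableau condition does what, and the cleaner two-step structure is worth adopting.
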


\begin{proof}
By the Lindstr\"{o}m Lemma, $P_J(M)$ is equal to the weighted sum over vertex-disjoint families of paths $p_1, \ldots, p_k$ in $N_k(X_{ij},t)$, where $p_r$ starts at the source $j_r$ and ends at the sink $r'$. Let $p_1, \ldots, p_r$ be a (not necessarily vertex-disjoint) family of paths such that $p_r$ goes from source $j_r$ to sink $r'$. The number of diagonal edges in $p_r$ is, by definition, equal to the number of boxes in the $r^{th}$ column of the diagram $D_{J,k}$. Define $T : D_{J,k} \rightarrow [n-k]$ by filling the $r^{th}$ column of $D_{J,k}$ with the heights of the diagonal edges in the $r^{th}$ path (in increasing order), where the height of the edge from $(i,j)$ to $(i-1,j+1)$ is $i$. See Figure \ref{fig_paths_Jtab} for an example of a family of paths in $N_k(X_{ij},t)$ and the associated filling of $D_{J,k}$.

It's clear that the association $(p_r)_{1 \leq r \leq k} \mapsto T$ is a bijection between (not necessarily vertex-disjoint) families of paths and fillings of $D_{J,k}$ satisfying properties (b) and (c) of Definition \ref{defn_J_tab}. It's also not hard to see that the rows of $T$ are weakly increasing if and only if the family of paths is vertex-disjoint, and that the association is weight-preserving. This completes the proof.
\end{proof}

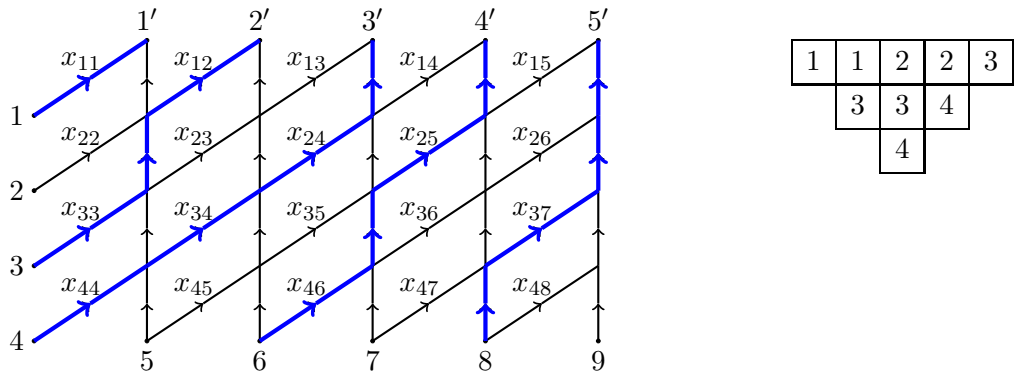
\begin{figure}

\begin{multicols}{2}

\begin{tikzpicture}[scale=0.5]

\pgfmathtruncatemacro{\n}{9};
\pgfmathtruncatemacro{\k}{5};
\pgfmathtruncatemacro{\nminusk}{\n-\k};
\foreach \x in {1,...,\k}
{
\filldraw (3*\x,2*\nminusk) circle [radius = .05] node[anchor=south] {$\x'$};
\pgfmathtruncatemacro{\source}{\x + \nminusk};
\filldraw (3*\x,0) circle [radius = .05] node[anchor=north] {$\source$};
\foreach \z in {1,...,\nminusk}
{
\pgfmathtruncatemacro{\y}{\nminusk - \z};
\draw[thick,->] (3*\x,2*\y) -- (3*\x, 2*\y + 1);
\draw[thick] (3*\x,2*\y+1) -- (3*\x, 2*\y + 2);

\pgfmathtruncatemacro{\weight}{\z + \x - 1};
\draw[thick,->] (3*\x-3,2*\y) -- (3*\x-1.5,2*\y+1);
\draw[thick] (3*\x-1.5,2*\y+1) -- (3*\x,2*\y+2);
\draw (3*\x-1.75,2*\y+1.45) node {$x_{\z \weight}$};
}
}
\foreach \z in {1,...,\nminusk}
{
\pgfmathtruncatemacro{\y}{\nminusk - \z +1};
\filldraw (0,2*\z-2) circle [radius = .05] node[anchor=east] {$\y$};
}

\pgfmathtruncatemacro{\x}{1}
\pgfmathtruncatemacro{\y}{2}
\draw[ultra thick,blue,->] (3*\x,2*\y) -- (3*\x, 2*\y + 1);
\draw[ultra thick,blue] (3*\x,2*\y+1) -- (3*\x, 2*\y + 2);

\pgfmathtruncatemacro{\x}{3}
\pgfmathtruncatemacro{\y}{3}
\draw[ultra thick,blue,->] (3*\x,2*\y) -- (3*\x, 2*\y + 1);
\draw[ultra thick,blue] (3*\x,2*\y+1) -- (3*\x, 2*\y + 2);

\pgfmathtruncatemacro{\x}{3}
\pgfmathtruncatemacro{\y}{1}
\draw[ultra thick,blue,->] (3*\x,2*\y) -- (3*\x, 2*\y + 1);
\draw[ultra thick,blue] (3*\x,2*\y+1) -- (3*\x, 2*\y + 2);

\pgfmathtruncatemacro{\x}{4}
\pgfmathtruncatemacro{\y}{3}
\draw[ultra thick,blue,->] (3*\x,2*\y) -- (3*\x, 2*\y + 1);
\draw[ultra thick,blue] (3*\x,2*\y+1) -- (3*\x, 2*\y + 2);

\pgfmathtruncatemacro{\x}{5}
\pgfmathtruncatemacro{\y}{3}
\draw[ultra thick,blue,->] (3*\x,2*\y) -- (3*\x, 2*\y + 1);
\draw[ultra thick,blue] (3*\x,2*\y+1) -- (3*\x, 2*\y + 2);

\pgfmathtruncatemacro{\x}{5}
\pgfmathtruncatemacro{\y}{2}
\draw[ultra thick,blue,->] (3*\x,2*\y) -- (3*\x, 2*\y + 1);
\draw[ultra thick,blue] (3*\x,2*\y+1) -- (3*\x, 2*\y + 2);

\pgfmathtruncatemacro{\x}{4}
\pgfmathtruncatemacro{\y}{0}
\draw[ultra thick,blue,->] (3*\x,2*\y) -- (3*\x, 2*\y + 1);
\draw[ultra thick,blue] (3*\x,2*\y+1) -- (3*\x, 2*\y + 2);

\pgfmathtruncatemacro{\x}{1}
\pgfmathtruncatemacro{\z}{1}
\pgfmathtruncatemacro{\weight}{\z + \x - 1}
\pgfmathtruncatemacro{\y}{\nminusk - \z}
\draw[ultra thick,blue,->] (3*\x-3,2*\y) -- (3*\x-1.5,2*\y+1);
\draw[ultra thick,blue] (3*\x-1.5,2*\y+1) -- (3*\x,2*\y+2);

\pgfmathtruncatemacro{\x}{1}
\pgfmathtruncatemacro{\z}{3}
\pgfmathtruncatemacro{\weight}{\z + \x - 1}
\pgfmathtruncatemacro{\y}{\nminusk - \z}
\draw[ultra thick,blue,->] (3*\x-3,2*\y) -- (3*\x-1.5,2*\y+1);
\draw[ultra thick,blue] (3*\x-1.5,2*\y+1) -- (3*\x,2*\y+2);

\pgfmathtruncatemacro{\x}{1}
\pgfmathtruncatemacro{\z}{4}
\pgfmathtruncatemacro{\weight}{\z + \x - 1}
\pgfmathtruncatemacro{\y}{\nminusk - \z}
\draw[ultra thick,blue,->] (3*\x-3,2*\y) -- (3*\x-1.5,2*\y+1);
\draw[ultra thick,blue] (3*\x-1.5,2*\y+1) -- (3*\x,2*\y+2);

\pgfmathtruncatemacro{\x}{2}
\pgfmathtruncatemacro{\z}{1}
\pgfmathtruncatemacro{\weight}{\z + \x - 1}
\pgfmathtruncatemacro{\y}{\nminusk - \z}
\draw[ultra thick,blue,->] (3*\x-3,2*\y) -- (3*\x-1.5,2*\y+1);
\draw[ultra thick,blue] (3*\x-1.5,2*\y+1) -- (3*\x,2*\y+2);

\pgfmathtruncatemacro{\x}{2}
\pgfmathtruncatemacro{\z}{3}
\pgfmathtruncatemacro{\weight}{\z + \x - 1}
\pgfmathtruncatemacro{\y}{\nminusk - \z}
\draw[ultra thick,blue,->] (3*\x-3,2*\y) -- (3*\x-1.5,2*\y+1);
\draw[ultra thick,blue] (3*\x-1.5,2*\y+1) -- (3*\x,2*\y+2);

\pgfmathtruncatemacro{\x}{3}
\pgfmathtruncatemacro{\z}{2}
\pgfmathtruncatemacro{\weight}{\z + \x - 1}
\pgfmathtruncatemacro{\y}{\nminusk - \z}
\draw[ultra thick,blue,->] (3*\x-3,2*\y) -- (3*\x-1.5,2*\y+1);
\draw[ultra thick,blue] (3*\x-1.5,2*\y+1) -- (3*\x,2*\y+2);

\pgfmathtruncatemacro{\x}{3}
\pgfmathtruncatemacro{\z}{4}
\pgfmathtruncatemacro{\weight}{\z + \x - 1}
\pgfmathtruncatemacro{\y}{\nminusk - \z}
\draw[ultra thick,blue,->] (3*\x-3,2*\y) -- (3*\x-1.5,2*\y+1);
\draw[ultra thick,blue] (3*\x-1.5,2*\y+1) -- (3*\x,2*\y+2);

\pgfmathtruncatemacro{\x}{4}
\pgfmathtruncatemacro{\z}{2}
\pgfmathtruncatemacro{\weight}{\z + \x - 1}
\pgfmathtruncatemacro{\y}{\nminusk - \z}
\draw[ultra thick,blue,->] (3*\x-3,2*\y) -- (3*\x-1.5,2*\y+1);
\draw[ultra thick,blue] (3*\x-1.5,2*\y+1) -- (3*\x,2*\y+2);

\pgfmathtruncatemacro{\x}{5}
\pgfmathtruncatemacro{\z}{3}
\pgfmathtruncatemacro{\weight}{\z + \x - 1}
\pgfmathtruncatemacro{\y}{\nminusk - \z}
\draw[ultra thick,blue,->] (3*\x-3,2*\y) -- (3*\x-1.5,2*\y+1);
\draw[ultra thick,blue] (3*\x-1.5,2*\y+1) -- (3*\x,2*\y+2);

\end{tikzpicture}

\columnbreak

\[
\ytableaushort{{1}{1}{2}{2}{3},\none{3}{3}{4},\none\none{4}}
\]
\end{multicols}

\caption{A collection of disjoint paths in $N_{5}(x_{ij})$ (with $n=9$) contributing to the Pl\"{u}cker coordinate $P_{13468}$, and the corresponding $\{1,3,4,6,8\}$-tableau.}
\label{fig_paths_Jtab}
\end{figure}

\begin{remark}
A similar result, also using an object called $J$-tableau to record vertex-disjoint families of paths, appeared previously (\cite[Proposition 2.6.7]{BFZ}). In that setting, $J$-tableaux are related to flag minors of an $n \times n$ matrix, rather than maximal minors of an $n \times k$ matrix.
\end{remark}

\begin{cor}
\label{cor_pluc_formulae}
If $(M,t) = \Theta_k(X_{ij},t)$, then
\begin{enumerate}
\item For all $J \in {[n] \choose k}$, $P_J(M)$ is a non-zero homogeneous polynomial of degree $|D_{J,k}|$ in the quantities $x_{ij} = X_{ij}/X_{i,j-1}$, with positive coefficients.
\item Suppose $i \in [n-k+1]$ and $0 \leq j-i \leq k-1$. Set $c = n-k+j-i+2$. Then we have
\begin{equation}
\label{eq_gen_cyc_pluc}
P_{[i,j] \cup [c,n]}(M) = \prod_{a \in [i,j] \cap [n-k]} X_{aj}.
\end{equation}
\end{enumerate}
\end{cor}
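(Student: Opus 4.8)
Part (1) is essentially immediate from Lemma \ref{lem_pluc_formula}. For any $(J,k)$-tableau $T$, the weight $\wt(T)=\prod_{(a,b)\in D_{J,k}} x_{T(a,b),\,T(a,b)+b-a}$ is a monomial in the $x_{ij}$ of degree $|D_{J,k}|$; each index $(T(a,b),\,T(a,b)+b-a)$ does lie in $R_{n-k}$, since $T(a,b)\in[n-k]$ and $0\le b-a\le k-1$. Hence $P_J(M)=\sum_T\wt(T)$ is homogeneous of degree $|D_{J,k}|$ with nonnegative integer coefficients. To see it is nonzero, I would exhibit one explicit $(J,k)$-tableau. Writing $J=\{j_1<\cdots<j_k\}$ and $d_a:=j_a-a$, the bounds $j_a\ge a$ and $j_a\le n-k+a$ give $0\le d_1\le\cdots\le d_k\le n-k$, and one checks directly that $T_0(a,b):=d_b+a=j_b-b+a$ satisfies conditions (a)--(c) of Definition \ref{defn_J_tab} and takes values in $[n-k]$ (using that $(a,b)\in D_{J,k}$ forces $a\le n-k-d_b$). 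Thus $\wt(T_0)$ occurs with positive coefficient, so $P_J(M)\ne 0$.

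For Part (2), set $J=[i,j]\cup[c,n]$ and $p:=j-i+1$. The plan is to show $P_J(M)$ is a single monomial by proving there is exactly one $(J,k)$-tableau. First I would identify the shape: a short computation gives $d_a=i-1$ for $a\le p$ and $d_a=n-k$ for $a>p$, so, since column $r$ of $D_{J,k}$ has $\min(r,\,n-k-d_r)$ boxes, the columns $r>p$ are empty and a column $r\le p$ has $\min(r,\,n-k-i+1)$ boxes, with the diagonal box $(r,r)$ present exactly when $r\le n-k-i+1$ (equivalently $j_r\le n-k$). When $i=n-k+1$ the diagram is empty and both sides of \eqref{eq_gen_cyc_pluc} equal $1$, so I may assume $i\le n-k$.

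The crux is to show the unique tableau is $T(a,b)=i+a-1$, which I would do by induction on the row index $a$, squeezing each entry from both sides. For the lower bound, $T(1,1)=j_1=i$ together with weak increase along rows gives $T(1,b)\ge i$, and strict increase down columns propagates $T(a,b)\ge i+a-1$. For the matching upper bound I would split into cases: in a full column the diagonal constraint gives $T(a,b)\le T(b,b)-(b-a)=i+a-1$, while in a truncated column (whose height is exactly $n-k-i+1$) the range constraint $T(\cdot,b)\le n-k$ forces $T(a,b)\le i+a-1$ as well. Hence every entry is pinned down. This uniqueness is the main obstacle: the bookkeeping of which columns are full versus truncated, and verifying that the diagonal bound and the range bound produce the \emph{same} value $i+a-1$, is where the argument must be carried out carefully.

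Finally I would compute the weight of this tableau. Since $T(a,b)+b-a=i+b-1$, we have $\wt(T)=\prod_{(a,b)\in D_{J,k}} x_{\,i+a-1,\,i+b-1}$. Grouping by row $a$, the column index $b$ runs over $[a,p]$, so the second subscript $i+b-1$ runs up to $i+p-1=j$; using $x_{rs}=X_{rs}/X_{r,s-1}$ with $X_{r,r-1}=1$, each row telescopes to $X_{\,i+a-1,\,j}$. As $a$ ranges over the nonempty rows $1,\dots,\min(p,\,n-k-i+1)$, the index $i+a-1$ ranges over $[i,\min(j,n-k)]=[i,j]\cap[n-k]$, yielding $\wt(T)=\prod_{a\in[i,j]\cap[n-k]} X_{aj}$, which is exactly \eqref{eq_gen_cyc_pluc}.
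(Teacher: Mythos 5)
Your proposal is correct and takes essentially the same route as the paper: both parts rest on the $J$-tableau formula of Lemma \ref{lem_pluc_formula}, with (1) proved by exhibiting one explicit tableau (you use $T_0(a,b)=j_b-b+a$, while the paper takes the row-constant tableau with every entry of row $a$ equal to $j_a$ --- an immaterial difference) and (2) proved by showing the $J$-tableau is unique and telescoping its weight row by row, exactly as in the paper. Your case analysis for uniqueness (diagonal constraint in full columns, range constraint $T \leq n-k$ in truncated columns) simply spells out the paper's terser observation that the column lengths are weakly increasing and the first entry of row $a$ is forced to equal $i+a-1$.
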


\begin{proof}
For any $J$, there is at least one $J$-tableau, namely, the tableau with all entries in row $r$ equal to $j_r$. The weight of every $J$-tableau is a monomial of degree $|D_{J,k}|$, so (1) is proved.

To prove (2), set $J = [i,j] \cup [c,n]$. We claim that there is only one $J$-tableau. Indeed, the first entry in the $a^{th}$ row must be $i+a-1$, and the lengths of the columns of $D_{J,k}$ are (in this case) weakly increasing, so every entry in the $a^{th}$ row must be $i+a-1$. The weight of this unique $J$-tableau is
\[
\prod_{a = 1}^m \prod_{b = a}^{j-i+1} x_{i+a-1,i+b-1} = \prod_{a' = i}^{m+i-1} X_{a'j}
\]
where $m = \min(j-i+1, n-k-i+1)$. Since $[i,m+i-1] = [i,j] \cap [n-k]$, we obtain \eqref{eq_gen_cyc_pluc}.
\end{proof}

\subsection{Basic Pl\"{u}cker coordinates}
\label{sec_basic_pluc}

We now focus on the special type of Pl\"{u}cker coordinates that appear in the formula for $\Theta_k^{-1}$ given in Proposition \ref{prop_GT_pl}. We first need a simple observation from linear algebra. Say that an $n \times k$ matrix $M$ has {\em diagonal form} if its first $k$ rows are lower triangular with nonzero entries on the main diagonal, and its last $k$ rows are upper triangular with $1$'s on the main diagonal. For example, if $n = 7$ and $k = 3$, then a matrix of diagonal form looks like
\begin{equation*}
\left(
\begin{array}{ccc}
a_1 & 0 & 0 \\
* & a_2 & 0 \\
* & * & a_3 \\
* & * & * \\
1 & * & * \\
0 & 1 & * \\
0 & 0 & 1
\end{array}
\right)
\end{equation*}
where $a_1, a_2, a_3$ are nonzero, and the $*$'s are arbitrary.

Say that a Pl\"{u}cker coordinate $P_J$ is {\em cyclic} if the elements of $J$ are consecutive mod $n$, and let $X_k^\circ$ denote the {\em open positroid cell}, the open subset of $\Gr(k,n)$ where the cyclic Pl\"{u}cker coordinates do not vanish.

\begin{lem}
\label{lem_diag_form}
Every subspace in $X_k^\circ$ is the column span of a unique $n \times k$ matrix of diagonal form.
\end{lem}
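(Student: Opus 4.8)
The plan is to use that the full-rank $n \times k$ matrix representatives of a point $M \in \Gr(k,n)$ form a single orbit under right multiplication by $\GL_k$ (column operations preserve the column span), and to pin down the unique representative in diagonal form by a triangular (``$LU$'') factorization governed by the cyclic Pl\"{u}cker coordinates. Fix any full-rank $n \times k$ matrix $A$ with column span $M$, and write $C = A_{[1,k],[1,k]}$ for its top $k \times k$ block and $B = A_{[n-k+1,n],[1,k]}$ for its bottom $k \times k$ block. Producing a diagonal-form representative of $M$ is the same as finding $g \in \GL_k$ such that $Bg$ is upper triangular with $1$'s on the diagonal and $Cg$ is lower triangular with nonzero diagonal; since the (unconstrained) middle rows of $Ag$ play no role, uniqueness of the diagonal-form representative is equivalent to uniqueness of such a $g$.

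The first step is to remove the bottom-block condition. Because $[n-k+1,n]$ is a cyclic set, $P_{[n-k+1,n]}(M) \ne 0$ on $X_k^\circ$, so $B$ is invertible; writing $g = B^{-1}V$, the condition that $Bg$ be unit upper triangular becomes simply that $V$ be unit upper triangular, and the top-block condition becomes that $WV$ be lower triangular, where $W := CB^{-1}$. Everything thus reduces to the claim that there is a \emph{unique} unit upper triangular $V$ making $WV$ lower triangular, with nonzero diagonal. This is precisely the $LU$ factorization: a matrix $W$ whose leading principal minors $\det W_{[1,i],[1,i]}$ are nonzero for $1 \le i \le k$ factors uniquely as $W = LU$ with $L$ lower triangular (nonzero diagonal) and $U$ unit upper triangular, and then $V = U^{-1}$ is the unique solution, giving $WV = L$.

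The key step, and the main obstacle, is to identify these leading principal minors with cyclic Pl\"{u}cker coordinates; this is where the hypothesis $M \in X_k^\circ$ is really used. I would pass to the representative $A' = AB^{-1}$, whose bottom block is the identity, so that the row $n-k+j$ of $A'$ is the standard vector $e_j$ and the top block of $A'$ equals $W$. For each $i$ I would then evaluate the maximal minor of $A'$ on the row set $J_i := [1,i] \cup [n-k+i+1,n]$: since the chosen bottom rows are standard basis vectors, the corresponding $k\times k$ submatrix is block upper triangular with an identity block, and its determinant collapses to $\det W_{[1,i],[1,i]}$. On the other hand $J_i$ is the consecutive run $n-k+i+1, \ldots, n, 1, \ldots, i$ modulo $n$, hence a cyclic $k$-subset, so $P_{J_i}(A')$ is the value of a cyclic Pl\"{u}cker coordinate of $M$ and is therefore nonzero. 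This yields $\det W_{[1,i],[1,i]} \ne 0$ for all $i$, exactly the hypothesis needed for the $LU$ factorization, and hence existence and uniqueness of $g$. The one piece of bookkeeping I would check is that when $2k > n$ the top and bottom blocks share rows; here it suffices to observe that no $J_i$ has a repeated element (which holds since $k \le n-1$ forces $n-k+i+1 > i$), so the argument goes through uniformly, the overlapping rows merely being forced to carry the expected $1$'s and nonzero pivots.
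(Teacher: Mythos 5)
Your proposal is correct and follows essentially the same route as the paper: normalize the bottom $k\times k$ block to the identity using $P_{[n-k+1,n]}(M)\neq 0$, identify the leading principal minors of the top block with the cyclic Pl\"{u}cker coordinates $P_{[1,i]\cup[n-k+i+1,n]}$, and then triangularize by unit upper triangular column operations --- your explicit $LU$ factorization is exactly the paper's ``Gaussian elimination on the columns.'' The only difference is that you spell out the uniqueness (via uniqueness of $LU$) and the $2k>n$ bookkeeping, both of which the paper dismisses as clear.
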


\begin{proof}
Suppose $M \in X_k^\circ$. Since $P_{[n-k+1,n]}(M) \neq 0$, $M$ can represented by an $n \times k$ matrix $M^\circ$ whose bottom $k$ rows are the identity matrix. Clearly we have $\Delta_{[1,i],[1,i]}(M^\circ) = \dfrac{P_{[1,i] \cup [n-k+i+1,n]}(M)}{P_{[n-k+1,n]}(M)}$ for $i \leq k$, so the principal minors $\Delta_{[1,i],[1,i]}(M^\circ)$ are nonzero for $i \leq k$. We may therefore use Gaussian elimination on the columns of $M^\circ$ to make the first $k$ rows lower triangular with nonzero entries on the main diagonal. The last $k$ rows will still have 1's on the main diagonal and 0's beneath the main diagonal, so we obtain a diagonal form representative of the subspace $M$. Uniqueness is clear.
\end{proof}

\begin{defn}
\label{defn_basic}
For $1 \leq i \leq n-k+1$ and $i-1 \leq j \leq i+k-1$, define the $k$-subset
\[
J_{i,j} = [i,j] \cup [n-k+j-i+2,n].
\]
We will call a subset of this form a {\em basic subset}, and we will refer to $P_{J_{i,j}}$ as a {\em basic Pl\"{u}cker coordinate}.
Define $U_k$ to be the open subset of $Gr(k,n)$ consisting of subspaces whose basic Pl\"{u}cker coordinates are all nonzero.
\end{defn}

Note that there is some redundancy in the definition of basic subsets: if $i = n-k+1$ or $j = i-1$, then $J_{i,j} = [n-k+1,n]$. Note also that cyclic Pl\"{u}cker coordinates are basic, so every element of $U_k$ has a diagonal form representative by Lemma \ref{lem_diag_form}. If $M^\circ$ is the diagonal form representative of $M \in U_k$, then we have
\begin{equation}
\label{eq_ij_minor}
\Delta_{[i,j],[1,j-i+1]}(M^\circ) = \dfrac{\Delta_{J_{i,j},[k]}(M^\circ)}{\Delta_{[n-k+1,n],[k]}(M^\circ)} = \dfrac{P_{J_{i,j}}(M)}{P_{[n-k+1,n]}(M)}
\end{equation}
for $1 \leq i \leq n-k+1$ and $i-1 \leq j \leq i+k-1$. This observation leads to the following result.

\begin{lem}
\label{lem_U_k_determines}
Every element of $U_k$ is uniquely determined by its basic Pl\"{u}cker coordinates.
\end{lem}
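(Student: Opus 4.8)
The plan is to reduce the statement to the reconstruction of the canonical \emph{diagonal form} representative from a specific family of minors, and then to carry out that reconstruction by an explicit recursion driven by the nonvanishing of basic coordinates.

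First I would observe that every $M \in U_k$ lies in $X_k^\circ$: the cyclic Pl\"{u}cker coordinates are basic, hence nonzero on $U_k$, so Lemma \ref{lem_diag_form} applies and $M$ has a unique diagonal form representative $M^\circ$. Since the bottom $k$ rows of $M^\circ$ are upper unitriangular, $\Delta_{[n-k+1,n],[k]}(M^\circ) = 1$, so \eqref{eq_ij_minor} reads $\Delta_{[i,j],[1,j-i+1]}(M^\circ) = P_{J_{i,j}}(M)/P_{[n-k+1,n]}(M)$. Because $P_{[n-k+1,n]}$ is itself a basic (indeed cyclic) coordinate, the ratios on the right are determined by the projective tuple of basic Pl\"{u}cker coordinates. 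Hence it suffices to show that the contiguous leading-column minors $\Delta(i,\ell) := \Delta_{[i,\,i+\ell-1],[1,\ell]}(M^\circ)$, for $1 \le i \le n-k+1$ and $0 \le \ell \le k$ (with $\Delta(i,0) = 1$), determine $M^\circ$ entrywise. Each such minor equals $P_{J_{i,\,i+\ell-1}}(M)/P_{[n-k+1,n]}(M)$ with $J_{i,\,i+\ell-1}$ basic, so all of them are nonzero on $U_k$.

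Next I would reconstruct $M^\circ$ column by column, for $\ell = 1, \dots, k$, and within each column by increasing $i$. The key point is that the diagonal form forces most entries of column $\ell$ that appear in $\Delta(i,\ell)$ to be $0$ or $1$, and forces the rest to have been computed at an earlier step. Expanding $\Delta(i,\ell)$ along its last column gives
\[
\Delta(i,\ell) = m_{i+\ell-1,\,\ell}\cdot \Delta(i,\ell-1) + (\text{terms involving only } m_{r,s} \text{ with } s < \ell \text{ or } r < i+\ell-1),
\]
where every entry $m_{r,\ell}$ with $\ell \le r < i+\ell-1$ was determined while processing $\Delta(r-\ell+1,\ell)$ (a smaller value of $i$), every entry $m_{r,\ell}$ with $r < \ell$ is forced to $0$, and every cofactor on the right uses only columns $[1,\ell-1]$, already reconstructed. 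Since $\Delta(i,\ell-1)$ is a nonzero basic minor, this solves uniquely for the single new entry $m_{i+\ell-1,\,\ell}$. As $i$ runs over $[1,n-k+1]$, the corners $(i+\ell-1,\ell)$ sweep rows $\ell$ through $n-k+\ell$ of column $\ell$, while all remaining entries of that column are forced to $0$ (or to $1$ on the unit diagonal); thus column $\ell$ is completely determined. The base case $\ell = 1$ just reads off $m_{i,1} = \Delta(i,1)$.

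Consequently $M^\circ$, and hence the subspace $M$, is uniquely determined by the basic Pl\"{u}cker coordinates. I expect the main obstacle to be bookkeeping rather than substance: one must verify that in each expansion exactly one genuinely new entry appears and that the quantity divided by is always one of the nonzero basic minors $\Delta(i,\ell-1)$, which is precisely where the defining nonvanishing hypothesis of $U_k$ enters. A minor additional check is needed when $n < 2k$, where the upper- and lower-triangular constraints overlap and $M^\circ$ is banded; the same Laplace argument applies verbatim, since the extra forced entries only cause more terms to vanish.
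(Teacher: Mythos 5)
Your proof is correct and takes essentially the same approach as the paper's: reduce to the unique diagonal form representative via Lemma \ref{lem_diag_form}, identify the contiguous leading minors with ratios of basic Pl\"{u}cker coordinates via \eqref{eq_ij_minor}, and recover the entries of $M^\circ$ one at a time by Laplace expansion along the last column, dividing by a nonzero basic minor. The only differences are bookkeeping --- you induct column-by-column where the paper sweeps entries in reading order (all earlier rows, then earlier entries in the current row), and you make explicit the projective normalization by $P_{[n-k+1,n]}$ and the banded case $n < 2k$, which the paper leaves implicit.
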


\begin{proof}
Suppose $M \in U_k$, and let $M^\circ$ be its diagonal form representative. We inductively show that all the entries of $M^\circ$ are determined by the basic Pl\"{u}cker coordinates of $M$. Consider an entry $M^\circ_{ab}$ which is not automatically 0 or 1, and assume that $M^\circ_{a'b'}$ is known for $a' < a$, and for $a' = a, b'<b$. Expand the determinant $\Delta_{[a-b+1,a],[1,b]}(M^\circ)$ along its last column. This gives an equation
\begin{equation}
\label{eq_M_ab}
M^\circ_{ab} \cdot \Delta_{[a-b+1,a-1],[1,b-1]}(M^\circ) = \Delta_{[a-b+1,a],[1,b]}(M^\circ) \; + \text{ a polynomial in known entries of } M^\circ.
\end{equation}
By \eqref{eq_ij_minor}, the two determinants in \eqref{eq_M_ab} are ratios of basic Pl\"{u}cker coordinates of $M$, and since these are nonzero, the entry $M^\circ_{ab}$ is determined.
\end{proof}

\begin{proof}[Proof of Proposition \ref{prop_GT_pl}]
Define $\Psi_k : U_k \times \Cx \rightarrow \bT_{n-k}$ by $\Psi_k(M,t) = (Y_{ij},t)$, where
\[
Y_{ij} = \dfrac{P_{J_{i,j}}(M)}{P_{J_{i+1,j}}(M)}.
\]
Suppose $(X_{ij}, t) \in \bT_{n-k}$, and let $(M,t) = \Theta_k(X_{ij},t)$. By part (2) of Corollary \ref{cor_pluc_formulae}, the basic Pl\"{u}cker coordinates of $M$ are monomials in the $X_{ij}$ (so they are nonzero), and we have
\[
\dfrac{P_{J_{i,j}}(M)}{P_{J_{i+1,j}}(M)} = \dfrac{\ds \prod_{a \in [i,j] \cap [1,n-k]} X_{aj}}{\ds \prod_{a \in [i+1,j] \cap [1,n-k]} X_{aj}} = X_{ij},
\]
so $\Psi_k \circ \Theta_k = \Id$.

Now suppose $M \in U_k$. Set $(X'_{ij}, t) = \Psi_k(M,t)$, and $(M',t) = \Theta_k(X'_{ij}, t)$. By part (2) of Corollary \ref{cor_pluc_formulae}, we have
\begin{align*}
\dfrac{P_{J_{i,j}}(M')}{P_{[n-k+1,n]}(M')} = \prod_{a \in [i,j] \cap [1,n-k]} X'_{aj} &= \prod_{a \in [i,j] \cap [1,n-k]} \dfrac{P_{J_{a,j}}(M)}{P_{J_{a+1,j}}(M)} \\
&= \dfrac{P_{J_{i,j}}(M)}{P_{J_{j+1,j}}(M)} = \dfrac{P_{J_{i,j}}(M)}{P_{[n-k+1,n]}(M)}.
\end{align*}
This shows that $M$ and $M'$ have the same basic Pl\"{u}cker coordinates, so $M = M'$ by Lemma \ref{lem_U_k_determines}. Thus, $\Theta_k \circ \Psi_k = \Id$, and we are done.
\end{proof}

\begin{cor}
\label{cor_basic=basis}
Every Pl\"{u}cker coordinate can be written as a Laurent polynomial in the basic Pl\"{u}cker coordinates with non-negative integer coefficients.
\end{cor}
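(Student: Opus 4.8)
The plan is to obtain this corollary as a direct consequence of the Gelfand-Tsetlin parametrization, by feeding expressions for the parametrizing variables into the monomial expansion of Lemma \ref{lem_pluc_formula}. The first step is to record the key building block supplied by Proposition \ref{prop_GT_pl}: the inverse of $\Theta_k$ sends $(M,t)$ to the point whose coordinates are $X_{ij} = P_{J_{i,j}}(M)/P_{J_{i+1,j}}(M)$, so each $X_{ij}$ is literally a ratio of two basic Pl\"ucker coordinates. Consequently each quantity $x_{ij} = X_{ij}/X_{i,j-1}$ is a Laurent monomial (with coefficient $1$) in the basic Pl\"ucker coordinates; explicitly, for $j > i$,
\[
x_{ij} = \frac{P_{J_{i,j}}\,P_{J_{i+1,j-1}}}{P_{J_{i+1,j}}\,P_{J_{i,j-1}}},
\]
and the cases $j=i$ are handled by the convention $X_{i,i-1}=1$. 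In particular the $t$-free variables $x_{ij}$ (those with $j \leq i+n-k-1$) are $t$-free Laurent monomials in the basic Pl\"ucker coordinates.

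Next I would invoke Lemma \ref{lem_pluc_formula}: for $(M,t) = \Theta_k(X_{ij},t)$, every Pl\"ucker coordinate is $P_J(M) = \sum_T \wt(T)$, a sum over $(J,k)$-tableaux of monomials in the $x_{ij}$, each appearing with coefficient $1$. Since $P_J(M)$ is independent of $t$ (Corollary \ref{cor_M}(2)), only the $t$-free variables $x_{ij}$ occur in this expansion, and by the previous step each such variable is a Laurent monomial in the basic Pl\"ucker coordinates. Substituting these expressions turns each $\wt(T)$ into a single Laurent monomial in the basic Pl\"ucker coordinates with coefficient $1$; summing over all $T$ then exhibits $P_J$ as a Laurent polynomial in the basic Pl\"ucker coordinates with non-negative integer coefficients (distinct tableaux may yield the same monomial, which only adds coefficients). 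Because $\Theta_k$ is a birational isomorphism with dense image (Proposition \ref{prop_GT_pl}), this identity of rational functions propagates from the image of $\Theta_k$ to all of $\Gr(k,n)$.

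The one point needing care is homogeneity and normalization, and this is the only genuine bookkeeping obstacle. The network representative underlying $N_k$ is normalized so that $P_{[n-k+1,n]}(M) = 1$ (Corollary \ref{cor_M}(1)), so Lemma \ref{lem_pluc_formula} really computes the degree-$0$ rational function $P_J/P_{[n-k+1,n]}$, and each $x_{ij}$ above is correspondingly a degree-$0$ Laurent monomial in the basic coordinates. Since $P_{[n-k+1,n]}$ is itself a basic Pl\"ucker coordinate, multiplying the resulting degree-$0$ expression by $P_{[n-k+1,n]}$ produces an honest degree-$1$ Laurent polynomial in the basic Pl\"ucker coordinates, still with non-negative integer coefficients. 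I expect no further difficulty, as all of the substantive content is already contained in Lemma \ref{lem_pluc_formula} and Proposition \ref{prop_GT_pl}.
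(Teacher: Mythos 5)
Your argument is correct and is essentially the paper's own proof: the paper likewise uses Proposition \ref{prop_GT_pl} to write the $X_{ij}$ as ratios of basic Pl\"{u}cker coordinates and then applies the Lindstr\"{o}m Lemma to the network $N_k$ (your invocation of Lemma \ref{lem_pluc_formula} is just the packaged $J$-tableau form of that same computation), with the same density argument to extend from $U_k$ to all of $\Gr(k,n)$. Only a cosmetic slip: since the relevant parametrization is $\Theta_k : \bT_{n-k} \rightarrow \X{k}$, the $t$-free variables are those $x_{ij}$ with $j \leq i+k-1$ (not $j \leq i+n-k-1$), which does not affect the argument.
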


\begin{proof}
Suppose $M \in U_k$. By Proposition \ref{prop_GT_pl}, we have $(M,t) = \Theta_k(X_{ij},t)$, where $X_{ij} = \dfrac{P_{J_{i,j}}(M)}{P_{J_{i+1,j}}(M)}$. Let $M^\circ$ be the diagonal form representative of $M$. By Lemma \ref{lem_Nk_represents}, $M^\circ$ is the matrix associated to a planar network whose weights are ratios of the $X_{ij}$, which are themselves ratios of the basic Pl\"{u}cker coordinates of $M$. Thus, by the Lindstr\"{o}m Lemma, every minor of $M^\circ$ is a Laurent polynomial in the basic Pl\"{u}cker coordinates of $M$ with non-negative integer coefficients. Since $U_k$ is a dense subset of $\Gr(k,n)$, the result follows.
\end{proof}

\begin{remark}
\label{rmk_cluster_alg}
Corollary \ref{cor_basic=basis} is a special case of the (positive) Laurent phenomenon in the theory of cluster algebras. Indeed, the $k(n-k)+1$ basic Pl\"{u}cker coordinates are a cluster in the coordinate ring of the affine cone over $\Gr(k,n)$ (see \cite[Figure 18]{MarSco}).
\end{remark}

\section{Tropicalization}
\label{sec_trop}
%Section{Tropicalization}
%\label{sec_trop}

\subsection{Definition of tropicalization}
\label{sec_trop_defn}

There are multiple ways to precisely define tropicalization. Here we follow the approach of \cite[\S 2.1]{BFZ} (for a more general framework, see \cite[\S 4]{BKII}).

Following \cite{BFZ}, a {\em semifield} is a set $K$ endowed with two operations, addition and multiplication, such that addition is commutative and associative, $K$ is an abelian group under multiplication, and multiplication distributes over addition. Note that division (the inverse of multiplication) is defined in a semifield, but subtraction (the inverse of addition) is not.

The {\em tropical semifield} $\bbZ_{\Trop}$ is the set of integers endowed with the following addition and multiplication:
\[
a \oplus b := \min(a,b) \quad\quad\quad a \odot b := a+b.
\]
The {\em universal semifield} $\bbQ_{>0}(z_1, \ldots, z_r)$ is the set of nonzero rational functions in $r$ variables which can be written as a ratio of two polynomials whose coefficients are non-negative integers. The operations are the usual addition and multiplication of rational functions. For example, $f = z_1^2 - z_1z_2 + z_2^2$ is an element of $\bbQ_{>0}(z_1,z_2)$, since $f = \dfrac{z_1^3 + z_2^3}{z_1 + z_2}$. We will call elements of $\bbQ_{>0}(z_1, \ldots, z_r)$ {\em subtraction-free rational functions}.

The universal semifield is ``universal'' in the sense that, given a semifield $K$ and elements $a_1, \ldots, a_r \in K$, there is a unique homomorphism of semifields $\bbQ_{>0}(z_1, \ldots, z_r) \rightarrow K$ which sends $z_i \mapsto a_i$ \cite[Lemma 2.1.6]{BFZ}. We denote the image of a rational function $f(z_1, \ldots, z_r)$ under this homomorphism by $f_K(a_1, \ldots, a_r)$. Note that $f_K(a_1, \ldots, a_r)$ is computed by replacing the indeterminates $z_i$ with the semifield elements $a_i$, and replacing addition, multiplication, and division with the corresponding operations in $K$.

\begin{defn}
\label{defn_trop}
Given a subtraction-free rational function $f \in \bbQ_{>0}(z_1, \ldots, z_r)$, define $\Trop(f)$ to be the piecewise-linear function $\bbZ_{\Trop}^r \rightarrow \bbZ_{\Trop}$ which maps $(a_1, \ldots, a_r) \in \bbZ \mapsto f_{\bbZ_{\Trop}}(a_1, \ldots, a_r)$.

More generally, given a subtraction-free rational map $f = (f_1, \ldots, f_s) \in (\bbQ_{>0}(z_1, \ldots, z_r))^s$, define $\Trop(f)$ to be the piecewise-linear map $(\Trop(f_1), \ldots, \Trop(f_s)) : (\bbZ_{\Trop})^r \rightarrow (\bbZ_{\Trop})^s$.

We call $\Trop(f)$ the {\em tropicalization} of $f$.\footnote{Sometimes what we call ``tropicalization'' is called ``ultradiscretization,'' and ``tropicalization'' (or ``tropification'') refers to the inverse procedure of finding a rational lift. Also, $\max$ is sometimes used in place of $\min$.}
\end{defn}

We will implicitly use the fact that tropicalization respects addition, multiplication, and composition of subtraction-free rational maps. This implies, for instance, that for $m \in \bbZ_{>0}$ and $f \in \bbQ_{>0}(z_1, \ldots, z_r)$ we have
\[
\Trop(f^m) = m\Trop(f) \quad\quad \Trop(mf) = \Trop(f) \quad\quad \Trop(m) = \Trop(mf/f) = 0.
\]
For example, if $f = \dfrac{z_1^2z_2 + z_3}{z_2^5 + 8z_1z_3 + 4}$, then $\Trop(f) = \min(2a_1 + a_2, a_3) - \min(5a_2, a_1+a_3)$.

\subsection{Recovering the combinatorial crystals}
\label{sec_recover}

By tropicalizing the rational maps associated to the geometric crystal $\X{n-k} = \Y{n-k}{n}$, we obtain piecewise-linear maps on $\tw{\bT}_k$. We will show that these piecewise-linear maps, when restricted to the set of $k$-rectangles inside $\tw{\bT}_k$ (Definition \ref{defn_k_rect}), give formulas for the combinatorial crystal structure on $k$-row rectangular tableaux. We informally summarize the results of this section by saying that
\begin{equation}
\label{eq_trop_informal}
\textit{the geometric crystal on } \X{n-k} \textit{ tropicalizes to the disjoint union } \ds \bigsqcup_{L \geq 0} B^{k,L}.
\end{equation}

\subsubsection{Preliminaries}
Recall the Gelfand-Tsetlin parametrization $\Theta_{n-k} : \bT_k \rightarrow \X{n-k}$ from \S \ref{sec_GT_param}. If $F : \X{n-k} \rightarrow (\Cx)^r$ is a dominant rational map, set
\[
\Theta F = F \circ \Theta_{n-k} : \bT_k \rightarrow (\Cx)^r.
\]
If $\Theta F$ is subtraction-free, set $\wh{F} = \Trop(\Theta F) : \tw{\bT}_k \rightarrow \mathbb{Z}^r$. This is the piecewise-linear map on elements $(B_{ij}, L) \in \tw{\bT}_k$ which is obtained from the rational map $\Theta F$ by tropicalizing the operations and replacing $X_{ij}$ with $B_{ij}$, and $t$ with $L$.

Similarly, if $G : \X{n-k} \rightarrow \X{n-\ell}$ is a dominant rational map, set
\[
\Theta G = \Theta_{n-\ell}^{-1} \circ G \circ \Theta_{n-k} : \bT_k \rightarrow \bT_\ell,
\]
and if $\Theta G$ is positive, define $\wh{G} = \Trop(\Theta G) : \tw{\bT}_k \rightarrow \tw{\bT}_\ell$.

There is a useful sufficient condition for $\Theta F$ (resp., $\Theta G$) to be subtraction-free.

\begin{lem}
\label{lem_sub_free}
\
\begin{enumerate}
\item Let $F : \X{n-k} \rightarrow (\Cx)^r$ be a rational map sending $(M,t) \mapsto (x_1, \ldots, x_r)$. If each component function $x_i$ can be written as a (nonzero) subtraction-free rational function in the Pl\"{u}cker coordinates of $M$ and the variable $t$, then $\Theta F$ is subtraction-free.
\item Let $G : \X{n-k} \rightarrow \X{n-\ell}$ be a rational map sending $(M,t) \mapsto (M',t)$, and suppose that for each \underline{basic} $(n-\ell)$-subset $J$ (Definition \ref{defn_basic}), $P_J(M')$ can be written as a (nonzero) subtraction-free rational function in the Pl\"{u}cker coordinates of $M$ and the variable $t$. Then $\Theta G$ is subtraction-free.
\end{enumerate}
\end{lem}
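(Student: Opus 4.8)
The plan is to reduce both parts to the single geometric fact supplied by Corollary \ref{cor_pluc_formulae}(1): if $(M,t) = \Theta_{n-k}(X_{ij},t)$, then for every $J \in {[n] \choose n-k}$ the Pl\"{u}cker coordinate $P_J(M)$ is a nonzero polynomial with positive coefficients in the quantities $x_{ij} = X_{ij}/X_{i,j-1}$. Since each $x_{ij}$ is a Laurent monomial in the coordinates $X_{ij}$ and the variable $t$, this exhibits $P_J(M)$ as an element of the universal semifield $\bbQ_{>0}(\{X_{ij}\},t)$; equivalently, each $P_J \circ \Theta_{n-k}$ is subtraction-free. This is the only geometric input; everything else is formal.

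For part (1) I would invoke the universal property of the universal semifield (Lemma 2.1.6 of \cite{BFZ}, recalled in \S\ref{sec_trop_defn}). By hypothesis each component $x_i$ of $F$ can be written as $x_i = R_i(\{P_J\}, t)$ for a subtraction-free rational expression $R_i$ in the formal symbols $P_J$ and $t$. The assignment $P_J \mapsto (P_J\circ \Theta_{n-k})$, $t \mapsto t$ extends to a unique semifield homomorphism $\bbQ_{>0}(\{P_J\}, t) \rightarrow \bbQ_{>0}(\{X_{ij}\}, t)$, and this is well defined precisely because each target value is subtraction-free by the previous paragraph. Applying this homomorphism to $R_i$ shows that the $i$-th component of $\Theta F = F \circ \Theta_{n-k}$ is subtraction-free, so $\Theta F$ is subtraction-free as the composite of subtraction-free maps.

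For part (2) the plan is to peel off the factor $\Theta_{n-\ell}^{-1}$ and then quote part (1). By Proposition \ref{prop_GT_pl}, applied with $k$ replaced by $n-\ell$, the coordinates $X'_{ij}$ of $\Theta_{n-\ell}^{-1}(M',t)$ are ratios of \emph{basic} $(n-\ell)$-Pl\"{u}cker coordinates of $M'$ (Definition \ref{defn_basic}), namely $X'_{ij} = P_{J_{i,j}}(M')/P_{J_{i+1,j}}(M')$. By the hypothesis of part (2), each such basic $P_J(M')$ is subtraction-free in the Pl\"{u}cker coordinates of $M$ and $t$, and since the universal semifield is closed under division, each $X'_{ij}$ is as well. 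Hence the map $(M,t) \mapsto (X'_{ij},t) = \Theta_{n-\ell}^{-1}\circ G(M,t)$ has every component subtraction-free in the $P_J(M)$ and $t$ --- exactly the hypothesis of part (1). Applying part (1) to this map yields that $\Theta G = (\Theta_{n-\ell}^{-1}\circ G)\circ\Theta_{n-k}$ is subtraction-free.

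The argument is essentially formal, so I do not expect a genuine obstacle; the only points requiring care are bookkeeping ones. First, the index shifts $k \mapsto n-k$ and $k \mapsto n-\ell$ must be tracked correctly when citing Corollary \ref{cor_pluc_formulae}(1) and Proposition \ref{prop_GT_pl}. Second, and more conceptually, one must justify that substituting subtraction-free expressions for the formal Pl\"{u}cker symbols preserves subtraction-freeness; this is exactly the content of the universal property of $\bbQ_{>0}(\{P_J\},t)$, and it is the step I would state most carefully, since it is the mechanism underlying both parts.
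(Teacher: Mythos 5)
Your proposal is correct and follows essentially the same route as the paper: establish via Corollary \ref{cor_pluc_formulae}(1) (the paper also invokes Corollary \ref{cor_basic=basis}) that each $P_J \circ \Theta_{n-k}$ is a nonzero Laurent-positive expression in the $X_{ij}$ and $t$, deduce part (1) by substitution, and reduce part (2) to part (1) using Proposition \ref{prop_GT_pl}, which expresses the components of $\Theta_{n-\ell}^{-1}$ as ratios of basic Pl\"{u}cker coordinates. The only difference is that you make explicit the semifield-homomorphism justification for why substituting subtraction-free expressions preserves subtraction-freeness, a step the paper treats as immediate.
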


\begin{proof}
Suppose $(M,t) = \Theta_k(X_{ij},t)$. By Corollary \ref{cor_basic=basis}, the Pl\"{u}cker coordinates of $M$ are given by Laurent polynomials in the variables $X_{ij}$ with non-negative integer coefficients, and these Laurent polynomials are nonzero by Corollary \ref{cor_pluc_formulae}(1). Now part (1) follows immediately, and part (2) follows from the fact that the component functions of $\Theta_{n-k}^{-1}$ are ratios of basic Pl\"{u}cker coordinates (Proposition \ref{prop_GT_pl}).
\end{proof}

\subsubsection{Cutting out the set of $k$-rectangles}
\label{sec_trop_dec}

The first step in ``proving'' \eqref{eq_trop_informal} is to show that the tropicalization of the decoration $f$ is able to identify the set of $k$-rectangles inside $\tw{\bT}_k$. Recall from \S \ref{sec_geom_Gr} that $f : \X{n-k} \rightarrow \Cx$ is defined by
\begin{equation}
\label{eq_dec_n-k}
f(M,t) = \sum_{i \neq n-k} \frac{P_{\{i-n+k\} \cup [i-n+k+2,i]}(M)}{P_{[i-n+k+1,i]}(M)} + t \frac{P_{[2,n-k] \cup \{n\}}(M)}{P_{[1,n-k]}(M)}.
\end{equation}
Clearly $f$ is subtraction-free in the sense of Lemma \ref{lem_sub_free}(1), so $\Theta f = f \circ \Theta_{n-k}$ is subtraction-free, and we may define
\[
\wh{f} = \Trop(\Theta f) : \tw{\bT}_k \rightarrow \mathbb{Z}.
\]

\begin{prop}
\label{prop_trop_dec}
Suppose $b = (B_{ij}, L) \in \tw{\bT}_k$. Then $\wh{f}(b) \geq 0$ if and only if $b$ is a $k$-rectangle.
\end{prop}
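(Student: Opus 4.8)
The plan is to compute $\wh{f}$ explicitly and recognize it as the minimum of a family of ``slacks'' of the Gelfand--Tsetlin inequalities. By Lemma \ref{lem_sub_free}(1), $\Theta f = f \circ \Theta_{n-k}$ is subtraction-free, so tropicalizing \eqref{eq_dec_n-k} turns the sum into a minimum over its $n$ terms, each ratio into a difference, each denominator into a single linear form (the denominators $P_{[i-n+k+1,i]}(M)$ are cyclic Pl\"ucker coordinates, hence monomials in the $X_{ij}$ by Corollary \ref{cor_pluc_formulae}(2)), and each numerator into a minimum over its $J$-tableaux (Lemma \ref{lem_pluc_formula}). Writing $I^+_i = \{i-n+k\}\cup[i-n+k+2,\,i]$ and $I^-_i = [i-n+k+1,\,i]$, this gives
\[
\wh{f}(b) = \min_{i \in \Zn} \left( \Trop\!\big(P_{I^+_i}\big)(b) - \Trop\!\big(P_{I^-_i}\big)(b) + \delta_{i,\,n-k}\, L \right),
\]
in which each $\Trop(P_{I^+_i})(b)$ is itself a minimum over the $I^+_i$-tableaux while $\Trop(P_{I^-_i})(b)$ is a single linear form; thus altogether $\wh{f}(b) = \min_{s \in \mathcal C} s(b)$ for an explicit finite collection $\mathcal C$ of integer-linear forms in the $(B_{ij},L)$.

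Next I would identify each form in $\mathcal C$ with a Gelfand--Tsetlin slack. Since $I^-_i$ is a cyclic interval and $I^+_i$ is obtained from it by replacing its smallest element $i-n+k+1$ by $i-n+k$ (leaving a single gap), the $I^+_i$-tableaux form a small, easily enumerated family; each one contributes, after dividing out the common monomial $P_{I^-_i}$, exactly one linear form of the shape $A_{r,s+1}-A_{rs}$ or $A_{rs}-A_{r+1,s+1}$, i.e.\ a Gelfand--Tsetlin slack. To carry out this translation from ratios of (basic, cyclic) Pl\"ucker coordinates into differences of the GT entries $A_{rs}$, I would use Proposition \ref{prop_GT_pl} and Corollary \ref{cor_pluc_formulae}(2). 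Because the subspace $M$ is independent of $t$ (Corollary \ref{cor_M}(2)), the parameter $L$ enters $\wh{f}$ only through the explicit factor in the $i = n-k$ term; this $t$-term is precisely what produces the slacks involving the boundary value $A_{rs}=L$ on the upper-right boundary, while the terms whose index sets wrap around modulo $n$ produce the slacks involving the boundary value $A_{rs}=0$.

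Granting the explicit description of $\mathcal C$, the proposition follows once one checks that the simultaneous nonnegativity of the forms in $\mathcal C$ is equivalent to all of the Gelfand--Tsetlin inequalities $A_{r,s+1} \geq A_{rs} \geq A_{r+1,s+1}$ (some of these inequalities are implied by the others and so need not appear in $\mathcal C$). Indeed, if $b$ is a $k$-rectangle then every Gelfand--Tsetlin inequality holds, so every $s \in \mathcal C$ is nonnegative and $\wh{f}(b) = \min_{s \in \mathcal C} s(b) \geq 0$; conversely, if $\wh{f}(b) \geq 0$ then every form in $\mathcal C$ is nonnegative, which forces all Gelfand--Tsetlin inequalities and hence shows that $b$ is a $k$-rectangle.

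I expect the middle step to be the main obstacle: the explicit enumeration of the $I^+_i$-tableaux, and the verification that as $i$ ranges over $\Zn$ (including the distinguished $t$-term and the wrap-around terms) the resulting collection $\mathcal C$ is both \emph{contained in} the set of Gelfand--Tsetlin slacks and \emph{complete enough} to force every Gelfand--Tsetlin inequality, with correct bookkeeping of the boundary entries $L$ and $0$. As a sanity check one can run the computation in small cases: for $n=3,\,k=1$ one finds $\wh{f}(b) = \min(B_{12}-B_{11},\, B_{11},\, L - B_{12})$, and for $n=4,\,k=2$ one finds $\wh{f}(b) = \min(B_{12}-B_{11},\, B_{23}-B_{22},\, B_{12}-B_{23},\, B_{11}-B_{22},\, B_{22},\, L-B_{12})$, and in each case the nonnegativity locus of these forms is exactly the set of $k$-rectangles.
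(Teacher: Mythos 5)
Your plan coincides with the paper's own proof: there too one establishes the closed form $f \circ \Theta_{n-k}(X_{ij},t) = X_{kk} + t/X_{1,n-k} + \sum X_{ij}/X_{i,j-1} + \sum X_{ij}/X_{i+1,j+1}$ by evaluating each term of \eqref{eq_dec_n-k} with Lemma \ref{lem_pluc_formula} for the numerators and Corollary \ref{cor_pluc_formulae}(2) for the (monomial) cyclic denominators, and then observes that the tropicalization is the minimum of exactly the slacks of the defining inequalities of a $k$-rectangle. The enumeration you flag as the main obstacle goes through just as you predict: in each numerator's $J$-tableaux all entries are forced except either a single box (yielding the column slacks $X_{i,j}/X_{i,j-1}$) or the first row (yielding the diagonal slacks $X_{r,r+b-1}/X_{r+1,r+b}$), so your collection $\mathcal{C}$ consists of precisely one slack for each nonredundant Gelfand--Tsetlin inequality, matching your small-case computations.
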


\begin{proof}
From the defining inequalities of a Gelfand-Tsetlin pattern, it's clear that $b$ is a $k$-rectangle if and only if the following inequalities are satisfied:
\begin{enumerate}
\item $B_{kk} \geq 0$
\item $L \geq B_{1,n-k}$
\item $B_{ij} \geq B_{i,j-1} \text{ for } i \in [k] \text{ and } j \in [i+1,i+n-k-1]$
\item $B_{ij} \geq B_{i+1,j+1} \text{ for } i \in [k-1] \text{ and } j \in [i,i+n-k-1]$.
\end{enumerate}

We will show that for $(X_{ij},t) \in \bT_k$, we have
\begin{equation}
\label{eq_f_coords}
f \circ \Theta_{n-k}(X_{ij},t) =  X_{kk} + \dfrac{t}{X_{1,n-k}} + \sum_{i \in [k] \atop j \in [i+1,i+n-k-1]} \dfrac{X_{ij}}{X_{i,j-1}} + \sum_{i \in [k-1] \atop j \in [i,i+n-k-1]} \dfrac{X_{ij}}{X_{i+1,j+1}}.
\end{equation}
Given this equation, it follows that $\wh{f}(b) \geq 0$ if and only if $b$ satisfies the inequalities listed above.

Let $(M,t) = \Theta_{n-k}(X_{ij},t)$, and let $P_J = P_J(M)$ denote the Pl\"{u}cker coordinates of $M$. To prove \eqref{eq_f_coords}, we will show the following:
\begin{enumerate}
\item $\dfrac{P_{\{k\} \cup [k+2,n]}}{P_{[k+1,n]}} = X_{kk}$
\item $t \dfrac{P_{[2,n-k] \cup \{n\}}}{P_{[1,n-k]}} = \dfrac{t}{X_{1,n-k}}$
\item $\dfrac{P_{[1,r-k] \cup \{r\} \cup [r+2,n]}}{P_{[1,r-k] \cup [r+1,n]}} = \ds \sum_{i = 1}^k \dfrac{X_{i,i+r-k}}{X_{i,i+r-k-1}} \quad \text{ for } r = k+1, \ldots, n-1$
\item $\dfrac{P_{\{r\} \cup [r+2,r+n-k]}}{P_{[r+1,r+n-k]}} = \ds \sum_{j=1}^{n-k} \dfrac{X_{r,r+j-1}}{X_{r+1,r+j}} \quad \text{ for } r = 1, \ldots, k-1$.
\end{enumerate}

By Corollary \ref{cor_pluc_formulae}(2), we have
\[
\ds \dfrac{P_{\{k\} \cup [k+2,n]}}{P_{[k+1,n]}} = \dfrac{X_{kk}}{1} \quad\quad \text{ and } \quad\quad \ds \dfrac{P_{[2,n-k] \cup \{n\}}}{P_{[1,n-k]}} = \dfrac{\ds \prod_{a \in [2,n-k] \cap [k]} X_{a,n-k}}{\ds \prod_{a \in [1,n-k] \cap [k]} X_{a,n-k}} = \dfrac{1}{X_{1,n-k}}
\]
which gives (1) and (2).

For (3), let $J = [1,r-k] \cup \{r\} \cup [r+2,n]$, and let $T$ be a $J$-tableau (see \S \ref{sec_J_tab}). The diagram $D_{J,n-k}$ has $r-k+1$ columns and $\min(r-k,k)$ rows, and the lengths of the first $r-k$ columns are weakly increasing. Since the first entry in the $a^{th}$ row of $T$ must be $a$, the first $r-k$ columns are completely determined. It remains to consider column $r-k+1$, which consists of a single box in the top row. The first $r-k$ boxes in the top row of $T$ are filled with 1, so we may choose any element of $[k]$ for the last column. If we choose $i$, then the weight of $T$ is $x_{i,i+r-k} \ds \prod_{a \in [r-k] \cap [k]} X_{a,r-k}$. By Corollary \ref{cor_pluc_formulae}(2), we have $\ds P_{[1,r-k] \cup [r+1,n]} = \prod_{a \in [r-k] \cap [k]} X_{a,r-k}$. Thus, Lemma \ref{lem_pluc_formula} gives
\[
\dfrac{P_{[1,r-k] \cup \{r\} \cup [r+2,n]}}{P_{[1,r-k] \cup [r+1,n]}} = \sum_{i=1}^k x_{i,i+r-k} = \sum_{i=1}^k \dfrac{X_{i,i+r-k}}{X_{i,i+r-k-1}}.
\]

For (4), let $J = \{r\} \cup [r+2,r+n-k]$, and let $T$ be a $J$-tableau. The diagram $D_{J,n-k}$ has $n-k$ columns and $\min(n-k,k-r)$ rows, and the column lengths are weakly increasing. For $a \geq 2$, the condition $T(a,a) = j_a = r+a$ implies that every entry in the $a^{th}$ row of $T$ must be $r+a$. There is some choice for the first row. The first entry must be $r$, but the other $n-k-1$ entries can be any weakly increasing sequence of $r$'s and $r+1$'s. If the first row of $T$ consists of $r$ repeated $b$ times and $r+1$ repeated $n-k-b$ times (for $1 \leq b \leq n-k$), then
\begin{align*}
\wt(T) &= x_{rr} x_{r,r+1} \cdots x_{r,r+b-1} x_{r+1,r+b+1} \cdots x_{r+1,r+n-k} \prod_{a \in [r+2,r+n-k] \cap [k]} X_{a,r+n-k} \\
&= X_{r,r+b-1} \dfrac{X_{r+1,r+n-k}}{X_{r+1,r+b}} \prod_{a \in [r+2,r+n-k] \cap [k]} X_{a,r+n-k}.
\end{align*}
Thus, using Lemma \ref{lem_pluc_formula} for the numerator and Corollary \ref{cor_pluc_formulae}(2) for the denominator, we have
\[
\dfrac{P_{\{r\} \cup [r+2,r+n-k]}}{P_{[r+1,r+n-k]}} = \dfrac{\ds \sum_{b = 1}^{n-k} X_{r,r+b-1} \dfrac{X_{r+1,r+n-k}}{X_{r+1,r+b}} \ds \prod_{a \in [r+2,r+n-k] \cap [k]} X_{a,r+n-k}}{\ds \prod_{a \in [r+1,r+n-k] \cap [k]} X_{a,r+n-k}} = \ds \sum_{b=1}^{n-k} \dfrac{X_{r,r+b-1}}{X_{r+1,r+b}}.
\]
\end{proof}

\subsubsection{Promotion and the crystal operators}

Recall the cyclic shift map $\PR : \X{n-k} \rightarrow \X{n-k}$ from \S \ref{sec_cyclic_shift}. By definition, every Pl\"{u}cker coordinate of $\PR_t(M)$ is equal to a power of $t$ times a Pl\"{u}cker coordinate of $M$, so $\Theta \PR = \Theta_{n-k}^{-1} \circ \PR \circ \, \Theta_{n-k}$ is subtraction-free, and we may define the piecewise-linear map
\[
\wh{\PR} = \Trop(\Theta \PR) : \tw{\bT}_k \rightarrow \tw{\bT}_k.
\]
The map $\Theta \PR^{-1}$ is also subtraction-free, so we may define $\wh{\PR^{-1}}  = \wh{\PR}^{-1}$ as well.

The following result, which is proved in \S \ref{sec_pf_trop_PR}, is the key tool in this section.

\begin{thm}
\label{thm_trop_PR}
If $b$ is a $k$-rectangle, then $\wh{\PR}(b) = \tw{\pr}(b)$.
\end{thm}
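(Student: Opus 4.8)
The strategy is to lift the combinatorial definition $\tpr = \tsi_1 \circ \cdots \circ \tsi_{n-1}$ (Definition \ref{defn_pr_S}) to the geometric setting. I would introduce, for each $r \in [n-1]$, a birational \emph{geometric Bender--Knuth involution} $\sigma_r : \X{n-k} \to \X{n-k}$ lifting $\tsi_r$, and then prove the single rational identity
\[
\Theta\PR = \Theta\sigma_1 \circ \Theta\sigma_2 \circ \cdots \circ \Theta\sigma_{n-1}
\]
of maps $\bT_k \to \bT_k$. Since tropicalization converts composition of subtraction-free maps into composition of piecewise-linear maps, applying $\Trop$ to this identity gives $\wh\PR = \Trop(\Theta\sigma_1) \circ \cdots \circ \Trop(\Theta\sigma_{n-1})$, and identifying each factor with the piecewise-linear Bender--Knuth involution $\tsi_r$ then yields $\wh\PR = \tpr$.

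The lift is essentially forced by the formula \eqref{eq_PLBK}. Working in Gelfand--Tsetlin coordinates, I would define $\Theta\sigma_r$ to fix every coordinate except those $X_{ir}$ with second index $r$, which it sends to
\[
X_{ir}\ \longmapsto\ \frac{\big(X_{i-1,r-1} + X_{i,r+1}\big)\, X_{i,r-1}\, X_{i+1,r+1}}{\big(X_{i,r-1} + X_{i+1,r+1}\big)\, X_{ir}}.
\]
This expression is subtraction-free, and since $\Trop(uv) = a+b$, $\Trop(u/v) = a-b$, $\Trop(u+v) = \min(a,b)$, and $\Trop\big(uv/(u+v)\big) = \max(a,b)$, it tropicalizes to exactly the right-hand side of \eqref{eq_PLBK}. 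With the boundary conventions of Lemma \ref{lem_PLBK} matched to the rational-rectangle conventions $X_{i,i-1}=1$ and $X_{i,i+n-k}=t$, this gives $\Trop(\Theta\sigma_r) = \tsi_r$ on the locus of $k$-rectangles. Checking that $\sigma_r$ is an involution and that the composite is subtraction-free (so that Lemma \ref{lem_sub_free} applies) is routine.

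The main obstacle is the rational identity itself, which carries no tropical content and must be verified as an exact equality of rational functions; it is the geometric avatar of ``promotion equals cyclic shift,'' closely related to Grinberg--Roby's result that birational rowmotion on the $k \times (n-k)$ rectangle has order $n$. I would prove it by comparing both sides on Pl\"ucker coordinates. The left side is transparent: by Definition \ref{defn_PR}, $\PR$ shifts the Pl\"ucker indices by one and multiplies by $t$ exactly when $1 \in J$, while $\Theta_{n-k}^{-1}$ reads off ratios of basic Pl\"ucker coordinates (Proposition \ref{prop_GT_pl}). The plan is then to evaluate the right side inductively, peeling off the involutions one at a time, using the explicit $J$-tableau formula (Lemma \ref{lem_pluc_formula}) to compute each intermediate Pl\"ucker coordinate and the three-term Pl\"ucker relation (Corollary \ref{cor_3term}) to simplify the ratios each involution produces. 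The real danger is a proliferation of cases; controlling it --- for example by isolating the single index $1$ at which the factor of $t$ enters and otherwise exploiting the cyclic uniformity of the formulas --- is where the genuine work lies.

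Granting the identity, the conclusion is immediate. Each $\tsi_r$ preserves the set of $k$-rectangles (Bender--Knuth involutions send rectangular tableaux to rectangular tableaux), so the equalities $\Trop(\Theta\sigma_r) = \tsi_r$ compose consistently along the $k$-rectangle locus. Tropicalizing the rational identity therefore gives $\wh\PR = \tsi_1 \circ \cdots \circ \tsi_{n-1} = \tpr$ on every $k$-rectangle --- the subset of $\tw{\bT}_k$ cut out by $\wh{f} \geq 0$ (Proposition \ref{prop_trop_dec}) --- which is precisely the assertion of the theorem.
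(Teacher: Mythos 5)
Your proposal is essentially the paper's own proof: the same subtraction-free lift of the Bender--Knuth involutions (your formula for $\sigma_r$ matches the paper's $f_{ir}\cdot g_{ir}/X_{ir}$ exactly, boundary conventions included), the same reduction to the rational identity $\Theta\PR = \Theta\sigma_1 \circ \cdots \circ \Theta\sigma_{n-1}$, and the same descending induction peeling off one $\sigma_r$ at a time with three-term Pl\"{u}cker relations (Corollary \ref{cor_3term}) absorbing the mixed old/new sums. The one minor deviation is your plan to invoke the $J$-tableau formula (Lemma \ref{lem_pluc_formula}) for intermediate coordinates: the paper's induction never needs it, since each intermediate entry is kept expressed as a ratio of Pl\"{u}cker coordinates of the fixed subspace $M$ (via Proposition \ref{prop_GT_pl} and the induction hypothesis), and the three-term relations alone do all the simplification.
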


\begin{remark}
The map $\PR$ clearly has order $n$, so Theorem \ref{thm_trop_PR} gives a ``birational'' proof that $\tw{\pr}$ has order $n$ on rectangular tableaux. Grinberg and Roby \cite{GrinRob} used a similar birational technique to prove an equivalent result.
\end{remark}

Now recall the definitions of the rational maps $\gamma, \vp_i, \ve_i,$ and $e_i^c$ from \ref{sec_geom_Gr}. It's clear that $\gamma : \X{n-k} \rightarrow (\Cx)^n$ and $\vp_i, \ve_i : \X{n-k} \rightarrow \Cx$ are subtraction-free in the sense of Lemma \ref{lem_sub_free}(1), so we may define piecewise-linear maps $\wh{\gamma} = \Trop(\Theta \gamma), \wh{\vp}_i = \Trop(\Theta \vp_i),$ and $\wh{\ve}_i = \Trop(\Theta \ve_i)$.

It requires a bit more work to show that $e_i^c$ is subtraction-free.

\begin{lem}
\label{lem_ei_pos}
If $e_i^c(M,t) = (M',t)$, then the Pl\"{u}cker coordinates of $M'$ are given by nonzero subtraction-free rational functions in the Pl\"{u}cker coordinates of $M$ and the variables $c$ and $t$.
\end{lem}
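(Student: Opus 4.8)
The plan is to compute $P_J(M')$ directly from the fact that $M'=x_i(a)\cdot M$ is an elementary row operation applied to $M$, and then to absorb the only offending factor $c-1$ by means of a Plücker relation. Write $a=(c-1)/\vp_i(M,t)$ for $i\neq 0$ and $a=\frac{(-1)^{k-1}}{t}\cdot\frac{c-1}{\vp_0(M,t)}$ for $i=0$. Left multiplication by $x_i(a)=\Id+aE_{i,i+1}$ (resp.\ $x_0(a)=\Id+aE_{n1}$) adds $a$ times row $i+1$ to row $i$ (resp.\ $a$ times row $1$ to row $n$). By multilinearity of the determinant in the rows indexed by $J$ one gets: $P_J(M')=P_J(M)$ whenever $i\notin J$ or $\{i,i+1\}\subseteq J$ (read cyclically); and $P_J(M')=P_J(M)+a\,\eta\,P_{J'}(M)$ when $i\in J$, $i+1\notin J$, where $J'=(J\setminus\{i\})\cup\{i+1\}$ and $\eta=1$ for $i\neq 0$, $\eta=(-1)^{k-1}$ for $i=0$ (the sign coming from cycling the new index $1$ to the front of $J'$). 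The trivial cases are manifestly subtraction-free. In every nontrivial case the definition of $\vp_i$ gives $a\eta=(c-1)P_{S_0}(M)/P_{S_1}(M)$, with $S_0=[i-k+1,i]$ and $S_1=[i-k+1,i-1]\cup\{i+1\}$ (mod $n$); here the factors $t^{\pm1}$ in $\vp_0$ and the sign $(-1)^{k-1}$ in the $i=0$ definition are exactly what is needed to make the answer uniform. Hence
\[
P_J(M') \;=\; \frac{P_{S_1}P_J-P_{S_0}P_{J'}}{P_{S_1}} \;+\; c\,\frac{P_{S_0}P_{J'}}{P_{S_1}},
\]
all Plücker coordinates on the right evaluated at $M$. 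The second summand is subtraction-free in the $P_I$ and $c$, so the whole proof reduces to showing that $E:=P_{S_1}P_J-P_{S_0}P_{J'}$ equals a subtraction-free expression in the Plücker coordinates of $M$.

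As a consistency check, when $J=S_0$ one has $J'=S_1$, so $E=0$ and the formula collapses to $P_{S_0}(M')=c\,P_{S_0}(M)$. In general $E$ is a genuine difference of two monomials in the Plücker symbols, so subtraction-freeness must come from the relations. The key structural input is that $S_0,S_1$ differ exactly in the adjacent pair $\{i,i+1\}$, and $J,J'$ differ in the same pair; consequently the two pairs $(S_1,J)$ and $(S_0,J')$ have the same multiset-union of indices, with $\{i,i+1\}$ merely distributed differently between the two factors. This is precisely the configuration controlled by the Grassmann--Plücker relations (Proposition \ref{prop_Gr_Pl_rel}). The cleanest route is a Lindström--Gessel--Viennot tail-swapping argument: by Lemma \ref{lem_pluc_formula} each product counts weighted pairs of vertex-disjoint path families in the network $N_k$, and since source $i$ and source $i+1$ are adjacent, two paths leaving them in different families are forced to cross, so swapping tails at the first crossing yields a weight-preserving identity $E=\sum(\text{product of two Plücker coordinates})$ with nonnegative coefficients. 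I expect the positivity of $E$ to be the main obstacle: a single application of Proposition \ref{prop_Gr_Pl_rel} only rewrites $E$ as an \emph{alternating} sum, which is not manifestly subtraction-free, so the genuine content is the combinatorial cancellation that leaves only positive terms. In the smallest case $k=2$ this collapses to the three-term relation of Corollary \ref{cor_3term}, giving $E$ as a single product $P_{\bullet}P_{\bullet}$.

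Two reductions streamline the work. First, it suffices to treat a single $i$: the map $\PR$ acts on Plücker coordinates by the monomial rule \eqref{eq_PR}, which is subtraction-free with subtraction-free inverse, and $\PR^{-1}\circ e_i^c\circ\PR=e_{i-1}^c$ by Lemma \ref{lem_pr_conj}; hence subtraction-freeness of the Plücker coordinates of $e_i^c(M,t)$ for one value of $i$ propagates to all $i\in\Zn$ by composition, and in particular the affine operator $i=0$ reduces to an ordinary one. Second, for the downstream application only the \emph{basic} Plücker coordinates of $M'$ are needed (Lemma \ref{lem_sub_free}(2)), and for these $J$ has the special two-block form of Definition \ref{defn_basic}, which should force $E$ to collapse to a single positive Plücker monomial via one three-term relation. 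Concretely I would first record $P_J(M')=P_J(M)$ in the trivial cases and the displayed formula in the nontrivial case, then prove $E$ subtraction-free in the required generality, the LGV/tail-swap positivity being the one genuinely nontrivial step. Nonzeroness of each resulting expression is automatic, since its leading term ($P_J(M)$, or $c\,P_{S_0}P_{J'}/P_{S_1}$) is a nonzero element of the semifield.
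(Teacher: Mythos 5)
Your setup is sound and coincides with the paper's opening moves: the case analysis for $P_J(M')$ under left multiplication by $x_i(a)$, the uniform bookkeeping of the sign $\eta$ and the $t$-powers at $i=0$, and the reduction to a single value of $i$ by conjugating with $\PR$ (Lemma \ref{lem_pr_conj}, plus monomiality of \eqref{eq_PR}) are all exactly right. The gap is the step you yourself flag as the main obstacle: the subtraction-freeness of $E = P_{S_1}P_J - P_{S_0}P_{J'}$ for \emph{arbitrary} $J$ is asserted via an unexecuted tail-swapping sketch. The four sets involved are not of the form $I\cup\{a,b\}$, $I\cup\{c,d\}$, etc.\ for a common index set $I$, so Corollary \ref{cor_3term} does not apply directly, and, as you concede, Proposition \ref{prop_Gr_Pl_rel} only rewrites $E$ as an alternating sum. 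The claim is in fact true, but differences of products of Pl\"{u}cker coordinates of this general shape are not automatically nonnegative on the totally nonnegative Grassmannian; a proof would have to exploit the specific cyclic-interval structure of $S_0$, and nothing in your writeup carries this out. As written, the proof is incomplete.

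The irony is that your second reduction, combined with your own consistency check, is two lines from the paper's actual proof --- you stop just short. After reducing to $i=1$, the only basic subset with $1 \in J$ and $2 \notin J$ is $J_{1,1} = \{1\} \cup [k+2,n]$, and this $J$ \emph{equals} $S_0$ (so $J' = S_1$): your observation that $E=0$ when $J=S_0$ applies verbatim, giving $P_{J_{1,1}}(M') = c\,P_{J_{1,1}}(M)$ with no Pl\"{u}cker relation at all --- not the ``single positive monomial via one three-term relation'' you predict. All other basic coordinates are unchanged. The missing ingredient is then Corollary \ref{cor_basic=basis}: every Pl\"{u}cker coordinate is a Laurent polynomial with nonnegative coefficients in the basic ones, so the fact that the basic coordinates of $M'$ are (monomially) subtraction-free in those of $M$ and $c$ immediately upgrades to the full statement for every $J$, which is what the lemma asserts. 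Your appeal to Lemma \ref{lem_sub_free}(2) instead argues only that the weaker basic-coordinate statement suffices for the downstream application; that does not prove the lemma as stated, and it leaves your route through the positivity of $E$ unproven. Replace the tail-swap program with a citation of Corollary \ref{cor_basic=basis} and your argument closes, at which point it is the paper's proof.
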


\begin{proof}
By Lemma \ref{lem_pr_conj}(b) and the fact that $\PR$ and $\PR^{-1}$ are subtraction-free, it suffices to prove this for $i = 1$. By definition, $M'$ is obtained from $M$ by adding a scalar multiple of the second row to the first row, so $P_J(M') = P_J(M)$ unless $1 \in J$ and $2 \not \in J$. The only basic $(n-k)$-subset which contains 1 but not 2 is $J_{1,1} = \{1\} \cup [k+2,n]$, and we compute
\begin{align*}
P_{\{1\} \cup [k+2,n]}(M') &= P_{\{1\} \cup [k+2,n]}(M) + \dfrac{c-1}{\vp_1(M,t)} \cdot P_{\{2\} \cup [k+2,n]}(M) \\
&= P_{\{1\} \cup [k+2,n]}(M) + (c-1) \cdot P_{\{1\} \cup [k+2,n]}(M) = cP_{\{1\} \cup [k+2,n]}(M).
\end{align*}
By Corollary \ref{cor_basic=basis}, every Pl\"{u}cker coordinate is a subtraction-free rational function (in fact, Laurent polynomial) in the basic Pl\"{u}cker coordinates, so we are done.
\end{proof}

In light of this Lemma, we may define a piecewise-linear action $\wh{e}_i : \bbZ \times \tw{\bT}_k \rightarrow \tw{\bT}_k$ by
\[
\wh{e}_i(m,b) = (\Trop(\Theta_{n-k}^{-1} \circ e_i^c \circ \Theta_{n-k}))(m,b),
\]
where we replace $c$ with $m$ in the tropicalization.

\begin{thm}
\label{thm_trop_cryst}
Let $b = (B_{ij},L)$ be a $k$-rectangle. Then for $i \in \Zn$, we have
\begin{enumerate}
\item $\wh{\gamma}_i(b) = \tw{\gamma}_i(b)$.
\item $\wh{\vp}_i(b) = -\tw{\vp}_i(b)$ and $\wh{\ve}_i(b) = -\tw{\ve}_i(b)$.
\item $\tw{e}_i(b)$ is defined if and only if $\wh{f}(\wh{e}_i(1,b)) \geq 0$; in that case, $\wh{e}_i(1,b) = \tw{e}_i(b)$.
\item $\tw{f}_i(b)$ is defined if and only if $\wh{f}(\wh{e}_i(-1,b)) \geq 0$; in that case, $\wh{e}_i(-1,b) = \tw{f}_i(b)$.
\end{enumerate}
Note that in (1), $\wh{\gamma}_i(b)$ is the $i$th component of $\wh{\gamma}(b)$, and similarly for $\tw{\gamma}_i(b)$; in (4), $\wh{f}$ is the tropicalization of the decoration, whereas $\tw{f}_i$ is a combinatorial crystal operator.
\end{thm}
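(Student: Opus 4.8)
The plan is to prove the four parts of Theorem \ref{thm_trop_cryst} by reducing everything to the case $i=1$ via the promotion-tropicalizes-to-$\PR$ machinery (Theorem \ref{thm_trop_PR}), then handling the $i=1$ case by direct computation using the Gelfand-Tsetlin parametrization. The key structural observation is that both the geometric side and the combinatorial side carry a cyclic-conjugation symmetry: on the geometric side, Lemma \ref{lem_pr_conj} tells us that $\vp_i \circ \PR = \vp_{i-1}$, $\ve_i \circ \PR = \ve_{i-1}$, $\PR^{-1} \circ e_i^c \circ \PR = e_{i-1}^c$, and $\gamma \circ \PR$ cyclically shifts the components; on the combinatorial side, Proposition \ref{prop_cryst_ops_pr} gives the exactly parallel identities $\tep_i = \tep_{i+1} \circ \tpr$, $\te_i = \tpr^{-1} \circ \te_{i+1} \circ \tpr$, and the shift of $\tg$. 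Since tropicalization respects composition, once I know $\wh{\PR}$ agrees with $\tpr$ on $k$-rectangles (Theorem \ref{thm_trop_PR}), these two symmetries match up, and so it suffices to establish each claim for a single value of $i$.

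\textbf{Parts (1) and (2): the functions.} First I would prove the $i=1$ case of each identity by an explicit tropical computation. For part (1), I compute $\Theta\gamma = \gamma \circ \Theta_{n-k}$ in coordinates using the basic Pl\"{u}cker formulas of Corollary \ref{cor_pluc_formulae}(2), then tropicalize and compare with the content map $\tg$ (expressed via the $k$-rectangle coordinates $b_{ij}$ of \eqref{eq_row_coords}). For part (2), the computation in Example \ref{ex_e1} already gives $\tep_1(A_{ij}) = A_{12}-A_{11}$ and $\tph_1(A_{ij}) = A_{11}-A_{22}$ on the combinatorial side; on the geometric side I would use the formula for $\vp_1, \ve_1$ in Definition \ref{defn_maps}, express the relevant Pl\"{u}cker coordinates via Corollary \ref{cor_pluc_formulae}(2), tropicalize, and check that I obtain $-\tw\vp_1$ and $-\tw\ve_1$ (the sign reflects the $\min$-convention in $\Trop$ versus the crystal convention). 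To pass from $i=1$ to general $i$ for part (1), I use $\wh\gamma \circ \wh{\PR} = \wh{\sh} \circ \wh\gamma$ (tropicalizing Lemma \ref{lem_pr_conj}(3)) together with Theorem \ref{thm_trop_PR} and the combinatorial shift identity Proposition \ref{prop_cryst_ops_pr}(2), and induct on $i$; for part (2) I similarly combine $\wh\vp_i \circ \wh\PR = \wh\vp_{i-1}$ with $\tep_i = \tep_{i+1}\circ\tpr$.

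\textbf{Parts (3) and (4): the crystal operators.} Here the content is that the geometric operator $e_1^c$, tropicalized at $c=1$ (resp. $c=-1$), reproduces $\te_1$ (resp. $\tf_1$) \emph{exactly when the combinatorial operator is defined}, and that the decoration detects definedness. For $i=1$, Lemma \ref{lem_ei_pos} shows that $e_1^c$ only changes the single basic coordinate $P_{J_{1,1}}$ by the factor $c$; tropicalizing, $\wh e_1(1,\cdot)$ and $\wh e_1(-1,\cdot)$ modify exactly one entry of the $k$-rectangle by $\pm 1$, which matches the description of $\te_1, \tf_1$ on GT patterns in Example \ref{ex_e1}. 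The definedness criterion comes from Proposition \ref{prop_trop_dec}: $\wh f \geq 0$ cuts out the $k$-rectangles, so $\wh f(\wh e_i(1,b)) \geq 0$ says precisely that applying the tropical operator stays inside the cone of $k$-rectangles, which is equivalent to $\te_i(b)$ being defined. The general $i$ case then follows by conjugating with $\wh\PR$ and invoking Theorem \ref{thm_trop_PR}, matched against Proposition \ref{prop_cryst_ops_pr}(4) and the definition of $\te_0, \tf_0$ on $B^{k,L}$.

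\textbf{The main obstacle} will be the careful bookkeeping in parts (3) and (4): I must verify not just that $\wh e_1(1,b)$ changes the right coordinate, but that the \emph{tropicalized} formula for $\Theta_{n-k}^{-1}\circ e_1^c \circ \Theta_{n-k}$ at $c=1$ literally equals the piecewise-linear map ``increase $B_{11}$ by $1$'' \emph{on the nose}, including the subtle point that when $\te_1(b)$ is \emph{undefined} the tropical map still produces some lattice point (which then falls outside the $k$-rectangle cone, as detected by $\wh f < 0$). Reconciling the geometric operator's unconditional action with the combinatorial operator's partial definedness --- and checking that the boundary case is governed exactly by the tropical decoration --- is the delicate step; everything else is a matter of propagating the $i=1$ computation around the cycle using the two parallel conjugation symmetries.
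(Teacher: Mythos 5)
Your proposal is correct and follows essentially the same route as the paper: prove each part for $i=1$ by explicit computation in the Gelfand--Tsetlin coordinates (via Corollary \ref{cor_pluc_formulae}(2), the single-coordinate scaling of $P_{J_{1,1}}$ from Lemma \ref{lem_ei_pos}, and comparison with Example \ref{ex_e1}), then propagate to all $i \in \Zn$ by matching the geometric conjugation identities of Lemma \ref{lem_pr_conj} against Proposition \ref{prop_cryst_ops_pr} through Theorem \ref{thm_trop_PR}. Your use of Proposition \ref{prop_trop_dec} for the definedness criterion is the same mechanism the paper employs via its formula \eqref{eq_f_coords}, and your flagged ``main obstacle'' is exactly the boundary bookkeeping the paper carries out (it also quietly sets aside the $k = 1, n-1$ boundary cases, which your write-up should likewise note or handle).
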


\begin{proof} We prove each of these statements for $i = 1$, and then Proposition \ref{prop_cryst_ops_pr}, Lemma \ref{lem_pr_conj}, and Theorem \ref{thm_trop_PR} allow us to conjugate by $\PR$ at the geometric level and $\tw{pr}$ at the combinatorial level to obtain the statements for all values of $i$.

Since $\tw{\gamma}_i(b)$ is the number of $i$'s in the tableau corresponding to $b$, we have $\tw{\gamma}_1 = B_{11}$. Suppose $\gamma \circ \Theta_{n-k}(X_{ij},t) = (\gamma_1, \ldots, \gamma_n)$. By Corollary \ref{cor_pluc_formulae}(2) and the definition of $\gamma$, we have $\gamma_1 = X_{11}$, so $\wh{\gamma}_1(b) = \tw{\gamma}_1(b)$, proving (1).

For (2)-(4), we assume that $k \neq 1, n-1$ to avoid ``boundary effects'' (we leave the cases $k = 1, n-1$ to the reader). Let $(M,t) = \Theta_{n-k}(X_{ij},t)$. By Corollary \ref{cor_pluc_formulae}(2), we have
\begin{equation*}
\vp_1 \circ \Theta_{n-k}(X_{ij},t) = \frac{P_{\{2\} \cup [k+2,n]}(M)}{P_{\{1\} \cup [k+2,n]}(M)} = \dfrac{X_{22}}{X_{11}}
\end{equation*}
and
\begin{equation*}
\ve_1 \circ \Theta_{n-k}(X_{ij},t) = \frac{P_{\{2\} \cup [k+2,n]}(M) P_{\{1\} \cup [k+2,n]}(M)}{P_{[k+1,n]}(M) P_{\{1,2\} \cup [k+3,n]}(M)} = \dfrac{X_{11}}{X_{12}}.
\end{equation*}
Thus, we have $-\wh{\vp}_1(b) = B_{11} - B_{22}$ and $-\wh{\ve}_1(b) = B_{12} - B_{11}$, and (2) follows from comparison with Example \ref{ex_e1}.

Now set $e_1^c(M,t) = (M',t)$ and $(X'_{ij},t) = \Theta_{n-k}^{-1}(M',t)$. By Proposition \ref{prop_GT_pl}, the $X'_{ij}$ depend only on the basic Pl\"{u}cker coordinates of $M'$, and it was shown in the proof of Lemma \ref{lem_ei_pos} that $P_{\{1\} \cup [k+2,n]}(M') = cP_{\{1\} \cup [k+2,n]}(M)$, and all other basic Pl\"{u}cker coordinates of $M$ and $M'$ are the same. Thus, the effect of $\Theta_{n-k}^{-1} \circ e_i^c \circ \Theta_{n-k}$ on $(X_{ij},t)$ is to replace $X_{11}$ with $cX_{11}$, and to leave the other $X_{ij}$ unchanged. This means that $\wh{e}_1(m,b)$ adds $m$ to $B_{11}$. Furthermore, using formula \eqref{eq_f_coords} for $f(M,t)$, we see that if $\wh{f}(b) \geq 0$, then $\wh{f}(\wh{e}_1(1,b)) \geq 0$ if and only if $B_{12} > B_{11}$, and $\wh{f}(\wh{e}_1(-1,b)) \geq 0$ if and only if $B_{11} > B_{22}$.

We saw in Example \ref{ex_e1} that $\tw{e}_1(b)$ is not defined when $B_{12} = B_{11}$, and otherwise $\tw{e}_1(b)$ increases $B_{11}$ by 1; similarly, $\tw{f}_1(b)$ is not defined when $B_{11} = B_{22}$, and otherwise $\tw{f}_1(b)$ decreases $B_{11}$ by 1. This agrees with the description of $\wh{e}_1(\pm 1,b)$ in the previous paragraph, so (3) and (4) are proved.
\end{proof}

\subsection{Examples}
\label{sec_trop_ex}

\subsubsection{One-row tableaux}
\label{sec_one_row}

Let $(X_{11}, \ldots, X_{1,n-1}, t)$ be an element of $\bT_{1}$, and set $x_1 = X_{11}, x_j = X_{1j}/X_{1,j-1}$ for $j = 2, \ldots, n-1$, and $x_n = t/X_{1,n-1}$. We have
\[
\Phi_1(X_{1j},t) = \left(
\begin{array}{ccccccc}
x_1 & \\
1 & x_2 & \\
 & 1 & x_3 & \\
 & & & \ddots \\
 & & & & x_{n-1} \\
 & & & & 1 & x_n
\end{array}
\right).
\]
By definition, $\Theta_{n-1}(X_{1j},t) = (M,t)$, where $M$ is the $(n-1)$-dimensional subspace spanned by the first $n-1$ columns of $\Phi_1(X_{1j},t)$. One easily computes
\[
\dfrac{P_{[1,j] \cup [j+2,n]}(M)}{P_{[2,n]}(M)} = x_1 x_2 \cdots x_j = X_{1j}
\]
for $i = 1, \ldots, n-1$, in agreement with Proposition \ref{prop_GT_pl}.

Set $(M',t) = \PR(M,t)$, and $(X'_{1j},t) = \Theta_{n-1}^{-1}(M',t) = \Theta \PR(X_{1j},t)$. We have
\[
X'_{1j} = \dfrac{P_{[1,j] \cup [j+2,n]}(M')}{P_{[2,n]}(M')} = t \dfrac{P_{[1,j-1] \cup [j+1,n]}(M)}{P_{[1,n-1]}(M)} = t \dfrac{X_{1,j-1}}{X_{1,n-1}} = x_n x_1 \cdots x_{j-1},
\]
and thus in terms of the variables $x_i$, we have
\begin{equation}
\label{eq_PR_one_row}
\Theta \PR(x_1, \ldots, x_n) = (x_n, x_1, \ldots, x_{n-1}).
\end{equation}

Now we compute $\Theta e_0^c$. Set $(M'',t) = e_0^c(M,t)$, and $(X''_{1j},t) = \Theta^{-1}_{n-1}(M'',t) = \Theta e_0^c(X_{1j},t)$. We have $\phi_0(M,t) = \dfrac{x_1 \cdots x_{n-1}}{t}$, so
\[
M'' = x_0\left( \dfrac{(-1)^{n-2}}{t} \dfrac{c-1}{\phi_0(M,t)} \right) \cdot M = x_0\left( (-1)^n \dfrac{c-1}{x_1 \cdots x_{n-1}} \right) \cdot M.
\]
Left-multiplication by $x_0(a)$ means adding $a$ times row 1 to row $n$, so we have
\[
P_{[2,n]}(M'') = P_{[2,n]}(M) + \dfrac{c-1}{x_1 \cdots x_{n-1}} P_{[1,n-1]}(M) = c,
\]
and the other maximal minors of $M''$ are equal to those of $M$. Thus, $X''_{1j} = c^{-1}X_{1j}$ for all $j$, so
\begin{equation}
\label{eq_e0_one_row}
\Theta e_0^c(x_1, \ldots, x_n) = (c^{-1}x_1, x_2, \ldots, x_{n-1}, cx_n).
\end{equation}
Since conjugation by $\PR$ sends $e_i^c$ to $e_{i-1}^c$, \eqref{eq_PR_one_row} and \eqref{eq_e0_one_row} imply that
\begin{equation}
\label{eq_ei_one_row}
\Theta e_i^c(x_1, \ldots, x_n) = (x_1, \ldots, cx_i, c^{-1}x_{i+1}, \ldots, x_n)
\end{equation}
for all $i \in \Zn$ (this can also be computed directly, of course). Thus, we recover the well-known affine geometric crystal structure on $(\Cx)^n$ (\cite{KOTY}). Note that the actions of $\tpr$ and $\te_i$ on a one-row tableau are indeed given by the tropicalizations of \eqref{eq_PR_one_row} and \eqref{eq_ei_one_row}, where $x_i$ is replaced with the number of $i$'s in the tableau (and $c$ is replaced with 1).

\subsubsection{The case $n=4$, $k=2$}
\label{sec_n4_k2}

Let $(X_{11}, X_{12}, X_{22}, X_{23}, t)$ be a rational 2-rectangle. Set $(M,t) = \Theta_2(X_{ij}, t)$, $(M',t) = \PR(M,t)$, and $(X'_{ij},t) = \Theta \PR(X_{ij},t) = \Theta_2^{-1}(M',t)$. We have
\[
M = \left(
\begin{array}{cc}
X_{11} & 0 \\ \smallskip
X_{22} & \dfrac{X_{12}}{X_{11}}X_{22} \\ \smallskip
1 & \dfrac{X_{12}}{X_{11}} + \dfrac{X_{23}}{X_{22}} \\ \smallskip
0 & 1
\end{array}
\right)
\quad\quad \text{ and } \quad\quad
M' = \left(
\begin{array}{cc}
0 & -t \\
X_{11} & 0 \\
X_{22} & \dfrac{X_{12}}{X_{11}}X_{22} \\
1 & \dfrac{X_{12}}{X_{11}} + \dfrac{X_{23}}{X_{22}}
\end{array}
\right),
\]
so Proposition \ref{prop_GT_pl} gives
\begin{equation}
\label{eq_PR_coords}
\begin{array}{ll}
X'_{11} = \dfrac{P_{14}(M')}{P_{34}(M')} = \dfrac{t}{X_{23}} & X'_{12} = \dfrac{P_{12}(M')}{P_{24}(M')} = \dfrac{t X_{11} X_{22}}{X_{11}X_{23} + X_{12}X_{22}} \\
X'_{22} = \dfrac{P_{24}(M')}{P_{34}(M')} = \dfrac{X_{11}X_{23} + X_{12}X_{22}}{X_{22}X_{23}} & X'_{23} = \dfrac{P_{23}(M')}{P_{34}(M')} = \dfrac{X_{12}X_{22}}{X_{23}}. \\
\end{array}
\end{equation}

Now suppose $(B_{11}, B_{12}, B_{22}, B_{23}, L) \in \tw{\bT}_{2}$. Tropicalizing \eqref{eq_PR_coords}, we obtain $\wh{\PR}(B_{ij},L) = (B'_{ij},L)$, where
\begin{equation}
\label{eq_wh_PR}
\begin{array}{l}
B'_{11} = L - B_{23} \\
B'_{12} = L + B_{11} + B_{22} - \min(B_{11} + B_{23}, B_{12} + B_{22}) \\
B'_{22} = \min(B_{11} + B_{23}, B_{12} + B_{22}) - B_{22} - B_{23} \\
B'_{23} = B_{12} + B_{22} - B_{23}.
\end{array}
\end{equation}

We verify that these piecewise-linear formulas agree with the combinatorial rule for $\tw{\pr}$ for a particular tableau. Consider the following 2-row tableau $T$, and its corresponding 2-rectangle:

\begin{equation}
\label{eq_T}
T = \ytableaushort{{1}{1}{2}{2}{2}{3},{2}{3}{3}{4}{4}{4}}
\quad\quad
\longleftrightarrow
\quad\quad
\begin{array}{ccccccccccc}
&&& 2 & \\
&& 5 && 1\\
&6 && 3
\end{array}
\end{equation}
Using either Bender-Knuth involutions or jeu-de-taquin, one computes

\begin{equation}
\label{eq_pr_T}
\tw{\pr}(T) =
\ytableaushort{{1}{1}{1}{2}{3}{3},{2}{3}{3}{4}{4}{4}}
\quad\quad
\longleftrightarrow
\quad\quad
\begin{array}{ccccccccccc}
&&& 3 & \\
&& 4 && 1\\
&6 && 3
\end{array}
\end{equation}
and the reader may verify that the 2-rectangle corresponding to $\tw{\pr}(T)$ agrees with the output of the piecewise-linear formulas \eqref{eq_wh_PR}, in accordance with Theorem \ref{thm_trop_PR}.

\subsection{Proof of Theorem \ref{thm_trop_PR}}
\label{sec_pf_trop_PR}

Recall from \S \ref{sec_prom_evac} that promotion is defined as the composition
\[
\tw{pr} = \tw{\sigma}_1 \cdots \tw{\sigma}_{n-1},
\]
where $\tw{\sigma}_r$ is the $r^{th}$ Bender-Knuth involution. Recall also the piecewise-linear formula for the action of a Bender-Knuth involution on a Gelfand-Tsetlin pattern from Lemma \ref{lem_PLBK}. Our strategy is to ``detropicalize'' this piecewise-linear formula to obtain ``geometric Bender-Knuth involutions,'' and then to show that applying a sequence of these involutions to an element $(X_{ij},t) \in \bT_k$ has the same effect as applying $\Theta_{n-k}^{-1} \circ \PR \circ \, \Theta_{n-k}$ to $(X_{ij},t)$.

Let $(B_{ij}, L) \in \tw{\bT}_k$ be a $k$-rectangle, and let $(B'_{ij}, L) = \tsi_r(B_{ij},L)$. By combining Lemma \ref{lem_PLBK} with the ``embedding'' of a $k$-rectangle into its associated Gelfand-Tsetlin pattern (c.f. \eqref{eq_rect_GT}), we see that
\begin{align*}
B'_{ij} &= \begin{cases}
\twf_{ir}(B_{ij},L) + \twg_{ir}(B_{ij},L) - B_{ir} & \text{ if } j = r \\
B_{ij} &\text{ if } j \neq r
\end{cases} \\
\text{ where } \quad\quad \twf_{ij}(B_{ij},L) &= \begin{cases}
\min(B_{i-1,j-1}, B_{i,j+1}) &\text{ if } i \neq 1 \text{ and } j \neq n-k-1+i \\
B_{i-1,j-1} & \text{ if } i \neq 1 \text{ and } j = n-k-1+i \\
B_{i,j+1} & \text{ if } i = 1 \text{ and } j \neq n-k \\
L & \text{ if } i = 1 \text{ and } j = n-k
\end{cases} \\
\text{ and } \quad\quad \twg_{ij}(B_{ij},L) &= \begin{cases}
\max(B_{i,j-1}, B_{i+1,j+1}) &\text{ if } i \neq k \text{ and } j \neq i \\
B_{i+1,j+1} & \text{ if } i \neq k \text{ and } j = i\\
B_{i,j-1} & \text{ if } i = k \text{ and } j \neq k \\
0 & \text{ if } i = k \text{ and } j = k.
\end{cases}
\end{align*}

Now we naively lift formula this piecewise-linear formula for $\tsi_r$ to a rational map $\sigma_r : \bT_k \rightarrow \bT_k$. That is, given $(X_{ij}, t) \in \bT_k$, define $\sigma_r(X_{ij},t) = (X'_{ij},t)$ by
\begin{align*}
X'_{ij} &= \begin{cases}
f_{ir}(X_{ij},t) \cdot g_{ir}(X_{ij},t) \cdot \dfrac{1}{X_{ir}} &\text{ if } j = r \\
X_{ij} &\text{ if } j \neq r
\end{cases} \\
\text{ where } \quad\quad f_{ij}(X_{ij},t) &= \begin{cases}
X_{i-1,j-1} + X_{i,j+1} &\text{ if } i \neq 1 \text{ and } j \neq n-k-1+i \\
X_{i-1,j-1} & \text{ if } i \neq 1 \text{ and } j = n-k-1+i \\
X_{i,j+1} & \text{ if } i = 1 \text{ and } j \neq n-k \\
t & \text{ if } i = 1 \text{ and } j = n-k
\end{cases} \\
\text{ and } \quad\quad g_{ij}(X_{ij},t) &= \begin{cases}
\dfrac{X_{i,j-1}X_{i+1,j+1}}{X_{i,j-1} + X_{i+1,j+1}} &\text{ if } i \neq k \text{ and } j \neq i \\
X_{i+1,j+1} & \text{ if } i \neq k \text{ and } j = i\\
X_{i,j-1} & \text{ if } i = k \text{ and } j \neq k \\
1 & \text{ if } i = k \text{ and } j = k.
\end{cases}
\end{align*}

Define $\pr : \bT_k \rightarrow \bT_k$ by
\[
\pr = \sigma_1 \circ \sigma_2 \circ \cdots \circ \sigma_{n-1}.
\]
Clearly $\Trop(\pr) = \tw{\pr}$, so to prove Theorem \ref{thm_trop_PR}, it suffices to show that 
\begin{equation}
\label{eq_PR_toprove}
\pr \, \circ \, \Theta_{n-k}^{-1} = \Theta_{n-k}^{-1} \circ \PR
\end{equation}
as rational maps from $\Y{n-k}{n}$ to $\bT_k$. Given $(M,t) \in \Y{n-k}{n}$, define $X_{ij}$ by $(X_{ij}, t) = \Theta_{n-k}^{-1}(M,t)$, and define $X'_{ij}$ by $(X'_{ij}, t) = \Theta_{n-k}^{-1} \circ \PR(M,t)$. Write $P_J = P_J(M)$ for the Pl\"{u}cker coordinates of $M$. By Proposition \ref{prop_GT_pl} and the definition of $\PR$, we have
\[
X_{ij} = \frac{P_{[i,j] \cup [k+j-i+2,n]}}{P_{[i+1,j] \cup [k+j-i+1,n]}}
\quad\quad \text{ and } \quad\quad
X'_{ij} = t^{\delta_{i,1}} \dfrac{P_{[i-1,j-1] \cup [k+j-i+1,n-1]}}{P_{[i,j-1] \cup [k+j-i,n-1]}}.
\]

Set $X^{(n)}_{ij} = X_{ij}$, and for $r = 1, \ldots, n-1$, define $X^{(r)}_{ij}$ by
\begin{equation*}
(X^{(r)}_{ij}, t) = \sigma_r(X^{(r+1)}_{ij},t) = \sigma_r \circ \cdots \circ \sigma_{n-1}(X_{ij},t).
\end{equation*}
In this notation, \eqref{eq_PR_toprove} is the equality $X^{(1)}_{ij} = X'_{ij}$ for all $i,j$. To prove this, we will show by descending induction on $r$ that
\begin{equation}
\label{eq_pr_to_prove}
X^{(r)}_{ij} = X'_{ij} \quad\quad\quad \text{ for } j = r, r+1, \ldots, n-1.
\end{equation}

If $r = n$, then \eqref{eq_pr_to_prove} is vacuously true. So suppose $1 \leq r \leq n-1$. Since $\sigma_a$ only changes entries in the $a$th row of the GT pattern, \eqref{eq_pr_to_prove} holds for $j > r$ by induction, and we need only show that for each $i$, we have
\begin{equation}
\label{eq_fg}
f_{ir}(X^{(r+1)}_{ij},t) \cdot g_{ir}(X^{(r+1)}_{ij},t) \cdot \dfrac{1}{X_{ir}} = X'_{ir}.
\end{equation}
By the induction hypothesis, the ``neighborhood'' of $X_{ir}$ in the GT pattern $X^{(r+1)}_{ij}$ looks like
\[
\begin{array}{ccccccccccc}
 X_{i-1,r-1} && X_{i,r-1}\\
& X_{ir}\\
X'_{i,r+1} && X'_{i+1,r+1} \\
\end{array}.
\]
Note that some or all of the NW, NE, and SE neighbors may be ``missing,'' and the SW neighbor may be $t$. For instance, when $r = n-1$, the SW neighbor is $t$ and the SE neighbor is missing.

We claim that
\begin{equation}
\label{eq_f_ir}
f_{ir}(X^{(r+1)}_{ij},t) = t^{\delta_{i,1}} \frac{P_{[i-1,r-1] \cup [k+r-i+1,n-1]}P_{[i,r] \cup [k+r-i+2,n]}}{P_{[i,r-1] \cup [k+r-i+1,n]}P_{[i,r] \cup [k+r-i+1,n-1]}}
\end{equation}
and
\begin{equation}
\label{eq_g_ir}
g_{ir}(X^{(r+1)}_{ij},t) = \frac{P_{[i,r-1] \cup [k+r-i+1,n]}P_{[i,r] \cup [k+r-i+1,n-1]}}{P_{[i,r-1] \cup [k+r-i,n-1]}P_{[i+1,r] \cup [k+r-i+1,n]}}.
\end{equation}

First we prove \eqref{eq_f_ir}. If $1 < i \leq k$ and $i \leq r < n-k-1+i$, then the NW and SW neighbors of $X_{ir}$ both exist, and we have
\begin{align*}
f_{ir}(X^{(r+1)}_{ij},t) &= X_{i-1,r-1} + X'_{i,r+1} \\
&= \frac{P_{[i-1,r-1] \cup [k+r-i+2,n]}}{P_{[i,r-1] \cup [k+r-i+1,n]}} + \frac{P_{[i-1,r] \cup [k+r-i+2,n-1]}}{P_{[i,r] \cup [k+r-i+1,n-1]}} \\
&= \frac{P_{[i-1,r-1] \cup [k+r-i+2,n]}P_{[i,r] \cup [k+r-i+1,n-1]} + P_{[i,r-1] \cup [k+r-i+1,n]}P_{[i-1,r] \cup [k+r-i+2,n-1]}}{P_{[i,r-1] \cup [k+r-i+1,n]}P_{[i,r] \cup [k+r-i+1,n-1]}} \\
&= \frac{P_{[i-1,r-1] \cup [k+r-i+1,n-1]}P_{[i,r] \cup [k+r-i+2,n]}}{P_{[i,r-1] \cup [k+r-i+1,n]}P_{[i,r] \cup [k+r-i+1,n-1]}}
\end{align*}
where in the last step we apply a three-term Pl\"{u}cker relation (Corollary \ref{cor_3term}) to simplify the numerator. We have verified the ``general case'' of \eqref{eq_f_ir}.

The three ``boundary cases'' of \eqref{eq_f_ir} are straightforward to verify: for instance, if $i = 1$ and $r < n-k$, then
\[
f_{1r}(X^{(r+1)}_{ij},t) = X'_{1,r+1} = t \dfrac{P_{[0,r] \cup [k+r+1,n-1]}}{P_{[1,r] \cup [k+r,n-1]}},
\]
which agrees with the right-hand side of \eqref{eq_f_ir} (recall Convention \ref{conv_pluc}). The other two boundary cases are similar, and are left to the reader.

Now we prove \eqref{eq_g_ir}. If $1 \leq i < k$ and $i < r \leq n-k-1+i$, then the NE and SE neighbors of $X_{ir}$ both exist, and we have

\begin{align*}
g_{ir}(X^{(r+1)}_{ij},t) &= \dfrac{X_{i,j-1}X'_{i+1,j+1}}{X_{i,j-1} + X'_{i+1,j+1}} \\
&= \frac{P_{[i,r-1] \cup [k+r-i+1,n]}P_{[i,r] \cup [k+r-i+1,n-1]}}{P_{[i,r-1] \cup [k+r-i+1,n]}P_{[i+1,r] \cup [k+r-i,n-1]} + P_{[i+1,r-1] \cup [k+r-i,n]}P_{[i,r] \cup [k+r-i+1,n-1]}} \\
&= \frac{P_{[i,r-1] \cup [k+r-i+1,n]}P_{[i,r] \cup [k+r-i+1,n-1]}}{P_{[i,r-1] \cup [k+r-i,n-1]}P_{[i+1,r] \cup [k+r-i+1,n]}}
\end{align*}
where in the last step we apply a three-term Pl\"{u}cker relation (Corollary \ref{cor_3term}) to simplify the denominator. This verifies the ``general case'' of \eqref{eq_g_ir}; we leave the three ``boundary cases'' to the reader.

Finally, observe that the denominator of \eqref{eq_f_ir} is equal to the numerator of \eqref{eq_g_ir}, so we have

\begin{align*}
\quad f_{ir}(X^{(r+1)}_{ij},t) \cdot g_{ir}(X^{(r+1)}_{ij},t) \cdot \dfrac{1}{X_{ir}} &= t^{\delta_{i,1}} \dfrac{P_{[i-1,r-1] \cup [k+r-i+1,n-1]}P_{[i,r] \cup [k+r-i+2,n]}}{P_{[i,r-1] \cup [k+r-i,n-1]}P_{[i+1,r] \cup [k+r-i+1,n]}} \cdot \dfrac{P_{[i+1,r] \cup [k+r-i+1,n]}}{P_{[i,r] \cup [k+r-i+2,n]}} \\
&= t^{\delta_{i,1}} \dfrac{P_{[i-1,r-1] \cup [k+r-i+1,n-1]}}{P_{[i,r-1] \cup [k+r-i,n-1]}} \\
&= X'_{ir}.
\end{align*}
This verifies \eqref{eq_fg} and completes the induction, proving Theorem \ref{thm_trop_PR}.

\section{Unipotent crystals}
\label{sec_unip_cryst}
%Section{Unipotent crystals}
%\label{sec_unip_cryst}

\subsection{Infinite periodic matrices}
\label{sec_inf_period}

Let $\mathbb{C}((\lp))$ be the field of formal Laurent series in the indeterminate $\lp$, that is, expressions of the form
\[
\sum_{m = m_0}^{\infty} a_m \lp^m
\]
where $m_0$ is an integer, and each $a_m$ is an element of $\mathbb{C}$. Let $M_n[\mathbb{C}((\lp))]$ denote the ring of $n \times n$ matrices with entries in this field. Borrowing the perspective of \cite{LPwhirl}, we will often view such matrices as infinite periodic arrays of complex numbers, as we now explain.

An {\em $n$-periodic matrix} (over $\mathbb{C}$) is a $\mathbb{Z} \times \mathbb{Z}$ array of complex numbers $(X_{ij})_{(i,j) \in \mathbb{Z}}$ such that $X_{ij} = 0$ if $j-i$ is sufficiently large, and $X_{ij} = X_{i+n,j+n}$ for all $i,j$. Say that the entries $X_{ij}$ with $i-j = k$ lie on the {\em $k^{th}$ diagonal} of $X$, or that $k$ indexes this diagonal. Thus, the main diagonal of $X$ is indexed by 0, and higher numbers index lower diagonals. We add these matrices entry-wise, and multiply them using the usual matrix product: if $X = (X_{ij})$ and $Y = (Y_{ij})$, then
\[
(XY)_{ij} = \sum_{k \in \mathbb{Z}} X_{ik}Y_{kj}.
\]
The hypothesis that $X_{ij} = 0$ for $j-i$ sufficiently large ensures that each of these sums is finite, and it's clear that the product of two $n$-periodic matrices is $n$-periodic. Denote the ring of $n$-periodic matrices by $M_n^{\infty}(\mathbb{C})$.

Given a matrix $A = (A_{ij}) \in M_n[\mathbb{C}((\lp))]$, where $A_{ij} = \sum a_m^{i,j} \lp^m$, define an $n$-periodic matrix $X = (X_{ij})$ by\footnote{The definition in \cite{LPwhirl} uses $s-r$ instead of $r-s$. This is equivalent to interchanging $\lp$ and $\lp^{-1}$.}
\[
X_{r n + i, s n + j} = a^{i,j}_{r - s}
\]
for $r,s \in \bbZ$ and $i,j \in [n]$. For example, if $n=2$ and
\[
A = \left(
\begin{array}{cc}
2\lp^{-1} + 3 + 4\lp + 5\lp^2 & \lp^{-1} + 7 + 8\lp \\
-3\lp^{-1} + 1 + \lp^2 & -2\lp^{-1} + 5 + 6\lp
\end{array}
\right)
\]
then
\[
X = \left(
\begin{array}{c|cc|cc|cc|c}
\ddots &&&&&&& \iddots \\ \hline
&3 & 7 & 2 & 1 & 0 & 0 \\
&1 & 5 & -3 & -2 & 0 & 0 \\ \hline
&4 & 8 & 3 & 7 & 2 & 1 \\
&0 & 6 & 1 & 5 & -3 & -2 \\ \hline
&5 & 0 & 4 & 8 & 3 & 7 \\
&1 & 0 & 0 & 6 & 1 & 5 \\ \hline
\iddots &&&&&&& \ddots
\end{array}
\right)
\]
where the row (resp., column) indexed by 1 is the upper-most row (resp., left-most column) whose entries are shown. The vertical and horizontal lines partition the matrix into $2 \times 2$ blocks whose entries are the $m^{th}$ coefficients of the entries of $A$, for some $m$.

It is straightforward to check that the map $A \mapsto X$ is an isomorphism of rings. We will refer to the $n \times n$ matrix $A$ as a {\em folded matrix}, and the $n$-periodic matrix $X$ as an {\em unfolded matrix}. We call $X$ the {\em unfolding} of $A$, and $A$ the {\em folding} of $X$. When it is important to distinguish between folded and unfolded matrices, we will use letters near the beginning of the alphabet for folded matrices, and letters near the end of the alphabet for unfolded matrices.

\subsection{Definition of unipotent crystal}
\label{sec_defn_unip}

Write $\GL_n(\mathbb{C}(\lp))$ for the group of $n \times n$ invertible matrices with entries in $\mathbb{C}(\lp)$, the field of rational functions in the indeterminate $\lp$. Since $\mathbb{C}(\lp)$ is a field, the condition of invertibility is equivalent to the requirement that the determinant be a nonzero rational function. We will refer to $\GL_n(\bbC(\lp))$ as the {\em loop group}, even though this term does not have a fixed meaning in the literature. We call the indeterminate $\lp$ the {\em loop parameter}.

Every rational function in $\lp$ has a Laurent series expansion, so $\GL_n(\bbC(\lp))$ is a subset of $M_n[\mathbb{C}((\lp))]$, and we may talk about the unfoldings of its elements.

In this article, we will work with the submonoid of $\GL_n(\bbC(\lp))$ consisting of matrices whose entries are Laurent polynomials in $\lp$, and whose determinant is a nonzero Laurent polynomial in $\lp$. Call this monoid $G$. The purpose of restricting to this monoid is that it is an ind-variety, so we may talk about rational maps to and from this space. It will be necessary to allow non-constant determinants, so the (minimal) Kac-Moody group used in Nakashima's \cite{Nak} definition of type $A_{n-1}^{(1)}$ unipotent crystal is too small (even the maximal Kac-Moody group only has elements of determinant 1).

Let $B^- \subset G$ be the submonoid of matrices whose unfolding is lower triangular with nonzero entries on the main diagonal. In terms of folded matrices, this means that all entries are (non-Laurent) polynomials, with the entries on the diagonal having nonzero constant term, and the entries above the diagonal having no constant term. $B^-$ is naturally an ind-variety, where the $m^{th}$ piece consists of unfolded matrices which are supported on diagonals $0, \ldots, m$.

For $a \in \bbC$, define
\[
\wh{x}_i(a) = Id + aE_{i,i+1} \quad \text{ for } i \in [n-1], \quad\quad\quad \text{ and } \quad \wh{x}_0(a) = Id + a\lp^{-1}E_{n1},
\]
where $E_{ij}$ is an $n \times n$ matrix unit. For $i \in \bbZ$, set $\wh{x}_i(a) = \wh{x}_{\ov{i}}(a)$, where $\ov{i}$ is the residue of $i$ mod $n$ (in $\{0, \ldots, n-1\}$). Let $U \subset G$ be the subgroup generated by the elements $\wh{x}_i(a)$. Note that the unfoldings of the elements of $U$ are upper triangular with ones on the main diagonal.

The usual definition of unipotent crystals (\cite{BKI}, \cite{Nak}) is based on rational actions of $U$. We work here with a slightly weaker notion.

\begin{defn}
\label{defn_pseudo}
Let $V$ be a complex algebraic (ind-)variety, and let $\alpha : U \times V \rightarrow V$ be a partially-defined map. Let $u.v := \alpha(u,v)$. We will say that $\alpha$ is a {\em pseudo-rational $U$-action} if it satisfies the following properties:
\begin{enumerate}
\item $1.v = v$ for all $v \in V$;
\item If $u.v$ and $u'.(u.v)$ are defined, then $(u'u).v =u'.(u.v)$;
\item For each $i \in \Zn$, the partially defined map from $\bb{C} \times V \rightarrow V$ given by $(a,v) \mapsto \wh{x}_i(a).v$ is rational.
\end{enumerate}
\end{defn}

\begin{remark}
We suspect that it is possible to give $U$ an ind-variety structure so that a pseudo-rational $U$-action is actually a rational $U$-action. The difficulty is that $U$ is \underline{not} the full set of upper triangular unfolded matrices with 1's on the diagonal and folded determinant equal to 1 (it is not possible to generate all the one-parameter subgroups corresponding to positive real roots using only the $\wh{x}_i(a)$). Fortunately, pseudo-rational $U$-actions suffice for our purposes.
\end{remark}

\begin{defn}
Define $\alpha_{B^-} : U \times B^- \rightarrow B^-$ by $u.b = b'$ if $ub = b'u'$, with $b' \in B^-, u' \in U$. If $ub$ does not have such a factorization, then $u.b$ is undefined.
\end{defn}

Note that if $b_1u_1 = b_2u_2$, then $b_2^{-1}b_1 = u_2u_1^{-1}$ is both lower triangular and upper triangular with 1's on the main diagonal (as an unfolded matrix), so it must be the identity matrix, and thus $b_1 = b_2$ and $u_1 = u_2$. This shows that $\alpha_{B^-}$ is well-defined (as a partial map). Observe that if $X \in B^-$ is an unfolded matrix, then
\begin{equation*}
\wh{x}_i(a) \cdot X \cdot \wh{x}_i\left(\frac{-aX_{i+1,i+1}}{X_{ii} + aX_{i+1,i}}\right) \in B^-,
\end{equation*}
so we have
\begin{equation}
\label{eq_pseudo_action}
\wh{x}_i(a).X = \wh{x}_i(a) \cdot X \cdot \wh{x}_i(\tau_i(a,X)) \quad\quad \text{ where } \quad\quad \tau_i(a,X) = \frac{-aX_{i+1,i+1}}{X_{ii} + aX_{i+1,i}}.
\end{equation}
This shows that $\alpha_{B^-}$ satisfies property (3) of Definition \ref{defn_pseudo}. It's clear the first two properties are satisfied as well, so $\alpha_{B^-}$ is a pseudo-rational $U$-action.

\begin{defn}
\label{defn_U_variety}
A {\em $U$-variety} is an irreducible complex algebraic (ind-)variety $X$ together with a pseudo-rational $U$-action $\alpha : U \times X \rightarrow X$. A morphism of $U$-varieties is a rational map which commutes with the $U$-actions (when they are defined).
\end{defn}

\begin{defn}
\label{defn_unip_cryst}
A {\em unipotent crystal} (of type $A_{n-1}^{(1)}$) is a pair $(V,g)$, where $V$ is a $U$-variety, and $g : V \rightarrow B^-$ is a morphism of $U$-varieties, such that for each $i \in \Zn$, the rational function $v \mapsto g(v)_{i+1,i}$ is not identically zero (here $g(v)$ is an unfolded matrix).
\end{defn}

For example, the ind-variety $B^-$ with the pseudo-rational $U$-action $\alpha_{B^-}$ is a $U$-variety, and the pair $(B^-,\Id)$ is (trivially) a unipotent crystal.

\begin{thm}
\label{thm_induces}
Let $(V,g)$ be a unipotent crystal. Suppose $v \in V$, and let $X = g(v)$ be an unfolded matrix. Define
\begin{itemize}
\item $\gamma(v) = (X_{11}, \ldots, X_{nn})$;
\item $\vp_i(v) = \dfrac{X_{i+1,i}}{X_{ii}} \quad \text{ and } \quad \ve_i(x) = \dfrac{X_{i+1,i}}{X_{i+1,i+1}}$;
\item $e_i^c(v) = \wh{x}_i\left(\dfrac{c-1}{\vp_i(v)}\right).v$ (here $.$ is the pseudo-rational action of $U$ on $X$).
\end{itemize}
Then $(V, \gamma, \vp_i, \ve_i, e_i)$ is a type $A_{n-1}^{(1)}$ geometric crystal (Definition \ref{defn_geom_cryst}). We say that this geometric crystal is {\em induced} from the unipotent crystal $(V,g)$.
\end{thm}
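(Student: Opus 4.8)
The plan is to verify, in turn, the geometric pre-crystal axioms of Definition \ref{defn_geom_precryst}, the fact that each $e_i$ is a genuine rational $\Cx$-action, and finally the geometric Serre relations of Definition \ref{defn_geom_cryst}. The one structural observation that drives everything is that $g$ is a morphism of $U$-varieties, so $g(\wh{x}_i(a).v) = \wh{x}_i(a).g(v)$; since $\vp_i(v)$ is defined purely in terms of the entries of $X = g(v)$, this gives $g(e_i^c(v)) = e_i^c(g(v))$, i.e.\ $g$ is a morphism of geometric pre-crystals onto the universal example $(B^-,\Id)$. Consequently, along any composition of crystal operators the scalars fed into the generators $\wh{x}_i$ depend on $v$ only through $g(v)$, and each side of a relation acts on $v$ as a fixed element of $U$. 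Verifying a relation therefore reduces to proving an equality of elements of $U$, which then holds when applied to any $v$.

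The pre-crystal axioms follow from a short computation with \eqref{eq_pseudo_action}. Put $Y = \wh{x}_i(a).X$ with $a = (c-1)/\vp_i(v) = (c-1)X_{ii}/X_{i+1,i}$; then the denominator in $\tau_i(a,X)$ collapses, $X_{ii} + aX_{i+1,i} = cX_{ii}$. Reading the diagonal and sub-diagonal entries of $Y = \wh{x}_i(a)\,X\,\wh{x}_i(\tau_i(a,X))$ (left multiplication alters row $i$, right multiplication alters column $i+1$) yields $Y_{ii} = cX_{ii}$, $Y_{i+1,i+1} = c^{-1}X_{i+1,i+1}$, $Y_{jj} = X_{jj}$ for $j \neq i,i+1$, and $Y_{i+1,i} = X_{i+1,i}$. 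Axiom (2), $\gamma(e_i^c(v)) = \alpha_i^\vee(c)\gamma(v)$, is immediate (the case $i = 0$ being uniform once one uses $n$-periodicity of the unfolded matrix, which identifies $X_{n+1,n+1}$ with $X_{11}$); axiom (3) follows from $\vp_i(e_i^c v) = Y_{i+1,i}/Y_{ii} = c^{-1}\vp_i(v)$ and $\ve_i(e_i^c v) = Y_{i+1,i}/Y_{i+1,i+1} = c\,\ve_i(v)$; axiom (1) is $\vp_i(v)\alpha_i(\gamma(v)) = (X_{i+1,i}/X_{ii})(X_{ii}/X_{i+1,i+1}) = \ve_i(v)$. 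Unitality holds since $c = 1$ forces $a = 0$; and combining $\wh{x}_i(s)\wh{x}_i(t) = \wh{x}_i(s+t)$ with the rule $\vp_i(e_i^c v) = c^{-1}\vp_i(v)$ gives $e_i^{c_1}e_i^{c_2} = e_i^{c_1c_2}$, so each $e_i$ is a rational $\Cx$-action.

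For the Serre relations I would argue entirely at the level of elements of $U$. If $|i - j| > 1$, the computation of $Y$ shows $e_i^{c_1}$ fixes $\gamma_j, \vp_j, \ve_j$ (the indices $j, j+1$ avoid the altered row $i$ and column $i+1$), so both $e_i^{c_1}e_j^{c_2}$ and $e_j^{c_2}e_i^{c_1}$ act as $\wh{x}_i(a_i)\wh{x}_j(a_j).v$ with the \emph{same} scalars $a_i = (c_1-1)/\vp_i(v)$, $a_j = (c_2-1)/\vp_j(v)$; and $\wh{x}_i(a_i)$ commutes with $\wh{x}_j(a_j)$ in $U$ because their unfolded superdiagonal supports are non-adjacent, so the cross products of matrix units vanish. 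If $|i - j| = 1$, say $j = i+1$, expanding $e_i^{c_1}e_{i+1}^{c_1c_2}e_i^{c_2}$ and $e_{i+1}^{c_2}e_i^{c_1c_2}e_{i+1}^{c_1}$ via the pseudo-action produces group elements $\wh{x}_i(a_3)\wh{x}_{i+1}(a_2)\wh{x}_i(a_1)$ and $\wh{x}_{i+1}(b_3)\wh{x}_i(b_2)\wh{x}_{i+1}(b_1)$, whose arguments are built recursively from the transformation rules for $\vp_i$ and $\vp_{i+1}$. The goal is to match these against the rank-two Chevalley identity $\wh{x}_i(p)\wh{x}_{i+1}(q)\wh{x}_i(r) = \wh{x}_{i+1}\!\big(\tfrac{qr}{p+r}\big)\wh{x}_i(p+r)\wh{x}_{i+1}\!\big(\tfrac{pq}{p+r}\big)$, which holds in the rank-two loop subgroup generated by $\wh{x}_i$ and $\wh{x}_{i+1}$.

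The main obstacle is exactly this last matching. While $\vp_i$ scales cleanly, the transformation of $\vp_{i+1}$ under $e_i^c$ brings in the sub-subdiagonal entry $X_{i+2,i}$: substituting $\tau_i$ gives $\vp_{i+1}(e_i^c v) = c\,\vp_{i+1}(v) - (c-1)X_{i+2,i}/X_{i+1,i}$, so the intermediate scalars are not monomial in the $c$'s. The crux is to show that, after these corrections accumulate along each length-three word, the triples $(a_1,a_2,a_3)$ and $(b_1,b_2,b_3)$ obey the Chevalley substitution above. Since only rows and columns $i, i+1, i+2$ of the unfolded matrix ever enter, this is a finite identity confined to an $SL_3$-type window, which I would settle by direct substitution (equivalently, by specializing the reduction of Berenstein–Kazhdan and Nakashima of the affine Serre relations to precisely this rank-two computation). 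The resulting equality $\wh{x}_i(a_3)\wh{x}_{i+1}(a_2)\wh{x}_i(a_1) = \wh{x}_{i+1}(b_3)\wh{x}_i(b_2)\wh{x}_{i+1}(b_1)$ is an equality of elements of $U$, hence holds on applying to any $v$, which yields the Serre relation on all of $V$ and completes the proof.
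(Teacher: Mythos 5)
Your proposal is correct and follows essentially the same route as the paper: use the $U$-morphism property of $g$ to transfer everything to $B^-$, compute the effect of $\wh{x}_i(a)\cdot X\cdot \wh{x}_i(\tau_i(a,X))$ on the $2\times 2$ block in rows and columns $i,i+1$ to get the pre-crystal axioms and the action property, handle $|i-j|>1$ by commuting generators, and reduce the $|i-j|=1$ Serre relation to a rank-two ($GL_3$-window) computation, which the paper likewise does not carry out but cites from Berenstein--Kazhdan \cite[\S 5.2, Proof of Theorem 3.8]{BKI}. Your extra details (the collapse $X_{ii}+aX_{i+1,i}=cX_{ii}$, the formula for $\vp_{i+1}(e_i^c v)$, and the Chevalley identity to be matched) are all accurate and consistent with the paper's argument.
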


\begin{proof}
The rational functions $\ve_i$ and $\vp_i$ are not identically zero due to the assumption about $g(v)_{i+1,i}$ in Definition \ref{defn_unip_cryst}. Property (1) of a geometric pre-crystal (Definition \ref{defn_geom_precryst}) is immediate. Given $v \in V$, set $X = g(v)$, $v' = \wh{x}_i\left(\frac{c-1}{\vp_i(v)}\right).v$, and $X' = g(v')$ (view $X$ and $X'$ as unfolded matrices). Since $g$ is a morphism of $U$-varieties, we have
\[
X' = \wh{x}_i(a) \cdot X \cdot \wh{x}_i(\tau_i(a,X)),
\]
where $a = \frac{c-1}{\vp_i(v)}$. A short computation shows that the principal two-by-two submatrix of $X'$ using rows and columns $i$ and $i+1$ is
\[
\left(
\begin{array}{cc}
cX_{i,i} & 0 \\
X_{i+1,i} & c^{-1}X_{i+1,i+1}
\end{array}
\right),
\]
and the other entries on the main diagonal of $X'$ are equal to those of $X$. This implies properties (2) and (3) of a geometric pre-crystal.

To see that $e_i$ is an action, compute
\begin{align*}
e_i^{c_1}(e_i^{c_2}(v)) &= \wh{x}_i\left(\dfrac{c_1-1}{\vp_i(e_i^{c_2}(v))}\right).\wh{x}_i\left(\dfrac{c_2 - 1}{\vp_i(v)}\right).v \\
&= \wh{x}_i\left(\dfrac{c_1 - 1}{c_2^{-1}\vp_i(v)} + \dfrac{c_2-1}{\vp_i(v)}\right).v = \wh{x}_i\left(\dfrac{c_1c_2 - 1}{\vp_i(v)}\right).v = e_i^{c_1c_2}(v)
\end{align*}
where the second equality uses property (3) of a geometric pre-crystal.

The geometric Serre relations in the case $|i-j| > 1$ follow from the fact that $\wh{x}_i(a)$ and $\wh{x}_j(b)$ commute when $|i-j| > 1$. The proof of the geometric Serre relations in the case $|i-j| = 1$ is a computation inside $GL_3$, which is worked out in \cite[\S 5.2, Proof of Theorem 3.8]{BKI}. 
\end{proof}

The unipotent crystal $(B^-, \Id)$ induces a geometric crystal $(B^-, \gamma_{B^-}, \vp_{i,B^-}, \ve_{i,B^-}, e_{i,B^-})$. If $X \in B^-$ is an unfolded matrix, then a short computation using \eqref{eq_pseudo_action} shows that
\begin{equation}
\label{eq_U_action_B-}
e_{i,B^-}^c(X) = \wh{x}_i\left( \dfrac{c-1}{\vp_i(X)} \right) \cdot X \cdot \wh{x}_i\left( \dfrac{c^{-1} - 1}{\ve_i(X)} \right).
\end{equation}
Since we don't consider any other geometric crystal structures on $B^-$, we will usually drop the subscript $B^-$. Note that if $(V,g)$ is a unipotent crystal with induced geometric crystal $(V,\gamma,\vp_i,\ve_i,e_i)$, then by definition, we have
\begin{equation}
\label{eq_commutes_with_g}
\gamma(v) = \gamma(g(v)) \quad\quad \vp_i(v) = \vp(g(v)) \quad\quad \ve_i(v) = \ve_i(g(v)) \quad\quad g(e_i^c(v)) = e_i^c(g(v)).
\end{equation}

\subsection{Unipotent crystal on the Grassmannian}
\label{sec_unip_Gr}

Given a folded matrix $A \in GL_n(\mathbb{C}(\lp))$ and $z \in \bbC$, let $A|_{\lp = z}$ denote the matrix obtained by evaluating the loop parameter $\lp$ at $z$. This is defined as long as $z$ is not a pole of any entry of $A$, and the resulting matrix is invertible if $z$ is not a root of the determinant of $A$.

Recall that $\X{k} = \Y{k}{n}$. Define a $U$-action $\alpha_k : U \times \X{k} \rightarrow \X{k}$ by
\[
u.(M,t) = (u|_{\lp = (-1)^{k-1} t} \cdot M, t).
\]
Note that $u.(M,t)$ is always defined, since every element of $U$ has Laurent polynomial entries and determinant 1. This action makes $\X{k}$ into a $U$-variety.

\begin{defn}
\label{defn_g}
Define a rational map $g : \X{k} \rightarrow B^-$ by $g(M, t) = A$, where $A$ is the folded matrix defined by
\[
A_{ij} = c_{ij} \dfrac{P_{[j-k+1,j-1] \cup \{i\}}(M)}{P_{[j-k,j-1]}(M)}, \quad\quad\quad
c_{ij}  = 
\begin{cases} 1 & \text{ if } j \leq k \\
t & \text{ if } j > k \text{ and } i \geq j \\
\lp  & \text{ if } j > k \text { and } i < j
\end{cases}
\]
(recall Convention \ref{conv_pluc}).
\end{defn}

For example, if $(M,t) \in \Y{2}{5}$, then setting $P_J = P_J(M)$, we have
\begin{equation}
\label{eq_ex_2_5}
g(M,t) = \left(
\begin{array}{ccccc}
\dfrac{P_{15}}{P_{45}} & 0 & \lp & \lp \dfrac{P_{13}}{P_{23}} & \lp \dfrac{P_{14}}{P_{34}} \smallskip \\
\dfrac{P_{25}}{P_{45}} & \dfrac{P_{12}}{P_{15}} & 0 & \lp & \lp \dfrac{P_{24}}{P_{34}} \smallskip \\
\dfrac{P_{35}}{P_{45}} & \dfrac{P_{13}}{P_{15}} & t \dfrac{P_{23}}{P_{12}} & 0 & \lp \smallskip \\
1 & \dfrac{P_{14}}{P_{15}} & t \dfrac{P_{24}}{P_{12}} & t \dfrac{P_{34}}{P_{23}} & 0 \smallskip \\
0 & 1 & t \dfrac{P_{25}}{P_{12}} & t \dfrac{P_{35}}{P_{23}} & t \dfrac{P_{45}}{P_{34}}
\end{array}
\right).
\end{equation}

If $(M,t) \in \Y{3}{4}$, then setting $P_J = P_J(M)$, we have
\begin{equation}
\label{eq_ex_3_4}
g(M,t) = \left(
\begin{array}{cccc}
\dfrac{P_{134}}{P_{234}} & 0 & 0 & \lp \\
1 & \dfrac{P_{124}}{P_{134}} & 0 & 0 \\
0 & 1 & \dfrac{P_{123}}{P_{124}} & 0 \\
0 & 0 & 1 & t \dfrac{P_{234}}{P_{123}}
\end{array}
\right).
\end{equation}

\begin{remark}
The matrix \eqref{eq_ex_3_4} is a shifted version of the ``whirl'' in \cite{LPwhirl}. It is also related to the type $A_{n-1}^{(1)}$ ``$M$-matrix'' in \cite{KNO2}.
\end{remark}

Define the {\em shift map} $\sh : B^- \rightarrow B^-$ on an unfolded matrix $X$ by
\begin{equation}
\label{eq_sh_defn}
\sh(X)_{ij} = X_{i-1,j-1}.
\end{equation}
By $n$-periodicity, this map has order $n$, and it is the ``unipotent analogue'' of the cyclic shift map $\PR : \X{k} \rightarrow \X{k}$ in the following sense.

\begin{lem}
\label{lem_PR_sh}
We have the identities
\begin{enumerate}
\item $\PR^{-1}(\wh{x}_i(a).\PR(M,t)) = \wh{x}_{i-1}(a).(M,t)$ for $(M,t) \in \X{k}$
\item $\sh^{-1}(\wh{x}_i(a).\sh(X)) = \wh{x}_{i-1}(a).X$ for $X \in B^-$
\item $g \circ \PR = \sh \circ \, g$.
\end{enumerate}
\end{lem}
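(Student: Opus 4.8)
The three parts become transparent once the $U$-actions and the shift are unwound, and I would treat them in order, since part (2) supplies the shift-conjugation idea that organizes part (3).

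\textbf{Part (1).} The plan is to reduce directly to Lemma \ref{lem_pr_u_action}. Evaluating the generators at $\lp=(-1)^{k-1}t$ gives $\wh{x}_i(a)|_{\lp=(-1)^{k-1}t}=x_i(a)$ for $i\in[n-1]$ and $\wh{x}_0(a)|_{\lp=(-1)^{k-1}t}=x_0\big((-1)^{k-1}a/t\big)$, so the $U$-action on $\X{k}$ is literally $\wh{x}_i(a).(M,t)=(x_i(a)\cdot M,t)$ for $i\in[n-1]$ and $\wh{x}_0(a).(M,t)=(x_0((-1)^{k-1}a/t)\cdot M,t)$. Since $\PR$ fixes $t$, the left side of (1) is $(\PR_t^{-1}(x_i(a)\cdot\PR_t(M)),t)$ (resp.\ with $x_0((-1)^{k-1}a/t)$ when $i=0$), and I would feed each case into Lemma \ref{lem_pr_u_action}: for $i\in\{2,\dots,n-1\}$ it returns $x_{i-1}(a)\cdot M$; for $i=1$ it returns $x_0((-1)^{k-1}a/t)\cdot M$; for $i=0$ the parameter $(-1)^{k-1}a/t$ combines with the factor $(-1)^{k-1}t$ in the lemma to give $x_{n-1}(a)\cdot M$. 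In every case this is exactly $\wh{x}_{i-1}(a).(M,t)$.

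\textbf{Part (2).} The key observation is that $\sh$ is conjugation by the bi-infinite shift matrix $S$ (with $S_{pq}=\delta_{p,q+1}$), so $\sh$ is a ring automorphism of the $n$-periodic matrices. Because every generator unfolds uniformly as $\wh{x}_i(a)=\Id+a\sum_r E_{rn+i,\,rn+i+1}$ (which is precisely what the factor $\lp^{-1}$ in $\wh{x}_0$ arranges), shifting both indices down by one sends $\wh{x}_i(a)\mapsto\wh{x}_{i+1}(a)$; hence $\sh^{-1}(\wh{x}_i(a))=\wh{x}_{i-1}(a)$, and $\sh$ preserves both $B^-$ and $U$. I would then apply $\sh^{-1}$ to the defining factorization $\wh{x}_i(a)\cdot\sh(X)=\big(\wh{x}_i(a).\sh(X)\big)\cdot u'$ with $u'\in U$: automorphy gives $\wh{x}_{i-1}(a)\cdot X=\sh^{-1}(\wh{x}_i(a).\sh(X))\cdot\sh^{-1}(u')$, with $\sh^{-1}(\wh{x}_i(a).\sh(X))\in B^-$ and $\sh^{-1}(u')\in U$. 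Uniqueness of the $B^-U$ factorization (a matrix both lower- and upper-triangular with $1$'s on the diagonal is the identity) then forces $\sh^{-1}(\wh{x}_i(a).\sh(X))=\wh{x}_{i-1}(a).X$.

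\textbf{Part (3).} This is the computational heart. Write $A=g(M,t)$ as a folded matrix, $A_{ij}=c_{ij}\,P_{S_{ij}}(M)/P_{T_{ij}}(M)$ with $S_{ij}=[j-k+1,j-1]\cup\{i\}$ and $T_{ij}=[j-k,j-1]$. I would first translate the right-hand side into folded form: unfolding $A$, applying $\sh$, and refolding yields the matrix $B$ with $B_{ij}=A_{i-1,j-1}$ for $i,j\ge2$, $B_{1j}=\lp\,A_{n,j-1}$, $B_{i1}=\lp^{-1}A_{i-1,n}$, and $B_{11}=A_{n,n}$; writing $\rho(m)=m-1$ for $m\ge2$ and $\rho(1)=n$, this is uniformly $B_{ij}=\lp^{\delta_{i,1}-\delta_{j,1}}A_{\rho(i),\rho(j)}$ (the $\lp^{\pm1}$ being the wrap-around factors produced by $\sh$ when an index crosses the block boundary). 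For the left-hand side I would substitute the $\PR$-rule $P_J(M')=t^{[1\in J]}P_{J-1}(M)$ from \eqref{eq_PR} into $g(\PR(M,t))_{ij}=c_{ij}P_{S_{ij}}(M')/P_{T_{ij}}(M')$, giving
\[
g(\PR(M,t))_{ij}=c_{ij}\,t^{[1\in S_{ij}]-[1\in T_{ij}]}\,\frac{P_{S_{ij}-1}(M)}{P_{T_{ij}-1}(M)}.
\]
The point is that $S_{ij}-1=S_{\rho(i),\rho(j)}$ and $T_{ij}-1=T_{\rho(i),\rho(j)}$ as subsets of $[n]$ (checking the boundary cases $j=1$ and $i=1$ through Convention \ref{conv_pluc}), so the Pl\"ucker ratio above equals $c_{\rho(i),\rho(j)}^{-1}A_{\rho(i),\rho(j)}$. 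Hence $g\circ\PR=\sh\circ g$ reduces to the scalar monomial identity
\[
c_{ij}\,c_{\rho(i),\rho(j)}^{-1}\,t^{[1\in S_{ij}]-[1\in T_{ij}]}=\lp^{\delta_{i,1}-\delta_{j,1}}.
\]

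The remaining work, and the main obstacle, is verifying this last identity, which I would do by a case analysis on the location of $(i,j)$: whether $i=1$ or $j=1$ (governing the $\lp^{\pm1}$), and whether $j\le k$, $j>k$ with $i\ge j$, or $j>k$ with $i<j$ (and likewise for $(\rho(i),\rho(j))$, governing the $t$- and $\lp$-powers in the $c$'s). In the generic interior case both $c_{ij}$ and $c_{\rho(i),\rho(j)}$ carry the same monomial and $S_{ij},T_{ij}$ either both contain or both avoid $1$, so the identity reads $1=1$; the genuine balancing happens exactly at $i=1$, $j=1$, or $j=k+1$, where a change in $c$-type is compensated by the exponent $[1\in S_{ij}]-[1\in T_{ij}]$ or by a wrap-around $\lp^{\pm1}$. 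Entries where the relevant Pl\"ucker set collapses below $k$ elements are structural zeros on both sides (since $S_{ij}-1=S_{\rho(i),\rho(j)}$), so they need no checking. I expect the bookkeeping of the $t$- and $\lp$-exponents across these boundary cases to be the only delicate part; checking the $n=5,k=2$ matrix \eqref{eq_ex_2_5} against its shift is a good sanity test that the factors balance.
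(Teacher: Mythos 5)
Your proposal is correct and follows essentially the same route as the paper: part (1) is precisely the paper's reduction to Lemma \ref{lem_pr_u_action} (with the evaluations $\wh{x}_i(a)|_{\lp=(-1)^{k-1}t}$ made explicit, including the sign cancellation $(-1)^{k-1}\cdot(-1)^{k-1}=1$ in the $i=0$ case), and part (3) is the entry-by-entry comparison of $g\circ\PR$ with $\sh\circ g$ that the paper asserts without detail --- your reduction to the monomial identity $c_{ij}\,c_{\rho(i),\rho(j)}^{-1}\,t^{[1\in S_{ij}]-[1\in T_{ij}]}=\lp^{\delta_{i,1}-\delta_{j,1}}$ is sound, the balancing does occur exactly at $i=1$, $j=1$, $j=k+1$ as you predict, and your observation that entries with $i\in[j-k+1,j-1]$ mod $n$ are structural zeros on both sides correctly disposes of the cases (such as $i=1$, $2\le j\le k$) where the identity would otherwise fail. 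The only micro-difference is in part (2), where the paper uses the explicit action formula \eqref{eq_pseudo_action} and verifies $\tau_i(a,\sh(X))=\tau_{i-1}(a,X)$ directly, while you instead apply the automorphism $\sh^{-1}$ to the defining factorization and invoke uniqueness of the $B^-U$ decomposition; both hinge on $\sh^{-1}(\wh{x}_i(a))=\wh{x}_{i-1}(a)$, and yours trades the short $\tau$-computation for a clean appeal to well-definedness of the action.
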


\begin{proof}
Part (1) is a reformulation of Lemma \ref{lem_pr_u_action} in terms of the $U$-action on $\X{k}$. For part (2), let $X \in B^-$ be an unfolded matrix, and let $X' = \sh(X)$. Since $\sh$ is multiplicative, we have
\begin{align*}
\sh^{-1}(\wh{x}_i(a).X') &= \sh^{-1}(\wh{x}_i(a)) \cdot \sh^{-1}(X') \cdot \sh^{-1}(\wh{x}_i(\tau_i(a,X'))) \\
&= \wh{x}_{i-1}(a) \cdot X \cdot \wh{x}_{i-1}(\tau_{i-1}(a,X)) \\
&= \wh{x}_{i-1}(a).X,
\end{align*}
where the second equality follows from
\[
\tau_i(a,X') = \dfrac{-aX'_{i+1,i+1}}{X'_{ii} + aX'_{i+1,i}} = \dfrac{-aX_{ii}}{X_{i-1,i-1} + aX_{i,i-1}} = \tau_{i-1}(a,X).
\]

Part (3) follows from comparing the entries of the unfolded matrices $X = g(M,t)$ and $X' = g \circ \PR(M,t)$.
%For part (3), suppose $(M,t) \in \X{k}$, and let $X = g(M,t)$ and $X' = g \circ \PR(M,t)$ be unfolded matrices. By definition, $X_{ij} = 0$ if $i-j \not \in [0,n-k]$, and for $i-j \in [0,n-k]$, $X_{ij}$ is given by
%\[
%X_{ij} = t^{b_{ij}} \dfrac{P_{[j-k+1,j-1] \cup \{i\}}(M)}{P_{[j-k,j-1]}(M)}, \quad\quad b_{ij} = \begin{cases}
%0 & \text{ if } \overline{j} \in [1,k] \text{ or } \overline{i} < \overline{j} \\
%1 & \text{ otherwise}
%\end{cases}
%\]
%%\left \lceil \dfrac{i}{n} \right \rceil = \left \lceil \dfrac{j}{n} \right \rceil + 1 
%where $\overline{i}$ denotes the residue of $i$ modulo $n$, taken in $[1,n]$. By the definition of $\PR$, we have (for $i-j \in [0,n-k]$)
%\[
%X'_{ij} = t^{b_{ij}+b'_{ij}} \dfrac{P_{[j-k,j-2] \cup \{i-1\}}(M)}{P_{[j-k-1,j-2]}(M)}, \quad\quad b'_{ij} = \begin{cases}
%1 & \text{ if } \overline{j} \not \in [2,k+1] \text{ and } \overline{i} = 1 \\
%-1 & \text{ if } \overline{j} = k+1 \text{ and } \overline{i} \neq 1 \\
%0 & \text{ otherwise}
%\end{cases}.
%\]
%By considering several cases, one verifies that $b_{ij} + b'_{ij} = b_{i-1,j-1}$, so we have $X'_{ij} = X_{i-1,j-1} = \sh(X)_{ij}$, proving (3).
\end{proof}

\begin{thm}
\label{thm_Gr_induces}
The pair $(\X{k}, g)$ is a unipotent crystal. This unipotent crystal induces the geometric crystal from Definition \ref{defn_maps}.
\end{thm}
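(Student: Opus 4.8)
The plan is to verify the two assertions in turn: first that $(\X{k}, g)$ satisfies the axioms of a unipotent crystal (Definition \ref{defn_unip_cryst}), and then that the geometric crystal produced from it by Theorem \ref{thm_induces} coincides entry-for-entry with the data of Definition \ref{defn_maps}.

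For the first assertion I would begin by checking that $g$ is a rational map into $B^-$. Writing $A = g(M,t)$ as in Definition \ref{defn_g}, every entry $A_{pq}$ is a monomial in $\lp$ (of degree $0$ or $1$, according to $c_{pq}$) times a ratio of Plücker coordinates of $M$ that is free of $\lp$, so each entry is a Laurent polynomial in $\lp$ and $A \in G$ generically. Since only the $\lp^0$ and $\lp^1$ diagonals occur, lower-triangularity of the unfolding amounts to the vanishing of $A_{pq}|_{\lp^0}$ for $p < q$. When $q \le k$ this holds because the numerator $P_{[q-k+1,q-1]\cup\{p\}}(M)$ has a repeated index: for $1 \le p < q \le k$ one has $p \in [q-k+1,q-1]$ as integers, hence modulo $n$ by Convention \ref{conv_pluc}, so the coordinate is zero. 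When $q > k$ and $p < q$ the factor $c_{pq} = \lp$ pushes the entry entirely onto the $\lp^1$ diagonal, which lies strictly below the main diagonal of the unfolding. The diagonal entries $A_{pp} = c_{pp}\,P_{[p-k+1,p]}(M)/P_{[p-k,p-1]}(M)$ are ratios of cyclic Plücker coordinates, hence nonzero on the open positroid cell $X_k^\circ$ (Lemma \ref{lem_diag_form}), so $A \in B^-$. Finally the subdiagonal entries $g(M,t)_{i+1,i}$ are (up to $c_{pq}$, or the $\lp^1$-coefficient of $A_{1,n}$ for the wrap-around index) the ratios $P_{[i-k+1,i-1]\cup\{i+1\}}(M)/P_{[i-k,i-1]}(M)$, which are not identically zero, giving the nondegeneracy condition of Definition \ref{defn_unip_cryst}.

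The heart of the argument — and the step I expect to be the main obstacle — is showing that $g$ commutes with the $U$-actions, i.e.\ $g(\wh{x}_i(a).(M,t)) = \wh{x}_i(a).g(M,t)$, where the left-hand action is $\alpha_k$ and the right-hand one is $\alpha_{B^-}$. I would first reduce to a single generator using the cyclic symmetry of Lemma \ref{lem_PR_sh}: combining $g \circ \PR = \sh \circ g$ with the conjugation identities $\PR^{-1}(\wh{x}_i(a).\PR(M,t)) = \wh{x}_{i-1}(a).(M,t)$ and $\sh^{-1}(\wh{x}_i(a).\sh(X)) = \wh{x}_{i-1}(a).X$ shows that if the intertwining holds for some $i$ it holds for $i-1$, and decrementing cycles through all of $\Zn$. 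It therefore suffices to treat $\wh{x}_1(a)$. For $i=1$ the left side is $g(M',t)$ with $M' = (\Id + aE_{12})M$, whose Plücker coordinates satisfy $P_J(M') = P_J(M)$ unless $1 \in J,\ 2 \notin J$, in which case $P_J(M') = P_J(M) + a\,P_{(J \setminus \{1\}) \cup \{2\}}(M)$ by multilinearity; the right side, by \eqref{eq_pseudo_action}, is $\wh{x}_1(a)\,g(M,t)\,\wh{x}_1(\tau_1(a,X))$, which modifies $g(M,t)$ by a row operation on rows $1,2$ and a column operation on columns $1,2$. Matching the two descriptions entry by entry reduces each nontrivial identity to an instance of the three-term Plücker relation (Corollary \ref{cor_3term}), or more generally the Grassmann--Plücker relations (Proposition \ref{prop_Gr_Pl_rel}); the delicate bookkeeping of the $c_{pq}$ factors and the cyclic index conventions is where the real effort lies.

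With $(\X{k}, g)$ established as a unipotent crystal, the second assertion follows by computing the induced data of Theorem \ref{thm_induces} from the entries of $g(M,t)$ and comparing with Definition \ref{defn_maps}. The diagonal gives $\gamma_i = X_{ii} = c_{ii}\,P_{[i-k+1,i]}(M)/P_{[i-k,i-1]}(M)$, where $c_{ii}$ supplies the factor $t$ for $i > k$; the ratios $\vp_i = X_{i+1,i}/X_{ii}$ and $\ve_i = X_{i+1,i}/X_{i+1,i+1}$ reproduce the stated formulas, with the boundary factors $t^{-\delta_{i,0}}$ and $t^{-\delta_{i,k}}$ arising precisely from $c_{i+1,i}/c_{ii}$ and $c_{i+1,i}/c_{i+1,i+1}$ (for $i=0$ one extracts the $\lp^1$-coefficient of $A_{1,n}$, producing the $t^{-1}$). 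Finally the operators agree: Theorem \ref{thm_induces} sets $e_i^c(M,t) = \wh{x}_i(\tfrac{c-1}{\vp_i}).(M,t)$, and since $\alpha_k$ is left multiplication by $\wh{x}_i(\cdot)|_{\lp = (-1)^{k-1}t}$, this is exactly the row operation of Definition \ref{defn_maps}, the $\lp^{-1}$ in $\wh{x}_0$ evaluating to $\tfrac{(-1)^{k-1}}{t}$ and thereby producing the prefactor in the $i=0$ case. This completes the identification.
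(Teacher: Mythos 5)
Your proposal is correct and follows essentially the same route as the paper: reduce the intertwining property $g(\wh{x}_i(a).x) = \wh{x}_i(a).g(x)$ to the single case $i=1$ via Lemma \ref{lem_PR_sh}, verify that case entry by entry with the three-term Pl\"{u}cker relation (Corollary \ref{cor_3term}) being the engine in the one nontrivial column, and then read off that the induced data of Theorem \ref{thm_induces} reproduce Definition \ref{defn_maps}, including the $t^{-\delta_{i,0}}$, $t^{-\delta_{i,k}}$, and $\frac{(-1)^{k-1}}{t}$ factors from the $c_{ij}$ and the evaluation $\lp = (-1)^{k-1}t$. The only difference is cosmetic: you additionally spell out why $g$ lands in $B^-$ (which the paper leaves implicit), while leaving the four-case entry computation as a sketch rather than carrying it out as the paper does.
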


\begin{proof}
It's clear that the maps $\gamma, \vp_i, \ve_i,$ and $e_i$ in Definition \ref{defn_maps} are induced from $(\X{k}, g)$ as in Theorem \ref{thm_induces}. The difficulty is to show that $g$ commutes with the $U$-actions. Since $U$ is generated by $\wh{x}_i(a)$, we need only show that
\begin{equation}
\label{eq_U_morphism}
g(\wh{x}_i(a).x) = \wh{x}_i(a).g(x)
\end{equation}
for all $i$. If we know that \eqref{eq_U_morphism} holds for a particular value of $i$, then Lemma \ref{lem_PR_sh} allows us to deduce that it holds for all $i$, so if suffices to consider the case $i=1$.

Suppose $(M,t) \in \X{k}$ and $a \in \bbC$. Set $(M',t) = \wh{x}_1(a).(M,t)$, and write $P_J = P_J(M)$ and $P'_J = P_J(M')$. By definition, $M' = x_1(a) \cdot M$ is obtained from $M$ by adding $a$ times row 2 to row 1, so we have
\begin{equation}
\label{eq_pluc_change}
P'_J = \begin{cases}
P_J + aP_{(J \backslash \{1\}) \cup \{2\}} &\text{ if } 1 \in J \text{ and } 2 \not \in J \\
P_J &\text{ otherwise}
\end{cases}.
\end{equation}

Set $A = g(M,t)$, $A' = g(M',t)$, and $A'' = \wh{x}_1(a).A$ (view these as folded matrices). We must show that $A' = A''$. By \eqref{eq_pseudo_action}, we have
\[
A'' = \wh{x}_1(a) \cdot A \cdot \wh{x}_1(\tau_1(a,A)).
\]
In words, $A''$ is obtained from $A$ by adding $a$ times row 2 to row 1, and then adding $\tau_1(a,A)$ times column 1 to column 2. Thus, $A''$ and $A$ differ only in the first row and the second column. There are four cases to consider.

\smallskip
\noindent \textbf{Case 1: $i \neq 1, j \neq 2$.}
In this case, $A''_{ij} = A_{ij}$, and by \eqref{eq_pluc_change} and the definition of $g$, we see that $A'_{ij} = A_{ij}$ as well.

\smallskip
\noindent \textbf{Case 2: $i = 1, j = 2$.}
By definition, $A_{12}$ and $A'_{12}$ are equal to $\lp$ if $k = 1$, and 0 otherwise. The quantity $\tau_1(a,A)$ is defined so that $A''_{12}$ has no constant term, so $A''_{12} = \delta_{k,1} \lp$ as well.

\smallskip
\noindent \textbf{Case 3: $i = 1, j \neq 2$.} In this case, we have
\begin{align*}
A''_{1j} = A_{1j} + aA_{2j} = \lp^{1-\delta_{j,1}} \dfrac{P_{\{1\} \cup [j-k+1,j-1]} + aP_{\{2\} \cup [j-k+1,j-1]}}{P_{[j-k,j-1]}} = \lp^{1-\delta_{j,1}} \dfrac{P'_{\{1\} \cup [j-k+1,j-1]}}{P'_{[j-k,j-1]}} = A'_{1j}.
\end{align*}

\noindent \textbf{Case 4: $i \neq 1, j = 2$.} Since the matrix entries $A_{11}, A_{12},$ and $A_{22}$ are constant polynomials, we have $\tau_1(a,A) = \frac{-aA_{22}}{A_{11} + aA_{21}}$. We compute
\begin{align*}
A''_{i2} &= A_{i2} + \tau_1(a,A)A_{i1} \\
&= t^{\delta_{k,1}} \dfrac{P_{\{1,i\} \cup [n-k+3,n]}}{P_{\{1\} \cup [n-k+2,n]}} + \frac{-at^{\delta_{k,1}} \dfrac{P_{\{1,2\} \cup [n-k+3,n]}}{P_{\{1\} \cup [n-k+2,n]}}}{\dfrac{P_{\{1\} \cup [n-k+2,n]}}{P_{[n-k+1,n]}} + a \dfrac{P_{\{2\} \cup [n-k+2,n]}}{P_{[n-k+1,n]}}}\dfrac{P_{\{i\} \cup [n-k+2,n]}}{P_{[n-k+1,n]}} \\
&= t^{\delta_{k,1}} \dfrac{P_{\{1,i\} \cup [n-k+3,n]}(P_{\{1\} \cup [n-k+2,n]} + aP_{\{2\} \cup [n-k+2,n]}) - aP_{\{1,2\} \cup [n-k+3,n]} P_{\{i\} \cup [n-k+2,n]}}{P_{\{1\} \cup [n-k+2,n]}(P_{\{1\} \cup [n-k+2,n]} + aP_{\{2\} \cup [n-k+2,n]})}.
\end{align*}
If $k > 1$ and $i \leq n-k+2$, apply a three-term Pl\"{u}cker relation (Corollary \ref{cor_3term}) to the terms in the numerator containing $a$ to obtain
\[
A''_{i2} = \dfrac{P_{\{1,i\} \cup [n-k+3,n]} + aP_{\{2,i\} \cup [n-k+3,n]}}{P_{\{1\} \cup [n-k+2,n]} + aP_{\{2\} \cup [n-k+2,n]}} = \dfrac{P'_{\{1,i\} \cup [n-k+3,n]}}{P'_{\{1\} \cup [n-k+2,n]}} = A'_{i2}.
\]
If $i > n-k+2$, then $A_{i1} = A_{i2} = A''_{i2} = A'_{i2} = 0$, and if $k = 1$, then we have
\[
A''_{i2} = t\dfrac{P_i(P_1 + aP_2) - aP_2P_i}{P_1(P_1+aP_2)} = t\dfrac{P_i}{P_1 + aP_2} = t\dfrac{P'_i}{P'_1} = A'_{i2}.
\]

We have shown that $A' = A''$, so we are done.
\end{proof}

To complete the proof of Theorem \ref{thm_is_geom_cryst}, we must show that the function $f$ (Definition \ref{defn_maps}(4)) is a decoration. Say that an unfolded $n$-periodic matrix $X$ is {\em $\ell$-shifted unipotent} if $X_{ij} = 0$ when $i-j > \ell$, and $X_{ij} = 1$ when $i-j = \ell$. If $X$ is $\ell$-shifted unipotent, define
\[
\chi(X) = \sum_{j = 1}^n X_{j+\ell-1,j}.
\]
It's easy to see that if $X$ is $\ell$-shifted unipotent and $Y$ is $\ell'$-shifted unipotent, then $XY$ is $(\ell+\ell')$-shifted unipotent, and $\chi(XY) = \chi(X)\chi(Y)$.

If $(M,t) \in \X{k}$, then $g(M,t)$ is $(n-k)$-shifted unipotent, and comparing the definitions of $g(M,t)$ and $f(M,t)$, we see that
\[
f(M,t) = \chi(g(M,t)).
\]
For example, if $(M,t) \in \Y{2}{5}$, then the folded version of $g(M,t)$ is shown above in \eqref{eq_ex_2_5}, and we have
\[
\chi(g(M,t)) = \frac{P_{35}}{P_{45}} + \frac{P_{14}}{P_{15}} + t \frac{P_{25}}{P_{12}} + \frac{P_{13}}{P_{23}} + \frac{P_{24}}{P_{34}} = f(M,t).
\]

Using \eqref{eq_U_action_B-}, \eqref{eq_commutes_with_g}, and the fact that $\wh{x}_i(a)$ is $0$-shifted unipotent with $\chi(\wh{x}_i(a)) = a$, we compute
\begin{align*}
f(e_i^c(M,t)) = \chi(g(e_i^c(M,t))) &= \chi(e_i^c(g(M,t))) \\
&= \chi\left(\wh{x}_i\left(\dfrac{c-1}{\vp_i(M,t)}\right) \cdot g(M,t) \cdot \wh{x}_i\left(\dfrac{c^{-1}-1}{\ve_i(M,t)}\right) \right) \\
&= \dfrac{c-1}{\vp_i(M,t)} + f(M,t) + \dfrac{c^{-1} - 1}{\ve_i(M,t)},
\end{align*}
so $f$ is indeed a decoration.

\subsection{Properties of the matrix $g(M,t)$}
\label{sec_g_props}

Recall from \S \ref{sec_basic_pluc} the open positroid cell $X_k^\circ \subset \Gr(k,n)$ where the cyclic Pl\"{u}cker coordinates are nonzero. Note that $g(M,t)$ is defined if and only if $M \in X_k^\circ$.

\begin{prop}
\label{prop_g_props}
Suppose $(M,t) \in \X{k}$, with $M \in X_k^\circ$. Let $A = g(M,t)$ (viewed as a folded matrix).
\begin{enumerate}
\item \label{itm:pi_g}
The first $k$ columns of $A$ span the subspace $M$.
\item \label{itm:rank_k}
The matrix $A|_{\lp = (-1)^{k-1}t}$ obtained from $A$ by evaluating $\lp$ at $(-1)^{k-1}t$ has rank $k$.
\item \label{itm:det_g}
The determinant of $A$ is $(t + (-1)^k \lp)^{n-k}$.
\item \label{itm:g_0}
If $(M,t) = \Theta_k(X_{ij},t)$, then $A|_{\lp = 0} = \Phi_{n-k}(X_{ij},t)$ (see Definition \ref{defn_Phi}).
\end{enumerate}
\end{prop}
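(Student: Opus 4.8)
The plan is to read off all four statements from the explicit shape of $A = g(M,t)$. The first thing I would record is that $A$ is affine-linear in the loop parameter: writing $A = A^{(0)} + \lp\, A^{(1)}$, the matrix $A^{(1)}$ is supported precisely on the positions $(i,j)$ with $j > k$ and $i < j$ (where $c_{ij} = \lp$), so $A^{(0)} = A|_{\lp = 0}$, and the first $k$ columns of $A$ do not involve $\lp$ at all. I will prove (1) and (2) by exhibiting columns of $A$ as vectors lying in $M$, then deduce (3) from the $\lp$-degree of $\det A$ together with (2), and finally prove (4) by identifying $A^{(0)}$ with $\Phi_{n-k}$.

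The engine for (1) and (2) is the standard fact, immediate from the Grassmann--Pl\"ucker relations (Proposition \ref{prop_Gr_Pl_rel}), that for any $(k-1)$-subset $S = \{s_1 < \cdots < s_{k-1}\}$ the vector $v^S$ with $v^S_i = P_{<i,s_1,\ldots,s_{k-1}>}(M)$ lies in $M$. For (1), the $j$-th column of $A$ with $j \le k$ is $P_{[j-k,j-1]}(M)^{-1}\big(P_{[j-k+1,j-1]\cup\{i\}}(M)\big)_{i\in[n]}$, and since $S_j = [j-k+1,j-1]$ is a cyclic interval meeting the $1$--$n$ boundary, the sign converting the set-ordered coordinate $P_{S_j\cup\{i\}}$ into the list-ordered $v^{S_j}_i$ is independent of $i$; hence this column is a scalar multiple of $v^{S_j} \in M$. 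A direct check shows the bottom $k\times k$ block of $A$ is upper-unitriangular (for $i<j\le k$ the index $i$ already lies in $S_j$, forcing a repeated row and a zero entry, while the diagonal entries reduce to ratios of cyclic coordinates equal to $1$), so these $k$ columns are independent and therefore span $M$. For (2), in a column $j > k$ the factor $c_{ij}$ equals $t$ when $i\ge j$ and $\lp$ when $i<j$; at $\lp = (-1)^{k-1}t$ these become $t$ and $(-1)^{k-1}t$, and the extra sign $(-1)^{k-1}$ is exactly what is needed to compensate the insertion sign of the now non-boundary interval $S_j\subseteq[2,n-1]$, so the column becomes $(-1)^{k-1}t\,P_{[j-k,j-1]}(M)^{-1} v^{S_j} \in M$. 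Thus every column of $A|_{\lp=(-1)^{k-1}t}$ lies in the $k$-dimensional space $M$, and since the first $k$ already span $M$ the rank is exactly $k$.

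For (3), $\det A$ is a polynomial in $\lp$ whose degree is at most $n-k$, because in any term of the determinant expansion at most $n-k$ factors can come from the columns $k+1,\ldots,n$ and hence carry a $\lp$. By (2) the matrix $A|_{\lp=(-1)^{k-1}t}$ has corank $n-k$, so $\det A$ vanishes to order at least $n-k$ at $\lp = (-1)^{k-1}t$; with the degree bound this forces $\det A = C\,(\lp + (-1)^k t)^{n-k}$ for a constant $C$. To pin down $C$ I would compute the constant term $\det A^{(0)}$: the matrix $A^{(0)}$ is block lower-triangular with vanishing top-right $k\times(n-k)$ block, its bottom-right block $R$ is lower-triangular with diagonal entries $t\,P_{[j-k+1,j]}(M)/P_{[j-k,j-1]}(M)$ whose product telescopes to $t^{n-k}P_{[n-k+1,n]}(M)/P_{[1,k]}(M)$, while the top-left block has determinant $P_{[1,k]}(M)/P_{[n-k+1,n]}(M)$ (this is where (1) is used: the first $k$ columns represent $M$ with the bottom block normalized to the identity). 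Hence $\det A^{(0)} = t^{n-k}$, giving $C = (-1)^{k(n-k)}$ and $\det A = (t + (-1)^k\lp)^{n-k}$.

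Finally, for (4) suppose $(M,t) = \Theta_k(X_{ij},t)$. The first $k$ columns of $A^{(0)}$ coincide with the first $k$ columns of $A$, which by (1) and the triangularity just established form the unique diagonal-form representative of $M$ (Lemma \ref{lem_diag_form}); by the definition of $\Theta_k$ and Corollary \ref{cor_M} these are exactly the first $k$ columns of $\Phi_{n-k}$. The remaining columns I would match entrywise, expressing each nonzero entry $t\,P_{[j-k+1,j-1]\cup\{i\}}(M)/P_{[j-k,j-1]}(M)$ through the $J$-tableau formula (Lemma \ref{lem_pluc_formula}) and Corollary \ref{cor_pluc_formulae} and checking that it agrees with the corresponding entry of the product $\Phi_{n-k}$, in the same spirit as the computations in the proof of Proposition \ref{prop_trop_dec}. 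I expect the main obstacle to be precisely this sign and index bookkeeping: verifying that the set-indexed cyclic Pl\"ucker coordinates reassemble into honest vectors of $M$ --- which works for columns $\le k$ only because $S_j$ is anchored at the cyclic boundary, and for columns $>k$ only after the compensating factor $(-1)^{k-1}$ appears at $\lp = (-1)^{k-1}t$ --- together with the explicit identification of the last $n-k$ columns in (4).
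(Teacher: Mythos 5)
Parts (1)--(3) of your proposal are correct, and worth comparing to the paper. Your engine for (1)--(2) --- the vector $v^S$ with $v^S_i = P_{<i,s_1,\ldots,s_{k-1}>}(M)$ lying in $M$ --- is the Grassmann--Pl\"ucker relation (Proposition \ref{prop_Gr_Pl_rel}) repackaged at the level of column vectors: the paper instead proves (1) by identifying the first $k$ columns with the unique diagonal-form representative of Lemma \ref{lem_diag_form}, and proves (2) by expanding the $(k+1)$-minors $\Delta_{I,[1,k]\cup\{j\}}\bigl(A|_{\lp=(-1)^{k-1}t}\bigr)$ along column $j$ and invoking the Pl\"ucker relations, with the ordered-list coordinates $P_{<\cdots>}$ absorbing exactly the sign bookkeeping you do by hand. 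Your two sign observations --- that the insertion sign is constant in $i$ for $j\le k$ because $S_j$ straddles the $n$--$1$ boundary, and that the specialization $\lp=(-1)^{k-1}t$ supplies precisely the compensating $(-1)^{k-1}$ for the interior intervals $S_j$ with $j>k$ --- are both correct and are the same mechanism. For (3) you genuinely diverge: the paper performs explicit column operations (using (2)) to produce a matrix $A'$ whose entries in columns $>k$ carry the factor $(-1)^kt+\lp$, and computes its determinant directly; your route --- degree $\le n-k$ in $\lp$ by column multilinearity, vanishing of order $\ge n-k$ at $\lp=(-1)^{k-1}t$ because corank $n-k$ at a point forces $n-k$ rows of a constant-conjugated matrix to be divisible by $\lp-(-1)^{k-1}t$, and the constant pinned by $\det A|_{\lp=0}=t^{n-k}$ via the block-triangular structure --- is complete, correct (the telescoping product and the normalization $\Delta_{[1,k],[k]}(M^\circ)=P_{[1,k]}/P_{[n-k+1,n]}$ both check out, and $(-1)^{k(n-k)}(\lp+(-1)^kt)^{n-k}=(t+(-1)^k\lp)^{n-k}$), and arguably cleaner than the paper's. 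One cosmetic slip: your parenthetical justification ``for $i<j\le k$ the index $i$ already lies in $S_j$'' proves the \emph{top} block is lower triangular; the bottom block's upper-unitriangularity needs the separate (equally easy) check that $i\in S_j$ when $i>n-k+j$.

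The genuine gap is in the second half of (4). The first $k$ columns are fine and match the paper's argument (diagonal-form uniqueness plus Corollary \ref{cor_M}). But for the last $n-k$ columns you only state a plan, and the plan as written does not directly apply: Lemma \ref{lem_pluc_formula} computes Pl\"ucker coordinates of the matrix associated to $N_k$, i.e.\ of the \emph{first $k$ columns}, whereas the entries $\Phi_{n-k}(X_{ij},t)_{ij}$ with $j>k$ are path sums in the full glued network with $n$ sinks, which that lemma does not cover. So ``matching entrywise via $J$-tableaux'' amounts to independently proving the identity $\Phi_{n-k}(X_{ij},t)_{ij}\cdot P_{[j-k,j-1]}(M)=t\,P_{[j-k+1,j-1]\cup\{i\}}(M)$ for $k<j\le i$, which is exactly the nontrivial step. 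The paper sidesteps it with one idea absent from your proposal: it computes the diagonal entries $j=i>k$ from the \emph{unique} path from source $j$ to sink $j'$, then shows $\Delta_{[j-k+1,j]\cup\{i\},[1,k]\cup\{j\}}(\Phi_{n-k}(X_{ij},t))=0$ by a Lindstr\"om blocking argument (the $k$th path of the unique vertex-disjoint family from $[j-k+1,j]$ to the sinks seals off all access to $j'$), and finally solves the resulting two-term Laplace expansion for the unknown entry $\Phi_{n-k}(X_{ij},t)_{ij}$, using that the first $k$ columns are already identified. Without this vanishing-minor device --- or an actually executed bijective evaluation of the path sums, which you correctly flag as the obstacle but do not carry out --- part (4) remains incomplete.
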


\begin{proof}
By Lemma \ref{lem_diag_form}, the subspace $M$ has a diagonal form representative $M^\circ$. It follows from the definition of diagonal form that
\[
M^\circ_{ij} = \frac{P_{[1,j-1] \cup \{i\} \cup [n-k+j+1,n]}(M)}{P_{[1,j-1] \cup [n-k+j,n]}(M)} = \frac{P_{[j-k+1,j-1] \cup \{i\}}(M)}{P_{[j-k,j-1]}(M)}
\]
for $j = 1, \ldots, k$ and $i \in [j, j+n-k]$, and $M^\circ_{ij} = 0$ otherwise. Comparing with the definition of the map $g$, we see that $M^\circ$ is equal to the the first $k$ columns of $A$, which proves (1).

For (2), set $A_t = A|_{\lp = (-1)^{k-1}t}$. We claim that $\Delta_{I, [1,k] \cup \{j\}}(A_t) = 0$ for all $(k+1)$-subsets $I \subset [n]$, and $j \in [k+1,n]$. To see this, suppose $I = \{i_1 < \cdots < i_{k+1}\}$, and expand the determinant along column $j$:
\begin{equation}
\label{eq_Delta_I_k_j}
\Delta_{I, [1,k] \cup \{j\}}(A_t) = \sum_{r = 1}^{k+1} (-1)^{k+1+r} (A_t)_{i_r,j} \Delta_{I \backslash \{i_r\},[1,k]}(A_t).
\end{equation}
By part (1) (and the fact that $\Delta_{[n-k+1,n],[1,k]}(A_t) = 1$), we have
\[
\Delta_{I \backslash \{i_r\},[1,k]}(A_t) = \dfrac{P_{I \backslash \{i_r\}}(M)}{P_{[n-k+1,n]}(M)}.
\]
By the definition of $g$, we have
\begin{align*}
(A_t)_{i_r,j} &= \begin{cases}
t \dfrac{P_{[j-k+1,j-1] \cup \{i_r\}}(M)}{P_{[j-k,j-1]}(M)} & \text{ if } i_r \geq j \smallskip \\
(-1)^{k-1} t \dfrac{P_{[j-k+1,j-1] \cup \{i_r\}}(M)}{P_{[j-k,j-1]}(M)} & \text{ if } i_r < j
\end{cases} \\
&= t \dfrac{P_{<j-k+1,j-k+2, \ldots, j-1,i_r>}(M)}{P_{[j-k,j-1]}(M)}
\end{align*}
where in the last line, the angle brackets indicate that we are taking the columns inside the brackets in the order in which they appear in the sequence, rather than sorting them in increasing order (see Convention \ref{conv_pluc}). Now \eqref{eq_Delta_I_k_j} becomes
\begin{equation*}
\Delta_{I, [1,k] \cup \{j\}}(A_t) = \sum_{r=1}^{k+1} (-1)^{k+1+r} t \dfrac{P_{<j-k+1,j-k+2, \ldots, j-1,i_r>}(M)}{P_{[j-k,j-1]}(M)} \dfrac{P_{I \backslash \{i_r\}}(M)}{P_{[n-k+1,n]}(M)} = 0
\end{equation*}
by the Grassmann-Pl\"{u}cker relations (Proposition \ref{prop_Gr_Pl_rel}).

We have shown that each of the last $n-k$ columns of $A_t$ is in the span of the first $k$, and since the first $k$ columns have rank $k$ by part (1), this proves (2).

For (3), let $A_t$ be as above. By part (2), it is possible to add linear combinations of the first $k$ columns of $A_t$ to the last $n-k$ columns to obtain a matrix with zeroes in the last $n-k$ columns. Let $A'$ be the matrix obtained by adding the same linear combinations of the first $k$ columns of $A$ (which are equal to the first $k$ columns of $A_t$) to the last $n-k$ columns of $A$. Then we have
\[
(A')_{ij} = c'_{ij} \dfrac{P_{[j-k+1,j-1] \cup \{i\}}(M)}{P_{[j-k,j-1]}(M)}, \quad\quad\quad
c'_{ij} = \begin{cases}
1 & \text{ if } j \leq k \\
(-1)^k t + \lp & \text{ if } i \leq n-k \text{ and } j > i \\
0 & \text{ otherwise.}
\end{cases}
\]
For example, if $n = 5$ and $k = 2$, then $A'$ is of the form
\[
\left(
\begin{array}{ccccc}
* & 0 & t + \lp & (t + \lp)* & (t + \lp)* \\
* & * & 0 & t+\lp & (t + \lp)* \\
* & * & 0 & 0 & t+\lp \\
1 & * & 0 & 0 & 0 \\
0 & 1 & 0 & 0 & 0
\end{array}
\right)
\]
where the $*$'s are certain ratios of Pl\"{u}cker coordinates. Thus, we have
\[
\det(A) = \det(A') = (-1)^{k(n-k)}((-1)^k t + \lp)^{n-k} = (t + (-1)^k \lp)^{n-k},
\]
proving (3).

Now we prove (4). By definition, the first $k$ columns of $\Phi_{n-k}(X_{ij},t)$ span the subspace $M$. By (1), the first $k$ columns of $A = g(M,t)$, (which are independent of $\lp$) also span $M$. Since the first $k$ columns of both matrices are diagonal form, they are equal.

Both $A|_{\lp = 0}$ and $\Phi_{n-k}(X_{ij},t)$ are lower triangular, so it remains to consider the entries in positions $(i,j)$, with $k < j \leq i$. First suppose $j = i$. Consider the network for $\Phi_{n-k}(X_{ij},t)$ (the case $n=5, k=2$ is shown in the proof of Lemma \ref{lem_Nk_represents}). There is a single path from source $j$ to sink $j'$, and this path has weight
\[
\dfrac{t}{X_{j-k,j-1}} \prod_{i \in [j-k+1,j] \cap [1,n-k]} \dfrac{X_{ij}}{X_{i,j-1}} = t \dfrac{P_{[j-k+1,j]}(M)}{P_{[j-k,j-1]}(M)}
\]
by Corollary \ref{cor_pluc_formulae}(2). This shows that
\begin{equation}
\label{eq_diag_entries}
\Phi_{n-k}(X_{ij},t)_{jj} = t \frac{P_{[j-k+1,j]}(M)}{P_{[j-k,j-1]}(M)} = A_{jj}.
\end{equation}

Now suppose $j < i$. We claim that
\begin{equation}
\label{eq_k+1_minor}
\Delta_{[j-k+1,j] \cup \{i\}, [1,k] \cup \{j\}}(\Phi_{n-k}(X_{ij},t)) = 0.
\end{equation}
To see this, again consider the network. There is exactly one vertex-disjoint family of paths from $[j-k+1,j]$ to $[1,k]$, and the $k$th path in this family ``blocks off'' the only access to the sink $j'$, so for any $i > j$, there is no way to add a path from $i$ to $j'$ which is vertex-disjoint from the other $k$ paths. Thus, the determinant is zero by the Lindstr\"{o}m Lemma.

The only nonzero entries in the $j$th column of the submatrix $\Phi_{n-k}(X_{ij},t)_{[j-k+1,j] \cup \{i\}, [1,k] \cup \{j\}}$ are in rows $j$ and $i$. Expand the determinant of this submatrix along the $j$th column and use \eqref{eq_diag_entries}, \eqref{eq_k+1_minor}, and the fact that the first $k$ columns of $\Phi_{n-k}(X_{ij},t)$ are the diagonal form representative of $M$ to get
\begin{align*}
0 &= \Phi_{n-k}(X_{ij},t)_{ij} \frac{P_{[j-k+1,j]}(M)}{P_{[n-k+1,n]}(M)} - \Phi_{n-k}(X_{ij},t)_{jj} \frac{P_{[j-k+1,j-1] \cup \{i\}}(M)}{P_{[n-k+1,n]}(M)} \\
&= \Phi_{n-k}(X_{ij},t)_{ij} \frac{P_{[j-k+1,j]}(M)}{P_{[n-k+1,n]}(M)} - t \frac{P_{[j-k+1,j]}(M)}{P_{[j-k,j-1]}(M)} \frac{P_{[j-k+1,j-1] \cup \{i\}}(M)}{P_{[n-k+1,n]}(M)}.
\end{align*}
This shows that $\Phi_{n-k}(X_{ij},t)_{ij} = t \dfrac{P_{[j-k+1,j-1] \cup \{i\}}(M)}{P_{[j-k,j-1]}(M)} = A_{ij}$, completing the proof.
\end{proof}

\section{Symmetries}
\label{sec_symm}
%\section{Symmetries}
%\label{sec_symm}

Throughout this section we write $g_t(M) = g(M,t)$.

\subsection{Geometric Sch\"{u}tzenberger involution}
\label{sec_schutz}

For $s \in \bbC$, define $\pi^k_s : B^- \rightarrow \Gr(k,n)$ by $\pi^k_s(A) = M$, where $M$ is the subspace spanned by the first $k$ columns of the folded matrix $A_s = A|_{\lp = (-1)^{k-1} s}$. This map is undefined if the first $k$ columns of $A_s$ do not have full rank. Proposition \ref{prop_g_props}\eqref{itm:pi_g} states that for $M \in X_k^\circ$ and $t \in \Cx$, we have
\begin{equation}
\label{eq_pi_g}
\pi^k_t \circ g_t(M) = M.
\end{equation}
This shows that the matrix $A = g_t(M)$ is determined by the subspace spanned by its first $k$ columns (and the value of $t$). Now we consider what happens if we ``project'' onto the last $k$ rows instead of the first $k$ columns.

Define the map $\fl : B^- \rightarrow B^-$ on a folded matrix $A$ by
\begin{equation}
\label{eq_fl_defn}
\fl(A)_{ij} = A_{n-j+1,n-i+1}.
\end{equation}
In words, $\fl$ reflects the folded matrix over the anti-diagonal. It's easy to see that $\fl$ is an anti-automorphism, and that it satisfies
\begin{equation}
\label{eq_sh_fl}
\fl^2 = \Id \quad\quad\quad\quad\quad \fl \circ \sh = \sh^{-1} \circ \fl,
\end{equation}
where $\sh$ is the shift map defined by \eqref{eq_sh_defn}.

\begin{defn}
\label{defn_S}
Define the {\em geometric Sch\"{u}tzenberger involution} $S : \X{k} \rightarrow \X{k}$ by $S(M,t) = (M',t)$, where
\[
M' = \pi^k_t \circ \fl \circ \, g_t(M).
\]
This is a rational map which is defined for $M$ in the open positroid cell $X_k^\circ$. Continuing the notation of previous sections, we write $S_t$ to denote the map $M \mapsto M'$. Note that the Pl\"{u}cker coordinates of $M'$ are given by
\begin{equation}
\label{eq_pluc_M'}
\frac{P_J(M')}{P_{[n-k+1,n]}(M')} = \Delta_{[n-k+1,n],w_0(J)}(g_t(M)),
\end{equation}
where $w_0(J)$ is the subset obtained from $J$ by replacing each $i \in J$ with $n-i+1$.
\end{defn}

For example, if $(M,t) \in \Y{2}{5}$, then setting $P_J = P_J(M)$, we have (c.f. \eqref{eq_ex_2_5})
\[
S_t(M) = \left(
\begin{array}{cc}
t \dfrac{P_{45}}{P_{34}} & 0 \smallskip \\
t \dfrac{P_{35}}{P_{23}} & t \dfrac{P_{34}}{P_{23}} \smallskip \\
t \dfrac{P_{25}}{P_{12}} & t \dfrac{P_{24}}{P_{12}} \smallskip \\
1 & \dfrac{P_{14}}{P_{15}} \smallskip \\
0 & 1
\end{array}
\right).
\]

Recall from \S \ref{sec_basic_pluc} the basic subsets $J_{i,j} = [i,j] \cup [n-k+j-i+2,n]$, and the open subset $U_k \subset \Gr(k,n)$ where the basic Pl\"{u}cker coordinates don't vanish. There is a simple expression for the basic Pl\"{u}cker coordinates of $S_t(M)$ in terms of those of $M$.

\begin{lem}
\label{lem_basic_pluc_S}
Suppose $M \in \Gr(k,n)$, and $M' = S_t(M)$. If $M \in U_k$, then so is $M'$, and the basic Pl\"{u}cker coordinates of $M'$ are given by
\begin{equation}
\label{eq_basic_pluc_S}
\dfrac{P_{J_{i,j}}(M')}{P_{[n-k+1,n]}(M')} = t^{\min(j,n-k) - i+1} \dfrac{P_{J_{n-k-i+2,n-j}}(M)}{P_{[n-j-k+1,n-j]}(M)}.
\end{equation}
\end{lem}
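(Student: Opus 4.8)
The plan is to pass from the map $S_t$ to a single explicit minor of the folded matrix $g_t(M)$ via the identity \eqref{eq_pluc_M'}, and then to evaluate that minor. Writing $A_t = g_t(M)|_{\lp=(-1)^{k-1}t}$, equation \eqref{eq_pluc_M'} gives
\[
\frac{P_{J_{i,j}}(M')}{P_{[n-k+1,n]}(M')} = \Delta_{[n-k+1,n],\, w_0(J_{i,j})}(A_t),
\]
and a short computation shows $w_0(J_{i,j}) = [1,\,i-j+k-1]\cup[n-j+1,\,n-i+1]$ (with the cyclic conventions of Convention \ref{conv_pluc}). Thus the whole statement reduces to evaluating this one $k\times k$ minor.

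Next I would describe the columns of $A_t$ in the normalized diagonal-form picture. Let $M^\circ$ be the diagonal-form representative of $M$ (Lemma \ref{lem_diag_form}), so that $\Delta_{[n-k+1,n],[k]}(M^\circ)=1$ and $\Delta_{J,[k]}(M^\circ)=P_J(M)/P_{[n-k+1,n]}(M)$ by \eqref{eq_ij_minor}. By Proposition \ref{prop_g_props}\eqref{itm:pi_g} the first $k$ columns of $A_t$ are exactly $M^\circ$, while the angle-bracket formula established inside the proof of Proposition \ref{prop_g_props}\eqref{itm:rank_k} shows that for $\ell>k$ the $\ell$th column of $A_t$ equals $\tfrac{t}{\Delta_{[\ell-k,\ell-1],[k]}(M^\circ)}\,M^\circ\nu^{(\ell)}$, where $\nu^{(\ell)}\in(\bbC^k)^*$ is the covector determined by $(M^\circ\nu^{(\ell)})_r = \Delta_{\langle \ell-k+1,\dots,\ell-1,\,r\rangle,[k]}(M^\circ)$. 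Factoring these scalars out of each column with $\ell>k$ and using $\det(M^\circ_{[n-k+1,n],\cdot})=1$ together with the Cauchy--Binet identity, the minor becomes
\[
\Delta_{[n-k+1,n],\,w_0(J_{i,j})}(A_t)=\frac{t^{N}}{\prod_{\ell\in w_0(J_{i,j}),\,\ell>k}\Delta_{[\ell-k,\ell-1],[k]}(M^\circ)}\;\det(W),
\]
where $W$ is the $k\times k$ matrix whose columns are $e_\ell$ for $\ell\le k$ and $\nu^{(\ell)}$ for $\ell>k$, and $N=\#\{\ell\in w_0(J_{i,j}):\ell>k\}$.

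The power of $t$ then falls out from a direct count: splitting into the cases $j\le n-k$ and $j>n-k$ shows $N=\min(j,n-k)-i+1$, matching the exponent in \eqref{eq_basic_pluc_S}. It remains to evaluate $\det(W)$. Expanding along the standard-basis columns $e_\ell$ reduces $\det(W)$ to a determinant purely of the cofactor covectors $\nu^{(\ell)}$, whose defining row sets $\{\ell-k+1,\dots,\ell-1\}$ are consecutive and overlap in $k-2$ rows as $\ell$ increases. The claim is that this determinant telescopes, by iterated application of the three-term Pl\"ucker relation (Corollary \ref{cor_3term}) in the same spirit as the proof of Theorem \ref{thm_trop_PR}, to
\[
\det(W)=\Delta_{J_{n-k-i+2,\,n-j},[k]}(M^\circ)\cdot\!\!\prod_{\ell=n-j+2}^{n-i+1}\!\!\Delta_{[\ell-k,\ell-1],[k]}(M^\circ),
\]
the pivot minors in this product cancelling all but the factor $\Delta_{[n-j-k+1,n-j],[k]}(M^\circ)$ in the denominator above and leaving $P_{J_{n-k-i+2,n-j}}(M)/P_{[n-j-k+1,n-j]}(M)$ after rewriting in unnormalized Pl\"ucker coordinates. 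Since $J_{n-k-i+2,n-j}$ is basic and $[n-j-k+1,n-j]$ is cyclic (hence basic), both are nonzero for $M\in U_k$, so every basic Pl\"ucker coordinate of $M'$ is nonzero and $M'\in U_k$ as claimed.

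I expect the main obstacle to be precisely the evaluation of $\det(W)$: organizing the iterated three-term Pl\"ucker relations so that the pivots cancel correctly, while tracking the cyclic (mod $n$) conventions of Convention \ref{conv_pluc} and the cofactor signs $(-1)^{k+s}$ coming from the covectors $\nu^{(\ell)}$, so that no stray sign survives. The boundary configurations—$j=i-1$ (where $J_{i,j}=[n-k+1,n]$ and $N=0$, so both sides collapse to $1$), $j>n-k$ (where the upper block of $w_0(J_{i,j})$ contributes extra columns $\le k$, altering which $e_\ell$ appear in $W$), and $i$ or $j$ at the extremes of their ranges—would be dispatched as separate but parallel cases, exactly as the analogous boundary cases are handled in \S\ref{sec_pf_trop_PR}.
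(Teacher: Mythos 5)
Your first half is sound, and your intermediate formulas are in fact all correct: \eqref{eq_pluc_M'} reduces the lemma to the single minor $\Delta_{[n-k+1,n],\,w_0(J_{i,j})}(A)$ with $w_0(J_{i,j})=[1,\,k-j+i-1]\cup[n-j+1,\,n-i+1]$ (and since the rows $[n-k+1,n]$ of $g_t(M)$ carry no $\lp$ --- the would-be $\lp$-entries there have a repeated row index in their Pl\"ucker coordinate, hence vanish --- passing to $A_t$ is harmless); the column description of $A_t$ via the angle-bracket formula from Proposition \ref{prop_g_props} is right; the exponent count $N=\min(j,n-k)-i+1$ checks out; and your asserted product formula for $\det(W)$ is a true identity (I verified it reproduces the lemma, e.g.\ against the displayed $S_t(M)$ for $\Y{2}{5}$ and in a $k=3$, $n=6$ instance). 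But there is a genuine gap exactly where you flag it: the evaluation of $\det(W)$ is only asserted, and it is the entire content of the lemma. ``Telescoping by iterated three-term Pl\"ucker relations'' is not an argument here: the entries $\nu^{(\ell)}_s=(-1)^{k+s}\Delta_{[\ell-k+1,\ell-1],[k]\setminus\{s\}}(M^\circ)$ are signed complementary minors with a sliding window of rows, the determinant has size $j-i+1$ (anything up to $k$), and Corollary \ref{cor_3term}, being a single quadratic relation, does not by itself organize an induction that makes the pivots cancel; nothing in your writeup shows the relations close up.

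The paper avoids this determinant altogether, and its device is worth noting because it also proves your $\det(W)$ identity in one stroke. By Proposition \ref{prop_g_props}\eqref{itm:rank_k}, every column of $A_t$ lies in $M$, so for a \emph{fixed} column set the $k\times k$ minors of $A_t$, as the row set $R$ varies, are proportional to $P_R(M)$. The paper therefore evaluates the same columns $w_0(J_{i,j})$ not on rows $[n-k+1,n]$ but on rows $[b+1,b+k]$ with $b=n-i-k+1$, where the submatrix is block-triangular (the last $k-a$ columns are zeroes atop a lower-triangular block of diagonal entries $A_{rr}$), so the minor is the explicit monomial \eqref{eq_minor_of_A}; it then transfers back via the ratio $P_{[n-k+1,n]}(M)/P_{[b+1,b+k]}(M)$ as in \eqref{eq_pluc_ratios}, checking that both minors are $\lp$-free. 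In your normalized picture this is precisely the statement that $\det(W)=\Delta_{[b+1,b+k],[k]}(M^\circ W)/\Delta_{[b+1,b+k],[k]}(M^\circ)$, with the numerator computed by triangularity --- i.e.\ the row-set swap \emph{is} the proof of your telescoped product, pivot cancellations and signs included. Replace the unproven telescoping by this swap and your argument closes (your $U_k$-membership conclusion and the boundary cases $j=i-1$, $j>n-k$ then go through as you indicate); as written, the central computation is missing.
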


\begin{proof}
Set $A = g_t(M)$ and $A' = \fl(A)$. By the definition of $S$ and the fact that $\Delta_{[n-k+1,n],[k]}(A) = 1$, we have
\[
\dfrac{P_{J_{i,j}}(M')}{P_{[n-k+1,n]}(M')} = \Delta_{J_{i,j}, [k]}(A') = \Delta_{[n-k+1,n], w_0(J_{i,j})}(A).
\]
Set $a = k-j+i-1$ and $b = n-i-k+1$, so that $w_0(J_{i,j}) = [1,a] \cup [b+a+1,b+k]$. Consider the $k \times k$ submatrix of $A$ using the rows $[b+1,b+k]$ and the columns $[1,a] \cup [b+a+1,b+k]$. The last $k-a$ columns of this submatrix consist of $a$ rows of zeroes followed by a lower triangular $(k-a) \times (k-a)$ block, so we have
\begin{align*}
\Delta_{[b+1,b+k], [1,a] \cup [b+a+1,b+k]}(A) &= \Delta_{[b+1,b+a],[1,a]}(A) \prod_{r = b+a+1}^{b+k} A_{rr} \\
&= \Delta_{[b+1,b+a],[1,a]}(A) \prod_{r = b+a+1}^{b+k} t^{c_r} \dfrac{P_{[r-k+1,r]}(M)}{P_{[r-k,r-1]}(M)}
\end{align*}
where $c_r = 0$ if $r \leq k$, and $c_r = 1$ if $r > k$. Using \eqref{eq_ij_minor} and canceling terms in the product, we obtain
\begin{equation}
\label{eq_minor_of_A}
\Delta_{[b+1,b+k],[1,a] \cup [b+a+1,b+k]}(A) = t^{\min(k-a,b)} \dfrac{P_{J_{b+1,b+a}}(M)}{P_{[n-k+1,n]}(M)} \dfrac{P_{[b+1,b+k]}(M)}{P_{[b+a+1-k,b+a]}(M)}.
\end{equation}
Note that the Pl\"{u}cker coordinates appearing here are nonzero because $M \in U_k$.

Let $A_t = A|_{\lp = (-1)^{k-1} t}$. By Proposition \ref{prop_g_props}, all the columns of $A_t$ are in the span of the first $k$ columns (which is $M$), so if a single minor using a given set of $k$ columns is nonzero, then those $k$ columns also span the subspace $M$. Thus, we have
\begin{equation}
\label{eq_pluc_ratios}
\dfrac{\Delta_{[n-k+1,n], [1,a] \cup [b+a+1,b+k]}(A_t)}{\Delta_{[b+1,b+k], [1,a] \cup [b+a+1,b+k]}(A_t)} = \dfrac{P_{[n-k+1,n]}(M)}{P_{[b+1,b+k]}(M)}.
\end{equation}
The minors appearing in the left-hand side of \eqref{eq_pluc_ratios} don't depend on $\lp$, so this equation still holds if we replace $A_t$ with $A$ (and \eqref{eq_minor_of_A} shows that the denominator of the left-hand side is nonzero). The lemma follows from combining \eqref{eq_minor_of_A} and \eqref{eq_pluc_ratios}, and replacing $a,b$ with $k-j+i-1,n-i-k+1$, respectively.
\end{proof}

Specializing \eqref{eq_basic_pluc_S} to the case of cyclic Pl\"{u}cker coordinates, we get
\begin{equation}
\label{eq_cyclic_pluc_S}
\dfrac{P_{[i,i+k-1]}(M')}{P_{[n-k+1,n]}(M')} = t^{|[i,i+k-1] \cap [n-k]|} \dfrac{P_{[n-k+1,n]}(M)}{P_{[n-i-2k+2,n-i-k+1]}(M)}
\end{equation}
for $i \in \Zn$.
This shows that when $M \in X_k^\circ$, $S_t(M) \in X_k^\circ$ as well, so $g_t \circ S_t(M)$ is defined.

\begin{prop}
\label{prop_pi_g_S}
For $M \in X_k^\circ$ and $t \in \Cx$, we have
\[
g_t \circ S_t(M) = \fl \circ \, g_t(M).
\]
\end{prop}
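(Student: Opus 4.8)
The plan is to set $A = g_t(M)$ and prove directly that $\fl(A) = g_t(M')$, where $M' = S_t(M)$. By Definition \ref{defn_S} we have $M' = \pi^k_t \circ \fl(A)$, and by \eqref{eq_cyclic_pluc_S} the subspace $M'$ again lies in $X_k^\circ$, so $g_t(M')$ is defined; thus the statement amounts to showing that $\fl(A)$ coincides with $g_t$ applied to the span of its own first $k$ columns. I would establish this by checking that $\fl(A)$ has all of the structural properties that characterize a matrix in the image of $g_t$, and then appealing to a uniqueness argument.

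First I would record that $\fl$ maps $B^-$ to $B^-$ and preserves the relevant shape. In unfolded coordinates $\fl$ is the reflection $Y_{I,J} = X_{1-J,\,1-I}$ across the anti-diagonal $I+J=1$; this preserves lower-triangularity and fixes each diagonal $I-J$ setwise, so $\fl(A)$ is again lower triangular and $(n-k)$-shifted unipotent (the all-ones diagonal $n-k$ is carried to itself). Moreover $\det \fl(A) = \det A = (t+(-1)^k\lp)^{n-k}$ by Proposition \ref{prop_g_props}\eqref{itm:det_g}. Because the first $k$ columns of any $(n-k)$-shifted unipotent element of $B^-$ are free of $\lp$, specialization does not change them; and since $A_t := A|_{\lp = (-1)^{k-1}t}$ has rank $k$ by Proposition \ref{prop_g_props}\eqref{itm:rank_k}, the matrix $\fl(A)|_{\lp = (-1)^{k-1}t} = \fl(A_t)$ has rank $k$ as well (anti-transpose preserves rank), with its first $k$ columns spanning $M'$. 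Hence every column of this specialization lies in $M'$, and the first $k$ columns of $\fl(A)$ form the unique diagonal-form representative $(M')^\circ$ of $M'$ (Lemma \ref{lem_diag_form}). The same three properties hold for $g_t(M')$ by Proposition \ref{prop_g_props}\eqref{itm:pi_g}, \eqref{itm:rank_k}, \eqref{itm:det_g}, so the two matrices already agree in their first $k$ columns.

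It then remains to see that columns $k+1,\dots,n$ of an $(n-k)$-shifted unipotent matrix in $B^-$ are determined by $M'$. In such a column $j>k$ the shape forces a rigid pattern: the entry in row $j-k$ is exactly $\lp$, the entries in rows $j-k+1,\dots,j-1$ vanish, the entries in rows below $j-k$ are $\lp$-multiples, and the entries in rows $\geq j$ are $\lp$-free; in particular the $\lp$-linear and the $\lp$-free entries occupy disjoint rows. Specializing $\lp \mapsto (-1)^{k-1}t$ forces the resulting vector into the $k$-dimensional space $M'$, and the $k-1$ prescribed zeros together with the normalization in row $j-k$ pin this vector down uniquely (here $M' \in X_k^\circ$ guarantees non-degeneracy). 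Reading the $\lp$-free and $\lp$-linear parts off the distinct rows then recovers the entire folded column. Hence $\fl(A)$ and $g_t(M')$ agree in every column, which is the desired identity.

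The technical heart, and the step I expect to require the most care, is this last uniqueness argument: confirming that the shifted-unipotent shape together with membership of the specialized columns in $M'$ genuinely leaves no freedom, and handling the boundary columns and the non-degeneracy supplied by $M' \in X_k^\circ$. A more computational alternative, which bypasses the uniqueness statement, is to verify the identity entrywise: expand $\fl(A)_{ij} = A_{n-j+1,\,n-i+1}$ from Definition \ref{defn_g}, translate the Pl\"{u}cker coordinates of $M'$ back into those of $M$ via \eqref{eq_pluc_M'} and Lemma \ref{lem_basic_pluc_S}, and reconcile the off-band entries using the three-term Pl\"{u}cker relation (Corollary \ref{cor_3term}).
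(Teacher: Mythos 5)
Your argument is correct, but it takes a genuinely more structural route than the paper's proof, which is computational. The paper matches the first $k$ columns the same way you do (they are forced to be the diagonal form representative of $M'$), but for columns $j > k$ it proves the explicit identity \eqref{eq_to_prove_pi_g_S}, expressing each entry $A_{ij}$ with $i \leq n-k$ as $t$ or $\lp$ times a ratio of $k \times k$ minors of the last $k$ rows of $A$; this is extracted by expanding the vanishing $(k+1) \times (k+1)$ minors of $A_t$ (Proposition \ref{prop_g_props}\eqref{itm:rank_k}) along row $i$, after which \eqref{eq_pluc_M'} converts those minors into Pl\"{u}cker coordinates of $M'$ and gives $\fl(A)_{ij} = g_t(M')_{ij}$ entry by entry. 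Your uniqueness argument repackages the same key input---rank $k$ after specialization, hence every column of $\fl(A_t)$ lies in $M'$---into a coordinate-free statement: an $(n-k)$-shifted unipotent element of $B^-$ with prescribed first $k$ columns and specialized columns in $M'$ is unique. Your structural description of column $j>k$ does hold for any such matrix (one checks it directly from the unfolding: the $\lp$-coefficient of the $(i,j)$ entry is $X_{n+i,j}$, which the support conditions kill unless $i \leq j-k$, with value $1$ at $i = j-k$), and the non-degeneracy you need is precisely that the only vector of $M'$ vanishing in the $k$ consecutive rows $[j-k,j-1]$ is zero, i.e.\ $P_{[j-k,j-1]}(M') \neq 0$, which \eqref{eq_cyclic_pluc_S} supplies since $M' \in X_k^\circ$; with that spelled out, your difference-of-two-columns argument pins each column down and the proof closes. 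What your route buys is brevity and a clean characterization of $g_t(M')$ as the unique shifted-unipotent lift of the diagonal form of $M'$; what the paper's buys is the explicit formula for all entries of $g_t(M)$ in terms of minors of its bottom $k$ rows, which does the column-matching with no separate uniqueness step. Two small corrections: ``rows below $j-k$'' should read rows $i < j-k$ (above row $j-k$)---those are the $\lp$-multiples, as your disjoint-support remark already indicates; and your fallback entrywise computation should rely on \eqref{eq_pluc_M'} for arbitrary $J$ rather than Lemma \ref{lem_basic_pluc_S}, which only covers basic subsets.
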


\begin{proof}
Set $A = g_t(M), A' = \fl(A)$, and $M' = \pi^k_t(A') = S_t(M)$ (view $A$ and $A'$ as folded matrices). We must show that
\begin{equation}
\label{eq_A'_x'}
g_t(M') = A'.
\end{equation}
By definition, the first $k$ columns of $A'$ are the diagonal form representative of $M'$ (note that the first $k$ columns of $A'$ do not depend on $\lp$), so arguing as in the proof of Proposition \ref{prop_g_props}\eqref{itm:pi_g}, we see that the first $k$ columns of $g_t(M')$ are equal to the first $k$ columns of $A'$. It remains to consider the last $n-k$ columns.

We claim that for $i \leq n-k$, we have
\begin{equation}
\label{eq_to_prove_pi_g_S}
A_{ij} = d_{ij} \dfrac{\Delta_{[n-k+1,n], \{j\} \cup [i+1, i+k-1]}(A)}{\Delta_{[n-k+1,n], [i+1,i+k]}(A)}, \quad\quad\quad
d_{ij} = \begin{cases}
t & \text{ if } j \leq i \\
\lp & \text{ if } j > i.
\end{cases}
\end{equation}
This is clearly true when $j \in [i+1,i+k]$. Let $A_t = g_t(M)|_{\lp = (-1)^{k-1} t}$. By Proposition \ref{prop_g_props}\eqref{itm:rank_k}, all $(k+1)$-minors of $A_t$ vanish. For $j \leq i$, expand the minor $\Delta_{\{i\} \cup [n-k+1,n], \{j\} \cup [i+1,i+k]}(A_t)$ along row $i$ and use the fact that $(A_t)_{ir} = 0$ for $r = i+1, \ldots, i+k-1$, and $(A_t)_{i,i+k} = (-1)^{k-1} t$ to obtain
\begin{equation}
\label{eq_A_t_k+1_small_j}
(A_t)_{ij} \Delta_{[n-k+1,n],[i+1,i+k]}(A_t) - t \Delta_{[n-k+1,n], \{j\} \cup [i+1, i+k-1]}(A_t) = 0.
\end{equation}
There are no $\lp$'s in the last $k$ rows of $A$, and $(A_t)_{ij} = A_{ij}$ for $j \leq i$, so we may replace $A_t$ by $A$ in \eqref{eq_A_t_k+1_small_j}. By \eqref{eq_pluc_M'} and \eqref{eq_cyclic_pluc_S}, the minor $\Delta_{[n-k+1,n],[i+1,i+k]}(A)$ is nonzero, so \eqref{eq_A_t_k+1_small_j} implies the $j \leq i$ case of \eqref{eq_to_prove_pi_g_S}.

For $j > i+k$, the same reasoning gives
\begin{equation*}
\label{eq_A_t_k+1_big_j}
t \Delta_{[n-k+1,n], [i+1, i+k-1] \cup \{j\}}(A_t) + (-1)^k (A_t)_{ij} \Delta_{[n-k+1,n],[i+1,i+k]}(A_t) = 0,
\end{equation*}
and since $(A_t)_{ij} = \dfrac{(-1)^{k-1} t}{\lp} A_{ij}$ when $j > i+k$, \eqref{eq_to_prove_pi_g_S} holds in this case as well.

Now \eqref{eq_pluc_M'} and \eqref{eq_to_prove_pi_g_S} imply that
\[
A'_{ij} = A_{n-j+1,n-i+1} = d_{n-j+1,n-i+1} \dfrac{P_{\{i\} \cup [j-k+1, j-1]}(M')}{P_{[j-k,j-1]}(M')} = g_t(M')_{ij}
\]
for $j \geq k+1$, which completes the proof.
\end{proof}

\begin{cor}
\label{cor_S}
The map $S$ has the following properties:
\begin{enumerate}
\item $S^2 = \Id$
\item $S \circ \PR = \PR^{-1} \circ \, S$
\item $\vp_i \circ S = \ve_{n-i}$ and $\ve_i \circ S = \vp_{n-i}$
\item $S \circ e_i^c = e_{n-i}^{c^{-1}} \circ S$.
\end{enumerate}
\end{cor}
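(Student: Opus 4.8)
The unifying observation is that Proposition \ref{prop_pi_g_S} turns $S$ into the anti-automorphism $\fl$ on the loop group: rewriting it as $g_t \circ S_t = \fl \circ g_t$, every statement about $S$ becomes a statement about $\fl$ on $B^-$. Moreover, equation \eqref{eq_pi_g} shows that $M = \pi^k_t \circ g_t(M)$, so $g_t$ is injective on $X_k^\circ$ and preserves the $t$-coordinate. Hence the plan is: to prove an identity $S \circ \Psi_1 = \Psi_2 \circ S$ (or $S^2 = \Id$) between $t$-preserving rational self-maps of $X_k^\circ$, it suffices to show that both sides agree after applying $g_t$, i.e.\ to prove the corresponding identity for $\fl$ on $B^-$, and then descend by injectivity. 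Throughout I will use the intertwining relations $g \circ \PR = \sh \circ g$ (Lemma \ref{lem_PR_sh}(3)), $g(e_i^c(v)) = e_i^c(g(v))$ (equation \eqref{eq_commutes_with_g}), and the formula \eqref{eq_U_action_B-} for $e_i^c$ on $B^-$, together with the elementary facts $\fl^2 = \Id$ and $\fl \circ \sh = \sh^{-1} \circ \fl$ from \eqref{eq_sh_fl}.

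Parts (1) and (2) are then immediate. For (1), $g_t(S_t^2(M)) = \fl(g_t(S_t(M))) = \fl^2(g_t(M)) = g_t(M)$, and since $S_t$ preserves $X_k^\circ$ (as noted after \eqref{eq_cyclic_pluc_S}), injectivity gives $S^2 = \Id$. For (2), chaining the intertwiners,
\[
g_t\bigl(S_t(\PR_t(M))\bigr) = \fl\bigl(g_t(\PR_t(M))\bigr) = \fl\bigl(\sh(g_t(M))\bigr) = \sh^{-1}\bigl(\fl(g_t(M))\bigr) = \sh^{-1}\bigl(g_t(S_t(M))\bigr) = g_t\bigl(\PR_t^{-1}(S_t(M))\bigr),
\]
using $\sh^{-1} \circ g = g \circ \PR^{-1}$; injectivity then yields $S \circ \PR = \PR^{-1} \circ S$.

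The crux of both (3) and (4) is to record how $\fl$ acts on the data of the induced geometric crystal on $B^-$. Reading the definition \eqref{eq_fl_defn} on the diagonal and subdiagonal (equivalently, using the unfolded description $\fl(X)_{ij} = X_{1-j,\,1-i}$ and $n$-periodicity), one finds that the diagonal and subdiagonal entries at level $i$ of $\fl(X)$ are the corresponding entries at level $n-i$ of $X$, so that $\vp_i \circ \fl = \ve_{n-i}$ and $\ve_i \circ \fl = \vp_{n-i}$ as functions on $B^-$. Composing with $g$ and invoking \eqref{eq_commutes_with_g} gives $\vp_i(S(M,t)) = \vp_i(\fl(g(M,t))) = \ve_{n-i}(M,t)$, and likewise for $\ve$, proving (3).

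Part (4) is the step I expect to be the main obstacle, since it requires tracking the two elementary factors in \eqref{eq_U_action_B-} through the order-reversing map $\fl$. First I would establish $\fl(\wh{x}_i(a)) = \wh{x}_{n-i}(a)$ for all $i \in \Zn$, a direct check on \eqref{eq_fl_defn} (including the $\lp^{-1}$ for $i=0$). Since $\fl$ is an anti-automorphism, applying it to $e_i^c(X) = \wh{x}_i\!\bigl(\tfrac{c-1}{\vp_i(X)}\bigr)\,X\,\wh{x}_i\!\bigl(\tfrac{c^{-1}-1}{\ve_i(X)}\bigr)$ reverses the product and sends index $i$ to $n-i$:
\[
\fl\bigl(e_i^c(X)\bigr) = \wh{x}_{n-i}\!\Bigl(\tfrac{c^{-1}-1}{\ve_i(X)}\Bigr)\,\fl(X)\,\wh{x}_{n-i}\!\Bigl(\tfrac{c-1}{\vp_i(X)}\Bigr).
\]
On the other side, expanding $e_{n-i}^{c^{-1}}(\fl(X))$ via \eqref{eq_U_action_B-} and substituting the flip identities $\vp_{n-i}(\fl(X)) = \ve_i(X)$ and $\ve_{n-i}(\fl(X)) = \vp_i(X)$ from part (3) produces exactly the same two parameters, with the inversion $c \mapsto c^{-1}$ accounting for the swap of $\vp_i$ and $\ve_i$. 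This gives the $B^-$-level identity $\fl \circ e_i^c = e_{n-i}^{c^{-1}} \circ \fl$; composing with $g$, using $g(e_i^c(v)) = e_i^c(g(v))$ and $g \circ S = \fl \circ g$, and descending by injectivity of $g_t$ yields $S \circ e_i^c = e_{n-i}^{c^{-1}} \circ S$. The delicate point throughout is the careful bookkeeping of the left/right factors and of the interchange $\vp_i \leftrightarrow \ve_{n-i}$ so that all parameters align.
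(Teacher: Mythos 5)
Your proof is correct and takes essentially the same route as the paper: rewrite everything through $g_t$ via Proposition \ref{prop_pi_g_S} as statements about $\fl$ on $B^-$ (using $\fl^2 = \Id$, $\fl \circ \sh = \sh^{-1} \circ \fl$, the computation $\vp_i \circ \fl = \ve_{n-i}$, and the anti-automorphism identity $\fl \circ e_i^c = e_{n-i}^{c^{-1}} \circ \fl$ obtained from \eqref{eq_U_action_B-} and $\fl(\wh{x}_i(a)) = \wh{x}_{n-i}(a)$), then descend to $\X{k}$. Your appeal to injectivity of $g_t$ is exactly the paper's step of applying $\pi^k_t$ to both sides, since \eqref{eq_pi_g} exhibits $\pi^k_t$ as a left inverse of $g_t$.
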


\begin{proof}
Throughout the proof, fix $M \in X_k^\circ$ and $t \in \Cx$. By \eqref{eq_pi_g}, \eqref{eq_sh_fl}, and Proposition \ref{prop_pi_g_S}, we have
\[
S_t^2 = \pi^k_t \circ \fl \circ \, g_t \circ S_t = \pi^k_t \circ \fl \circ \fl \circ \, g_t = \pi^k_t \circ \, g_t = \Id,
\]
which proves (1).

For (2), use Lemma \ref{lem_PR_sh}(3), \eqref{eq_sh_fl}, and Proposition \ref{prop_pi_g_S} to compute
\begin{align*}
g_t \circ S_t \circ \PR_t &= \fl \circ \, g_t \circ \PR_t = \fl \circ \sh \circ g_t \\
&= \sh^{-1} \circ \fl \circ \, g_t = \sh^{-1} \circ \, g_t \circ S_t = g_t \circ \PR^{-1} \circ \, S_t.
\end{align*}
Applying $\pi^k_t$ to both sides and using \eqref{eq_pi_g} gives (2).

For (3), suppose $X \in B^-$ is an unfolded matrix. Due to $n$-periodicity, $\fl$ acts on unfolded matrices by $\fl(X)_{ij} = X_{n-j+1,n-i+1}$, so we have
\begin{equation}
\label{eq_vp_fl}
\vp_i(\fl(X)) = \dfrac{\fl(X)_{i+1,i}}{\fl(X)_{ii}} = \dfrac{X_{n-i+1,n-i}}{X_{n-i+1,n-i+1}} = \ve_{n-i}(X).
\end{equation}
Combining this with Proposition \ref{prop_pi_g_S} and \eqref{eq_commutes_with_g}, we obtain
\begin{align*}
\vp_i \circ S(M,t) = \vp_i \circ g \circ S(M,t) = \vp_i \circ \fl \circ \, g(M,t) = \ve_{n-i} \circ g(M,t) = \ve_{n-i}(M,t),
\end{align*}
proving the first half of (3). The second half of (3) is equivalent since $S$ is an involution.

For (4), suppose $X \in B^-$, and set $X' = \fl(X)$. By \eqref{eq_U_action_B-}, \eqref{eq_vp_fl}, and the fact that $\fl$ is an anti-automorphism which maps $\wh{x}_i(a)$ to $\wh{x}_{n-i}(a)$, we have
\begin{align*}
\fl(e_i^c (X)) &= \fl \left(\wh{x}_i\left(\frac{c-1}{\vp_i(X)}\right) \cdot X \cdot \wh{x}_i\left(\frac{c^{-1} - 1}{\ve_i(X)}\right) \right) \\
&= \wh{x}_{n-i}\left(\frac{c^{-1}-1}{\ve_i(X)}\right) \cdot X' \cdot \wh{x}_{n-i}\left(\frac{c - 1}{\vp_i(X)}\right) \\
&= \wh{x}_{n-i}\left(\frac{c^{-1}-1}{\vp_{n-i}(X')}\right) \cdot X' \cdot \wh{x}_{n-i}\left(\frac{c - 1}{\ve_{n-i}(X')}\right) = e_{n-i}^{c^{-1}}(\fl (X)).
\end{align*}
Since $e_i^c$ commutes with $g$ by \eqref{eq_commutes_with_g}, (4) is now proved in the same manner as (2).
%%Using \eqref{eq_pi_g} and the fact that $g_t$ commutes with $e_{i,t}^c$ by \eqref{eq_commutes_with_g}, we have
%%\[
%%e_{i,t}^c \circ \pi^k_t(A) = \pi_k^t \circ e_{i,t}^c(A),
%%\]
%%and Lemma \ref{lem_pi_g_S} shows that this equation holds with $A$ replaced by $A' = \fl(A)$. Thus, we have
%\[
%S_t \circ e_{i,t}^c = \pi^k_t \circ \fl \circ g_t \circ e_{i,t}^c = e_{n-i,t}^{c^{-1}} \circ \pi^k_t \circ \fl \circ g_t = e_{n-i,t}^{c^{-1}} \circ S_t,
%\]
%proving (4).
\end{proof}

\subsection{The dual Grassmannian}
\label{sec_duality}

Given a subspace $W \subset \mathbb{C}^n$, let $W^\perp$ be the orthogonal complement of $W$ with respect to the non-degenerate bilinear form given by $\langle v_i, v_j \rangle = (-1)^{i+1} \delta_{i,j}$, where $v_1, \ldots, v_n$ is the standard basis.

\begin{lem}
\label{lem_perp=comp}
If $M \in \Gr(k,n)$, then for $J \in {[n] \choose k}$, we have
\[
P_J(M) = P_{\ov{J}}(M^\perp)
\]
(as projective coordinates), where $\ov{J}$ denotes the complement $[n] \setminus J$.
\end{lem}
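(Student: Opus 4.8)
The claim is that the Plücker coordinates of a subspace $M \in \Gr(k,n)$ equal those of its orthogonal complement $M^\perp$ after complementing the index set, where orthogonality is with respect to the sign-twisted form $\langle v_i, v_j\rangle = (-1)^{i+1}\delta_{ij}$. My plan is to reduce this to the classical duality between the maximal minors of a matrix and those of a matrix whose rows span the orthogonal complement, tracking the effect of the sign twist.

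The first step is to set up representatives. Let $M^\circ$ be a full-rank $n \times k$ matrix whose columns span $M$. To describe $M^\perp$, I would produce a full-rank $n \times (n-k)$ matrix $N$ whose columns span $M^\perp$. The defining condition is that each column $w$ of $N$ satisfies $\langle v, w\rangle = 0$ for every column $v$ of $M^\circ$; writing the form as $\langle u, w\rangle = u^{\mathsf T} D w$ with $D = \mathrm{diag}((-1)^{i+1})$, this says $(M^\circ)^{\mathsf T} D N = 0$. Equivalently, if $N' = DN$, then the columns of $N'$ span the ordinary orthogonal complement (Euclidean form) of $M$, i.e. $(M^\circ)^{\mathsf T} N' = 0$.

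The second, and main, step is the classical minor-complementation identity: if $(M^\circ)^{\mathsf T} N' = 0$ with $M^\circ$ of size $n \times k$ and $N'$ of size $n \times (n-k)$, both full rank, then for any $J \in \binom{[n]}{k}$ one has
\[
\Delta_{J,[k]}(M^\circ) = \pm \, \Delta_{\bar J, [n-k]}(N'),
\]
as projective coordinates, where the sign depends on $J$ (it is $(-1)^{\sigma(J)}$ for the sign of the shuffle of $J$ and $\bar J$). This is the standard statement that the Plücker coordinates of a subspace and of its (Euclidean) orthocomplement are equal up to the alternating shuffle sign. I expect verifying this identity, and in particular pinning down the sign as a function of $J$, to be the main obstacle — it is where all the genuine content lies, and the cleanest route is probably to observe that the two sides are both, up to scalar, the Plücker embedding of complementary subspaces and to compare against the generic (or a convenient special) representative. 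Since the lemma asserts equality only as projective coordinates, the global constant scalar is irrelevant; I only need the $J$-dependent sign to come out correctly.

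The final step is to absorb the sign twist. Passing from $N'$ back to $N = D^{-1}N' = DN'$ multiplies row $i$ of $N'$ by $(-1)^{i+1}$, so $\Delta_{\bar J,[n-k]}(N) = \left(\prod_{i \in \bar J}(-1)^{i+1}\right)\Delta_{\bar J,[n-k]}(N')$. The point of choosing precisely the form $(-1)^{i+1}\delta_{ij}$ is that this row-rescaling sign exactly cancels the shuffle sign $(-1)^{\sigma(J)}$ from the second step, leaving $P_J(M) = P_{\bar J}(M^\perp)$ with no residual sign. I would carry out the sign bookkeeping by comparing $\prod_{i\in\bar J}(-1)^{i+1}$ with $(-1)^{\sigma(J)}$ — both are determined by the positions of $\bar J$ inside $[n]$ — and check they agree, which is a short computation with the inversion/shuffle statistic. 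Because everything is projective, I do not need to track the overall constant, so the argument concludes once the two signs are matched.
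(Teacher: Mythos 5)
Your proposal is correct, and at bottom it rests on the same fact as the paper's proof (complementary-minor duality), but the two arguments are packaged quite differently. The paper avoids any explicit shuffle-sign bookkeeping: it fixes $I$ with $P_I(M) \neq 0$, forms the invertible $n \times n$ matrix $X$ whose first $n-k$ columns are the standard basis vectors $e_{i_j}$ indexed by $\ov{I}$ and whose last $k$ columns represent $M$, and takes the first $n-k$ rows of $X^{-c}$ (the inverse conjugated by the checkerboard sign matrix). This one construction simultaneously produces a spanning set for $M^\perp$ with respect to the twisted form and, via a single application of Jacobi's identity $\Delta_{I,J}(X^{-c}) = \det(X)^{-1}\Delta_{\ov{J},\ov{I}}(X)$, yields the equality of ratios $P_J(M)/P_I(M) = P_{\ov{J}}(M^\perp)/P_{\ov{I}}(M^\perp)$ — the checkerboard twist absorbs exactly the signs you handle by hand. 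Your route instead factors through the classical Euclidean statement with the shuffle sign $(-1)^{\sigma(J)}$ and then cancels signs explicitly; this works, and indeed the cancellation goes through: since $\sigma(J) \equiv \sum_{j \in J} j - k(k+1)/2 \pmod 2$ while $\prod_{i \in \ov{J}}(-1)^{i+1} = (-1)^{(n-k) + n(n+1)/2 - \sum_{j \in J} j}$, the $J$-dependent factors $(-1)^{\sum_{j\in J} j}$ multiply to $1$, leaving a residual sign $(-1)^{c(n,k)}$ that is constant in $J$ — not literally ``no residual sign'' as you wrote, but harmless projectively, as you correctly anticipate. The one soft spot is that you quote the shuffle-sign duality as classical and gesture at verifying it by comparing representatives; that fact is standard (and is itself most cleanly proved by exactly the Jacobi identity the paper invokes), so nothing is missing, but your argument is not more self-contained than the paper's — its advantage is rather that it makes transparent \emph{why} the form $\langle v_i, v_j \rangle = (-1)^{i+1}\delta_{ij}$ is the right twist, namely that its row-rescaling signs kill the $J$-dependence of the shuffle sign.
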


The proof relies on a version of Jacobi's identity for complementary minors of inverse matrices (see, e.g., \cite{Jacobi}). Given a (complex-valued) matrix $X$, define $X^c$ to be the matrix obtained from $X$ by scaling the $i^{th}$ row and column by $(-1)^i$ (so $(X^c)_{ij} = (-1)^{i+j}X_{ij}$). If $X$ is invertible, define $X^{-c} = (X^{-1})^c = (X^c)^{-1}$. Jacobi's identity implies that for two subsets $I$ and $J$ of the same cardinality, we have
\begin{equation}
\label{eq_Jacobi}
\Delta_{I,J}(X^{-c}) = \dfrac{1}{\det(X)} \Delta_{\overline{J},\overline{I}}(X).
\end{equation}

\begin{proof}[Proof of Lemma \ref{lem_perp=comp}]
Let $M'$ be an $n \times k$ matrix whose column span is $M$. Choose a $k$-subset $I$ so that $P_I(M) \neq 0$, and suppose $\ov{I} = \{i_1 < \ldots < i_{n-k}\}$. Let $X$ be the $n \times n$ matrix whose $j$th column is the standard basis vector $e_{i_j}$ for $j = 1, \ldots, n-k$, and whose last $k$ columns are the matrix $M'$. Clearly $X$ is invertible. Let $N'$ be the $(n-k) \times n$ matrix consisting of the first $n-k$ rows of $X^{-c}$. Since $X^{-1}X = I$, we have
\[
0 = \sum_{r=1}^n (-1)^{i+r}N'_{ir}M'_{rj} = (-1)^{i-1} \sum_{r=1}^n (-1)^{r+1}N'_{ir}M'_{rj}
\]
for $i = 1, \ldots, n-k$ and $j = 1, \ldots, k$. Thus, every row of the matrix $N'$ is orthogonal to every column of the matrix $M'$ with respect to the bilinear form defined above, and since these rows are linearly independent, they span the $(n-k)$-dimensional subspace $M^\perp$.

By \eqref{eq_Jacobi}, we have $P_{\ov{I}}(M^\perp) = \Delta_{[n-k],\ov{I}}(X^{-c}) = \dfrac{1}{\det(X)} \Delta_{I,[n-k+1,n]}(X) \neq 0$. Combining this with another application of \eqref{eq_Jacobi}, we obtain
\[
\frac{P_J(M)}{P_I(M)} = \dfrac{\Delta_{J,[n-k+1,n]}(X)}{\Delta_{I,[n-k+1,n]}(X)} = \dfrac{\Delta_{[n-k],\ov{J}}(X^{-c})}{\Delta_{[n-k],\ov{I}}(X^{-c})} = \frac{P_{\ov{J}}(M^\perp)}{P_{\ov{I}}(M^\perp)}
\]
for all $J \in {[n] \choose k}$. Thus, $P_J(M) = P_{\ov{J}}(M^\perp)$ as projective coordinates, as claimed.
\end{proof}

Let $Q_{w_0}$ be the permutation matrix corresponding to the longest element of $S_n$. Define $T_{w_0} : \Gr(k,n) \rightarrow \Gr(k,n)$ by $T_{w_0}(M) = Q_{w_0} \cdot M$. Note that $\Delta_{J,[k]}(T_{w_0}(M)) = (-1)^{k(k-1)/2} \Delta_{w_0(J),[k]}(M)$, so $P_J(T_{w_0}(M)) = P_{w_0(J)}(M)$ (as projective coordinates).

\begin{defn}
\label{defn_D}
Define the {\em duality map} $D : \X{k} \rightarrow \X{n-k}$ by
\[
D(M,t) = S(T_{w_0}(M^\perp),t).
\]
As usual, we write $D_t$ to denote the map $M \mapsto S_t(T_{w_0}(M^\perp))$.
\end{defn}

To understand the interaction of $D$ with the geometric crystal structure, we will show that $g^{n-k}_t \circ D_t(M)$ is closely related to the inverse of $g^k_t(M)$ (we introduce superscripts on $g$ since there are multiple Grassmannians in this section). We start by explicitly computing the inverse of $g_t^k(M)$. Define $h^k_t : X_k^\circ \rightarrow B^-$ by $h^k_t(M) = B$, where $B$ is the folded matrix given by
\begin{equation}
\label{eq_h_defn}
B_{ij} = (-1)^{i+j} c'_{ij} \dfrac{P_{[i-k,i] \setminus \{j\}}(M)}{P_{[i-k+1,i]}(M)}, \quad\quad\quad
c'_{ij} =
\begin{cases}
1 & \text{ if } i > k \\
t & \text{ if } i \leq k \text{ and } i \geq j \\
(-1)^n \lp & \text{ if } i \leq k \text { and } i < j.
\end{cases}
\end{equation}
When $k$ is clear from context, we write $h_t$ instead of $h^k_t$. For example, if $n=5$ and $k=3$, then writing $P_J = P_J(M)$, we have
\begin{equation*}
h_t(M) = \left(
\begin{array}{ccccc}
t\dfrac{P_{345}}{P_{145}} & 0 & -\lp & \lp \dfrac{P_{135}}{P_{145}} & -\lp \dfrac{P_{134}}{P_{145}} \smallskip \\
-t\dfrac{P_{245}}{P_{125}} & t\dfrac{P_{145}}{P_{125}} & 0 & -\lp & \lp \dfrac{P_{124}}{P_{125}} \smallskip \\
t \dfrac{P_{235}}{P_{123}} & -t \dfrac{P_{135}}{P_{123}} & t \dfrac{P_{125}}{P_{123}} & 0 & -\lp \smallskip \\
-1 & \dfrac{P_{134}}{P_{234}} & -\dfrac{P_{124}}{P_{234}} & \dfrac{P_{123}}{P_{234}} & 0 \smallskip \\
0 & -1 & \dfrac{P_{245}}{P_{345}} & -\dfrac{P_{235}}{P_{345}} & \dfrac{P_{234}}{P_{345}}
\end{array}
\right).
\end{equation*}

\begin{lem}
\label{lem_g_inv}
For $t \in \Cx$ and $M \in X_k^\circ$, we have $h^k_t(M) \cdot g^k_t(M) = (t + (-1)^k \lp) \cdot Id$.
\end{lem}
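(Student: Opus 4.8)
The plan is to exploit the fact that both $g_t(M)$ and $h_t(M)$ are \emph{affine-linear} in the loop parameter $\lp$. Writing $g = g^k_t(M)$ and $h = h^k_t(M)$ as folded matrices, the coefficient $c_{ij}$ in Definition \ref{defn_g} equals $\lp$ exactly when $j > k$ and $i < j$, and is $\lp$-free otherwise; likewise the coefficient $c'_{ij}$ in \eqref{eq_h_defn} equals $(-1)^n\lp$ exactly when $i \le k$ and $i < j$. I would therefore decompose
\[
g = L + \lp\, U, \qquad\qquad h = L' + \lp\, U',
\]
where $L = g|_{\lp = 0}$, $L' = h|_{\lp = 0}$, and $U, U'$ collect the ($\lp$-free) Pl\"{u}cker ratios sitting in the $\lp$-entries. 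The matrix $L$ is lower triangular because its first $k$ columns form the diagonal form representative of $M$ (Proposition \ref{prop_g_props}\eqref{itm:pi_g} and Lemma \ref{lem_diag_form}) and its remaining $\lp$-free entries lie on or below the diagonal; a direct inspection of \eqref{eq_h_defn} shows that $L'$ is lower triangular as well. The matrices $U$ and $U'$ are supported strictly above the diagonal, in the last $n-k$ columns and first $k$ rows respectively.

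Multiplying the two decompositions gives
\[
h\,g = L'L + \lp\,(L'U + U'L) + \lp^2\, U'U,
\]
and since all three matrices $L'L$, $L'U + U'L$, $U'U$ are $\lp$-free, the Lemma is equivalent to the three $\lp$-free identities
\[
L'L = t\,\Id, \qquad L'U + U'L = (-1)^k\,\Id, \qquad U'U = 0,
\]
each of which is a statement about ordinary matrices whose entries are ratios of Pl\"{u}cker coordinates of $M$. This is the key reduction: it trades a single identity involving $\lp$ for three determinantal identities to which the Grassmann--Pl\"{u}cker machinery applies directly.

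The diagonal of the first identity is essentially automatic: since $L$ and $L'$ are lower triangular, $(L'L)_{ii} = L'_{ii} L_{ii} = c'_{ii}\, c_{ii}\, \tfrac{P_{[i-k,i-1]}}{P_{[i-k+1,i]}} \cdot \tfrac{P_{[i-k+1,i]}}{P_{[i-k,i-1]}} = c'_{ii}\, c_{ii} = t$, the last equality holding in both ranges $i \le k$ and $i > k$; lower triangularity of $L'L$ also kills every entry strictly above the diagonal. The remaining content is (a) the strictly-below-diagonal entries of $L'L$, (b) the off-diagonal entries of $L'U + U'L$ and the production of $(-1)^k$ on its diagonal, and (c) the vanishing $U'U = 0$. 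Each such entry is a signed sum of products $P_S\,P_{S'}$ of Pl\"{u}cker coordinates whose index sets differ by a single element, and I would collapse each sum to its desired value by applying the Grassmann--Pl\"{u}cker relations (Proposition \ref{prop_Gr_Pl_rel}); in low rank the three-term relation (Corollary \ref{cor_3term}) already suffices, for instance $(L'L)_{31} \propto P_{15}P_{23} - P_{13}P_{25} + P_{12}P_{35} = 0$ when $n = 5,\ k = 2$.

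The main obstacle will be purely organizational rather than conceptual: correctly tracking the index ranges modulo $n$ (Convention \ref{conv_pluc}), the signs $(-1)^{i+j}$, and the single transition point of the coefficient $c'_{il}c_{lj}$ as $l$ runs over $[i-k,i]$, so that each entry is matched to exactly the right instance of the Grassmann--Pl\"{u}cker relation with $(k{+}1)$-element row set $[i-k,i]$ and $(k{-}1)$-element column set $[j-k+1,j-1]$. To cut down the casework I would note that the claimed identity is invariant under the simultaneous cyclic shift: using $g \circ \PR = \sh \circ g$ (Lemma \ref{lem_PR_sh}\emph{(3)}), the multiplicativity of $\sh$, and the fact that $\sh$ fixes $(t + (-1)^k\lp)\,\Id$, it suffices to verify the three identities on one representative $(i,j)$ per shift orbit, provided one first records the analogous equivariance $h \circ \PR = \sh \circ h$.
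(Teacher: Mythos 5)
Your proposal is correct, and it runs on the same two engines as the paper's proof --- the shift equivariance of $g$ and $h$ and the Grassmann--Pl\"{u}cker relations --- but it organizes the computation differently. The paper does the reduction you defer to the end \emph{first}: it records $h \circ \PR = \sh \circ h$ (the analogue of Lemma \ref{lem_PR_sh}(3), exactly the equivariance you flag as needing to be checked) and, since $\sh$ is an automorphism fixing $(t + (-1)^k\lp)\Id$, reduces the whole lemma to the single identity $(h_t(M)\,g_t(M))_{i1} = \delta_{i,1}(t + (-1)^k\lp)$. In the first column the $\lp$-grading you introduce collapses: $A_{\ell 1} = P_{[n-k+2,n]\cup\{\ell\}}/P_{[n-k+1,n]}$ is $\lp$-free, for $i=1$ only the two singleton terms $\ell = 1$ and $\ell = n-k+1$ survive (giving $t$ and $(-1)^k\lp$), and for $i>1$ the support constraint $\ell \notin [n-k+2,n]$ forces the coefficient $c'_{i\ell}$ to be \emph{constant} on the nonzero terms, so each entry is a constant times one full Grassmann--Pl\"{u}cker relation (Proposition \ref{prop_Gr_Pl_rel}) and vanishes. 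Your route --- decomposing $g = L + \lp U$, $h = L' + \lp U'$ and proving $L'L = t\,\Id$, $L'U + U'L = (-1)^k\Id$, $U'U = 0$ --- is a valid reformulation (the decomposition and the triangularity claims all check out), and it does work entry by entry: in each entry the coefficient $c'_{i\ell}c_{\ell j}$ is constant on the nonzero support except on the diagonal, where the graded pieces degenerate to singletons, so each graded entry really is a full Pl\"{u}cker relation rather than a dangerous partial sum; your sample identity $P_{15}P_{23} - P_{13}P_{25} + P_{12}P_{35} = 0$ is indeed an instance of Corollary \ref{cor_3term}. One pleasant simplification you could exploit: $U'U = 0$ needs no Pl\"{u}cker relation at all, since the row support of $U'$ lies in columns $\geq n-k+1$ while the column support of $U$ lies in rows $\leq n-k$, so every product $U'_{i\ell}U_{\ell j}$ vanishes term by term. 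The trade-off, then, is that your version buys a transparent statement of what each $\lp$-degree must satisfy at the cost of the index bookkeeping you yourself identify; the paper's order of operations (shift first, then one column) eliminates that bookkeeping and shrinks the proof to one relation per row.
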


\begin{proof}
All matrices in this proof are folded. It's easy to see that $h \circ \PR = \sh \circ \, h$ (c.f. Lemma \ref{lem_PR_sh}(3)). Thus, since $\sh$ is an automorphism, it suffices to prove that
\[
(h_t(M) \cdot g_t(M))_{i1} = \delta_{i,1} (t + (-1)^k \lp).
\]

Set $B = h_t(M)$ and $A = g_t(M)$, and write $P_J = P_J(M)$ for the Pl\"{u}cker coordinates of $M$. By definition,
\begin{equation}
\label{eq_BA_i1}
(BA)_{i1} = \sum_{\ell} B_{i \ell} A_{\ell 1} = \sum_{\ell} (-1)^{i+\ell} c'_{i \ell} \dfrac{P_{[i-k,i] \setminus \{\ell\}}}{P_{[i-k+1,i]}} \dfrac{P_{[n-k+2,n] \cup \{\ell\}}}{P_{[n-k+1,n]}}.
\end{equation}
If $i = 1$, then $B_{i \ell} A_{\ell 1} = 0$ unless $\ell \in \{1,n-k+1\}$, so we have
\[
(BA)_{11} = t + (-1)^k \lp.
\]
If $i > 1$, then $c'_{i \ell}$ has the same value for all nonzero terms appearing in \eqref{eq_BA_i1} (the value is $t$ if $i \in [2,k]$ and $1$ if $i > k$), so we have $BA_{i1} = 0$ by the Grasmmann-Pl\"{u}cker relations (Proposition \ref{prop_Gr_Pl_rel}).
%If $i = 1$, then $B_{i \ell} A_{\ell 1} = 0$ unless $\ell \in (\{1\} \cup [n-k+1,n]) \cap [1,n-k+1] = \{1,n-k+1\}$, so we have
%\begin{align*}
%(BA)_{11} &= t \dfrac{P_{[n-k+1,n]}}{P_{\{1\} \cup [n-k+2,n]}} \dfrac{P_{\{1\} \cup [n-k+2,n]}}{P_{[n-k+1,n]}} + (-1)^{n-k+2+n} \lp \dfrac{P_{\{1\} \cup [n-k+2,n]}}{P_{\{1\} \cup [n-k+2,n]}} \dfrac{P_{[n-k+1,n]}}{P_{[n-k+1,n]}} \\
%&= t + (-1)^k \lp.
%\end{align*}
%
%If $2 \leq i \leq k$, then $B_{i \ell} A_{\ell 1} = 0$ unless $\ell \in ([1,i] \cup [n-k+i,n]) \cap [1,n-k+1] = [1,\min(i,n-k+1)]$. For all such $\ell$, we have $c'_{i \ell} = t$, so
%\[
%(BA)_{i1} = \sum_{\ell \in [1,\min(i,n-k+1)]} (-1)^{i+\ell} t \dfrac{P_{[1,i] \cup [n-k+i,n] \setminus \{\ell\}}}{P_{[1,i] \cup [n-k+i+1,n]}} \dfrac{P_{\{\ell\} \cup [n-k+2,n]}}{P_{[n-k+1,n]}} = 0
%\]
%by the Grassmann-Pl\"{u}cker relations (Proposition \ref{prop_Gr_Pl_rel}). If $i > k$, then $c'_{i \ell} = 1$ for all $\ell$, and again we have $(BA)_{i1} = 0$ by the Grassmann-Pl\"{u}cker relations.
%%\[
%%(BA)_{i1} = \sum_{\ell \in [i,i-k]} (-1)^{i+\ell} \dfrac{P_{[i-k,i] \setminus \{\ell\}}}{P_{[i-k+1,i]}} \dfrac{P_{\{\ell\} \cup [n-k+2,n]}}{P_{[n-k+1,n]}} = 0
%%\]
%%by the Grassmann-Pl\"{u}cker relations.
\end{proof}

We previously defined $X^c$, for a complex-valued matrix $X$, to be the matrix obtained by replacing $X_{ij}$ with $(-1)^{i+j}X_{ij}$. If $A \in M_n[\bbC((\lp))]$, define $A^c$ to be the folding of $X^c$, where $X$ is the unfolding of $A$. It is important to note that $A^c$ is obtained from $A$ by multiplying the $(i,j)$-entry by $(-1)^{i+j}$ and replacing $\lp$ with $(-1)^n \lp$, so $A^c_{ij} \neq (-1)^{i+j}A_{ij}$ if $n$ is odd. Define $\inv : B^- \rightarrow B^-$ on a folded matrix $A$ by
\[
\inv(A) = \adj(A)^c,
\]
where $\adj(A)$ is the adjoint of $A$ (i.e., $\adj(A)_{ij} = (-1)^{i+j} \Delta_{[n] \setminus \{j\}, [n] \setminus \{i\}}(A)$).

\begin{prop}
\label{prop_D}
For $t \in \Cx$ and $M \in X_k^\circ$, we have
\[
\beta^{n-k-1} \cdot g^{n-k}_t \circ D_t(M) = \inv \circ \, g^k_t(M),
\]
where $\beta = (t + (-1)^{k+n}\lp)$.
\end{prop}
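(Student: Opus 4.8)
The plan is to reduce the statement to a clean identity between $g^{n-k}_t$ and the explicit inverse matrix $h^k_t$ from Lemma \ref{lem_g_inv}, and then to verify that identity entrywise. First I would compute $\inv \circ \, g^k_t(M)$ directly. Since $\adj(A) = \det(A)\cdot A^{-1}$ for invertible $A$, Proposition \ref{prop_g_props}\eqref{itm:det_g} gives $\det(g^k_t(M)) = (t+(-1)^k\lp)^{n-k}$, while Lemma \ref{lem_g_inv} gives $g^k_t(M)^{-1} = (t+(-1)^k\lp)^{-1} h^k_t(M)$, so that
\[
\adj(g^k_t(M)) = (t+(-1)^k\lp)^{n-k-1}\, h^k_t(M).
\]
Applying $(\cdot)^c$, which multiplies the $(i,j)$-entry by $(-1)^{i+j}$ and substitutes $\lp \mapsto (-1)^n\lp$, turns the scalar $(t+(-1)^k\lp)^{n-k-1}$ into $\beta^{n-k-1}$ (as $\beta = t + (-1)^{k+n}\lp$), so $\inv \circ \, g^k_t(M) = \beta^{n-k-1}\,(h^k_t(M))^c$. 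Cancelling $\beta^{n-k-1}$, the proposition becomes equivalent to $g^{n-k}_t \circ D_t(M) = (h^k_t(M))^c$.

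Next I would strip off the Sch\"utzenberger involution. Since $M \in X_k^\circ$ forces $M^\perp \in X_{n-k}^\circ$ (complements of cyclic intervals are cyclic intervals, so Lemma \ref{lem_perp=comp} preserves non-vanishing of cyclic Pl\"ucker coordinates) and $T_{w_0}$ preserves this open cell, the point $N := T_{w_0}(M^\perp)$ lies in $X_{n-k}^\circ$ and $D_t(M) = S_t(N)$ is defined. Proposition \ref{prop_pi_g_S}, applied in $\Gr(n-k,n)$, then gives $g^{n-k}_t(S_t(N)) = \fl \circ g^{n-k}_t(N)$, and since $\fl$ is an involution the remaining task is to prove
\[
g^{n-k}_t\bigl(T_{w_0}(M^\perp)\bigr) = \fl\bigl((h^k_t(M))^c\bigr).
\]

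Finally I would match the two sides entry by entry, using the defining formula \eqref{eq_h_defn} for $h^k_t(M)$, the definition \eqref{eq_fl_defn} of $\fl$, and Definition \ref{defn_g} (with $k$ replaced by $n-k$) for $g^{n-k}_t$. On the right, unwinding $\fl$ and $(\cdot)^c$ shows the two factors $(-1)^{i+j}$ cancel, the scalar $c'_{n-j+1,\,n-i+1}$ becomes exactly the scalar $c_{ij}$ of Definition \ref{defn_g} after $\lp\mapsto(-1)^n\lp$, and the surviving ratio is $P_{[n-j-k+1,\,n-j+1]\setminus\{n-i+1\}}(M)\big/P_{[n-j-k+2,\,n-j+1]}(M)$. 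On the left I would translate the Pl\"ucker coordinates of $T_{w_0}(M^\perp)$ into those of $M$ via $P_S(T_{w_0}(M^\perp)) = P_{w_0(S)}(M^\perp) = P_{\ov{w_0(S)}}(M)$, combining the $T_{w_0}$ rule with Lemma \ref{lem_perp=comp}. The key point is that $w_0$ sends a cyclic interval to a cyclic interval and complementation then produces the interval on the right; a short arc-length check confirms $\ov{w_0([j-n+k+1,j-1]\cup\{i\})} = [n-j-k+1,n-j+1]\setminus\{n-i+1\}$ and $\ov{w_0([j-n+k,j-1])} = [n-j-k+2,n-j+1]$, so the ratios agree. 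The main obstacle is exactly this entrywise bookkeeping: tracking the interplay of the $(-1)^{i+j}$ signs coming from $\fl$ and $(\cdot)^c$, the behaviour of the prefactors $t,\lp$ under $\lp\mapsto(-1)^n\lp$, and the mod-$n$ index arithmetic in the subscripts, with the degenerate boundary cases absorbed uniformly by Convention \ref{conv_pluc}. Everything else is a formal consequence of the two reductions above.
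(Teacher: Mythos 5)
Your proposal is correct and follows essentially the same route as the paper's proof: identify $\adj(g^k_t(M)) = (t+(-1)^k\lp)^{n-k-1}h^k_t(M)$ by combining Lemma \ref{lem_g_inv} with Proposition \ref{prop_g_props}\eqref{itm:det_g}, strip the Sch\"utzenberger involution via Proposition \ref{prop_pi_g_S}, and match entries using Lemma \ref{lem_perp=comp} and the definitions. Your version merely makes explicit the entrywise index and sign bookkeeping (and the check that $T_{w_0}(M^\perp) \in X_{n-k}^\circ$) that the paper compresses into ``unravel the definitions and compare,'' and your computed subscripts and prefactors are all correct.
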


\begin{proof}
Set $A = g^k_t(M)$. By Proposition \ref{prop_g_props}\eqref{itm:det_g}, we have
\[
\adj(A) \cdot A = \det(A) \cdot Id = \beta_0^{n-k} \cdot Id,
\]
where $\beta_0 = (t + (-1)^k\lp)$. Comparing this with Lemma \ref{lem_g_inv}, we see that $\adj(A) = \beta_0^{n-k-1} \cdot h^k_t(M)$, so
\begin{equation}
\label{eq_inv_matrix}
\inv(A)_{ij} = \beta^{n-k-1} c'_{ij} \dfrac{P_{[i-k,i] \setminus \{j\}}(M)}{P_{[i-k+1,i]}(M)},
\quad\quad
c'_{ij} = \begin{cases}
1 & \text{ if } i > k \\
t & \text{ if } i \leq k \text{ and } i \geq j \\
\lp & \text{ if } i \leq k \text{ and } i < j
\end{cases}.
\end{equation}

Set $A' = g_t^{n-k} \circ D_t(M)$. By Proposition \ref{prop_pi_g_S}, we have $A' = \fl \circ \, g^{n-k}_t(T_{w_0}(M^\perp))$. Now unravel the definitions, apply Lemma \ref{lem_perp=comp}, and compare with \eqref{eq_inv_matrix} to obtain $\beta^{n-k-1} \cdot A' = \inv(A)$.
%Unravelling the definitions and applying Lemma \ref{lem_perp=comp}, we obtain
%\begin{align*}
%%\label{eq_perp_matrix}
%A'_{ij} &= c_{n-j+1,n-i+1} \dfrac{P_{[k-i+2,n-i] \cup \{n-j+1\}}(T_{w_0}(M^\perp))}{P_{[k-i+1,n-i]}(T_{w_0}(M^\perp))} \\
%&= c_{n-j+1,n-i+1} \dfrac{P_{[i+1,i+n-k-1] \cup \{j\}}(M^\perp)}{P_{[i+1,i+n-k]}(M^\perp)} \\
%&= c_{n-j+1,n-i+1} \dfrac{P_{[i-k,i] \setminus \{j\}}(M)}{P_{[i-k+1,i]}(M)}
%\end{align*}
%where
%\[
%c_{ij} = \begin{cases}
%1 & \text{ if } j \leq n-k \\
%t & \text{ if } j > n-k \text{ and } i \geq j \\
%\lp & \text{ if } j > n-k \text{ and } i < j.
%\end{cases}
%\]
%Comparing the definitions of $c_{ij}$ and $c''_{ij}$, we see that $(t + (-1)^{k+n} \lp)^{n-k-1} \cdot A' = A''$.
\end{proof}

\begin{cor}
\label{cor_D}
The map $D$ has the following properties:
\begin{enumerate}
\item $D^2 = \Id$
\item $S \circ D = D \circ S$
\item $\PR \circ \, D = D \circ \PR$
\item $\vp_i \circ D = \ve_i$ and $\ve_i \circ D = \vp_i$
\item $e_i^c \circ D = D \circ e_i^{c^{-1}}$.
\end{enumerate}
\end{cor}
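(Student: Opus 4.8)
The plan is to run the same argument that proved Corollary \ref{cor_S}, but with the anti-automorphism $\inv$ of $B^-$ playing the role that $\fl$ played there. The master identity is Proposition \ref{prop_D}, which says $g^{n-k}_t \circ D_t(M) = \beta^{-(n-k-1)}\,\inv \circ g^k_t(M)$ with $\beta = t+(-1)^{k+n}\lp$; combined with $\pi^k_t \circ g^k_t = \Id$ from \eqref{eq_pi_g}, this lets me turn each identity about $D$ into an identity about how $\inv$ interacts with the other structure maps on $B^-$, and then recover the statement on $\X{\bullet}$ by applying $\pi_t$. First I would record the elementary properties of $\inv(A)=\adj(A)^c$: it is an anti-automorphism (since $\adj$ reverses products and $c$ is multiplicative with $c^2=\Id$); it fixes every generator, $\inv(\wh{x}_i(a))=\wh{x}_i(a)$ for all $i\in\Zn$ (this is exactly what the $c$-twist is engineered to do, the sign from $(-1)^{i+(i+1)}$ being cancelled by the twist, and for $i=0$ the substitution $\lp\mapsto(-1)^n\lp$ finishing the job); it commutes with both $\sh$ and $\fl$ (because $\adj$ and $c$ each do, and $\sh,\fl$ fix $\lp$-scalars); and $\inv^2(A)=\det(A)^{n-2}A$ together with $\inv(fB)=(f^{n-1})^c\,\inv(B)$ for a scalar $f\in\bbC(\lp)$. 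I would also note the harmlessness of scalars: on $B^-$ the functions $\vp_i,\ve_i$ are unchanged under $B\mapsto fB$ (the common nonzero constant term of $f$ cancels in the ratios, using that entries above the diagonal have no constant term), $e_i^c(fB)=f\,e_i^c(B)$, and $\pi^k_t(fA)=\pi^k_t(A)$ since a scalar does not change a column span.

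For parts (2) and (3) I would apply $g^{n-k}_t$ to both sides and use the master identity together with $g\circ S=\fl\circ g$ (Proposition \ref{prop_pi_g_S}) and $g\circ\PR=\sh\circ g$ (Lemma \ref{lem_PR_sh}(3)). For (3): $g^{n-k}_t(\PR_t D_t M)=\sh(g^{n-k}_t D_t M)=\sh(\beta^{-(n-k-1)}\inv(g^k_t M))$, while $g^{n-k}_t(D_t\PR_t M)=\beta^{-(n-k-1)}\inv(\sh(g^k_t M))$; these agree because $\sh$ fixes the $\lp$-scalar $\beta$ and commutes with $\inv$. Part (2) is identical with $\fl$ in place of $\sh$. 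Applying $\pi^{n-k}_t$ then gives the equalities on $\X{n-k}$. For part (1) I would apply Proposition \ref{prop_D} twice: using $\inv(fB)=(f^{n-1})^c\inv(B)$ and $\inv^2(A)=\det(A)^{n-2}A$ (with $\det(g^k_t M)=(t+(-1)^k\lp)^{n-k}$ from Proposition \ref{prop_g_props}(3)) gives $g^k_t(D^2_t M)=(\text{scalar})\cdot g^k_t(M)$; applying $\pi^k_t$, which kills the scalar, yields $D^2_t M=M$.

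For part (4) the cleanest route is to read $\vp_i,\ve_i$ off the explicit matrix. Dividing the formula \eqref{eq_inv_matrix} for $\inv(g^k_t M)$ by $\beta^{n-k-1}$ gives the entries of $g^{n-k}_t(D_t M)$ as $c'_{ij}\,P_{[i-k,i]\setminus\{j\}}(M)/P_{[i-k+1,i]}(M)$. Computing $\vp_i=X_{i+1,i}/X_{ii}$ from these entries produces $\tfrac{c'_{i+1,i}}{c'_{ii}}$ times a ratio of Pl\"ucker coordinates of $M$, and a short case check shows $c'_{i+1,i}/c'_{ii}=t^{-\delta_{i,k}}$, so this matches $\ve_i(M,t)$ exactly as in Definition \ref{defn_maps}(2); the identity $\ve_i\circ D=\vp_i$ is the symmetric computation. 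Finally, part (5) follows formally: applying $e_i^c$ to the master identity and using $e_i^c(fB)=f\,e_i^c(B)$ reduces it to $e_i^c(\inv(Y))=\inv(e_i^{c^{-1}}(Y))$ for $Y=g^k_t M$, and this in turn follows by expanding both sides via \eqref{eq_U_action_B-}, using that $\inv$ is an anti-automorphism fixing $\wh{x}_i(a)$ together with the relations $\vp_i\circ\inv=\ve_i$, $\ve_i\circ\inv=\vp_i$ from part (4); then $\pi^{n-k}_t$ gives the result.

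I expect the main obstacle to be the careful bookkeeping of the sign/$\lp$ twist $c$ and the scalar factors $\beta$: verifying $\inv(\wh{x}_i(a))=\wh{x}_i(a)$ and the commutations $\inv\circ\sh=\sh\circ\inv$, $\inv\circ\fl=\fl\circ\inv$ requires tracking how $c$ and the substitution $\lp\mapsto(-1)^n\lp$ interact with the unfolded index parities, and one must consistently check that every stray power of $\beta$ is a genuine $\lp$-scalar so that it is annihilated either by the ratios defining $\vp_i,\ve_i$ or by the projection $\pi_t$. None of the individual steps is deep, but the twist makes it easy to drop a sign.
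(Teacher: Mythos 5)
Your proposal is correct, and for parts (2), (3), and (5) it is essentially the paper's own argument: take Proposition \ref{prop_D} as the master identity, combine it with $g_t\circ S_t=\fl\circ g_t$ (Proposition \ref{prop_pi_g_S}), $g\circ\PR=\sh\circ g$ (Lemma \ref{lem_PR_sh}), the commutation of $\inv$ with $\fl$ and $\sh$, the anti-automorphism property of $\inv$ and $\inv(\wh{x}_i(a))=\wh{x}_i(a)$, and then project back with $\pi_t$ using \eqref{eq_pi_g}. You deviate in two local places, both legitimately. For (4), the paper first proves the abstract relation $\vp_i(\inv(A))=\ve_i(A)$ on $B^-$ (equation \eqref{eq_vp_inv}, obtained by reducing to $i=1$ via $\sh$ and computing complementary minors of the lower-triangular matrix $A|_{\lp=0}$ on unfolded matrices), and only then invokes Proposition \ref{prop_D}; you instead read $\vp_i$ and $\ve_i$ directly off the explicit entries \eqref{eq_inv_matrix}. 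Since \eqref{eq_inv_matrix} is already established inside the proof of Proposition \ref{prop_D}, your shortcut is sound, and your prefactor check $c'_{i+1,i}/c'_{ii}=t^{-\delta_{i,k}}$ does match Definition \ref{defn_maps}(2) — just make sure to treat $i=0$ separately, where the relevant unfolded entry is the coefficient of $\lp$ in the $(1,n)$ entry, not a constant term. For (1), the paper avoids all determinant bookkeeping: writing $\mu(M)=T_{w_0}(M^\perp)$, it deduces $D^2=D\circ S\circ\mu=S\circ D\circ\mu=S^2\circ\mu^2=\Id$ from part (2) and the involutivity of $S$ and $\mu$, whereas you apply Proposition \ref{prop_D} twice.

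Your route for (1) works, but one caution: the phrase ``applying $\pi^k_t$, which kills the scalar'' is not automatic here. The stray scalars are powers of $\beta_0=t+(-1)^k\lp$ (note $\beta^c=\beta_0$, and the second application of Proposition \ref{prop_D}, at level $n-k$, also produces $\beta_0$ since $(-1)^{(n-k)+n}=(-1)^k$), and $\beta_0$ \emph{vanishes} at the evaluation point $\lp=(-1)^{k-1}t$ used by $\pi^k_t$; a nonzero net power would make the projection degenerate (zero matrix or a pole) rather than being harmlessly discarded. So you must actually verify that the exponents cancel — they do: from $\inv(fB)=(f^c)^{n-1}\inv(B)$, $\inv^2(A)=\det(A)^{n-2}A$, and $\det g^k_t(M)=\beta_0^{n-k}$ (Proposition \ref{prop_g_props}(3)), one gets $\beta_0^{k-1}\,g^k_t(D_t^2M)=\beta_0^{-(n-k-1)(n-1)+(n-k)(n-2)}\,g^k_t(M)=\beta_0^{k-1}\,g^k_t(M)$, so the net scalar is exactly $1$ (alternatively, compare the $(n,k)$ entries of both sides, which are both equal to $1$). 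With that verification supplied, all five parts are sound, and the remaining bookkeeping you flag (that $\inv$ fixes the generators, including $\wh{x}_0(a)$ via the substitution $\lp\mapsto(-1)^n\lp$, and commutes with $\sh$ and $\fl$ because unfolding preserves the parity of $i+j$ under both maps) is exactly the content the paper leaves implicit.
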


\begin{proof}
Set $\beta = (t + (-1)^{k+n} \lp)$ as above. By Propositions \ref{prop_pi_g_S} and \ref{prop_D} and the fact that $\inv$ and $\fl$ commute, we have
\begin{align*}
\beta^{n-k-1} \cdot g^{n-k}_t \circ S_t \circ D_t &= \beta^{n-k-1} \cdot \fl \circ \, g^{n-k}_t \circ D_t = \fl \circ \inv \circ g^k_t \\
&= \inv \circ \fl \circ \, g^k_t = \inv \circ \, g^k_t \circ S_t = \beta^{n-k-1} \cdot g^{n-k}_t \circ D_t \circ S_t.
\end{align*}
Divide by $\beta^{n-k-1}$ and apply $\pi^{n-k}_t$ to both sides to obtain (2). Part (3) is proved in the same way, using Lemma \ref{lem_PR_sh}(3) and the fact that $\inv$ and $\sh$ commute. To prove (1), let $\mu$ denote the map $M \mapsto T_{w_0}(M^\perp)$. By (2), the definition of $D$, and the fact that $S$ and $\mu$ are involutions, we have
\[
D^2 = D \circ S \circ \mu = S \circ D \circ \mu = S \circ S \circ \mu \circ \mu = \Id.
\]

Suppose $A \in B^-$. We claim that
\begin{equation}
\label{eq_vp_inv}
\vp_i(\inv(A)) = \ve_i(A).
\end{equation}
Since $\sh$ commutes with $\inv$ and $\vp_{i-1} = \vp_i \circ \sh$, it suffices to prove \eqref{eq_vp_inv} for $i = 1$. Let $A' = \inv(A)$, and let $X,X'$ be the unfolded matrices corresponding to the folded matrices $A,A'$, respectively. For $i,j \in [n]$, $X'_{ij}$ is the constant coefficient of the $(i,j)$-entry in $A'$, so we have
\[
X'_{ij} = \Delta_{[n] \setminus \{j\}, [n] \setminus \{i\}}(A|_{\lp = 0}).
\]
Since $A|_{\lp = 0}$ is lower triangular and its $(i,j)$-entry is $X_{ij}$, we have
\[
\vp_1(X') = \dfrac{X'_{21}}{X'_{11}} = \dfrac{\Delta_{[2,n],\{1\} \cup [3,n]}(A|_{\lp = 0})}{\Delta_{[2,n],[2,n]}(A|_{\lp = 0}} = \dfrac{X_{21}X_{33} \cdots X_{nn}}{X_{22}X_{33} \cdots X_{nn}} = \ve_1(X),
\]
proving \eqref{eq_vp_inv}.

Using \eqref{eq_vp_inv}, \eqref{eq_U_action_B-}, the identity $\inv(XY) = \inv(Y)\inv(X)$, and the fact that $\inv$ fixes $\wh{x}_i(a)$ for each $i$, we obtain
\begin{equation}
\label{eq_e_inv}
\inv(e_i^c(A)) = e_i^{c^{-1}}(\inv(A)).
\end{equation}

Now (4) and (5) follow from Proposition \ref{prop_D}, \eqref{eq_vp_inv}, \eqref{eq_e_inv}, and the observation that if $p(\lp)$ is any polynomial in $\lp$ with nonzero constant term, then $\vp_i(p(\lp) \cdot A) = \vp_i(A)$. This is similar to the proof of parts (3) and (4) of Corollary \ref{cor_S}, so we omit the details.
%Thus, for $(M,t) \in \X{k}$, Theorem \ref{thm_D} and \eqref{eq_commutes_with_g} give
%\begin{align*}
%\vp_i(D(M,t)) &= \vp_i(g^{n-k} \circ D(M,t)) = \vp_i(\beta^{n-k-1} \cdot g^{n-k} \circ D(M,t)) \\
%&= \vp_i(\inv \circ \, g^k(M,t)) = \ve_i(g^k(M,t)) = \ve_i(M,t).
%\end{align*}
%This proves the first half of (4); the second half is equivalent since $D$ is an involution.
%
%To prove (5), suppose $A \in B^-$, and let $A' = \inv(A)$ as above. By \eqref{eq_U_action_B-}, \eqref{eq_vp_inv}, the identity $\inv(XY) = \inv(Y)\inv(X)$, and the fact that $\inv$ fixes $\wh{x}_i(a)$ for each $i$, we see that
%\begin{align*}
%\inv(e_i^c(A)) &= \inv \left( \wh{x}_i\left( \frac{c - 1}{\vp_i(A)} \right) \cdot A \cdot \wh{x}_i\left( \frac{c^{-1} - 1}{\ve_i(A)} \right) \right) \\
%&= \wh{x}_i\left( \frac{c^{-1} - 1}{\ve_i(A)} \right) \cdot A' \cdot \wh{x}_i \left( \frac{c - 1}{\vp_i(A)}\right) \\
%&= \wh{x}_i\left( \frac{c^{-1} - 1}{\vp_i(A')} \right) \cdot A' \cdot \wh{x}_i \left( \frac{c - 1}{\ve_i(A')}\right) = e_i^{c^{-1}}(\inv(A)).
%\end{align*}
%Now (5) is proved in the same way as (2).
\end{proof}

\subsection{Tropicalizing the symmetries}
\label{sec_symms_trop}

Suppose $(M,t) \in \X{n-k}$, and set $(M',t) = S(M,t)$. Lemma \ref{lem_basic_pluc_S} shows that the basic Pl\"{u}cker coordinates of $M'$ are equal to a power of $t$ times a ratio of Pl\"{u}cker coordinates of $M$, so by Lemma \ref{lem_sub_free}(2), $\Theta S = \Theta_{n-k}^{-1} \circ S \circ \Theta_{n-k}$ is subtraction-free, and we may define $\wh{S} = \Trop(\Theta S) : \tw{\bT}_k \rightarrow \tw{\bT}_k$. Since $P_J(T_{w_0}(M^\perp)) = P_{w_0(\ov{J})}(M)$, the map $\Theta D = \Theta_k^{-1} \circ D \circ \Theta_{n-k}$ is also subtraction-free, so we may define $\wh{D} = \Trop(\Theta D) : \tw{\bT}_k \rightarrow \tw{\bT}_{n-k}$.

\begin{thm}
\label{thm_trop_S_D}
On the set of $k$-rectangles $B^k$, we have $\wh{S} = \tw{\rot}$ and $\wh{D} = \tw{\refl}$ (see \S \ref{sec_comb_symmetries}).
\end{thm}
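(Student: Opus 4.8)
The plan is to show that, in the Gelfand--Tsetlin coordinates supplied by $\Theta_{n-k}$, both $\Theta S$ and $\Theta D$ are honest Laurent-\emph{monomial} maps, with no additions at all; their tropicalizations are then the linear maps $\trot$ and $\trefl$, and in fact the two identities hold on all of $\tw{\bT}_k$, not merely on $B^k$. Both computations rest on a single mechanism: the inverse parametrization of Proposition \ref{prop_GT_pl} writes each coordinate as a ratio $X_{ab} = P_{J_{a,b}}(M)/P_{J_{a+1,b}}(M)$ of consecutive basic Pl\"ucker coordinates, while Lemma \ref{lem_basic_pluc_S} (and the complementation Lemma \ref{lem_perp=comp}) control how these basic coordinates transform. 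The substantive point is that when one forms the defining ratios, the common denominators and all but one power of $t$ cancel, leaving a monomial.

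For $\wh S$: here $M \in \Gr(n-k,n)$, so I apply Lemma \ref{lem_basic_pluc_S} and Proposition \ref{prop_GT_pl} with $k$ replaced by $n-k$. Setting $(X_{ij},t)=\Theta_{n-k}^{-1}(M,t)$ and $(X'_{ij},t)=\Theta_{n-k}^{-1}(S(M,t))$, Lemma \ref{lem_basic_pluc_S} expresses $P_{J_{i,j}}(S_tM)/P_{[k+1,n]}(S_tM)$ as $t^{\min(j,k)-i+1}$ times a Pl\"ucker ratio of $M$ whose only $i$-dependence is through that power of $t$ and through the basic index $k-i+2$. Forming $X'_{ij}=P_{J_{i,j}}(S_tM)/P_{J_{i+1,j}}(S_tM)$, the normalizing factor and the $j$-only denominator drop out and the $t$-powers collapse to $t^1$, giving $X'_{ij}=t\,P_{J_{k-i+2,\,n-j}}(M)/P_{J_{k-i+1,\,n-j}}(M)=t/X_{k-i+1,\,n-j}$ by Proposition \ref{prop_GT_pl}. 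Tropicalizing yields $B'_{ij}=L-B_{k-i+1,\,n-j}$, which is exactly $\trot$.

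For $\wh D$: I factor $D=S\circ\mu$ with $\mu(M,t)=(T_{w_0}(M^\perp),t)$, so that $\Theta D=(\Theta_k^{-1}\circ S\circ\Theta_k)\circ\nu$ with $\nu=\Theta_k^{-1}\circ\mu\circ\Theta_{n-k}$. The outer factor is the same monomial $S$-computation as above, now on $\Gr(k,n)$, giving $Y_{ij}\mapsto t/Y_{n-k-i+1,\,n-j}$. For $\nu$ I combine $P_J(\mu(M))=P_{w_0(\ov J)}(M)$ with the bookkeeping identity $w_0\big(\ov{J^{(k)}_{i,j}}\big)=J^{(n-k)}_{\,k-j+i,\,n-j}$ between the basic subsets of the two Grassmannians; the defining ratio of $\nu$ then becomes $Y_{ij}=X_{k-j+i,\,n-j}$, a pure relabelling carrying no power of $t$. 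Composing the two factors and simplifying the indices gives $X''_{ij}=t/X_{j-i+1,\,j}$, whose tropicalization $B''_{ij}=L-B_{j-i+1,\,j}$ is precisely $\trefl$.

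The only real obstacle is bookkeeping. I must check the complement identity for $w_0\big(\ov{J^{(k)}_{i,j}}\big)$, confirm that the indices $(k-i+1,n-j)$, $(k-j+i,n-j)$ and $(j-i+1,j)$ always lie in the correct index sets $R_k$ or $R_{n-k}$, and dispose of the boundary cases of Lemma \ref{lem_basic_pluc_S}, where the $\min$ in the exponent of $t$ is attained on the other branch or a basic subset is truncated. None of this needs an idea beyond Lemmas \ref{lem_basic_pluc_S} and \ref{lem_perp=comp} and Proposition \ref{prop_GT_pl}; the content is simply that the cancellations render $\Theta S$ and $\Theta D$ monomial.
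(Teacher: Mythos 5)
Your proposal is correct and follows essentially the same route as the paper: both reduce $\wh{S}$ and $\wh{D}$ to the observation that, via Proposition \ref{prop_GT_pl}, the coordinates are ratios of consecutive basic Pl\"ucker coordinates, which transform monomially under $S_t$ by Lemma \ref{lem_basic_pluc_S} and under $M \mapsto T_{w_0}(M^\perp)$ by Lemma \ref{lem_perp=comp} together with the index identity $w_0\bigl(\ov{J^k_{i,j}}\bigr) = J^{n-k}_{k-j+i,\,n-j}$, yielding $X'_{ij} = t/X_{k-i+1,n-j}$ and $X''_{ij} = t/X_{j-i+1,j}$ exactly as in the paper's proof. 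Your explicit factorization $D = S \circ \mu$ and your remark that the monomial form makes the identities hold on all of $\tw{\bT}_k$ are only cosmetic refinements of the paper's single-chain computation.
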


\begin{proof}
In this proof, we write $J^r_{i,j}$ for the basic subset $[i,j] \cup [n-r+j-i+2,n]$ of size $r$.

For $(X_{ij},t) \in \bT_k$, set $(M,t) = \Theta_{n-k}(X_{ij},t)$, $(M',t) = S(M,t)$, and $(X'_{ij},t) = \Theta_{n-k}^{-1}(M',t)$. By Proposition \ref{prop_GT_pl} and Lemma \ref{lem_basic_pluc_S}, we have
\[
X'_{ij} = \dfrac{P_{J^{n-k}_{i,j}}(M')}{P_{J^{n-k}_{i+1,j}}(M')} = \dfrac{t^{\min(j-i+1,k-i+1)} P_{J^{n-k}_{k-i+2,n-j}}(M)}{t^{\min(j-i,k-i)} P_{J^{n-k}_{k-i+1,n-j}}(M)} = \dfrac{t}{X_{k-i+1,n-j}}.
\]
Tropicalizing this equality and comparing with the definition of $\tw{\rot}$, we see that $\wh{S} = \tw{\rot}$.

Now set $(M'',t) = D(M,t)$ and $(X''_{ij},t) = \Theta_k^{-1}(M'',t)$. By definition, $M'' = S_t(T_{w_0}(M^\perp))$, so we have
\[
X''_{ij} = \dfrac{P_{J^k_{i,j}}(S_t(T_{w_0}(M^\perp)))}{P_{J^k_{i+1,j}}(S_t(T_{w_0}(M^\perp)))} = t \dfrac{P_{J^k_{n-k-i+2,n-j}}(T_{w_0}(M^\perp))}{P_{J^k_{n-k-i+1,n-j}}(T_{w_0}(M^\perp))} = t \dfrac{P_{J^{n-k}_{j-i+2,j}}(M)}{P_{J^{n-k}_{j-i+1,j}}(M)} = \dfrac{t}{X_{j-i+1,j}}.
\]
Tropicalizing and comparing with the definition of $\tw{\refl}$, we conclude that $\wh{D} = \tw{\refl}$.
\end{proof}

\begin{remark}
\label{rmk_symms_cryst_ops}
Parts (2) and (3) of Proposition \ref{prop_symms_cryst_ops} follow from combining this result with Theorem \ref{thm_trop_cryst}(3,4), Corollary \ref{cor_S}(4), and Corollary \ref{cor_D}(5).
\end{remark}

\bibliographystyle{plain}
\bibliography{affine_geom_cryst_paper.refs}

\begin{thebibliography}{10}

\bibitem{BFZ}
A.~Berenstein, S.~Fomin, and A.~Zelevinsky.
\newblock Parametrizations of canonical bases and totaly positive matrices.
\newblock {\em Adv. Math.}, 122(1):49--149, 1996.

\bibitem{BKI}
A.~Berenstein and D.~Kazhdan.
\newblock Geometric and unipotent crystals.
\newblock {\em Geom. Funct. Anal.}, Special Volume, Part I:188--236, 2000.

\bibitem{BKII}
A.~Berenstein and D.~Kazhdan.
\newblock Geometric and unipotent crystals. {II}. {F}rom unipotent bicrystals
  to crystal bases.
\newblock In {\em Quantum groups}, volume 433 of {\em Contemp. Math.}, pages
  13--88. Amer. Math. Soc., Providence, RI, 2007.

\bibitem{F2}
G.~Frieden.
\newblock A rational lift of the combinatorial {$R$}-matrix for tensor products
  of rectangles.
\newblock In preparation.

\bibitem{F0}
G.~Frieden.
\newblock Affine type {$A$} geometric crystal structure on the {G}rassmannian.
\newblock In {\em 28th Annual International Conference on Formal Power Series
  and Algebraic Combinatorics (FPSAC 2016)}, pages 503--514. Discrete Math.
  Theor. Comput. Sci., 2016.

\bibitem{Ful}
W.~Fulton.
\newblock {\em Young Tableaux}.
\newblock Cambridge Univ. Press, Cambridge, 1997.

\bibitem{Jacobi}
D.~Grinberg, J.~Jiang, F.~Poloni, and D.~Serre.
\newblock Math {O}verflow: {J}acobi's equality between complementary minors of
  inverse matrices.
\newblock url:
  https://mathoverflow.net/questions/87877/jacobis-equality-between-complementary-minors-of-inverse-matrices,
  2012 (Accessed June 5, 2017).

\bibitem{GrinRob}
D.~Grinberg and T.~Roby.
\newblock Iterative properties of birational rowmotion {II}: rectangles and
  triangles.
\newblock {\em Electron. J. Combin.}, 22(3):Paper 3.40, 49 pp., 2015.

\bibitem{Kash}
M.~Kashiwara.
\newblock On crystal bases of the {$Q$}-analogue of universal enveloping
  algebras.
\newblock {\em Duke Math. J.}, 63(2):465--516, 1991.

\bibitem{KNO1}
M.~Kashiwara, T.~Nakashima, and M.~Okado.
\newblock Affine geometric crystals and limit of perfect crystals.
\newblock {\em Trans. Amer. Math. Soc.}, 360(7):3645--3686, 2008.

\bibitem{KNO2}
M.~Kashiwara, T.~Nakashima, and M.~Okado.
\newblock Tropical {$R$} maps and affine geometric crystals.
\newblock {\em Represent. Theory}, 14:446--509, 2010.

\bibitem{Kir}
A.~Kirillov.
\newblock Introduction to tropical combinatorics.
\newblock In {\em Physics and combinatorics, 2000 (Nagoya)}, pages 82--150.
  World Sci. Publ., River Edge, NJ, 2001.

\bibitem{KirBer}
A.~Kirillov and A.~Berenstein.
\newblock Groups generated by involutions, {G}el'fand-{T}setlin {P}atterns, and
  the combinatorics of {Y}oung {T}ableaux.
\newblock {\em St. Petersburg Math. J.}, 7(1):77--127, 1996.

\bibitem{KOTY}
A.~Kuniba, M.~Okado, T.~Takagi, and Y.~Yamada.
\newblock Geometric crystal and tropical {$R$} for {$D_n^{(1)}$}.
\newblock {\em Int. Math. Res. Not.}, (48):2565--2620, 2003.

\bibitem{LPwhirl}
T.~Lam and P.~Pylyavskyy.
\newblock Total positivity in loop groups, {I}: {W}hirls and curls.
\newblock {\em Adv. Math.}, 230(3):1222--1271, 2012.

\bibitem{LPenergy}
T.~Lam and P.~Pylyavskyy.
\newblock Intrinsic energy is a loop {S}chur function.
\newblock {\em J. Comb.}, 4(4):387--401, 2013.

\bibitem{LLT}
A.~Lascoux, B.~Leclerc, and J.-Y. Thibon.
\newblock Crystal graphs and {$q$}-analogues of weight multiplicities for the
  root system {$A_n$}.
\newblock {\em Lett. Math. Phys.}, 35(4):359--374, 1995.

\bibitem{Lind}
B.~Lindstr\"{o}m.
\newblock On the vector representations of induced matroids.
\newblock {\em Bull. London Math. Soc.}, 5:85--90, 1973.

\bibitem{MarSco}
R.~Marsh and J.~Scott.
\newblock Twists of {P}l\"{u}cker coordinates as dimer partition functions.
\newblock {\em Comm. Math. Phys.}, 341(3):821--884, 2016.

\bibitem{MisNakAll}
K.~Misra and T.~Nakashima.
\newblock Affine geometric crystal of {$A_n^{(1)}$} and limit of
  {K}irillov-{R}eshetikhin perfect crystals.
\newblock arXiv:1608.06063.

\bibitem{Nak}
T.~Nakashima.
\newblock Geometric crystals on {S}chubert varieties.
\newblock {\em J. Geom. Phys.}, 53(2):197--225, 2005.

\bibitem{NouYam}
M.~Noumi and Y.~Yamada.
\newblock {T}ropical {R}obinson-{S}chensted-{K}nuth correspondence and
  birational {W}eyl group actions.
\newblock In {\em Representation theory of algebraic groups and quantum
  groups}, volume~40 of {\em Adv. Stud. Pure Math.}, pages 371--442. Math. Soc.
  Japan, Tokyo, 2004.

\bibitem{Rho}
B.~Rhoades.
\newblock Cyclic sieving, promotion, and representation theory.
\newblock {\em J. Combin. Theory Ser. A}, 117(1):38--76, 2010.

\bibitem{Shim}
M.~Shimozono.
\newblock Affine type {$A$} crystal structure on tensor products of rectangles,
  {D}emazure characters, and nilpotent varieties.
\newblock {\em J. Algebraic Combin.}, 15(2):151--187, 2002.

\bibitem{Yam}
Y.~Yamada.
\newblock A birational representation of {W}eyl group, combinatorial
  {$R$}-matrix and discrete {T}oda equation.
\newblock In {\em Physics and combinatorics, 2000 (Nagoya)}, pages 305--319.
  World Sci. Publ., River Edge, NJ, 2001.

\end{thebibliography}

\end{document}